\documentclass[reqno]{amsart}

\usepackage[english]{babel}
\usepackage{amsmath,amssymb,amsthm}
\usepackage{hyperref}
\usepackage[centering]{geometry}
\usepackage{xcolor}

\newtheorem{theorem}{Theorem}[section]   
\newtheorem*{theorem*}{Theorem}          
\newtheorem{lemma}[theorem]{Lemma}
\newtheorem{proposition}[theorem]{Proposition}

\theoremstyle{definition}

\newtheorem{example}[theorem]{Example}

\newtheorem{remark}{Remark}[section]

\numberwithin{equation}{section}

\title[Hypersurfaces of $\mathbb{H}^2\times\mathbb{H}^2$ with constant sectional curvature]
{Hypersurfaces of $\mathbb{H}^2\times\mathbb{H}^2$ with constant sectional curvature}
\thanks{}

\author{Haizhong Li, Luc Vrancken, Xianfeng Wang \and Zeke Yao}

\subjclass[2020]{Primary 53C42; Secondary 53B25}
\keywords{Riemannian product manifold, hypersurface with constant sectional curvature, minimal hypersurface, cosh-Gordon equation, sinh-Gordon equation}

\date{}
\begin{document}

\begin{abstract}
In this paper, we classify the hypersurfaces
of $\mathbb{H}^2\times\mathbb{H}^2$ with constant sectional curvature. 
In contrast to $\mathbb{S}^2\times\mathbb{S}^2$, the resulting examples for $\mathbb{H}^2\times\mathbb{H}^2$  exhibit more diversity, and we construct a special  example with non-constant product angle function. For  $\mathbb{S}^2\times\mathbb{S}^2$, however, the product angle function of any constant sectional curvature  hypersurface is identically zero. 
As a byproduct, we classify the hypersurfaces of  $\mathbb{H}^2\times\mathbb{H}^2$ with constant product angle function and constant mean curvature (or constant scalar curvature).
\end{abstract}

\maketitle

\section{Introduction}\label{sect:1}

From the point of view of Riemannian geometry, the  classification problem of submanifolds with constant sectional curvature in different ambient spaces is an important and  attractive topic in differential geometry. 
When the ambient space is a real space form, the problem has a long history and has been well understood. It is remarkable that explicit examples of such 
submanifolds were constructed in \cite{DT2002} by  developing the classical Ribaucour transformation. We refer to the monograph \cite{DT2019} for more details of classification of  constant sectional curvature hypersurfaces in real space forms. 
In non-flat complex space forms and some other irreducible symmetric spaces, there are also some important classification results for hypersurfaces with constant sectional curvature or Einstein hypersurfaces (see, e.g.,  \cite{CR1982,I-R,K-R,Kon,M,N-P} and the references therein).
There are also classifications for constant sectional curvature  hypersurfaces of cylinders over space forms 
(see, e.g., \cite{AEG,CT2025,M-T} and the references  therein).

Howerver, in general, if the curvature tensor of the ambient space is not simple, it is a very challenging problem to obtain
the classification of hypersurfaces with
constant sectional curvature, if one uses the Gauss and Codazzi
equations directly. In this aspect, the first three authors  introduced a new approach  in 2013,
the so-called {\it Tsinghua principle}, which applies the Codazzi equation
and the Ricci identity in an indirect way to obtain some nice linear equations involving
the components of the second fundamental form.
This new  approach has been  applied successfully to classsify submanifolds with constant sectional curvature in different ambient spaces (see, e.g., \cite{A-L-V-W,LMVVW,LVWY}).

 In recent years, the study of the submanifolds of $\mathbb{S}^2\times\mathbb{S}^2$ and
 $\mathbb{H}^2\times\mathbb{H}^2$ have become very active.
Both  $\mathbb{S}^2\times\mathbb{S}^2$ and
$\mathbb{H}^2\times\mathbb{H}^2$  are very typical 4-dimensional Hermitian symmetric manifolds.
When the codimension is  $2$, there have been many interesting results on minimal surfaces (cf. \cite{C-U,GVWX,LMVVW,T-U2}, etc.), and surfaces with parallel mean curvature vector   (cf. \cite{T-U1}, etc.) in $\mathbb{S}^2\times\mathbb{S}^2$ or
$\mathbb{H}^2\times\mathbb{H}^2$.
 When the codimension is $1$,  it is remarkable that the homogeneous hypersurfaces and the isoparametric hypersurfaces of $\mathbb{S}^2\times\mathbb{S}^2$ and $\mathbb{H}^2\times\mathbb{H}^2$ were classified in  \cite{Ur} and \cite{GMY}, respectively. There have also been some classification results on Hopf hypersurfaces and the hypersurfaces with isometric Reeb flow of  $\mathbb{S}^2\times\mathbb{S}^2$ and $\mathbb{H}^2\times\mathbb{H}^2$ (see \cite{GHMY} and \cite{ZGHY}, etc.).  In particular, the authors of the present paper classified the hypersurfaces of $\mathbb{S}^2\times\mathbb{S}^2$
 with constant sectional curvature in  \cite{LVWY}. It turns out that any such hypersurface is a parallel hypersurface of a minimal hypersurface in $\mathbb{S}^2\times\mathbb{S}^2$, and the aothors established a one-to-one correspondence between such minimal hypersurface and the solution to the famous
 ``sinh-Gordon equation''
 $
 (\frac{\partial^2}{\partial u^2}+\frac{\partial^2}{\partial v^2})h
 =-\tfrac{1}{\sqrt{2}}\sinh(\sqrt{2}h).
 $ 
 The main purpose of this paper is to classify the hypersurfaces of  $\mathbb{H}^2\times\mathbb{H}^2$ with constant sectional curvature. Due to the curvature properties and  non-compactness of $\mathbb{H}^2\times\mathbb{H}^2$, 
in contrast to  $\mathbb{S}^2\times\mathbb{S}^2$, the resulting examples for $\mathbb{H}^2\times\mathbb{H}^2$ exhibit more  diversity.

Before stating our main results, we first recall some basic properties of the hypersurfaces in $\mathbb{H}^2\times\mathbb{H}^2$. Note that there is a natural  product
structure $P$ on $\mathbb{H}^2\times\mathbb{H}^2$ defined by $$P(v_1,v_2)=(v_1,-v_2)$$ for
any tangent vector fields $v_1, v_2$ on $\mathbb{H}^2$. 
For any orientable hypersurface $M$ of $\mathbb{H}^2\times\mathbb{H}^2$ with
a unit normal vector field $N$,  we can introduce an important function $C$ defined by
$$C=g(PN, N),$$ 
where $g$ denotes the standard product metric
on $\mathbb{H}^2\times\mathbb{H}^2$. 
The geometry of hypersurfaces in $\mathbb{H}^2\times\mathbb{H}^2$ is closely related
to this function $C$. Hereafter, we always assume that $M$ is an orientable hypersurfaecs of $\mathbb{H}^2\times\mathbb{H}^2$, and we call $C$ the
{\it product angle function} of $M$.

As the first main result in this paper, we classify  the hypersurfaces of $\mathbb{H}^2\times\mathbb{H}^2$ with constant sectional curvature.
\begin{theorem}\label{thm:1.1}
Let $M$ be a hypersurface of $\mathbb{H}^2\times \mathbb{H}^2$ with constant sectional curvature $\kappa$.
Then $\kappa=-1/2$ and $M$ is either
\begin{enumerate}
\item[(1)]
an open part of Example \ref{exam:4.1ss1ss11ab1}; or

\item[(2)]
an open part of $M_{k_1,1/k_1}^{1/2}$ given in Example \ref{exam:3.4fg1} for some constant $0<k_1\leq1$; or

\item[(3)]
an open part of one of the hypersurfaces described by Theorems \ref{thm:5.10a1}--\ref{thm:5.13a1}.

\end{enumerate}
\end{theorem}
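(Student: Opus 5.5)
We will work with the standard structure equations of a hypersurface $M^3\subset\mathbb{H}^2\times\mathbb{H}^2$. Write $PN = CN + X$, where $X$ is tangent to $M$ with $|X|^2 = 1-C^2$, and let $PY = \phi Y + g(X,Y)N$ for tangent $Y$, where $\phi$ is the induced symmetric endomorphism. The ambient curvature is
\[
\bar R(U,V)W=-\tfrac12\big(g(V,W)U-g(U,W)V+g(PV,W)PU-g(PU,W)PV\big),
\]
so the Gauss equation expresses $R$ through the shape operator $A$, through $\phi$, and through $X$. Differentiating $P$ gives $\nabla_Y X=\phi AY - CAY$ and $Y(C)=-2g(AY,X)$, together with a formula for $\nabla\phi$. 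Imposing $R=\kappa(g\wedge g)$ yields an algebraic identity
\[
AY\wedge AZ \ \text{expressed through}\ \kappa+\tfrac12,\ \phi,\ X .
\]

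\textbf{Step 1: the rank of $A$ and $\kappa=-1/2$.} First treat the case $C^2\neq1$ on an open set. Choose an orthonormal frame $e_1=X/|X|$, $e_2$, $e_3$ adapted to $\phi$; the eigenvalues of $\phi$ on $X^\perp$ are $\pm1$, and $\phi e_1=-Ce_1$. In this frame, read off the components of the Gauss equation. When $\kappa\neq-1/2$ the right-hand side is nondegenerate. We then apply the Tsinghua principle: differentiate the Gauss equation, insert the Codazzi equation
\[
(\nabla_YA)Z-(\nabla_ZA)Y=-\tfrac12\big(g(X,Z)\phi Y-g(X,Y)\phi Z+g(\phi Z,\ldots)\big)
\]
(with the analogous terms), and take the cyclic sum/Ricci identity. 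This produces linear equations in the $h_{ij}$ with coefficients depending on $\kappa$ and $C$. Solving these should force $(\kappa+\tfrac12)$ times a nonvanishing quantity to vanish, hence $\kappa=-1/2$. The case $C\equiv\pm1$ is handled separately: then $N$ is tangent to a factor, $M$ is locally $\gamma\times\mathbb{H}^2$ with $\gamma$ a curve, and the sectional curvature is $-1$ on some planes and $0$ on others. This contradicts constant curvature, so that case is excluded.

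\textbf{Step 2: normal form of $A$.} With $\kappa=-1/2$, the Gauss equation becomes $AY\wedge AZ = -\tfrac12(\phi Y\wedge\phi Z - Y\wedge Z) + (\text{$X$-terms})$. This pins down $A$ in the adapted frame: $A$ has a prescribed form in terms of $C$ and at most two free functions (say $\lambda,\mu$ on the $e_2,e_3$ block and the mixed entries with $e_1$). Feeding this into Codazzi and into the equations for $\nabla X$ and $dC$ gives a first-order system for the connection forms and these functions.

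\textbf{Step 3: case analysis and integration.} We split into three cases.
\begin{enumerate}
\item[(a)] $C=0$ and $A$ degenerate. This should give Example \ref{exam:4.1ss1ss11ab1}.
\item[(b)] $C$ constant but not zero, or $A$ with constant eigenvalues. This should give the family $M^{1/2}_{k_1,1/k_1}$ of Example \ref{exam:3.4fg1}, with the constraint $0<k_1\le1$ coming from positivity of the metric.
\item[(c)] The generic case. Here the distribution spanned by $e_2,e_3$ (or its complement) is integrable, and one obtains a 2-dimensional PDE. In isothermal coordinates $(u,v)$ this should reduce to a cosh-Gordon or sinh-Gordon equation, or to a Liouville-type equation. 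This branch includes the non-constant-$C$ example, and each sub-branch should match Theorems \ref{thm:5.10a1}--\ref{thm:5.13a1}.
\end{enumerate}
The identification with the models uses the rigidity (congruence) theorem for hypersurfaces: matching $(g,A,\phi,X)$ determines $M$ up to isometries of $\mathbb{H}^2\times\mathbb{H}^2$.

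\textbf{Main obstacle.} The hardest step is Step 1. Excluding $\kappa\neq-1/2$ via the Tsinghua principle requires handling points where $A$ has repeated eigenvalues or where the adapted frame degenerates, and the linear system may be rank-deficient at special values of $C$. Unlike $\mathbb{S}^2\times\mathbb{S}^2$, there is no compactness to exclude these values. My first attempt will be to work on the open dense set where the eigenvalue multiplicities of $A$ and of $A$ restricted to $X^\perp$ are locally constant, and then argue by continuity. The second obstacle is ensuring that the case split in Step 3(c) is exhaustive, so that no extra PDE branch is missed.
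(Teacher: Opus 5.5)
Your proposal has genuine gaps: the mechanism you give for $\kappa=-\frac12$ cannot work, and your Step 3 case split puts each model in the wrong branch.

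\textbf{Step 1.} In the Tsinghua argument, the Ricci-identity side of the cyclic sum, $-\mathfrak{S}_{W,U,Y}\{g(R(W,U)Y,AZ)+g(R(W,U)Z,AY)\}$, vanishes identically once $R=\kappa(g\wedge g)$, for \emph{every} constant $\kappa$. So the linear equations you obtain are $\kappa$-free. They are exactly the identity of Lemma~\ref{lemma:3.2aa}, which in the frame \eqref{eqn:2.5} reads $b_3(1+C)=b_5(1-C)=b_2C=0$. They can never yield $(\kappa+\frac12)\cdot(\text{nonzero})=0$. What they give is the dichotomy that organizes the whole proof; $\kappa=-\frac12$ then needs separate arguments.
\begin{itemize}
\item If $C\equiv0$, then $AX=-\frac12\nabla C=0$, so $AE_3=0$, and the Gauss equation on the plane spanned by $E_1,E_3$ gives $\kappa=-\frac12$ at once.
\item If $C\neq0$, then $A$ is diagonal in $\{E_1,E_2,E_3\}$. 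The Gauss equation gives $b_1b_4=\kappa$, $b_1b_6=\kappa+\frac{1-C}{2}$, $b_4b_6=\kappa+\frac{1+C}{2}$, so $b_1,b_4,b_6$ are explicit functions of $(\kappa,C)$. Differentiating these along $E_3$ with $E_3C=-2b_6\sqrt{1-C^2}$, and comparing with the Codazzi equations for $E_3b_1$ and $E_3b_4$, forces $(2\kappa+1)\sqrt{1-C^2}=0$.
\end{itemize}
This Codazzi comparison is the missing ingredient in your plan. The obstacle you single out does not arise: the frame is built from $P$ (via $J_1N\pm J_2N$ and $X$), not from $A$, so it is canonical wherever $|C|<1$ and eigenvalue multiplicities of $A$ play no role.

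\textbf{Step 3.} Example~\ref{exam:4.1ss1ss11ab1} has $C={\rm sech}(\sqrt2\,t)\neq0$, not $C=0$. It is the entire $C\neq0$ branch. There, with $\kappa=-\frac12$, $A=\pm\frac{1}{\sqrt2}T$ is completely rigid, and $C$ is forced to be non-constant by $E_3C=C\sqrt{2(1-C^2)}$. The commuting frame $\sqrt{\frac{1+C}{2C}}E_1,\ \sqrt{\frac{1-C}{2C}}E_2,\ E_3$ integrates to a single hypersurface. There is no PDE, no free function, and no ``constant nonzero $C$'' case.

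Conversely, $M^{1/2}_{k_1,1/k_1}$ and all the PDE families have $C\equiv0$. There $AE_3=0$ and $b_1b_4-b_2^2=-\frac12$, and Codazzi along $E_3$ becomes the ODEs
$E_3(b_1-b_4)=(b_1-b_4)^2-2$, $E_3b_2=b_2(b_1-b_4)$, $E_3(b_1+b_4)=(b_1+b_4)(b_1-b_4)$.
Integrating these (Lemma~\ref{lemma:4.1aaa1}) gives an exhaustive list:
\begin{itemize}
\item $b_2=0$ gives $M^{1/2}_{k_1,1/k_1}$. The range $0<k_1\le1$ is a normalization via the swap isometry $(p,q)\mapsto(q,p)$ and the choice of $N$, not a positivity condition on the metric.
\item $b_1-b_4=-\sqrt2$ gives Theorems~\ref{thm:5.10a1}--\ref{thm:5.11a1}, parametrized by an arbitrary function of one variable, not a Liouville equation.
\item The $\tanh$ and $\coth$ branches give $\partial_u\partial_v h=4\sqrt2\cosh h$ and $\partial_u\partial_v h=4\sqrt2\sinh h$. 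These are hyperbolic equations in coordinates adapted to the rescaled frame $\tilde E_1^*,\tilde E_2^*$, not equations in isothermal coordinates.
\end{itemize}
Without the $b_2C=0$ dichotomy and these explicit ODEs, you have no way to certify that the case split is exhaustive, which is precisely the second obstacle you flag.
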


\begin{remark}\label{rem:1.1}
(i) Compared with $\mathbb{S}^2\times \mathbb{S}^2$, there is a hypersurface of $\mathbb{H}^2\times \mathbb{H}^2$ with constant sectional curvature and non-constant product angle function,
which is Case (1) in Theorem \ref{thm:1.1}. The examples in 
Case (2) and Case (3) of Theorem \ref{thm:1.1} have constant product angle function $C=0$.

(ii) In Theorem \ref{thm:5.10a1} (or Theorem \ref{thm:5.11a1}), for any function depending on one variable,
one can construct a hypersurface of $\mathbb{H}^2\times \mathbb{H}^2$ with constant sectional curvature
$-1/2$. In fact, for any function $\beta_1:I\rightarrow \mathbb{R}$ (or $\beta_2:I\rightarrow \mathbb{R}$), we construct a metric $g$ and two $(1,1)$-tensors $A,~\tilde{P}$ as \eqref{eqn:3.1ff3}--\eqref{eqn:3.1ff5}
(or \eqref{eqn:3.1ffff3}--\eqref{eqn:3.1ffff5}), then by the existence theorem
and uniqueness theorem in \cite{K,L-T-V}, up to an isometry of $\mathbb{H}^2\times\mathbb{H}^2$, there exists a unique hypersurface in $\mathbb{H}^2\times\mathbb{H}^2$ with constant sectional curvature
such that $C=0$ and its induced metric and shape operator  are given by $g$ and $A$,
respectively.

(iii) In Theorem \ref{thm:5.12a1}, for any function $h_3:(u,v)\to\mathbb{R}$ satisfying
the ``cosh-Gordon equation''
\begin{equation*}
\frac{\partial^2 h_3}{\partial u \partial v}
=4\sqrt{2}\cosh(h_3),
\end{equation*}
we construct a metric $g$ and two
$(1,1)$-tensors $A,~\tilde{P}$ (see \eqref{eqn:3.1fffff3}, \eqref{eqn:3.1fffff4} and \eqref{eqn:3.1fffff5}), then by the existence theorem
and uniqueness theorem in \cite{K,L-T-V}, up to an isometry of $\mathbb{H}^2\times\mathbb{H}^2$, there exists a unique hypersurface in $\mathbb{H}^2\times\mathbb{H}^2$ with constant sectional curvature
such that $C=0$ and its induced metric and shape operator  are given by $g$ and $A$,
respectively.

(iv) In Theorem \ref{thm:5.13a1}, for any non-zero function $h_4:(u,v)\to\mathbb{R}$ satisfying
the ``sinh-Gordon equation''
\begin{equation*}
\frac{\partial^2 h_4}{\partial u \partial v}
=4\sqrt{2}\sinh(h_4),
\end{equation*}
we construct a metric $g$ and two
$(1,1)$-tensors $A,~\tilde{P}$ (see \eqref{eqn:3.1ffffff3}, \eqref{eqn:3.1ffffff4} and \eqref{eqn:3.1ffffff5}), then by the existence theorem
and uniqueness theorem in \cite{K,L-T-V}, up to an isometry of $\mathbb{H}^2\times\mathbb{H}^2$, there exists a unique hypersurface in $\mathbb{H}^2\times\mathbb{H}^2$ with constant sectional curvature
such that $C=0$ and its induced metric and shape operator  are given by $g$ and $A$,
respectively.

(v) In \cite{LVWY}, it was proved that any hypersurface of $\mathbb{S}^2\times\mathbb{S}^2$
with constant sectional curvature must be a parallel hypersurface of a minimal hypersurface with $C=0$, and there is 
a one-to-one correspondence between such minimal hypersurface and the solution to the
``sinh-Gordon equation''
$
(\frac{\partial^2}{\partial u^2}+\frac{\partial^2}{\partial v^2})h
=-\tfrac{1}{\sqrt{2}}\sinh(\sqrt{2}h).
$
However, this phenomenon is no longer true in $\mathbb{H}^2\times\mathbb{H}^2$.

\end{remark}

As the second main result, we obtain a complete classification of
the hypersurfaces of $\mathbb{H}^2\times\mathbb{H}^2$
with constant product angle function and constant mean curvature.
\begin{theorem}\label{thm:1.2}
Let $M$ be a hypersurface of $\mathbb{H}^2\times \mathbb{H}^2$ with constant product angle function $C$. Then
$M$ has constant mean curvature if and only if either
\begin{enumerate}
\item[(1)]
$M$ is an open part of $M_\Gamma$ given in Example \ref{exam:3.1f1f2}, where $\Gamma$ is a curve of $\mathbb{H}^2$ with constant curvature; or

\item[(2)]
$M$ is an open part of $M_\tau$ given in Example \ref{exam:3.8f1f2} for some $\tau<-1$; or

\item[(3)]
$M$ is an open part of $M_{1,1}^c$ given in Example \ref{exam:3.4fg1} for some $c\in(0,1)$; or

\item[(4)]
$M$ is an open part of $M_{1,-1}^c$ given in Example \ref{exam:3.4fg1} for some $c\in(0,1)$; or


\item[(5)]
$M$ is an open part of $M_{k_1,k_1}^{1/2}$ given in Example \ref{exam:3.4fg1} for some constant $k_1\neq1$; or

\item[(6)]
$M$ is an open part of a minimal hypersurface described by Theorem \ref{thm:3.1}.
\end{enumerate}
\end{theorem}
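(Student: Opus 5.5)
The plan is to work with the standard structure equations of a hypersurface $M$ in $\mathbb{H}^2\times\mathbb{H}^2$. Write $PN=CN+V$ with $V$ tangent, and $PX=\tilde P X+g(X,V)N$ for tangent $X$. The product structure $P$ is parallel, so we get
\[
\nabla_X V=-\tilde P AX+CAX,\qquad X(C)=-2g(AX,V),
\]
together with $(\nabla_X\tilde P)Y=g(Y,V)AX+g(AX,Y)V$. The Gauss and Codazzi equations carry the ambient curvature term
\[
\tfrac{-1}{2}\bigl(g(Y,Z)X-g(X,Z)Y+g(\tilde PY,Z)\tilde PX-g(\tilde PX,Z)\tilde PY\bigr).
\]
Here $\tilde P$ is symmetric with $\tilde P^2=I-V\otimes V$ and $|V|^2=1-C^2$.

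\textbf{Step 1 ($C^2\equiv 1$).} If $C=\pm1$, then $V=0$, so $\tilde P$ is a parallel almost product structure on $M$ with eigenvalues $\pm1$. Its distributions have dimensions $2$ and $1$, or the reverse. Parallelism of $\tilde P$ together with the first equation forces $\tilde PA=CA$. This should yield a product $\Gamma\times\mathbb{H}^2$ with $\Gamma$ a curve in $\mathbb{H}^2$. Constant mean curvature is then equivalent to $\Gamma$ having constant curvature, which gives case (1), $M_\Gamma$.

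\textbf{Step 2 ($C^2<1$ constant).} Now $V\neq0$, and differentiating $C$ gives $AV=0$. So $V/|V|$ is a principal direction with principal curvature $0$. Set $e_1=V/|V|$. On $e_1^\perp$, the relations between $\tilde P$ and $A$ coming from $\nabla V=(C-\tilde P)A$ will be used to choose a frame $e_2,e_3$ adapted to $\tilde P$. In this frame $\tilde P e_1=-Ce_1+\sqrt{1-C^2}\,e_2$ (or an analogous normal form), $\tilde Pe_3=\dots$, and $A$ has at most three unknown entries. Next write the Codazzi equations for the connection coefficients in this frame, together with the constraint that $H=\operatorname{tr}A$ is constant. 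This should give an ODE/algebraic system. I would split by whether $C=0$ or $C\neq0$, and by whether $A|_{e_1^\perp}$ has distinct eigenvalues.

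\textbf{Step 3 (identification).} For $C\neq0$, the algebraic constraints should pin the principal curvatures to constants. Integrating, one obtains isoparametric-type hypersurfaces, which are identified with $M_\tau$ ($\tau<-1$) and the families $M^c_{1,\pm1}$ and $M^{1/2}_{k_1,k_1}$ of Example \ref{exam:3.4fg1}; this is checked by computing their $C$ and $H$ directly from the examples. For $C=0$ with $H\neq0$, I expect the Codazzi equations to force $H=0$ unless $M$ is one of the examples above. The remaining minimal case is exactly the setting of Theorem \ref{thm:3.1}, which I may invoke to give case (6). The converse direction is a direct verification on each example.

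The main obstacle is Step 2. The Codazzi system with the $\tilde P$-terms is messy, and in the case $C\neq0$ I need to show that $H$ constant forces all principal curvatures constant. I would try first to differentiate the trace relation along $e_1$ and use $\nabla_{e_1}V$ to obtain a polynomial identity in the principal curvatures, then argue on the open set where its coefficients are nonzero.
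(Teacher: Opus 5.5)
Your overall architecture is the paper's: handle $C^2\equiv1$ via the product structure (Lemma \ref{lem:2.2}), and for $|C|<1$ use $AX=0$ in the frame \eqref{eqn:2.5} to reduce either to constant principal curvatures or to the minimal case with $C=0$. However, the decisive step is only asserted, and the case bookkeeping at the end is wrong in a way that loses item (5).

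\textbf{The key step.} The differentiation you propose does close it, and it is the paper's argument in disguise. Your $e_1$ is $E_3$. Use $b_3=b_5=b_6=0$ and $b_1+b_4=3H$, apply $E_3$, and use \eqref{eqn:2.26} and \eqref{eqn:2.28}. This gives exactly $C\|A\|^2=3H(b_1-b_4)$, which is \eqref{eqn:3.2a1}; the paper obtains it from $\Delta C=0$ and \eqref{eqn:2.5aaa}. Differentiate once more along $E_3$, now also using \eqref{eqn:2.27}. For $H\neq0$, eliminate $b_2^2$ and $b_1-b_4$ by means of this identity. This yields
\[
(C^2-9H^2)\|A\|^2=9H^2(1-9H^2).
\]
So the natural split is $H\neq0$ versus $H=0$, not $C\neq0$ versus $C=0$.

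If $H\neq0$, then $C^2=9H^2$ would force $C=H=0$. Hence $\|A\|^2$ is constant, so $b_1-b_4=C\|A\|^2/(3H)$ and $b_2^2$ are constant, and all principal curvatures are constant. This branch also contains $C=0$, where it produces $M_\tau$ and $M^{1/2}_{1,-1}$.

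If $H=0$, then $C\|A\|^2=0$. But $A\equiv0$ is incompatible with $|C|<1$ by \eqref{eqn:2.26}, so $C=0$.

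Identifying the hypersurfaces with constant principal curvatures requires the classification theorem (Theorem 1.1 of \cite{GMY}). Computing $C$ and $H$ on the examples only proves the converse.

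\textbf{The genuine gap.} It is in your final identification. $M_\tau$ and $M^{1/2}_{k_1,k_1}$ both have $C=0$. For $k_1\neq\pm1$, $M^{1/2}_{k_1,k_1}$ has non-constant principal curvatures, so it cannot come out of an isoparametric analysis, let alone one with $C\neq0$. It is minimal with $C=0$ and $b_2\equiv0$, i.e. Case 1 of Lemma \ref{lemma:2.1aaa1}. Theorem \ref{thm:3.1} explicitly excludes this case, since it assumes $b_2$ is non-constant.

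So your claim that the remaining minimal case is exactly Theorem \ref{thm:3.1} is false. You must also treat constant $b_2$, via Proposition \ref{prop:3.61} (resting on Theorem 3.11 of \cite{GMY}) and Lemma \ref{lemma:3.5fg12}(1). In other words, use Theorem \ref{thm:3.2aa}. This is what produces item (5), and also $M^{1/2}_{1,1}$, which falls under item (3).

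A minor slip: in the adapted frame, $\tilde Pe_1=-Ce_1$ exactly. The $\sqrt{1-C^2}$ component of $Pe_1$ lies along $N$, and the vectors $E_1,E_2$ of \eqref{eqn:2.5} are the $\pm1$ eigenvectors of $\tilde P$.
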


\begin{remark}\label{rem:1.2}
In Theorem \ref{thm:3.1}, we establish a one-to-one correspondence between the minimal hypersurface $M$ in Case (6) of Theorem \ref{thm:1.2} and the non-zero solution to the ``sin-Gordon equation'' 
\begin{equation*}
(\frac{\partial^2}{\partial u^2}+\frac{\partial^2}{\partial v^2})h
=\tfrac{1}{\sqrt{2}}\sin(\sqrt{2}h).
\end{equation*}
In fact, we prove that the hypersurface $M$ in Theorem \ref{thm:3.1} admits coordinates $(u,v,t)$ and there is a non-zero function $h:(u,v)\to\mathbb{R}$ such that the components of the
second fundamental form of $M$ are given by \eqref{eq:2.ab15} with $h$ satisfying the above equation.
Conversely, for any non-zero solution $h:(u,v)\rightarrow \mathbb{R}$, we construct a metric $g$ and two
$(1,1)$-tensors $A,~\tilde{P}$ (see \eqref{eq:2.ab17}, \eqref{eqn:A} and \eqref{defP}), then by the existence theorem
and uniqueness theorem in \cite{K,L-T-V}, up to an isometry of $\mathbb{H}^2\times\mathbb{H}^2$, there exists a unique minimal hypersurface in $\mathbb{H}^2\times\mathbb{H}^2$
such that $C=0$ and its induced metric and shape operator  are given by $g$ and $A$,
respectively.
\end{remark}

\begin{remark}\label{rem:1.3}
The conditions of constant product angle function and constant mean curvature in Theorem \ref{thm:1.2} are necessary.

(i) In fact, we can define the map $\Phi:\Omega\subset \mathbb{R}^3\hookrightarrow\mathbb{H}^2\times\mathbb{H}^2$ as follows:
\begin{equation}\label{eqn:1.1a1a1}
\begin{aligned}
&p(t,r)=\cosh r\,(\cosh t,\sinh t,0)+\sinh r\,(0,0,1),\\
&q(t,s)=\cosh s\,(\cosh t,\sinh t,0)+\sinh s\,(0,0,1).
\end{aligned}
\end{equation}
Then hypersurface $\Phi(\Omega)$ is minimal, but its product angle function is $C=\frac{\cosh^2 s-\cosh^2 r}{\cosh^2 s+\cosh^2 r}$, which is not constant.

(ii) In Example \ref{exam:3.4fg1}, when $k_1(r)$ and $k_2(s)$ are non-constant,
then the hypersurface $M_{k_1,k_2}^c$ has constant product angle function $C=1-2c$,
but it does not have constant mean curvature.
\end{remark}

As the third main result, we obtain a complete classification of
the hypersurfaces of $\mathbb{H}^2\times\mathbb{H}^2$
with constant product angle function and constant scalar curvature.

\begin{theorem}\label{thm:1.3}
Let $M$ be a hypersurface of $\mathbb{H}^2\times \mathbb{H}^2$ with constant product angle function $C$. Then
$M$ has constant scalar curvature if and only if either
\begin{enumerate}
\item[(1)]
$M$ is an open part of $M_\Gamma$ given in Example \ref{exam:3.1f1f2},  where $\Gamma$ is a curve of $\mathbb{H}^2$ with constant curvature; or

\item[(2)]
$M$ is an open part of $M_\tau$ given in Example \ref{exam:3.8f1f2} for some $\tau<-1$; or

\item[(3)]
$M$ is an open part of $M_{1,1}^c$ given in Example \ref{exam:3.4fg1} for some $c\in(0,1)$; or

\item[(4)]
$M$ is an open part of $M_{1,-1}^c$ given in Example \ref{exam:3.4fg1} for some $c\in(0,1)$; or


\item[(5)]
$M$ is an open part of $M_{k_1,1/k_1}^{1/2}$ given in Example \ref{exam:3.4fg1} for some constant $0<k_1<1$; or

\item[(6)]
$M$ is an open part of one of the hypersurfaces described by Theorems \ref{thm:5.10a1}--\ref{thm:5.13a1}.

\end{enumerate}
\end{theorem}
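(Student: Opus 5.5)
Theorem \ref{thm:1.3} follows the structure of Theorem \ref{thm:1.2}, using the Gauss equation to convert constant scalar curvature into an algebraic condition on the shape operator. Write the ambient curvature of $\mathbb{H}^2\times\mathbb{H}^2$ as $\tilde R(X,Y)Z=-\tfrac12\big(g(Y,Z)X-g(X,Z)Y+g(PY,Z)PX-g(PX,Z)PY\big)$. The tangent part of $PX$ is $\tilde P X$, and $PN=CN+\tilde P$-type terms. Tracing the Gauss equation twice then expresses the scalar curvature as
\[
S=-\tfrac12\,(6-2+2C^2)\cdot(\text{const})+9H^2-|A|^2,
\]
up to normalization. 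So when $C$ is constant, $S$ is constant if and only if $9H^2-|A|^2$ is constant.

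\emph{Step 1.} Set up the standard frame adapted to constant $C$. Then $U=PN-CN$ satisfies $|U|^2=1-C^2$. Differentiating $C$ gives $AU=0$ when $C\neq\pm1$. Differentiating $PN$ tangentially gives $\tilde P A=A\tilde P$ up to terms involving $U$.

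\emph{Step 2.} Split into the three cases $C=\pm1$, $C\in(-1,1)\setminus\{0\}$ and $C=0$.

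\emph{Case $C=\pm1$.} Here $N$ is tangent to one factor, so $M$ is $\Gamma\times\mathbb{H}^2$ as in Example \ref{exam:3.1f1f2}. Constant $|A|^2$ forces constant curvature of $\Gamma$, which gives case (1).

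\emph{Case $C\ne0,\pm1$.} The relations $AU=0$ and $\tilde PA=A\tilde P$ reduce $A$, on the eigenspaces of $\tilde P$ orthogonal to $U$, to two functions $k_1(r)$, $k_2(s)$. These are exactly the hypersurfaces $M^c_{k_1,k_2}$ of Example \ref{exam:3.4fg1}, together with the special family $M_\tau$ of Example \ref{exam:3.8f1f2}. Since $k_1$ and $k_2$ depend on independent variables, constancy of $9H^2-|A|^2$, which is a combination of $k_1^2$, $k_2^2$ and $k_1k_2$ with constant coefficients depending on $c$, forces both $k_1$ and $k_2$ to be constant. The Codazzi/Gauss compatibility in those examples then pins down $k_1,k_2\in\{\pm1\}$, or $c=1/2$. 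This gives cases (2)--(5). The condition $k_1k_2=1$ at $c=1/2$ arises as the scalar-curvature analogue of the condition $k_1=k_2$ in Theorem \ref{thm:1.2}. The value $k_1=1$ has to be excluded, since it lands in case (3) or (4).

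\emph{Case $C=0$.} This is the heart of the argument and the main obstacle. For $C=0$, the minimal hypersurfaces of Theorem \ref{thm:3.1} have $H=0$ but $|A|^2$ governed by the sin-Gordon solution $h$, which is non-constant. So I expect Theorem \ref{thm:3.1} to contribute nothing here, apart from solutions that are actually covered by Theorems \ref{thm:5.10a1}--\ref{thm:5.13a1}.

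\emph{Step 3.} The plan for $C=0$ is to use the structure of the shape operator with respect to the frame $\{U,\tilde P U, \ldots\}$ and the Codazzi equations, as in Section~5. The aim is to show that constant $9H^2-|A|^2$ forces the Gauss equation to make every sectional curvature equal to $-1/2$. Concretely, I would show that the Codazzi system with constant $9H^2-|A|^2$ admits exactly the solution families of Theorems \ref{thm:5.10a1}--\ref{thm:5.13a1}. These are one-variable data $\beta_1$ or $\beta_2$, or cosh/sinh-Gordon data. In the process the remaining algebraic branches (umbilic-type) must be shown to be empty.

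\emph{Step 4 (converse).} Check directly that each listed example has constant $C$ and constant scalar curvature. For the examples in case (6), the scalar curvature is $6\cdot(-1/2)=-3$, since they have constant sectional curvature $-1/2$.

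The hardest part is the $C=0$ analysis, where differential consequences of constant $|A|^2-9H^2$ must be fed back into Codazzi to reach constant sectional curvature.
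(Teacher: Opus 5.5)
Your proposal has a genuine gap at its central step. Step~1's claim that constant $C$ gives $TA=AT$ up to terms in $X$ is false. Constant $C$ only gives $AX=0$, i.e.\ $b_3=b_5=b_6=0$ in the frame \eqref{eqn:2.5}, and then $(TA-AT)E_1=-2b_2E_2$. So $TA=AT$ means $b_2=0$, which constant $C$ does not force: the $C=0$ hypersurfaces of Theorems \ref{thm:3.1} and \ref{thm:5.10a1}--\ref{thm:5.13a1} have non-constant $b_2$, and Proposition \ref{prop:3.61} needs constancy of $b_2$ as an extra hypothesis just to land in $M^c_{k_1,k_2}\cup M_\tau$.

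Consequently your reduction of the case $C\neq0,\pm1$ to $M^c_{k_1,k_2}$ and $M_\tau$ is unproved. It also never uses the scalar curvature, which is precisely the hypothesis that must kill $b_2$.

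The case split is inconsistent as well. $M_\tau$ and every $M^{1/2}_{k_1,k_2}$ have $C=0$ (recall $C=1-2c$), so items (2) and (5) cannot arise in your branch $C\neq0$. In the branch $C=0$ your target (constant $9H^2-|A|^2$ forcing sectional curvature $-\tfrac12$) is false: $M_\tau$ and $M^{1/2}_{1,-1}$ have $C=0$ and constant scalar curvature $-1$ without constant sectional curvature.

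Separately, for $C=\pm1$, ``constant $|A|^2$'' does not follow from constant scalar curvature. On $M_\Gamma$ one has $9H^2-|A|^2=k_\Gamma^2-k_\Gamma^2=0$, so $\rho\equiv-2$ for every curve $\Gamma$ (indeed $M_\Gamma$ is locally isometric to $\mathbb{R}\times\mathbb{H}^2$). The paper's proof asserts the same restriction without justification. In fact it cannot be derived, and item (1) should allow an arbitrary curve $\Gamma$.

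The missing idea is the paper's double differentiation along $E_3=X/\sqrt{1-C^2}$.
\begin{itemize}
\item With $\rho=-2+2(b_1b_4-b_2^2)$ constant, applying $E_3$ and using \eqref{eqn:2.26}--\eqref{eqn:2.28} gives $(1-C)b_1(\rho+C+3)=(1+C)b_4(\rho-C+3)$.
\item Applying $E_3$ once more, the quadratic terms in the $b_i$ cancel. This yields the purely algebraic condition $(\rho+1)(\rho+3)+C^2=0$, i.e.\ $\rho=-2\pm\sqrt{1-C^2}$.
\item Outside the branch $C=0$, $\rho=-3$, the first identity becomes a linear relation $b_1\sqrt{1-C}=\pm b_4\sqrt{1+C}$. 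Differentiating $b_1b_4-b_2^2=\tfrac12(\rho+2)$ along $E_1,E_2$, using this relation and \eqref{eqn:2.29}--\eqref{eqn:2.30}, gives $E_1b_2=E_2b_2=0$.
\item Then $[E_1,E_2]=-\tfrac{2b_2}{\sqrt{1-C^2}}E_3$ applied to $b_2$ gives $b_2\,E_3b_2=0$. With \eqref{eqn:2.27}, every sub-branch ends with constant principal curvatures, and the isoparametric classification of \cite{GMY} yields (2)--(4).
\item In the remaining branch $C=0$, $\rho=-3$, one has $b_1b_4-b_2^2=-\tfrac12$, so \eqref{eqn:2.8a1212} gives ${\rm Ric}=-{\rm Id}$. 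In dimension three this means constant sectional curvature $-\tfrac12$. Theorem \ref{thm:1.1} restricted to $C=0$ then yields (5), (6) and $M^{1/2}_{1,1}$.
\end{itemize}
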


\begin{remark}\label{rem:1.4}
The conditions of constant product angle function and constant scalar curvature in Theorem \ref{thm:1.3} are necessary.

(i) In fact, Example \ref{exam:4.1ss1ss11ab1} has constant scalar curvature, but its product angle function $C$ is not constant.

(ii) In Example \ref{exam:3.4fg1}, when $k_1(r)$ and
$k_2(s)$ are non-constant, then the
hypersurface $M_{k_1,k_2}^c$ has constant product angle function $C=1-2c$, but
it does not have constant scalar curvature.
\end{remark}

The paper is organized as follows. In Sect. \ref{sect:2}, we  collect some basic properties of $\mathbb{H}^2\times \mathbb{H}^2$ and some preliminaries of the geometry of the hypersurfaces of $\mathbb{H}^2\times \mathbb{H}^2$. In Sect. \ref{sect:3}, we introduce some canonical examples of hypersurfaces in
$\mathbb{H}^2\times\mathbb{H}^2$. In particular, we construct a special  example with constant sectional curvature and non-constant product angle function (see Example \ref{exam:4.1ss1ss11ab1}). In Sect. \ref{sect:4}, we give a complete classification of the minimal hypersurfaces in $\mathbb{H}^2\times \mathbb{H}^2$ with $C=0$ (see Theorem \ref{thm:3.2aa}). 
In Sect. \ref{sect:5}, we present the proof of Theorem \ref{thm:1.1}. One of the key ingredients is to prove that the constant sectional curvature can only be $-\frac{1}{2}$, which is achieved by applying the so-called Tsinghua principle 
(see Proposition \ref{prop:4.5}). In Sect. \ref{sect:6}, we present the proofs of Theorems \ref{thm:1.2} 
and \ref{thm:1.3}. In Sect. \ref{sect:7}, we give a complete classification of the hypersurfaces 
in $\mathbb{S}^m\times \mathbb{S}^n$ ($m\geq3$, $n\geq2$), 
$\mathbb{H}^m\times \mathbb{H}^n$ ($m\geq3$, $n\geq2$), and $\mathbb{S}^m\times \mathbb{H}^n$ ($m,n\geq2$) with constant sectional curvature.

\vskip 2mm

\textbf{Acknowledgments:} 
H. Li was supported by NSFC Grant No.12471047 and NSFC-FWO W2521103. 
L. Vrancken was supported by NSFC-FWO mobility project VS05726N. 
X. Wang was supported by Young Scientific and Technological Talents (Level Two) in Tianjin, Natural Science Foundation of Tianjin, China (Grant No. 25JCZDJC01110)  and the Fundamental Research Funds for the Central Universities (Grant No. 050-63261176). Z. Yao was supported by NSFC Grant No. 12401061.

\section{Preliminaries} \label{sect:2}
\subsection{The geometric structure on $\mathbb{H}^{2}\times \mathbb{H}^{2}$}~

Let $\mathbb{R}_{1}^{3}$ be the three-dimensional Minkowski space with the
Lorentzian metric $\langle\cdot,\cdot\rangle $.
The hyperbolic plane of curvature $-1$ can be defined as the following subset
of $\mathbb{R}_{1}^{3}$: 
$$
\mathbb{H}^{2}=\left\{\left(x_{1}, x_{2}, x_{3}\right) \in \mathbb{R}_{1}^{3}
\mid-x_{1}^{2}+x_{2}^{2}+x_{3}^{2}=-1  , x_{1}>0\right\}.
$$
The standard complex structure $J$ on $\mathbb{H}^{2}$ is defined by
$$J_xu=x\boxtimes u,$$
for all $x\in \mathbb{H}^{2}$ and all $u\in T_x\mathbb{H}^{2}$, where $\boxtimes$
is the Lorentzian cross product defined by
$$
(a_1,a_2,a_3)\boxtimes (b_1,b_2,b_3)=(a_3b_2-a_2b_3, a_3b_1-a_1b_3, a_1b_2-a_2b_1).
$$

Throughout the paper we will consider $\mathbb{H}^2 \times  \mathbb{H}^2$
embedded isometrically  in $\mathbb{R}_{1}^{3} \times \mathbb{R}_{1}^{3} \cong \mathbb{R}_{2}^{6}$,
with the induced Riemannian product metric which we denote by $g$.
We define two complex structures on $\mathbb{H}^2\times \mathbb{H}^2$ by
$$
J_1=(J,J),\quad J_2=(J,-J),
$$
which endow $\mathbb{H}^2 \times  \mathbb{H}^2$ with two structures of K\"ahler surface.
The isometry group of $\mathbb{H}^{2}\times \mathbb{H}^{2}$ is
$$
\operatorname{Iso}\left(\mathbb{H}^{2}\times \mathbb{H}^{2}\right)=\left\{\left(\begin{array}{cc}
A_{1} & 0 \\
0 & A_{2}
\end{array}\right),\left(\begin{array}{cc}
0 & B_1 \\
B_{2} & 0
\end{array}\right) \mid A_{1}, A_{2}, B_1, B_{2} \in \mathrm{O}^{+}(1,2)\right\},
$$
where $\mathrm{O}^{+}(1,2)$ denotes the ortochronous Lorentz group.

The product structure $P$ on $\mathbb{H}^2\times\mathbb{H}^2$ is defined by
$P: T(\mathbb{H}^2\times\mathbb{H}^2)\rightarrow T(\mathbb{H}^2\times\mathbb{H}^2)$ such that
$$
P(v_1,v_2)=(v_1,-v_2), \quad \forall\, v_1, v_2\in T\mathbb{H}^2.
$$
Obviously, we have $P=-J_1J_2=-J_2J_1$, $P^2=\mathrm{Id}$ and
\begin{equation}\label{eqn:2.1}
g(PY,Z)=g(Y,PZ), \quad \forall\,Y,Z\in T(\mathbb{H}^2\times\mathbb{H}^2).
\end{equation}
Moreover, $\bar{\nabla} P=0$, where $\bar{\nabla}$ is the Levi-Civita connection on
$\mathbb{H}^2\times\mathbb{H}^2$.

The curvature tensor $\bar{R}$ of $\mathbb{H}^2\times\mathbb{H}^2$
with the Riemannian product metric is given by
\begin{align*}
g(\bar{R}(U,Y)Z,W)=-\tfrac{1}{2}&\big\{g(Y,Z)g(U,W)
-g(U,Z)g(Y,W)\\
&+g(PY,Z)g(PU,W)-g(PU,Z)g(PY,W)\big\},
\end{align*}
where $U, Y, Z, W\in T(\mathbb{H}^2\times\mathbb{H}^2)$. Thus, $\mathbb{H}^2\times\mathbb{H}^2$
is an Einstein manifold with scalar curvature $-4$ and nonpositive sectional curvature.

\subsection{Hypersurfaces of $\mathbb{H}^{2}\times \mathbb{H}^{2}$}~

Let $M$ be an orientable hypersurface of $\mathbb{H}^2\times\mathbb{H}^2$ with
$N$ a unit normal vector field and still $g$ the induced metric on $M$.
Then, with respect to the product structure $P$, we define by
\begin{align*}
C&:=g(PN,N)=g(J_1 N,J_2 N),\\
X&:=PN-CN,
\end{align*}
the product angle function $C: M\rightarrow\mathbb{R}$ and a vector field
$X$ tangent to $M$. It is clear that $-1\leq C\leq 1$ and $\|X\|^2:=g(X,X)=1-C^2$.

For any tangential vector field $Y$ of $M$, acting by the product structure $P$,
we have the following decomposition
$$
PY=T Y+\mu(Y)N,
$$
where $T Y$ and $\mu(Y)N$ are the tangential
and normal parts of $PY$. Thus $T$ is a
tensorial field of type $(1,1)$, and $\mu$ is a $1$-form over $M$.
Moreover, $\mu(Y)=g(PY, N)$.

Let $\nabla$ be the Levi-Civita connection of the induced metric $g$ on $M$.
The Gauss and Weingarten formulae are given by 
\begin{align*}
\bar{\nabla}_Y Z=\nabla_Y Z+g(AY,Z)N, \quad \bar{\nabla}_YN=-AY,
\end{align*}
where $A$ is the shape operator of $M$.

Now, the Gauss and Codazzi equations of $M$ are given by
\begin{equation}\label{eqn:2.2}
\begin{aligned}
\operatorname{R}(U, Y)Z=&-\tfrac{1}{2}\big[g(Y,Z)U-g(U,Z)Y+g(TY,Z)TU
-g(TU,Z)TY\big]\\
&+g(AY,Z)AU-g(AU,Z)AY,
\end{aligned}
\end{equation}
\begin{equation}\label{eqn:2.3}
(\nabla_YA)Z-(\nabla_Z A)Y=-\tfrac{1}{2}\big[g(Y,X)TZ-g(Z,X)TY\big],
\end{equation}
where $U, Y, Z\in TM$, and $R$ denotes the curvature tensor of $M$ with respect
to the metric $g$.

It follows from \eqref{eqn:2.2} that the Ricci curvature tensor of $M$ is given by
\begin{equation}\label{eqn:2.4a11}
\begin{aligned}
g({\rm Ric}(Y),Z)
=-\tfrac{1}{2}\big[g(Y,Z)-Cg(TY,Z)
+g(Y,X)g(Z,X)\big]+3Hg(AY,Z)-g(A^2Y,Z),
\end{aligned}
\end{equation}
where 
$H=\frac{1}{3}{\rm Tr}A$ denotes the mean curvature of $M$. 
%
%
%
It follows that the scalar curvature $\rho$ of $M$ is given by
\begin{equation}\label{eqn:2.6a11}
\rho=-2+9H^{2}-\|A\|^{2}.
\end{equation}

Let $\nabla^2 A$ denote the second covariant derivative of $A$, i.e.,
$$
(\nabla^{2}A)(U,Y,Z):=\nabla_U[(\nabla_Y A)Z]-(\nabla_{\nabla_U Y}
A)Z-(\nabla_Y A){\nabla_U Z}.
$$
Then the Ricci identity states that
\begin{equation}\label{eqn:ric}
g((\nabla^{2}A)(U,Y,Z),W)-g((\nabla^{2}A)(Y,U,Z),W)
=-g( R(U,Y)Z,AW)-g( R(U,Y)W,AZ).
\end{equation}

Now, by a similar proof as Lemma 2.1 of \cite{LVWY},
we can obtain the following Lemma \ref{lemma:2.1} which describes some properties
of the function $C$, the vector field $X$ and the covariant derivatives of $T$ and $\mu$.

\begin{lemma}\label{lemma:2.1}
Let $M$ be an orientable hypersurface of $\mathbb{H}^2\times\mathbb{H}^2$
with $A$ the shape operator associated
to the unit normal field $N$. Then the gradient of $C$, the
covariant derivatives of $X$, $T$ and $\mu$ are given by
\begin{equation}\label{eqn:2.4}
\begin{split}
&\nabla C=-2 AX,\ \ \nabla_{Y}X=C AY -T A Y,\\
&\left(\nabla_Y T\right) Z =g( AY,Z)X+\mu(Z) A Y,\\
&\left(\nabla_Y \mu\right) Z=C g( AY,Z)-g( TZ,AY).
\end{split}
\end{equation}
\end{lemma}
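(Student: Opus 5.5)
The plan is to differentiate the defining identities of $C$, $X$, $T$ and $\mu$ using only $\bar\nabla P=0$ together with the Gauss and Weingarten formulae, and then to separate tangential and normal components. First I will record the algebraic relations that come from $P^2=\mathrm{Id}$ and the symmetry \eqref{eqn:2.1}. For $Y$ tangent, $PY=TY+\mu(Y)N$ and $PN=X+CN$. Since $g(PY,N)=g(Y,PN)$, we get $\mu(Y)=g(Y,X)$. Applying $P$ twice gives $T^2Y+\mu(Y)X=Y$, $\mu(TY)+C\mu(Y)=0$, $TX+CX=0$ and $\mu(X)=1-C^2$. The tensor $T$ is self-adjoint with respect to $g$.

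Next I compute $\nabla C$. Let $Y$ be tangent. Since $\bar\nabla P=0$, we have
\begin{equation*}
Y(C)=2g(P\bar\nabla_YN,N)=-2g(PAY,N)=-2\mu(AY).
\end{equation*}
Using $\mu(AY)=g(AY,X)=g(Y,AX)$, this gives $\nabla C=-2AX$.

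For $\nabla_YX$, I differentiate $PN=X+CN$ along $Y$. The left side gives
\begin{equation*}
\bar\nabla_Y(PN)=P(-AY)=-TAY-\mu(AY)N.
\end{equation*}
The right side gives
\begin{equation*}
\nabla_YX+g(AY,X)N+Y(C)N-CAY.
\end{equation*}
The tangential parts yield $\nabla_YX=CAY-TAY$. The normal parts are consistent with the formula for $\nabla C$ already obtained.

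For $T$ and $\mu$, I differentiate $PZ=TZ+\mu(Z)N$ for tangent $Z$. The left side is
\begin{equation*}
P(\nabla_YZ+g(AY,Z)N)=T\nabla_YZ+\mu(\nabla_YZ)N+g(AY,Z)(X+CN).
\end{equation*}
The right side is
\begin{equation*}
\nabla_Y(TZ)+g(AY,TZ)N+Y(\mu(Z))N-\mu(Z)AY.
\end{equation*}
The tangential components give $(\nabla_YT)Z=g(AY,Z)X+\mu(Z)AY$. The normal components give $(\nabla_Y\mu)Z=Cg(AY,Z)-g(TZ,AY)$.

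No step is a real obstacle; the whole argument is bookkeeping with the Gauss and Weingarten formulae. The only points needing care are the signs coming from $\bar\nabla_YN=-AY$ and the identification $\mu(Y)=g(Y,X)$, which relies on \eqref{eqn:2.1}. The sign of the ambient curvature plays no role here, so the argument is the same as Lemma~2.1 of \cite{LVWY} verbatim.
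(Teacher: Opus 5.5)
Your proof is correct and follows essentially the same route as the paper: differentiate $PZ=TZ+\mu(Z)N$ (and $PN=X+CN$) using $\bar\nabla P=0$ together with the Gauss and Weingarten formulae, then compare tangential and normal components. The only difference is that you also derive $\nabla C=-2AX$ and $\nabla_YX=CAY-TAY$ this way, whereas the paper cites Lemma 2.1 of \cite{GMY} for them, so your argument is self-contained.
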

\begin{proof}
The formulae of the gradient of $C$ and the covariant derivative of $X$ can be directly found
in Lemma 2.1 of \cite{GMY}. For the rest two formulae, by the definitions of $T$, $\mu$
and the Gauss and Weingarten formulae, we get
\begin{align*}
\left(\nabla_{Y} T\right) Z &=(\bar{\nabla}_{Y} (T Z))^{\top}-T (\nabla_{Y} Z) 
=[\bar{\nabla}_{Y}(P Z-\mu(Z) N)]^{\top}-T (\nabla_{Y} Z) \\
&=[P \bar{\nabla}_{Y} Z-\mu(Z) \bar{\nabla}_{Y} N]^{\top}-T (\nabla_{Y} Z) 
=g(AY,Z)(P N)^{\top}+\mu(Z) A Y\\
&=g(AY,Z)X+\mu(Z) A Y,
\end{align*}
\begin{align*}
\left(\nabla_{Y} \mu\right) Z&= Y \mu(Z)-\mu\left(\nabla_{Y} Z\right) \\
&=g(\bar{\nabla}_{Y} PZ,N)+g(PZ,\bar{\nabla}_{Y} N)-g(P\bar{\nabla}_{Y} Z-g(AY,Z) PN,N)\\
&=-g(TZ,AY)+Cg(AY,Z),
\end{align*}
where $\cdot^\top$ denotes the tangential part.
\end{proof}


\begin{lemma}\label{lemma:2.1aaa}
Let $M$ be an orientable hypersurface of $\mathbb{H}^2\times\mathbb{H}^2$
with $A$ the shape operator associated
to the unit normal field $N$. Then the Hessian of $C$,
the Laplacian of $C$ and the divergence of $X$ are given by
\begin{equation}\label{eqn:2.4aaa}
({\rm Hess}(C))(Y,Z)=-2g((\nabla_Y A)X,Z)-2Cg(A^2Y,Z)+2g(TAY,AZ).
\end{equation}
\begin{equation}\label{eqn:2.5aaa}
\triangle C=-6g(\nabla H,X)-2C\|A\|^2+2{\rm tr}(TA^2).
\end{equation}
\begin{equation}\label{eqn:2.6aaa}
{\rm div}X=3CH-{\rm tr}(TA).
\end{equation}
\end{lemma}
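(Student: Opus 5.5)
We obtain all three identities by differentiating the formulas of Lemma \ref{lemma:2.1} once more and tracing; we use the symmetry of $A$ and $T$ ($T$ is symmetric by \eqref{eqn:2.1}) and the Codazzi equation \eqref{eqn:2.3}.

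\emph{Hessian.} Since $\nabla C=-2AX$, we have ${\rm Hess}(C)(Y,Z)=g(\nabla_Y\nabla C,Z)=-2g((\nabla_YA)X,Z)-2g(A\nabla_YX,Z)$. Substituting $\nabla_YX=CAY-TAY$ gives
$$
-2g(A\nabla_YX,Z)=-2Cg(A^2Y,Z)+2g(ATAY,Z).
$$
Because $A$ is symmetric, $g(ATAY,Z)=g(TAY,AZ)$, and this is exactly \eqref{eqn:2.4aaa}.

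\emph{Divergence.} Take an orthonormal frame $\{e_i\}$ and trace $\nabla_YX=CAY-TAY$. This gives ${\rm div}X=C\,{\rm tr}A-{\rm tr}(TA)=3CH-{\rm tr}(TA)$, which is \eqref{eqn:2.6aaa}.

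\emph{Laplacian.} Trace \eqref{eqn:2.4aaa}. The last two terms give $-2C\|A\|^2+2\sum_i g(TAe_i,Ae_i)=-2C\|A\|^2+2{\rm tr}(ATA)=-2C\|A\|^2+2{\rm tr}(TA^2)$. The only step needing care is the first term $-2\sum_i g((\nabla_{e_i}A)X,e_i)$. Since $\nabla_{e_i}A$ is symmetric, it equals $-2\sum_i g((\nabla_{e_i}A)e_i,X)$. Applying the Codazzi equation \eqref{eqn:2.3} with $Y=e_i$, $Z=X$ gives
$$
(\nabla_{e_i}A)X=(\nabla_XA)e_i-\tfrac12\big[g(e_i,X)TX-g(X,X)Te_i\big].
$$
Therefore
$$
\sum_i g((\nabla_{e_i}A)X,e_i)={\rm tr}(\nabla_XA)-\tfrac12\big[g(TX,X)-\|X\|^2{\rm tr}T\big].
$$
Here ${\rm tr}(\nabla_XA)=X({\rm tr}A)=3g(\nabla H,X)$.

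It remains to show the correction bracket vanishes, i.e. $g(TX,X)=\|X\|^2{\rm tr}T$. We compute both sides from $P^2=\mathrm{Id}$ and ${\rm tr}P=0$ on the $4$-dimensional ambient space.
\begin{itemize}
\item Taking the trace of $P$ over $\{e_i,N\}$ gives $0={\rm tr}T+C$, so ${\rm tr}T=-C$.
\item Since $PN=X+CN$, applying $P$ gives $N=PX+CPN$, so $PX=N-CX-C^2N$. Its tangential part is $TX=-CX$.
\item Hence $g(TX,X)=-C\|X\|^2=\|X\|^2{\rm tr}T$, and the bracket is zero.
\end{itemize}
So $\triangle C=-6g(\nabla H,X)-2C\|A\|^2+2{\rm tr}(TA^2)$, which is \eqref{eqn:2.5aaa}.
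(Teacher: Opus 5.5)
Your proof is correct and follows the same route as the paper: differentiate $\nabla C=-2AX$ using $\nabla_YX=CAY-TAY$ to get the Hessian, then trace it for $\triangle C$, and trace $\nabla_YX$ for ${\rm div}X$. The one place you go beyond the paper is the Laplacian step. The paper replaces $\sum_i g((\nabla_{e_i}A)X,e_i)$ by $\sum_i g((\nabla_XA)e_i,e_i)$ without comment, whereas you check that the Codazzi correction $-\tfrac12\big[g(TX,X)-\|X\|^2{\rm tr}\,T\big]$ actually vanishes, using $TX=-CX$ and ${\rm tr}\,T=-C$.
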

\begin{proof}
By using Lemma \ref{lemma:2.1}, the Gauss and Weingarten formulae, we get
\begin{align*}
({\rm Hess}(C))(Y,Z)&=Y(ZC)-(\nabla_YZ)C=-2Yg(AX,Z)+2g(AX,\nabla_YZ)\\
&=-2g(\nabla_Y{AX},Z)=-2g((\nabla_YA)X,Z)-2g(\nabla_Y X,AZ)\\
&=-2g((\nabla_Y A)X,Z)-2Cg(A^2Y,Z)+2g(TAY,AZ).
\end{align*}
\begin{align*}
\triangle C&=\sum_i({\rm Hess}(C))(e_i,e_i)
=\sum_i\{-2g((\nabla_{e_i} A)X,{e_i})-2Cg(A^2{e_i},{e_i})+2g(TA{e_i},A{e_i})\}\\
&=-2\sum_i\{g((\nabla_X A){e_i},{e_i})\}-2C\|A\|^2+2{\rm tr}(TA^2)
=-6g(\nabla H,X)-2C\|A\|^2+2{\rm tr}(TA^2).
\end{align*}
\begin{align*}
{\rm div}X&=\sum_ig(\nabla_{e_i}X,e_i)=\sum_ig(CAe_i-TAe_i,e_i)
=3CH-{\rm tr}(TA).
\end{align*}
\end{proof}

In \cite{GMY}, Gao, Ma and the fourth author classified the hypersurfaces with $C^2\equiv 1$ as follows.
\begin{lemma}[cf. Lemma 3.3 of \cite{GMY}]\label{lem:2.2}
Let $M$ be a hypersurface of $\mathbb{H}^2\times\mathbb{H}^2$ with $C^2\equiv 1$.
Then, up to isometries of $\mathbb{H}^2\times\mathbb{H}^2$, $M$ is locally the
product of a curve $\Gamma$ in $\mathbb{H}^2$ and an open
subset of $\mathbb{H}^2$.
\end{lemma}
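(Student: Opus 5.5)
The natural route is to use the structure equations of Lemma \ref{lemma:2.1}. If $C^2\equiv1$, then $X=PN-CN$ has $\|X\|^2=1-C^2=0$, so $X\equiv0$, and $PN=\pm N$.

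\textbf{Choice of sign.} After possibly composing with the isometry that swaps the two factors, we may assume $C\equiv1$. Then $PN=N$, so $N=(N_1,0)$ is tangent to the first factor.

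\textbf{Consequences for the shape operator.} Since $\nabla C=-2AX$, there is no constraint from that term. However, $\nabla_Y X=CAY-TAY$ together with $X\equiv0$ gives $TAY=AY$ for all tangent $Y$. Also $\mu(Y)=g(PY,N)=g(Y,PN)=g(Y,N)=0$, so $P$ preserves $TM$ and $T=P|_{TM}$. Hence $T$ is an involution on $TM$ with the following eigenspaces:
\begin{enumerate}
\item[(a)] the $+1$-eigenspace $TM\cap T\mathbb{H}^2_{(1)}$, which is one-dimensional because $N$ lies in the first factor;
\item[(b)] the $-1$-eigenspace, the full second-factor tangent space $T\mathbb{H}^2_{(2)}$, which is two-dimensional.
\end{enumerate}
The identity $TA=A$ says that the image of $A$ lies in the $+1$-eigenspace. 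By symmetry of $A$, the operator $A$ vanishes on the second-factor directions, so $A$ has rank at most one, with $A e=\lambda e$ for the unit vector $e$ spanning the first-factor part of $TM$.

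\textbf{Splitting of the hypersurface.} The $(\nabla_Y T)$ formula gives $\nabla T=0$, because $X=0$ and $\mu=0$. So both eigendistributions are parallel, and by de Rham $M$ is locally a Riemannian product $\Gamma\times S$. Since $\bar\nabla$ preserves both factor distributions and the second fundamental form vanishes on the second-factor directions, $S$ is totally geodesic in $\mathbb{H}^2\times\mathbb{H}^2$. Concretely, it is a leaf tangent to $T\mathbb{H}^2_{(2)}$, i.e. an open subset of $\{p\}\times\mathbb{H}^2$.

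Along the curve direction, $M$ projects to the first factor with one-dimensional image. More precisely, the projection $\pi_1$ has differential of rank one, since its kernel is $T\mathbb{H}^2_{(2)}\subset TM$. The image is therefore a curve $\Gamma\subset\mathbb{H}^2$, and $M\subset\Gamma\times\mathbb{H}^2$ is open by dimension count.

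\textbf{Main obstacle.} The only delicate point is justifying that the distributions are integrable and that the first projection has constant rank one. Both follow from $\mu=0$ and $\nabla T=0$. The case $C\equiv-1$ is handled by the symmetric argument, or by the factor swap noted above.
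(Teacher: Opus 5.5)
Your proof is correct. The paper gives no proof of its own for this lemma; it only cites Lemma 3.3 of \cite{GMY}. The step of your proposal that actually proves the statement is the last one, which is the standard direct argument. After the swap $\mathbb{F}(p,q)=(q,p)$, which replaces $C$ by $-C$, you have $PN=N$, so $N=(N_1,0)$. Hence $T_{(p,q)}M=N^{\perp}$ contains $\{0\}\times T_q\mathbb{H}^2$, and $d\pi_1|_{TM}$ has rank exactly one everywhere. The constant rank theorem then makes $\pi_1(M)$ locally a curve $\Gamma$ and $M$ an open subset of $\Gamma\times\mathbb{H}^2$.

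The detour through Lemma \ref{lemma:2.1} ($TA=A$, rank of $A$ at most one, $\nabla T=0$, de Rham) is not needed for the statement. Your closing remark attributes the constant rank of $\pi_1$ and the integrability of the second-factor distribution on $M$ to $\nabla T=0$, but neither requires it:
\begin{itemize}
\item the constant rank is pointwise linear algebra from $N\in T\mathbb{H}^2_{(1)}$;
\item the integrability holds because that distribution is the restriction to $M$ of the ambient foliation by the slices $\{p\}\times\mathbb{H}^2$.
\end{itemize}
What the detour does give you is the extra information $A|_{T\mathbb{H}^2}=0$ and $TM=T\Gamma\oplus T\mathbb{H}^2$. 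This is exactly what the paper uses right after quoting this lemma, in Case II of Proposition \ref{prop:4.5}. It also follows directly from the product structure of $\Gamma\times\mathbb{H}^2$ once the lemma is known.
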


\subsection{A canonical frame related to hypersurfaces of
$\mathbb{H}^2\times\mathbb{H}^2$}\label{sect:2.3}~

In this subsection, we assume that $|C|< 1$ holds on $M$,
and choose an appropriate local orthonormal frame fields $\{E_1, E_2, E_3\}$
of $M$ as follows:
\begin{equation}\label{eqn:2.5}
E_1=\frac{J_1N+J_2 N}{\sqrt{2(1+C)}},\
\ E_2=\frac{J_1N-J_2 N}{\sqrt{2(1-C)}},\ \ E_3=\frac{X}{\sqrt{1-C^{2}}}.
\end{equation}
This frame has the following properties:
\begin{equation}\label{eqn:2.6}
\left\{
\begin{aligned}
&P E_1=E_1,\ \ P E_2=-E_2,\ \ P E_3=\sqrt{1-C^{2}}N-C E_3,\ \ PN=CN+\sqrt{1-C^{2}}E_3.\\
&TE_1=E_1,\ \ TE_2=-E_2,\ \ TE_3=-CE_3,\ \ \mu(E_1)=\mu(E_2)=0,\ \ \mu(E_3)=\sqrt{1-C^{2}}.
\end{aligned}\right.
\end{equation}

We assume that
\begin{equation}\label{eqn:2.7}
\begin{aligned}	
&A E_1=b_1 E_1+b_2 E_2+b_3 E_3, \\
&A E_2=b_2 E_1+b_4 E_2+b_5 E_3,\\
&A E_3=b_3 E_1+b_5E_2+b_6 E_3,
\end{aligned}
\end{equation}
where $b_1$, $b_2$, $b_3$, $b_4$, $b_5$ and $b_6$ are smooth
functions on $M$. 
By \eqref{eqn:2.4a11}, under the frame $\{E_1,E_2,E_3\}$, the Ricci tensor is expressed by
\begin{equation}\label{eqn:2.8a1212}
\begin{aligned}
&{\rm Ric}(E_1)=(\tfrac{-1+C}{2}
+b_1b_4+b_1b_6-b_2^{2}-b_3^{2}) E_1+
(b_2b_6-b_3 b_5) E_2+(b_3b_4-b_2b_5) E_3, \\
&{\rm Ric}(E_2)=(b_2b_6-b_3 b_5) E_1
+(\tfrac{-1-C}{2}+b_1b_4+b_4b_6-b_2^{2}-b_5^{2}) E_2+(b_1b_5-b_2b_3) E_3,\\
&{\rm Ric}(E_3)=(b_3b_4-b_2b_5) E_1+(b_1b_5-b_2b_3) E_2+(-1+b_1b_6+b_4b_6-b_3^{2}-b_5^{2}) E_3.
\end{aligned}
\end{equation}

When $C$ is constant which is not equal to $\pm1$ on $M$, we have the following lemma:

\begin{lemma}\label{lemma:2.3}
Let $M$ be a hypersurface of $\mathbb{H}^2\times \mathbb{H}^2$ with constant $C\neq\pm1$.
Then, under the local orthonormal frame fields $\{E_1, E_2, E_3\}$,
the following relations hold:
\begin{equation}\label{eqn:2.25}
\begin{aligned}
&b_3=b_5=b_6=0, \ \ A E_1=b_1 E_1+b_2 E_2,\ A E_2=b_2 E_1+b_4 E_2,\ A E_3=0,\\		
&\nabla_{E_1}E_1=b_1\sqrt{\tfrac{1-C}{1+C}}E_3,\
\nabla_{E_1}E_2=-b_2\sqrt{\tfrac{1+C}{1-C}}E_3,\ \nabla_{E_1}E_3=-b_1\sqrt{\tfrac{1-C}{1+C}}E_1
+b_2\sqrt{\tfrac{1+C}{1-C}}E_2,\\
&\nabla_{E_2} E_1=b_2\sqrt{\tfrac{1-C}{1+C}}E_3,\
\nabla_{E_2} E_2=-b_4\sqrt{\tfrac{1+C}{1-C}}E_3,\ \nabla_{E_2}E_3=-b_2\sqrt{\tfrac{1-C}{1+C}}E_1
+b_4\sqrt{\tfrac{1+C}{1-C}}E_2,\\
&\nabla_{E_3}E_1=\nabla_{E_3}E_2=\nabla_{E_3}E_3=0.
\end{aligned}
\end{equation}
Moreover, the Gauss and Codazzi equations are equivalent to the following equations:
\begin{equation}\label{eqn:2.26}
E_3b_1=-\tfrac{\sqrt{1-C^2}}{2}+b_1^{2}\sqrt{\tfrac{1-C}{1+C}}-b_2^2\sqrt{\tfrac{1+C}{1-C}},
\end{equation}
\begin{equation}\label{eqn:2.27}
E_3b_2=b_2\big(b_1\sqrt{\tfrac{1-C}{1+C}}-b_4\sqrt{\tfrac{1+C}{1-C}}\big),
\end{equation}
\begin{equation}\label{eqn:2.28}
E_3b_4=\tfrac{\sqrt{1-C^2}}{2}+b_2^{2}\sqrt{\tfrac{1-C}{1+C}}-b_4^2\sqrt{\tfrac{1+C}{1-C}},
\end{equation}
\begin{equation}\label{eqn:2.29}
E_1b_2-E_2b_1=0,	
\end{equation}
\begin{equation}\label{eqn:2.30}
E_1b_4-E_2b_2=0.
\end{equation}
\end{lemma}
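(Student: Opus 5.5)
The plan is to exploit $\nabla C=-2AX$ from Lemma \ref{lemma:2.1}. Since $C$ is constant and $|C|<1$, we get $AX=0$, and $X=\sqrt{1-C^2}\,E_3\neq 0$, so $AE_3=0$. Comparing with \eqref{eqn:2.7} gives $b_3=b_5=b_6=0$, and the stated form of $A$ follows.

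Next we compute the connection. First we need $\nabla X$ and $\nabla N$ in terms of the frame. From Lemma \ref{lemma:2.1}, $\nabla_Y X=CAY-TAY$. Since $AE_3=0$, this gives $\nabla_{E_3}E_3=0$. For $Y=E_1,E_2$, the vector $AY$ lies in $\mathrm{span}\{E_1,E_2\}$, where $T$ acts as $\mathrm{diag}(1,-1)$. Dividing by $\sqrt{1-C^2}$ then yields
\[
\nabla_{E_1}E_3=\tfrac{1}{\sqrt{1-C^2}}\big((C-1)b_1E_1+(C+1)b_2E_2\big),
\]
which simplifies to the stated coefficients $-b_1\sqrt{\tfrac{1-C}{1+C}}$ and $b_2\sqrt{\tfrac{1+C}{1-C}}$; $\nabla_{E_2}E_3$ is handled in the same way. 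Skew-symmetry then gives the $E_3$-components of $\nabla_{E_i}E_j$ for $i,j\in\{1,2\}$. What remains are the mutual components $g(\nabla_{E_i}E_1,E_2)$ for $i=1,2,3$ and the vectors $\nabla_{E_3}E_1$, $\nabla_{E_3}E_2$.

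For these we use that $E_1,E_2$ are unit eigenvectors of the parallel tensor $P$ restricted to $\mathrm{span}\{E_1,E_2\}$, with eigenvalues $\pm1$, and apply $(\nabla_Y T)Z=g(AY,Z)X+\mu(Z)AY$. Take $Z=E_1$, so that $\mu(E_1)=0$. Then
\[
(\nabla_YT)E_1=\nabla_Y E_1-T\nabla_YE_1=g(AY,E_1)X .
\]
Writing $\nabla_YE_1=aE_2+bE_3$, the left side equals $2aE_2+b(1+C)E_3$. Hence $a=0$, which gives $g(\nabla_YE_1,E_2)=0$ for every $Y$, and $b=g(AY,E_1)\sqrt{1-C^2}/(1+C)$. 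The same computation with $Y=E_3$ gives $\nabla_{E_3}E_1=0$, and likewise for $E_2$. This produces exactly the connection table \eqref{eqn:2.25}. It also agrees with the derivative of $E_1$ computed directly from \eqref{eqn:2.5}; the $\mu$-formula in Lemma \ref{lemma:2.1} provides a consistency check.

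Finally, the Codazzi equation \eqref{eqn:2.3} is written out for the pairs $(E_1,E_3)$, $(E_2,E_3)$ and $(E_1,E_2)$. Its right-hand side is nonzero only when $X$ is involved: $-\tfrac12\sqrt{1-C^2}\,TE_1$ for the pair $(E_3,E_1)$, and similarly for $E_2$. The pairs with $E_3$ yield \eqref{eqn:2.26}--\eqref{eqn:2.28}; the $E_3$-components give identities or duplicates. The pair $(E_1,E_2)$ yields \eqref{eqn:2.29} and \eqref{eqn:2.30}, since its $E_3$-component vanishes automatically by the connection formulas.

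For the equivalence claim we must also check the Gauss equation, and this is the main obstacle. The approach is to compute $R(E_1,E_3)E_3$, $R(E_2,E_3)E_3$ and $R(E_1,E_2)E_2$ from the connection table and compare with \eqref{eqn:2.2}. The curvatures involving $E_3$ reduce to the $E_3$-derivative equations \eqref{eqn:2.26}--\eqref{eqn:2.28}. For $R(E_1,E_2)E_2$ we first need $[E_1,E_2]$, which by the table lies along $E_3$ with coefficient $b_2(\sqrt{\tfrac{1-C}{1+C}}+\sqrt{\tfrac{1+C}{1-C}})$. We then expect the resulting identity to follow from \eqref{eqn:2.29}, \eqref{eqn:2.30} and the $E_3$-equations; this is the bookkeeping I expect to be hardest. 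Alternatively, one could cite the standard fact that in dimension three the Codazzi equations together with $\nabla P=0$ imply the Gauss equations for this structure via the existence theorem of \cite{K,L-T-V}, but I would verify it directly.
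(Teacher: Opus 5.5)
Your proposal is correct and is essentially the paper's argument: the paper defers details to \cite{LVWY}, but, as it says at \eqref{eqn:LL}, it obtains the connection table from $\bar\nabla P=0$, the frame relations \eqref{eqn:2.6} and Lemma \ref{lemma:2.1}, and then reads off \eqref{eqn:2.26}--\eqref{eqn:2.30} componentwise from \eqref{eqn:2.3}, exactly as you do via the $\nabla X$ and $\nabla T$ formulas. In the Gauss check, which the paper does not spell out, a few corrections apply: $[E_1,E_2]=-\tfrac{2b_2}{\sqrt{1-C^2}}E_3$ (your sign is reversed), and your three curvature vectors miss the component $g(R(E_1,E_2)E_1,E_3)$, which reduces to \eqref{eqn:2.29}. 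Also, no real bookkeeping is needed, because $[E_1,E_2]$ is a multiple of $E_3$ and $\nabla_{E_3}E_i=0$, so $R(E_1,E_2)E_2=(b_1b_4-b_2^2)E_1-\sqrt{\tfrac{1+C}{1-C}}\,(E_1b_4-E_2b_2)E_3$, which agrees with \eqref{eqn:2.2} by \eqref{eqn:2.30} alone. Finally, your fallback of citing \cite{K,L-T-V} does not work, since those theorems take the Gauss equation as a hypothesis.
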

\begin{proof}
By the assumption that $C$ is constant and $\nabla C=-2 AX$, we have $b_3=b_5=b_6=0$.

The product structures and the almost complex structures in
$\mathbb{S}^2\times\mathbb{S}^2$ and $\mathbb{H}^2\times\mathbb{H}^2$ share some same
properties, i.e., the product structures are parallel, and are symmetric with respect to the
product metric, the almost complex structures are parallel. Then the connections in \eqref{eqn:2.25}
can be obtained by a totally same way as (2.11) of \cite{LVWY}.

Because the Codazzi equation for a hypersurface of $\mathbb{H}^2\times\mathbb{H}^2$ 
is exactly opposite to the Codazzi equation for a hypersurface of $\mathbb{S}^2\times\mathbb{S}^2$, 
thus \eqref{eqn:2.26}--\eqref{eqn:2.30} can be obtained by a similar way with
(2.12)--(2.16) of \cite{LVWY}.
\end{proof}

\section{Some canonical examples}\label{sect:3}

In this section, we introduce some canonical examples of hypersurfaces in
$\mathbb{H}^2\times\mathbb{H}^2$, where Example \ref{exam:3.1f1f2}, Example \ref{exam:3.8f1f2}
and Example \ref{exam:3.4fg1} have been appeared in \cite{GMY}.
\begin{example}\label{exam:3.1f1f2}
For any smooth curve $\Gamma$ of $\mathbb{H}^2$, one can define a hypersurface
in $\mathbb{H}^2\times\mathbb{H}^2$ by
$$
M_\Gamma:=\left\{(x,y)\in \mathbb{H}^2\times\mathbb{H}^2
~|~x\in \Gamma,\ y\in\mathbb{H}^2\right\}.
$$
\end{example}

Let $k_{\Gamma}$ be the curvature of $\Gamma$ in $\mathbb{H}^2$.
Then, hypersurface $M_\Gamma$ has constant product angle function $C=1$, and its principal curvatures are $k_{\Gamma}$, $0$ and $0$.

\begin{lemma}[cf. Lemma 3.2 of \cite{GMY}]\label{lemma:3.1ff1}
For any smooth curve $\Gamma$ of $\mathbb{H}^2$, $M_\Gamma$ has constant principal curvatures
if and only if $\Gamma$ is an open part of a curve in $\mathbb{H}^2$ with constant curvature.
\end{lemma}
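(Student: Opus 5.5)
The plan is to compute the shape operator of $M_\Gamma$ explicitly and read off its principal curvatures. Parametrize $\Gamma$ by arc length, $\gamma(s)\subset\mathbb{H}^2$, with unit normal $n(s)=J\gamma'(s)$ in $T_{\gamma(s)}\mathbb{H}^2$. Then $M_\Gamma=\Gamma\times\mathbb{H}^2$ is parametrized by $(s,y)\mapsto(\gamma(s),y)$. Its unit normal field is $N=(n(s),0)$. This gives $PN=N$, so $C=1$, which agrees with the remark after Example \ref{exam:3.1f1f2}.

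Next I compute $A$ through the Weingarten formula $\bar\nabla_Y N=-AY$, using that $\bar\nabla$ is the product connection.
\begin{itemize}
\item For a vector $(0,w)$ tangent to the $\mathbb{H}^2$ factor, $N$ does not depend on $y$, so $\bar\nabla_{(0,w)}N=0$. Hence $A(0,w)=0$, and $0$ is a principal curvature of multiplicity $2$.
\item For the tangent direction $(\gamma',0)$, the Frenet equation of $\gamma$ in $\mathbb{H}^2$ gives $D_{\gamma'}n=-k_\Gamma\gamma'$, where $D$ is the Levi-Civita connection of $\mathbb{H}^2$ (the sign depends on the orientation convention). Therefore $A(\gamma',0)=k_\Gamma(\gamma',0)$.
\end{itemize}
So the principal curvatures are $k_\Gamma(s)$, $0$, $0$. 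Since $k_\Gamma$ depends only on $s$, they are constant on $M_\Gamma$ exactly when $k_\Gamma$ is constant along $\Gamma$.

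To finish, recall that a curve in $\mathbb{H}^2$ with constant geodesic curvature is, by the fundamental theorem of plane curves in $\mathbb{H}^2$, an open part of one of the standard ones: a geodesic, an equidistant curve, a horocycle or a geodesic circle. This gives both directions of the statement.

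No step is a real obstacle. The only care needed is the sign convention in the Frenet equation, and the sign of $k_\Gamma$ does not affect whether it is constant.
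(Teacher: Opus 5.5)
Your proposal is correct and follows essentially the same route as the paper, which cites Lemma 3.2 of \cite{GMY} and records right after Example \ref{exam:3.1f1f2} that $M_\Gamma$ has $C=1$ and principal curvatures $k_\Gamma,0,0$; you derive these cleanly from the Weingarten formula and the Frenet equation, and the lemma follows immediately. The final appeal to the classification of constant-curvature curves in $\mathbb{H}^2$ is harmless but not needed, since the statement only asks that $\Gamma$ have constant curvature.
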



\begin{example}\label{exam:3.8f1f2}
For any given $\tau<-1$, we define $M_\tau=\{(p,q)\in
\mathbb{H}^2\times\mathbb{H}^2|\ \langle p,q\rangle=\tau\}$.
\end{example}

According to the calculations in Example 3.10 of \cite{GMY}, we know that
$M_\tau$ has three distinct constant principal curvatures
$\sqrt{\tfrac{\tau-1}{2(\tau+1)}}$, $\sqrt{\tfrac{\tau+1}{2(\tau-1)}}$ and $0$.
Its product angle function is $C=0$.
For any $\tau<-1$, $M_\tau$ cannot be a minimal hypersurface or a hypersurface of $\mathbb{H}^2\times\mathbb{H}^2$ with constant sectional curvature.

In the following, we define a class of hypersurfaces which generalizes the Example 3.4 of \cite{GMY}.

\begin{example}\label{exam:4.1}
For any two non-constant functions $f_1(t)$, $f_2(t)$ and any smooth curves $\gamma_1(r)$ and $\gamma_2(s)$
of $\mathbb{H}^2$ with $r,s$ being their arc length parameters, a hypersurface in
$\mathbb{H}^2\times\mathbb{H}^2$ can be defined by the immersion
$\Psi:\Omega\subset\mathbb{R}^3\rightarrow\mathbb{H}^2\times\mathbb{H}^2$:
$(t,r,s)\rightarrow(p(t,r),q(t,s))$
\begin{equation}\label{eqn:3.1ss1}
\begin{aligned}
&p(t,r)=\cosh(f_1(t))\gamma_1(r)+\sinh(f_1(t))N_1(r),\\
&q(t,s)=\cosh(f_2(t))\gamma_2(s)+\sinh(f_2(t))N_2(s),
\end{aligned}	
\end{equation}
where $N_1(r)$ and $N_2(s)$ are unit normal vector fields of $\gamma_1(r)$ and
$\gamma_2(s)$ in $\mathbb{H}^2$, respectively.
Let $k_1(r)$ and $k_2(s)$ be the curvatures of
curves $\gamma_1(r)$ and $\gamma_2(s)$ in $\mathbb{H}^2$, respectively.

Since $\{\gamma_1(r), \frac{d\gamma_1(r)}{dr}, N_1(r)\}$ and $\{\gamma_2(s),
\frac{d\gamma_2(s)}{ds}, N_2(s)\}$ are two orthonormal frames
of $\mathbb{R}_1^{3}$ along curves $\gamma_1(r)$ and $\gamma_2(s)$ respectively, it follows that
\begin{equation}\label{eqn:3.2ss1}
\begin{aligned}
&\frac{d^2\gamma_1(r)}{dr^2}=\gamma_1(r)+k_1(r)N_1(r),\ \
\frac{dN_1(r)}{dr}=-k_1(r)\frac{d\gamma_1(r)}{dr},\\
&\frac{d^2\gamma_2(s)}{ds^2}=\gamma_2(s)+k_2(s)N_2(s),\ \
\frac{dN_2(s)}{ds}=-k_2(s)\frac{d\gamma_2(s)}{ds}.
\end{aligned} 	
\end{equation}%
Now, we have an orthonormal frame field $\left\{E_{1}, E_{2}, E_{3}\right\}$
of hypersurface $\Psi(t,r,s)$ defined as

\begin{equation*}
\begin{aligned}
E_1=&\frac{1}{\cosh(f_1(t))-\sinh(f_1(t))k_1(r)}
\frac{\partial}{\partial r}(p(t,r),q(t,s))
=(\frac{d\gamma_1(r)}{dr},0),\\
E_2=&\frac{1}{\cosh(f_2(t))-\sinh(f_2(t))k_2(s)}
\frac{\partial }{\partial s}(p(t,r),q(t,s))
=(0,\frac{d\gamma_2(s)}{ds}),
\end{aligned}	
\end{equation*}
\begin{equation*}
\begin{aligned}
E_3=&\frac{1}{\sqrt{(f_1'(t))^2+(f_2'(t))^2}}\frac{\partial}{\partial t}(p(t,r),q(t,s))\\
=&\frac{1}{\sqrt{(f_1'(t))^2+(f_2'(t))^2}}(f_1'(t)\sinh(f_1(t))\gamma_1(r)
+f_1'(t)\cosh(f_1(t))N_1(r),\\
&\qquad\qquad  f_2'(t)\sinh(f_2(t))\gamma_2(s)
+f_2'(t)\cosh(f_2(t))N_2(s)).\\
\end{aligned}	
\end{equation*}
The unit normal vector field $N$ of $\Psi(t,r,s)$ is given by
\begin{equation}\label{eqn:3.3ss1}
\begin{aligned}
N&=\frac{1}{\sqrt{(f_1'(t))^2+(f_2'(t))^2}}(f_2'(t)\sinh(f_1(t))\gamma_1(r)
+f_2'(t)\cosh(f_1(t))N_1(r),\\
&\qquad \qquad -f_1'(t)\sinh(f_2(t))\gamma_2(s)
-f_1'(t)\cosh(f_2(t))N_2(s)).
\end{aligned}
\end{equation}
It follows that $\Psi(t,r,s)$ has $C=g(PN,N)=\frac{(f_2'(t))^2-(f_1'(t))^2}{(f_1'(t))^2+(f_2'(t))^2}$.
By direct calculations, we get
\begin{equation}\label{eqn:3.4ss1}
\begin{aligned}
&AE_1=-\frac{f_2'(t)}{\sqrt{(f_1'(t))^2+(f_2'(t))^2}}\frac{\sinh(f_1(t))-k_1(r)\cosh(f_1(t))}
{\cosh(f_1(t))-k_1(r)\sinh(f_1(t))}E_1,\\
&AE_2=\frac{f_1'(t)}{\sqrt{(f_1'(t))^2+(f_2'(t))^2}}\frac{\sinh(f_2(t))-k_2(s)\cosh(f_2(t))}
{\cosh(f_2(t))-k_2(s)\sinh(f_2(t))}E_2,\\
&AE_3=-\frac{f_1'(t)f_2''(t)-f_1''(t)f_2'(t)}{((f_1'(t))^2+(f_2'(t))^2)^{\frac{3}{2}}}E_3.
\end{aligned}
\end{equation}
\end{example}

%

\begin{remark}\label{rem:3.5fg1}
Hypersurfaces constructed by \eqref{eqn:3.1ss1} are curvature-adapted
hypersurface of $\mathbb{H}^2\times \mathbb{H}^2$ (see Proposition 3.1 of \cite{HLYZ}).
When taking $k_1(r)=k_2(s)=1$,
hypersurfaces constructed by \eqref{eqn:3.1ss1} are minimal if and only if the functions $f_1(t)$ and
$f_2(t)$ satisfy
$$
(f_2'(t)-f_1'(t))\big((f_1'(t))^2+(f_2'(t))^2\big)=f_1'(t)f_2''(t)-f_1''(t)f_2'(t).
$$
When taking $k_1(r)=-k_2(s)=1$,
hypersurfaces constructed by \eqref{eqn:3.1ss1} are minimal if and only if the functions $f_1(t)$ and
$f_2(t)$ satisfy
$$
(f_2'(t)+f_1'(t))\big((f_1'(t))^2+(f_2'(t))^2\big)=f_1'(t)f_2''(t)-f_1''(t)f_2'(t).
$$

\end{remark}

Now, we choose specific $f_1(t),f_2(t)$, $\gamma_1(r),N_1(r)$ and $\gamma_2(s),N_2(s)$ in \eqref{eqn:3.1ss1}, so that we can get a hypersurface of $\mathbb{H}^2\times \mathbb{H}^2$ with constant sectional curvature $-\frac{1}{2}$.

\begin{example}\label{exam:4.1ss1ss11ab1}
In \eqref{eqn:3.1ss1}, for some constant $0\neq a\in \mathbb{R}$, we take
$$
\begin{aligned}
&f_1(t)=\ln\Big(\sqrt{2}\cosh(\tfrac{t}{\sqrt{2}})+\sqrt{\cosh(\sqrt{2}t)}\Big),\ \
f_2(t)={\rm arcsinh}\Big(-\sqrt{2}\sinh(\tfrac{t}{\sqrt{2}})\Big)-a,\ \ t<0,\\
&\gamma_1(r)=(\cosh(r),\sinh(r),0), \ \
\gamma_2(s)=(\cosh(a),\sinh(a)\cos(s),\sinh(a)\sin(s)),\\
&N_1(r)=(0,0,1),\ \ N_2(s)=(\sinh(a),\cosh(a)\cos(s),\cosh(a)\sin(s)).
\end{aligned}	
$$
Then we have the immersion:
$\Phi_1:\Omega\subset\mathbb{R}^3\rightarrow\mathbb{H}^2\times\mathbb{H}^2$:
$(t,r,s)\rightarrow(p(t,r),q(t,s))$, where 
\begin{equation}\label{eqn:jr}
\begin{aligned}
&p(t,r)=\Big(\sqrt{\cosh(\sqrt{2}t)+1}\cosh(r),\sqrt{\cosh(\sqrt{2}t)+1}\sinh(r),\sqrt{\cosh(\sqrt{2}t)}\Big),\\
&q(t,s)=\Big(\sqrt{\cosh(\sqrt{2}t)},\sqrt{\cosh(\sqrt{2}t)-1}\cos(s),\sqrt{\cosh(\sqrt{2}t)-1}\sin(s)\Big).
\end{aligned}	
\end{equation}
By \eqref{eqn:2.2}, \eqref{eqn:3.3ss1} and \eqref{eqn:3.4ss1}, one can be checked that such hypersurface constructed by \eqref{eqn:jr} has constant sectional curvature $-\frac{1}{2}$, and
$C={\rm sech}(\sqrt{2}t)$.
\end{example}

%



As another special case of Example \ref{exam:4.1} by choosing $f_1(t)=\sqrt{c}t$ and $f_2(t)=\sqrt{1-c}t$
for some constant $0<c<1$, we have the following example.
\begin{example}\label{exam:3.4fg1}
For any constant $0<c<1$, and any smooth curves $\gamma_1(r)$ and $\gamma_2(s)$
of $\mathbb{H}^2$ with $r,s$ being their arc length parameters, we define a hypersurface $M_{k_1,k_2}^c$ in
$\mathbb{H}^2\times\mathbb{H}^2$ by the following immersion
$\Phi:\Omega\subset\mathbb{R}^3\rightarrow\mathbb{H}^2\times\mathbb{H}^2$:
$(t,r,s)\rightarrow(p(t,r),q(t,s))$,
\begin{equation}\label{eqn:3.1fg12}
\begin{aligned}
&p(t,r)=\cosh(\sqrt{c}t)\gamma_1(r)+\sinh(\sqrt{c}t)N_1(r),\\
&q(t,s)=\cosh(\sqrt{1-c}t)\gamma_2(s)+\sinh(\sqrt{1-c}t)N_2(s),
\end{aligned}	
\end{equation}
where $N_1(r)$ and $N_2(s)$ are unit normal vector fields of $\gamma_1(r)$ and
$\gamma_2(s)$ in $\mathbb{H}^2$, respectively.
Let $k_1(r)$ and $k_2(s)$ be the curvatures of
curves $\gamma_1(r)$ and $\gamma_2(s)$ in $\mathbb{H}^2$, respectively.

It follows from \eqref{eqn:3.4ss1} that $M_{k_1,k_2}^c$ has constant
$C=g(PN,N)=1-2c$,
and its principal curvatures are
$$
\begin{aligned}
0,\ \ -\sqrt{1-c}\frac{\sinh(\sqrt{c}t)-\cosh(\sqrt{c}t)k_1(r)}
{\cosh(\sqrt{c}t)-\sinh(\sqrt{c}t)k_1(r)},\ \
\sqrt{c}\frac{\sinh(\sqrt{1-c}t)-\cosh(\sqrt{1-c}t)k_2(s)}
{\cosh(\sqrt{1-c}t)-\sinh(\sqrt{1-c}t)k_2(s)}.
\end{aligned}
$$

\begin{lemma}[cf. Lemma 3.5 and Remark 3.1 of \cite{GMY}]\label{lemma:3.5fg12}
Let $M$ be an open part of hypersurface $M_{k_1,k_2}^c$ which constructed by
\eqref{eqn:3.1fg12}, then
\begin{enumerate}
\item[(1)]
$M$ is minimal if and only if $c=\frac{1}{2}$, $k_1(r)$ and $k_2(s)$
are the same constant functions.

\item[(2)]
$M$ has constant sectional curvature if and only if $c=\frac{1}{2}$, $k_1(r)$ and
$k_2(s)$ are constant functions which satisfy $k_1(r) k_2(s)=1$.
Moreover, the sectional curvature of $M$ is $-\frac{1}{2}$.

\item[(3)]
$M$ has constant principal curvatures if and only if $k_1(r)=k_2(s)=1$ or
$k_1(r)=-k_2(s)=1$. We denote such hypersurfaces as $M_{1,1}^c$ and $M_{1,-1}^c$.
\end{enumerate}
\end{lemma}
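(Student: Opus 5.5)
Since $M_{k_1,k_2}^c$ is a special case of Example \ref{exam:4.1} with $f_1(t)=\sqrt{c}\,t$, $f_2(t)=\sqrt{1-c}\,t$, the plan is to read off everything from \eqref{eqn:3.4ss1}. Here $f_1'=\sqrt c$, $f_2'=\sqrt{1-c}$, $f_1''=f_2''=0$ and $(f_1')^2+(f_2')^2=1$. Hence $AE_3=0$, and $E_1,E_2$ are principal with curvatures
\[
\lambda_1=-\sqrt{1-c}\,\phi_1,\qquad \lambda_2=\sqrt c\,\phi_2,\qquad
\phi_1=\frac{\sinh(\sqrt c t)-k_1\cosh(\sqrt c t)}{\cosh(\sqrt c t)-k_1\sinh(\sqrt c t)},
\]
with $\phi_2$ defined analogously. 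Moreover $C=1-2c$, so the frame $E_1,E_2,E_3$ agrees with the canonical frame \eqref{eqn:2.5} up to sign: $TE_1=E_1$, $TE_2=-E_2$, $TE_3=-CE_3$. A useful observation is that $\phi_1=\tanh(\sqrt c t-\theta_1)$ when $|k_1|<1$ (with $k_1=\tanh\theta_1$), $\phi_1=\coth(\sqrt c t-\theta_1)$ when $|k_1|>1$, and $\phi_1\equiv -k_1$ when $|k_1|=1$. For a general curve $\theta_1$ depends on $r$.

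\emph{Part (3).} The $t$-dependence of $\lambda_1$ disappears exactly when $\partial_t\phi_1=\sqrt c\,(1-k_1^2)/(\cosh-k_1\sinh)^2$ vanishes, that is, when $k_1^2=1$. Since $\gamma_1$ is smooth and $k_1$ is continuous, this forces $k_1\equiv\pm1$, and similarly $k_2\equiv\pm1$. Conversely, if $k_1,k_2\in\{\pm1\}$, then the principal curvatures are constant. It remains to identify these cases with $M_{1,1}^c$ and $M_{1,-1}^c$. Reversing the orientation of $N_i$ (that is, replacing $t\mapsto -t$ together with $N_i\mapsto -N_i$) changes the sign of $k_i$, and isometries of $\mathbb{H}^2\times\mathbb{H}^2$ act on these choices. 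Using these, the four sign choices reduce to the two listed ones.

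\emph{Part (1).} Minimality means $\sqrt{1-c}\,\phi_1(t,r)=\sqrt c\,\phi_2(t,s)$. Since the left side is independent of $s$ and the right side is independent of $r$, both sides are functions of $t$ alone. I will then compare the functional forms in $t$.
\begin{itemize}
\item The function $\tanh(\sqrt c t-\theta_1)$ must match $\tanh(\sqrt{1-c}t-\theta_2)$ (or the coth version) up to the constant factors. Comparing asymptotics, or alternatively the zero/pole structure and the Riccati equation $\partial_t\phi_i=\sqrt{c_i}(1-\phi_i^2)$ with $c_1=c$, $c_2=1-c$, forces $\sqrt{1-c}=\sqrt c$, hence $c=1/2$.
\item With $c=1/2$ the identity becomes $\phi_1=\phi_2$, and both satisfy the same Riccati equation. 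So they agree exactly when their initial values agree, which means $k_1(r)=k_2(s)$ for all $r,s$. This forces $k_1$ and $k_2$ to be the same constant.
\item The constant cases $\phi_i=-k_i$ with $|k_i|=1$ are handled directly in the same way.
\end{itemize}

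\emph{Part (2).} I will use the Gauss equation \eqref{eqn:2.2} in the principal frame, where the off-diagonal curvature terms vanish because $T$ and $A$ are diagonal. This gives
\[
K(E_1,E_2)=-\tfrac12(1-1)+\lambda_1\lambda_2=\lambda_1\lambda_2,\quad
K(E_1,E_3)=-\tfrac12(1-C),\quad
K(E_2,E_3)=-\tfrac12(1+C).
\]
The last two coincide iff $C=0$, that is, $c=1/2$, and then both equal $-\tfrac12$. The remaining condition is $\lambda_1\lambda_2=-\tfrac12$, which reads $\phi_1\phi_2=1$, i.e. $\phi_2=1/\phi_1$. Differentiating in $t$ with the Riccati equation (using $c=1/2$ for both), $1/\phi_1$ satisfies $\partial_t(1/\phi_1)=-\tfrac1{\sqrt2}(1-\phi_1^{-2})$, while $\partial_t\phi_2=\tfrac{1}{\sqrt2}(1-\phi_2^2)$. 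Matching these yields $1-\phi_2^2=0$ or consistency, and this must be checked carefully. The cleaner route is to expand $\phi_1\phi_2=1$ as an identity in $\cosh,\sinh$ of $t/\sqrt2$. The coefficients give $k_1k_2=1$ with $k_1,k_2$ independent of $r,s$ respectively, hence constant. The converse is the same computation reversed. I expect this coefficient matching, together with the degenerate $|k_i|=1$ subcases and the sign/orientation bookkeeping in (3), to be the main fiddly step.
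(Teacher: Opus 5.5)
Your proposal is correct. The paper gives no argument for this lemma; it only quotes it from \cite{GMY}. Your derivation is therefore a self-contained replacement, and it is the natural route: you read the principal curvatures $-\sqrt{1-c}\,\phi_1$, $\sqrt c\,\phi_2$, $0$ off \eqref{eqn:3.4ss1}, then apply the Gauss equation \eqref{eqn:2.2} in the frame $\{E_1,E_2,E_3\}$, which diagonalizes both $A$ and $T$.

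A few steps should be tightened.

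\textbf{Part (3).} You cannot reverse orientation on one factor at a time, because $t$ is shared by $p$ and $q$. The only available move is $(t,N_1,N_2)\mapsto(-t,-N_1,-N_2)$. It fixes the immersion and sends $(k_1,k_2)\mapsto(-k_1,-k_2)$. This is exactly what reduces the four sign choices to $M^c_{1,1}$ and $M^c_{1,-1}$. These two are genuinely different, since $3|H|$ equals $|\sqrt{1-c}-\sqrt c|$ and $\sqrt{1-c}+\sqrt c$ respectively. No further isometry is needed; note that the swap $\mathbb{F}$ replaces $c$ by $1-c$.

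\textbf{Part (1).} Drop the comparison of asymptotics, which is not available on an open part. The Riccati argument is the one that works. Write $g(t)=\sqrt{1-c}\,\phi_1=\sqrt c\,\phi_2$. Subtracting the two Riccati equations gives $g^2(1-2c)=0$. The alternative $g\equiv 0$ is impossible, since it would force $g'=\sqrt{c(1-c)}\neq 0$.

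For $c=1/2$, do not appeal to initial values at $t=0$, which need not lie in the domain. Instead, for any fixed $t_0$ in the domain, the map $k\mapsto(\sinh x-k\cosh x)/(\cosh x-k\sinh x)$ is an injective M\"obius map (its determinant is $-1$). So $\phi_1(t_0,r)=\phi_2(t_0,s)$ gives $k_1(r)=k_2(s)$ directly.

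\textbf{Part (2).} You are right that the Riccati route gives nothing here. With the sign corrected, $\partial_t(1/\phi_1)=+\tfrac{1}{\sqrt2}(1-\phi_1^{-2})$, so $1/\phi_1$ solves the same equation as $\phi_2$. No coefficient matching is needed either, because the identity is pointwise. For $c=1/2$ and $x=t/\sqrt2$,
\[
\phi_1\phi_2-1=\frac{k_1k_2-1}{(\cosh x-k_1\sinh x)(\cosh x-k_2\sinh x)}.
\]
Hence $\lambda_1\lambda_2=-\tfrac12$ holds at a point if and only if $k_1(r)k_2(s)=1$ there. Holding this for all $r,s$ forces both curvatures to be constant.
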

\end{example}

\section{Minimal hypersurfaces with $C=0$}\label{sect:4}
In this section, we give the classification of the minimal hypersurfaces with $C=0$, which
plays an important role in the proof of Theorem \ref{thm:1.2}. 

\begin{lemma}\label{lemma:2.1aaa1}
Let $M$ be a minimal hypersurface of $\mathbb{H}^2\times \mathbb{H}^2$ with $C=0$.
Then, under the frame $\{E_1,E_2,E_3\}$ defined by \eqref{eqn:2.5}, we have two cases
for $b_1,b_2$ and $b_4$ on $M$ as follows:

\noindent{\bf Case 1}:
$$
b_1=-b_4=-\tfrac{1}{\sqrt{2}}\tanh(\tfrac{t-k}{\sqrt{2}}),\ \ b_2=0,
$$

\noindent{\bf Case 2}:
\begin{equation}\label{eq:j}
\begin{aligned}
b_1&=-b_4=-\tfrac{1}{\sqrt{2}}\frac{\sinh(\frac{t-l}{\sqrt{2}})\cosh(\frac{t-l}{\sqrt{2}})}
{\sinh^2(\frac{t-l}{\sqrt{2}})+\cos^2(\frac{h}{\sqrt{2}})},\\
b_2&=-\tfrac{1}{\sqrt{2}}\frac{\sin(\frac{h}{\sqrt{2}})\cos(\frac{h}{\sqrt{2}})}
{\sinh^2(\frac{t-l}{\sqrt{2}})+\cos^2(\frac{h}{\sqrt{2}})},
\end{aligned}	
\end{equation}
where function $t$ satisfies $E_3t=1$ and functions $h, k, l:M\rightarrow \mathbb{R}$
satisfy $E_3h=E_3k=E_3l=0$.

\end{lemma}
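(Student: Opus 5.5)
The plan is to reduce everything to a single complex Riccati equation along the integral curves of $E_3$, and then integrate it.

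\textbf{Reduction.} Since $C=0$ is constant, Lemma~\ref{lemma:2.3} applies with all the factors $\sqrt{(1\mp C)/(1\pm C)}$ equal to $1$. Thus $AE_3=0$, and the shape operator lives on $\mathrm{span}\{E_1,E_2\}$ with entries $b_1,b_2,b_4$. Minimality gives $3H=b_1+b_4=0$, so $b_4=-b_1$. The structure equations \eqref{eqn:2.26}--\eqref{eqn:2.28} become
\begin{equation*}
E_3b_1=-\tfrac12+b_1^2-b_2^2,\qquad E_3b_2=2b_1b_2,\qquad E_3b_4=\tfrac12+b_2^2-b_4^2.
\end{equation*}
With $b_4=-b_1$ the third equation is identical to the first, so there is no extra constraint. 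Put $w=b_1+ib_2$. Then $w^2=b_1^2-b_2^2+2ib_1b_2$, so the first two equations combine into the single equation
\begin{equation*}
E_3 w=w^2-\tfrac12 .
\end{equation*}

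\textbf{The parameter $t$.} $E_3$ is a nowhere vanishing vector field. By Lemma~\ref{lemma:2.3}, $\nabla_{E_3}E_3=0$, so its integral curves are unit-speed geodesics. Locally we straighten the flow: choose a surface transversal to $E_3$ and let $t$ be the flow parameter measured from it, so that $E_3t=1$. Any function constant on that transversal surface and transported by the flow satisfies $E_3f=0$. (Note that $t$ need not come from a closed $1$-form; one computes that the dual form of $E_3$ has $d\omega(E_1,E_2)=2b_2$. We only need existence of $t$, and the flow-box construction gives it.)

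\textbf{Integrating the Riccati equation.} Along each integral curve, $w$ solves the ODE $w'=w^2-\tfrac12$. The nonconstant solutions are
\begin{equation*}
w=-\tfrac1{\sqrt2}\tanh\!\Big(\tfrac{t-z}{\sqrt2}\Big),
\end{equation*}
with a complex constant $z$ that depends on the curve. Hence $z=l+ih$ with $E_3l=E_3h=0$. If $\mathrm{Im}\,w\equiv0$ on an open set, then $w$ is real and $z=k$ is real, which gives Case~1. Otherwise we expand $\tanh(x-iy)$ with $x=(t-l)/\sqrt2$ and $y=h/\sqrt2$:
\begin{equation*}
\tanh(x-iy)=\frac{\sinh x\cosh x-i\sin y\cos y}{\sinh^2x+\cos^2y}.
\end{equation*}
Taking real and imaginary parts gives exactly \eqref{eq:j}, i.e. Case~2. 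This case also contains the real $\coth$-solutions, obtained when $\cos(h/\sqrt2)=0$. Solutions defined on open pieces glue by analyticity in $t$ and continuity.

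\textbf{Main obstacle.} The delicate point is the constant equilibria $w\equiv\pm\tfrac1{\sqrt2}$, which also solve the Riccati equation. For these, $b_2=0$ and $b_1=-b_4=\pm\tfrac1{\sqrt2}$. The Codazzi equations \eqref{eqn:2.29}--\eqref{eqn:2.30} hold trivially, and these data seem to correspond to $M^{1/2}_{1,1}$ or $M^{1/2}_{-1,-1}$. I would first try to exclude them from the two formulas, or show they appear as the limiting case $k\to\pm\infty$ of Case~1. If neither works, the statement must be read as including them as a degenerate sub-case of Case~1, and the proof should record this. The remaining care is pointwise: on the open set where $w$ equals one of the equilibria at some point, ODE uniqueness forces $w$ to be constant along the whole integral curve. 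So the equilibrium set is a union of whole integral curves, and its complement is where the $\tanh$ form holds.
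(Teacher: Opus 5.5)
Your proof is essentially the paper's. Both combine the structure equations into the complex Riccati equation $E_3F=F^2-\tfrac12$ for $F=b_1+\mathbf{i}b_2$ and integrate it along the integral curves of $E_3$; the paper first splits off the case $b_2$ constant, which is only cosmetic. One small correction: to match the sign of $b_2$ in \eqref{eq:j} you should take $z=l-\mathbf{i}h$ rather than $l+\mathbf{i}h$, or rename $h\mapsto -h$.

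Your concern about the equilibria is justified, and they cannot be excluded. The hypersurface $M^{1/2}_{1,1}$ of Example \ref{exam:3.4fg1} is minimal with $C=0$ and has constant principal curvatures $0,\pm\tfrac{1}{\sqrt2}$. So it has $b_1=-b_4=\pm\tfrac{1}{\sqrt2}$ and $b_2=0$, which fits neither formula of the lemma.

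The paper's ``directly integrating'' step in the branch $b_2\equiv0$ silently drops these solutions. It also drops the $\coth$ solutions, although, as you observe, those are covered by \eqref{eq:j} with $\cos(h/\sqrt2)=0$. The lemma should therefore list the equilibria explicitly as a degenerate subcase of Case 1 (the limit $k\to\pm\infty$).

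This omission is harmless downstream. Theorem \ref{thm:3.2aa} handles the whole case $b_2\equiv0$ through Proposition \ref{prop:3.61} and Lemma \ref{lemma:3.5fg12}, which give $M^{1/2}_{k_1,k_1}$ for every constant $k_1$, including $k_1=\pm1$.
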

\begin{proof}
By $H=\frac{1}{3}(b_1+b_4)=0$ and $C=0$, equations \eqref{eqn:2.26} and \eqref{eqn:2.27} become
\begin{equation*}
\begin{split}
E_3b_1&=-\tfrac{1}{2}+b_1^2-b_2^2,\\
E_3b_2&=2b_1b_2.
\end{split}	
\end{equation*}

When $b_2$ is constant on $M$, we have $b_1b_2=0$. Furthermore,
if $b_1=0$, then we get a contradiction from the first equation.
If $b_2=0$, then $b_1$ satisfies $E_3b_1=-\tfrac{1}{2}+b_1^2$. By directly integrating
this equation along the integral curves of $E_3$, we can obtain the {\bf Case 1}.

When $b_2$ is not constant on $M$, let $F=b_1+\mathbf{i}b_2$, we get
$$
E_3F=E_3(b_1+\mathbf{i}b_2)=-\frac{1}{2}+(b_1+\mathbf{i}b_2)^2=-\frac{1}{2}+F^2.
$$
This equation can be directly integrated along the integral curves of $E_3$
to obtain $F=-\frac{1}{\sqrt{2}}\tanh(\frac{t+z_0}{\sqrt{2}})$,
where $z_0=-l+\mathbf{i}h$, $h$ and $l$ are functions on $M$ which satisfy $E_3h=E_3l=0$.
By $F=b_1+\mathbf{i}b_2$, it follows that $b_1$ and $b_2$ are its real and imaginary parts,
which corresponds to the {\bf Case 2}.
\end{proof}

In the following, we deal with the {\bf Case 2} of Lemma \ref{lemma:2.1aaa1} 
by finding appropriate local coordinates. 
We assume that
\begin{equation}\label{UVT-H2}
\left(
  \begin{array}{c}
    U \\
    V \\
    T \\
  \end{array}
\right)
=
\left(
  \begin{array}{ccc}
    a_1 & a_2 & a_1d_1+a_2d_2 \\
    -a_2 & a_1 & -a_2d_1+a_1d_2 \\
    0 & 0 & 1 \\
  \end{array}
\right)
\left(
  \begin{array}{c}
    E_1 \\
    E_2 \\
    E_3 \\
  \end{array}
\right),
\end{equation}
where $a_1,a_2,d_1,d_2$ are smooth functions of $M$.

\begin{lemma}\label{lemma:2.1a1}
$\{U,V,T\}$ is a local coordinate frame if and only if
the following system of equations is satisfied:
\begin{equation}\label{eq:2.ab1}
E_3a_1=-a_1b_1-a_2b_2,\ \ E_3a_2=a_1b_2-a_2b_1,
\end{equation}
\begin{equation}\label{eq:2.ab2}
E_1a_2+E_2a_1=0,\ \ E_1a_1-E_2a_2=0,
\end{equation}
\begin{equation}\label{eq:2.ab3}
E_3d_1=b_1d_1-b_2d_2,\ \ E_3d_2=d_1b_2+d_2b_1,
\end{equation}
\begin{equation}\label{eq:2.ab4}
E_1d_2-E_2d_1=2b_2.
\end{equation}

\end{lemma}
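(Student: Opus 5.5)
The plan is to use the standard fact that three pointwise linearly independent vector fields form a local coordinate frame if and only if their pairwise Lie brackets vanish. The matrix in \eqref{UVT-H2} has determinant $a_1^2+a_2^2$, and I assume this is nonzero, which is implicit in calling $\{U,V,T\}$ a frame. The proof then reduces to computing $[T,U]$, $[T,V]$ and $[U,V]$ in the basis $\{E_1,E_2,E_3\}$ and setting every component equal to zero.

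\textbf{Step 1: brackets of the canonical frame.} Since $C=0$ and $H=0$, we have $b_4=-b_1$, and Lemma \ref{lemma:2.3} gives
\[
[E_1,E_2]=-2b_2E_3,\quad [E_1,E_3]=-b_1E_1+b_2E_2,\quad [E_2,E_3]=-b_2E_1-b_1E_2 .
\]
These follow directly from the connection formulas \eqref{eqn:2.25}, using $\nabla_{E_3}E_i=0$.

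\textbf{Step 2: the brackets with $T$.} Since $T=E_3$, expanding gives
\[
[E_3,U]=(E_3a_1+a_1b_1+a_2b_2)E_1+(E_3a_2-a_1b_2+a_2b_1)E_2+E_3(a_1d_1+a_2d_2)E_3 .
\]
The $E_1$ and $E_2$ components give exactly \eqref{eq:2.ab1}. The computation for $[E_3,V]$ is analogous. Its $E_1$ and $E_2$ components are $E_3(-a_2)$ and $E_3a_1$ corrected by the same bracket terms, and they reproduce \eqref{eq:2.ab1} again, so they are consistent.

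The $E_3$ components give $E_3(a_1d_1+a_2d_2)=0$ and $E_3(-a_2d_1+a_1d_2)=0$. Substituting \eqref{eq:2.ab1} turns these into a linear system for $(E_3d_1,E_3d_2)$ whose coefficient matrix is the rotation-type matrix $\begin{pmatrix}a_1&a_2\\-a_2&a_1\end{pmatrix}$. Because $a_1^2+a_2^2\neq0$, this system is equivalent to \eqref{eq:2.ab3}.

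\textbf{Step 3: the bracket $[U,V]$.} I write $U=a_1E_1+a_2E_2+\phi E_3$ and $V=-a_2E_1+a_1E_2+\psi E_3$, with $\phi=a_1d_1+a_2d_2$ and $\psi=-a_2d_1+a_1d_2$. Then I expand $[U,V]$ using Step 1. The terms in which $\phi E_3$ or $\psi E_3$ differentiate $a_1,a_2$ can be rewritten through \eqref{eq:2.ab1}. After this substitution, the $b_1$- and $b_2$-terms in the $E_1$ and $E_2$ components should cancel against the bracket contributions $\phi[E_3,\cdot]$ and $\psi[\cdot,E_3]$. What remains should be $a_1(E_1a_1-E_2a_2)+a_2(E_1a_2+E_2a_1)$ and a similar combination. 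Since $a_1^2+a_2^2\neq0$, these vanish exactly when \eqref{eq:2.ab2} holds.

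The $E_3$ component of $[U,V]$ contains $U\psi-V\phi$ together with $-2b_2(a_1^2+a_2^2)$ from $[E_1,E_2]$. Expanding $U\psi-V\phi$ using \eqref{eq:2.ab2} and \eqref{eq:2.ab3}, the derivatives of the $a_i$ and the $E_3$-derivatives of the $d_i$ should cancel. This should leave $(a_1^2+a_2^2)\bigl(E_1d_2-E_2d_1-2b_2\bigr)$, which gives \eqref{eq:2.ab4}.

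For the converse direction, the same identities read backwards show that \eqref{eq:2.ab1}--\eqref{eq:2.ab4} make all three brackets vanish. Together with linear independence, this makes $\{U,V,T\}$ a coordinate frame.

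\textbf{Expected obstacle.} I expect the main difficulty to be the bookkeeping in Step 3, specifically checking that the $E_3$ component of $[U,V]$ collapses exactly to $(a_1^2+a_2^2)(E_1d_2-E_2d_1-2b_2)$. To verify this, I would first use \eqref{eq:2.ab2} to eliminate $E_2a_1$ and $E_2a_2$, and then group the terms by $d_1$ and $d_2$.
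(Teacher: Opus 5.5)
Your proposal is correct and follows the same route as the paper. The paper reduces the lemma to the vanishing of $[T,U]$, $[T,V]$, $[U,V]$, using $[E_1,E_2]=-2b_2E_3$, $[E_1,E_3]=-b_1E_1+b_2E_2$, $[E_2,E_3]=-b_2E_1-b_1E_2$, and refers the identical bookkeeping to Lemma 3.2 of \cite{LVWY}; your explicit component computations, including the collapse of the $E_3$-component of $[U,V]$ to $(a_1^2+a_2^2)(E_1d_2-E_2d_1-2b_2)$ and the use of $a_1^2+a_2^2\neq 0$, check out.
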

\begin{proof}
We just need to show that our system of equations is equivalent to the vanishing of all the commutators $[T,U]$, $[T,V]$ and $[U,V]$.
Using $[E_i,E_j]=\nabla_{E_i} {E_j}-\nabla_{E_j} {E_i}$, $C=0$
and \eqref{eqn:2.25}, we get
$$
[E_1,E_2]=-2b_2E_3,\ [E_1,E_3]=-b_1E_1+b_2E_2,\ [E_2,E_3]=-b_2E_1+b_4E_2=-b_2E_1-b_1E_2.
$$
Due that \eqref{UVT-H2} is the same as (3.2) of \cite{LVWY}, then the proof of Lemma \ref{lemma:2.1a1} is exactly the same as the proof of Lemma 3.2 of \cite{LVWY}. 
\end{proof}

In the following, we will give a special solution to the system of equations \eqref{eq:2.ab1}--\eqref{eq:2.ab4}, which provides us adapted coordinates. 


\begin{lemma}\label{lemma:2.1a2}
Assume that $b_1,b_2$ and $b_4$ are given by \eqref{eq:j} on $M$,
and let $f$ be any function satisfying $E_3f=-1$. Then the functions $a_1,a_2,d_1,d_2$ defined by
\begin{equation}\label{eq:2.ab7}
\begin{aligned}
(a_1+\mathbf{i}a_2)^2=\frac{1}{1-2(b_1-\mathbf{i}b_2)^2}, \ \ d_1=E_1f,\ d_2=E_2f,
\end{aligned}
\end{equation}
are a solution to the system of differential equations \eqref{eq:2.ab1}--\eqref{eq:2.ab4}.
\end{lemma}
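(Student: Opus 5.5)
Throughout I work in complex notation. Set
\[
F=b_1+\mathbf{i}b_2,\qquad w=a_1+\mathbf{i}a_2,\qquad \delta=d_1+\mathbf{i}d_2 .
\]
In Case 2 of Lemma \ref{lemma:2.1aaa1} we have $b_4=-b_1$ and $E_3F=-\tfrac12+F^2$. The plan is to rewrite each pair of real equations in Lemma \ref{lemma:2.1a1} as a single complex equation, and then check it.

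\textbf{Step 1: equation \eqref{eq:2.ab1}.} In complex form it reads $E_3w=-\bar F w$, equivalently $E_3(w^2)=-2\bar F w^2$. Conjugating $E_3F=-\tfrac12+F^2$ gives $E_3\bar F=-\tfrac12+\bar F^2$. For $w^2=(1-2\bar F^2)^{-1}$ I compute
\[
E_3(w^2)=\frac{4\bar F\,E_3\bar F}{(1-2\bar F^2)^2}
=\frac{4\bar F(\bar F^2-\tfrac12)}{(1-2\bar F^2)^2}
=\frac{-2\bar F}{1-2\bar F^2},
\]
which is exactly $-2\bar F w^2$. A continuous branch of the square root then gives $w$ with $E_3w=-\bar Fw$. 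The denominator is nonzero because, by \eqref{eq:j}, $1-2\bar F^2=\operatorname{sech}^2\big(\tfrac{t+\bar z_0}{\sqrt2}\big)$ is finite and nonzero wherever $F$ is defined.

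\textbf{Step 2: equation \eqref{eq:2.ab2}.} This is a Cauchy--Riemann type condition, $(E_1+\mathbf{i}E_2)w=0$. It suffices to show $(E_1+\mathbf{i}E_2)\bar F=0$, equivalently $(E_1-\mathbf{i}E_2)F=0$. Its real and imaginary parts are $E_1b_1+E_2b_2$ and $E_1b_2-E_2b_1$.
\begin{itemize}
\item The imaginary part vanishes by the Codazzi equation \eqref{eqn:2.29}.
\item The real part is $E_1b_1+E_2b_2=-(E_1b_4-E_2b_2)=0$, using $b_4=-b_1$ and \eqref{eqn:2.30}.
\end{itemize}
Since $w^2$ is a function of $\bar F$ alone, $(E_1+\mathbf{i}E_2)w^2=0$, and hence $(E_1+\mathbf{i}E_2)w=0$ because $w\neq0$.

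\textbf{Step 3: equations \eqref{eq:2.ab3} and \eqref{eq:2.ab4}.} I use $d_1=E_1f$, $d_2=E_2f$ and the brackets from the proof of Lemma \ref{lemma:2.1a1}. Since $E_3f=-1$ is constant,
\[
E_3E_1f=E_1E_3f-[E_1,E_3]f=b_1E_1f-b_2E_2f,
\]
\[
E_3E_2f=-[E_2,E_3]f=b_2E_1f+b_1E_2f .
\]
These are exactly \eqref{eq:2.ab3}. For \eqref{eq:2.ab4},
\[
E_1d_2-E_2d_1=[E_1,E_2]f=-2b_2E_3f=2b_2 .
\]

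\textbf{Main obstacle.} Steps 1 and 3 are direct verifications. The delicate point is Step 2: it requires matching the signs of the Codazzi relations \eqref{eqn:2.29}--\eqref{eqn:2.30} with the conjugation that appears in $w^2=(1-2\bar F^2)^{-1}$, and confirming that the square-root branch is smooth and nonvanishing.
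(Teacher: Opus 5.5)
Your proposal is correct and follows essentially the paper's route: you use the same split into the $(a_1,a_2)$ and $(d_1,d_2)$ subsystems, the same holomorphic dependence on $z=b_1-\mathbf{i}b_2$ (with the Codazzi equations \eqref{eqn:2.29}--\eqref{eqn:2.30} and $b_4=-b_1$ supplying the Cauchy--Riemann condition), and the same commutator computation for $d_1=E_1f$, $d_2=E_2f$. The only difference is that you verify the given formula for $a_1+\mathbf{i}a_2$ directly, including the check that $1-2\bar F^2$ never vanishes. The paper instead derives that formula by solving $(z^2-\tfrac12)\partial_z(a_1+\mathbf{i}a_2)=-z(a_1+\mathbf{i}a_2)$.
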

\begin{proof}
Observe that the system of equations splits into two independent
sub-systems, \eqref{eq:2.ab1}--\eqref{eq:2.ab2} and \eqref{eq:2.ab3}--\eqref{eq:2.ab4}.

Assume now that $a_1,a_2$ are functions about $b_1,b_2$. Using the equations
\eqref{eqn:2.26}--\eqref{eqn:2.30} and $C=0$, we re-express the first sub-system with respect to
the $b_1,b_2$ to obtain that
\begin{equation}\label{eq:2.ab8}
(b_1^2-b_2^2-\frac{1}{2})\frac{\partial a_1}{\partial b_1}+2b_1b_2\frac{\partial a_1}{\partial b_2}
=-a_1b_1-a_2b_2,
\end{equation}
\begin{equation}\label{eq:2.ab9}
(b_1^2-b_2^2-\frac{1}{2})\frac{\partial a_2}{\partial b_1}+2b_1b_2\frac{\partial a_2}{\partial b_2}
=a_1b_2-a_2b_1,
\end{equation}
\begin{equation}\label{eq:2.ab10}
\frac{\partial a_2}{\partial b_1}E_1b_1+\frac{\partial a_2}{\partial b_2}E_1b_2
+\frac{\partial a_1}{\partial b_1}E_2b_1+\frac{\partial a_1}{\partial b_2}E_2b_2=0,
\end{equation}
\begin{equation}\label{eq:2.ab11}
\frac{\partial a_1}{\partial b_1}E_1b_1+\frac{\partial a_1}{\partial b_2}E_1b_2
-\frac{\partial a_2}{\partial b_1}E_2b_1-\frac{\partial a_2}{\partial b_2}E_2b_2=0.
\end{equation}
Equations \eqref{eq:2.ab10} and \eqref{eq:2.ab11} are satisfied if $(a_1,a_2)$ satisfy the Cauchy-Riemann equations with respect to $b_1,b_2$, i.e.,
$$
\frac{\partial a_2}{\partial b_1}=\frac{\partial a_1}{\partial b_2},\ \
\frac{\partial a_2}{\partial b_2}=-\frac{\partial a_1}{\partial b_1}.
$$

Therefore, we assume that $a_1(z)+\mathbf{i}a_2(z)$ is a holomorphic function with respect
to $z=b_1-\mathbf{i}b_2$. Then we have
$$
(z^2-\tfrac{1}{2})\frac{\partial}{\partial z}(a_1+\mathbf{i}a_2)=
(b_1^2-b_2^2-\tfrac{1}{2})\frac{\partial a_1}{\partial b_1}+2b_1b_2\frac{\partial a_1}{\partial b_2}
+\mathbf{i}\Big((b_1^2-b_2^2-\tfrac{1}{2})\frac{\partial a_2}{\partial b_1}+2b_1b_2\frac{\partial a_2}{\partial b_2}\Big),
$$
and
$$
-z(a_1+\mathbf{i}a_2)=(-a_1b_1-a_2b_2)+\mathbf{i}(-b_1a_2+b_2a_1).
$$
Then, \eqref{eq:2.ab8} and \eqref{eq:2.ab9} are equivalent to
$$
(z^2-\tfrac{1}{2})\frac{\partial}{\partial z}(a_1+\mathbf{i}a_2)=-z(a_1+\mathbf{i}a_2).
$$
Thus, we get a special solution to the equation system \eqref{eq:2.ab8}--\eqref{eq:2.ab11}: $(a_1+\mathbf{i}a_2)^2=\frac{1}{1-2(b_1-\mathbf{i}b_2)^2}$.

Let $f$ be any function satisfying that $E_3f=-1$. We denote $d_1=E_1f, d_2=E_2f$, by using \eqref{eqn:2.25}, we obtain the following integrability conditions for $f$.
$$
\begin{aligned}
	&0=[E_1,E_3]f+b_1E_1(f)-b_2E_2(f)=-E_3d_1+b_1d_1-b_2d_2,\\
	&0=[E_2,E_3]f+b_2E_1(f)+b_1E_2(f)=-E_3d_2+b_2d_1+b_1d_2,\\
	&0=[E_1,E_2]f+2b_2E_3(f)=E_1d_2-E_2d_1-2b_2,
\end{aligned}
$$
which implies that  $d_1=E_1f, d_2=E_2f$ is a solution to the
second subsystem \eqref{eq:2.ab3}--\eqref{eq:2.ab4}.
\end{proof}

We denote the coordinates obtained from the solution given in Lemma \ref{lemma:2.1a2} by $(u,v,t)$, i.e.
$\partial_u=U,\ \partial_v=V,\ \partial_t=T=E_3$.
We will further make a special choice for $f$ to simplify \eqref{eq:j}.

\begin{lemma}\label{lemma:2.1a3}
There exists a function $f$ with $E_3f=-1$ and a constant $\tilde{c}$ such that
replacing $t$ by $t-\tilde{c}$ one has $l=0$. Moreover, $f$ is given by
$$
f=-\tfrac{1}{\sqrt{2}}{\rm arcsinh}\Big(\tfrac{-2\sqrt{2}b_1}
{\sqrt{(2b_1^2-2b_2^2-1)^2+16b_1^2b_2^2}}\Big).
$$
The variable $t$ is given by $t=-f+$const.

\end{lemma}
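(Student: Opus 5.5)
Write $F=b_1+\mathbf{i}b_2$, so that by Case 2 of Lemma \ref{lemma:2.1aaa1} we have $F=-\tfrac{1}{\sqrt{2}}\tanh\bigl(\tfrac{t+z_0}{\sqrt{2}}\bigr)$ with $z_0=-l+\mathbf{i}h$. Here $t$ is any function with $E_3t=1$, and $E_3h=E_3l=0$. The plan is to choose $f$ so that $-f$ is precisely $t-l$, i.e.\ the real part of $t+z_0$. Then the coordinate $t$ coming from Lemma \ref{lemma:2.1a2} (with $E_3=\partial_t$) can be taken to be $-f$ up to an additive constant, and \eqref{eq:j} holds with $l=0$ after that constant shift.

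\textbf{Step 1.} Set $w=\tfrac{t-l}{\sqrt2}$ and $\theta=\tfrac{h}{\sqrt2}$, and solve for $w$ in terms of $b_1,b_2$. From \eqref{eq:j}, with $D=\sinh^2w+\cos^2\theta$,
\[
b_1=-\tfrac{1}{\sqrt2}\frac{\sinh w\cosh w}{D},\qquad
b_2=-\tfrac{1}{\sqrt2}\frac{\sin\theta\cos\theta}{D}.
\]
Compute $2b_1^2-2b_2^2-1$ and $4b_1b_2$, which are the real and imaginary parts of $2F^2-1=-\operatorname{sech}^2(w+\mathbf{i}\theta)$. This gives
\[
\sqrt{(2b_1^2-2b_2^2-1)^2+16b_1^2b_2^2}=|\cosh(w+\mathbf{i}\theta)|^{-2}=\frac{1}{D},
\]
using $|\cosh(w+\mathbf{i}\theta)|^2=\sinh^2w+\cos^2\theta$. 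Hence
\[
\frac{-2\sqrt2\, b_1}{\sqrt{(2b_1^2-2b_2^2-1)^2+16b_1^2b_2^2}}=2\sinh w\cosh w=\sinh(2w)=\sinh\bigl(\sqrt2(t-l)\bigr).
\]
So the proposed $f$ equals $-\tfrac{1}{\sqrt2}\operatorname{arcsinh}\bigl(\sinh(\sqrt2(t-l))\bigr)=-(t-l)$.

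\textbf{Step 2.} Since $E_3t=1$ and $E_3l=0$, we get $E_3f=-1$. Therefore $f$ is admissible in Lemma \ref{lemma:2.1a2}, and $d_1=E_1f$, $d_2=E_2f$ give coordinates $(u,v,t)$.

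\textbf{Step 3.} Show that the new coordinate $t$ agrees with $-f$ up to a constant. Because $T=\partial_t=E_3$, we have $\partial_t(-f)=1$. It remains to check $U(-f)=V(-f)=0$. From \eqref{UVT-H2},
\[
U f=a_1E_1f+a_2E_2f+(a_1d_1+a_2d_2)E_3f=a_1d_1+a_2d_2-(a_1d_1+a_2d_2)=0,
\]
and similarly $Vf=0$. Thus $t+f$ is constant, say $t=-f+\tilde c$. Then $t-\tilde c=-f=t_{\rm old}-l$. Replacing $t$ by $t-\tilde c$ and rewriting \eqref{eq:j} in the new variable therefore gives the same formulas with $l=0$.

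The main obstacle is the algebraic identity in Step 1, specifically expressing the square root as $1/D$ and reading off the sign. I would handle it by the complex-variable computation above rather than by brute-force expansion. One should also note that $D>0$ on the region considered, which is needed for \eqref{eq:j} to make sense in the first place.
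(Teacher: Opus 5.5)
Your proposal is correct and follows essentially the paper's argument. The paper likewise uses \eqref{UVT-H2} to get $\partial_u f=\partial_v f=0$, inserts \eqref{eq:j} to obtain $\sinh(-\sqrt{2}f)=\sinh(\sqrt{2}(t-l))$, and concludes that $l$ is constant in the coordinates $(u,v,t)$. You instead read off $E_3f=-1$ directly from $f=-(t-l)$ (via the identity $2F^2-1=-{\rm sech}^2(w+\mathbf{i}\theta)$), and you keep the old and new $t$ separate rather than phrasing the conclusion as ``$l$ is constant''. Both differences are only cosmetic.
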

\begin{proof}
A short calculation implies that our choice of $f$ satisfies $E_3f=-1$.
By \eqref{UVT-H2}, we get
\begin{equation}\label{eq:2.ab12}
E_1=\frac{a_1}{a_1^2+a_2^2}\partial_u-\frac{a_2}{a_1^2+a_2^2}\partial_v-d_1\partial_t,
\end{equation}
\begin{equation}\label{eq:2.ab13}
E_2=\frac{a_2}{a_1^2+a_2^2}\partial_u+\frac{a_1}{a_1^2+a_2^2}\partial_v-d_2\partial_t,
\end{equation}
\begin{equation}\label{eq:2.ab14}
E_3=\partial_t.
\end{equation}
From $E_1f=d_1, E_2f=d_2, E_3f=-1$, we have
$$
\frac{\partial}{\partial u}f=\frac{\partial}{\partial v}f=0.
$$
We insert \eqref{eq:j} into the definition of $f$ and obtain $\sinh(-\sqrt{2}f)=\sinh(\sqrt{2}(t-l))$,
hence $\frac{\partial}{\partial u}l=\frac{\partial}{\partial v}l=0$.
Recall that $E_3l=0$ (see Lemma \ref{lemma:2.1aaa1}), we obtain that $l$ must be a constant.
\end{proof}

\begin{theorem}\label{thm:3.1}
Let $M$ be a minimal hypersurface of $\mathbb{H}^2\times \mathbb{H}^2$ with product angle function $C=0$.
Assume that with respect to frame field $\{E_1,E_2,E_3\}$ (see \eqref{eqn:2.5}), the function $b_2$ defined in \eqref{eqn:2.7} is not constant on $M$.
Then $M$ admits local coordinates $(u,v,t)$ and there is a non-zero function $h:(u,v)\mapsto \mathbb{R}$
such that the functions $b_1,b_2,b_4$ defined in \eqref{eqn:2.7} are given by
\eqref{eq:2.ab15}, where $h$ satisfies the following ``sin-Gordon equation''
\begin{equation}\label{eq:2.ab16}
(\frac{\partial^2}{\partial u^2}+\frac{\partial^2}{\partial v^2})h
=\tfrac{1}{\sqrt{2}}\sin(\sqrt{2}h).
\end{equation}

Conversely, let $\mathcal{D}$ be an open subset in $\mathbb{R}^2$ and  $h:\mathcal{D}\to\mathbb{R}, (u,v)\mapsto h(u,v)\in\mathbb{R}$
be a non-zero function which satisfies the differential equation \eqref{eq:2.ab16}.
Let $\Omega_0=\{(u,v,t)\in \mathcal{D}\times \mathbb{R}|\sqrt{2}h(u,v)=(2p+1)\pi,t=0, p\in\mathbb{Z}\}$, $\Omega=\{(u,v,t)\in \mathcal{D}\times \mathbb{R}\}-\Omega_0\subseteq \mathbb{R}^3$, we define a metric $g$ on $\Omega$ by \eqref{eq:2.ab17} and a $(1,1)$-tensor field $A$ on $T\Omega$ by
\eqref{eqn:A}. Let $\nu$ be a vector bundle over  $\Omega$ of rank $1$ with metric $\tilde{g}$, $\nabla^{\perp}$ a connection on $\nu$ compatible with the metric $\tilde{g}$, and $N$ a unit vector field in $\nu$.   We define a $(1,1)$-tensor field $\tilde{P}$ on $T\Omega\otimes \nu$ by
\eqref{defP}.
Then the integrability conditions are satisfied on $\Omega$ and hence on any simply
connected subset $\Omega_1$ of $\Omega$, up to an isometry of  $\mathbb{H}^2\times \mathbb{H}^2$, there is a unique isometric immersion $\Psi$ from $(\Omega_1,~g)$ into $\mathbb{H}^2\times \mathbb{H}^2$ such that $A$ is the shape operator of  $\Omega_1$ into $\mathbb{H}^2\times \mathbb{H}^2$, $\nu$ is isometric to the normal bundle of $\Psi(\Omega_1)$ in
$\mathbb{H}^2\times \mathbb{H}^2$ by an isomorphism $\tilde{\Psi}:\nu\to T^{\perp}\Psi(\Omega_1)$,
and for any $Y\in T\Omega_1$, we have
\begin{equation}
	P(\Psi_{*}Y)=\Psi_{*}((\tilde{P}Y)^{T})+\tilde{\Psi}((\tilde{P}Y)^{\perp}),~
	P(\tilde{\Psi}N)=\Psi_*(\partial_t),
\end{equation}
where $P$ is the product structure of $\mathbb{H}^2\times \mathbb{H}^2$, $(\tilde{P}Y)^{T}$ and $(\tilde{P}Y)^{\perp}$ denote the projections of  $\tilde{P}Y$ onto $T\Omega_1$ and $\nu$, respectively.
Moreover, $\Psi(\Omega_1)$ is a minimal hypersurface with $C=0$.
\end{theorem}
\begin{proof}
Let $M$ be a minimal hypersurface of $\mathbb{H}^2\times \mathbb{H}^2$ with $C=0$.
By Lemmas \ref{lemma:2.1aaa1}--\ref{lemma:2.1a3}, $M$ admits local coordinates $(u,v,t)$ and
there is a function $h:(u,v)\mapsto \mathbb{R}$
such that $b_1,b_2,b_4$ are given by
\begin{equation}\label{eq:2.ab15}
b_1=-b_4=-\tfrac{1}{\sqrt{2}}\frac{\sinh(\frac{t}{\sqrt{2}})\cosh(\frac{t}{\sqrt{2}})}
{\sinh^2(\frac{t}{\sqrt{2}})+\cos^2(\frac{h}{\sqrt{2}})},\ \
b_2=-\tfrac{1}{\sqrt{2}}\frac{\sin(\frac{h}{\sqrt{2}})\cos(\frac{h}{\sqrt{2}})}
{\sinh^2(\frac{t}{\sqrt{2}})+\cos^2(\frac{h}{\sqrt{2}})},
\end{equation}
Now, by Lemma \ref{lemma:2.1a2}, we can obtain that
\begin{equation}\label{eq:2.ab18}
a_1=\cosh(\tfrac{t}{\sqrt{2}})\cos(\tfrac{h}{\sqrt{2}}),\
a_2=-\sinh(\tfrac{t}{\sqrt{2}})\sin(\tfrac{h}{\sqrt{2}}).
\end{equation}

Using \eqref{eq:2.ab12}--\eqref{eq:2.ab14} and \eqref{eq:2.ab15},
then \eqref{eqn:2.29} and \eqref{eqn:2.30} are equivalent to
$$
d_1=\tfrac{2\big(\cosh(\frac{t}{\sqrt{2}})\cos(\frac{h}{\sqrt{2}})h_v
-\sinh(\frac{t}{\sqrt{2}})\sin(\frac{h}{\sqrt{2}})h_u\big)}{\cosh(\sqrt{2}t)+\cos(\sqrt{2}h)},\ \
d_2=-\tfrac{2\big(\sinh(\frac{t}{\sqrt{2}})\sin(\frac{h}{\sqrt{2}})h_v
+\cosh(\frac{t}{\sqrt{2}})\cos(\frac{h}{\sqrt{2}})h_u\big)}{\cosh(\sqrt{2}t)+\cos(\sqrt{2}h)}.
$$

Now, we can check the compatibility of functions $b_1,b_2,b_4$ with respect to the Lie bracket $[E_i,E_j]$,
$1\leq i<j\leq3$, and obtain that
$$
(\frac{\partial^2}{\partial u^2}+\frac{\partial^2}{\partial v^2})h
=\tfrac{1}{\sqrt{2}}\sin(\sqrt{2}h).
$$
In fact, the  compatibility conditions of  $b_1,b_2,b_4$  with respect to the Lie brackets $[E_1,E_3]$ and $[E_2,E_3]$ are automatically satisfied and the compatibility conditions of  $b_1,b_2,b_4$  with respect to the Lie bracket $[E_1,E_2]$  imply the above differential equation.

Conversely, let $\mathcal{D}$ be an open subset in $\mathbb{R}^2$ and  $h:\mathcal{D}\to\mathbb{R}, (u,v)\mapsto h(u,v)\in\mathbb{R}$
be a non-zero function which satisfies the differential equation \eqref{eq:2.ab16}.
We assume that the metric $g$ on $\Omega$ is defined by
\begin{equation}\label{eq:2.ab17}
\begin{aligned}
g&=\Big(\tfrac{1}{2}(\cosh(\sqrt{2}t)+\cos(\sqrt{2}h))+h_v^2\Big)du^2
+\Big(\tfrac{1}{2}(\cosh(\sqrt{2}t)+\cos(\sqrt{2}h))+h_u^2\Big)dv^2+dt^2\\
&\ \ -2h_uh_vdudv+2h_vdudt-2h_udvdt,
\end{aligned}
\end{equation}
the $(1,1)$-tensor field $A$ on $T\Omega$ is defined by
\begin{equation}\label{eqn:A}
\begin{aligned}
A\partial_u&=-\tfrac{\cos(\sqrt{2}h)\sinh(\sqrt{2}t)}{\sqrt{2}(\cosh(\sqrt{2}t)+\cos(\sqrt{2}h))}
\partial_u
-\tfrac{\cosh(\sqrt{2}t)\sin(\sqrt{2}h)}{\sqrt{2}(\cosh(\sqrt{2}t)+\cos(\sqrt{2}h))}\partial_v\\
&\ \ +\tfrac{\cos(\sqrt{2}h)\sinh(\sqrt{2}t)h_v-\cosh(\sqrt{2}t)\sin(\sqrt{2}h)h_u}
{\sqrt{2}(\cosh(\sqrt{2}t)+\cos(\sqrt{2}h))}\partial_t,\\
A\partial_v&=-\tfrac{\cosh(\sqrt{2}t)\sin(\sqrt{2}h)}{\sqrt{2}(\cosh(\sqrt{2}t)+\cos(\sqrt{2}h))}
\partial_u
+\tfrac{\cos(\sqrt{2}h)\sinh(\sqrt{2}t)}{\sqrt{2}(\cosh(\sqrt{2}t)+\cos(\sqrt{2}h))}\partial_v\\
&\ \ +\tfrac{\sin(\sqrt{2}h)\cosh(\sqrt{2}t)h_v+\sinh(\sqrt{2}t)\cos(\sqrt{2}h)h_u}
{\sqrt{2}(\cosh(\sqrt{2}t)+\cos(\sqrt{2}h))}\partial_t,\\
A\partial_t&=0,
\end{aligned}
\end{equation}
and the $(1,1)$-tensor field $\tilde{P}$ on $T\Omega\otimes \nu$ is defined by 
\begin{equation}\label{defP}
\begin{aligned}
	\tilde{P}\partial_u&=\tfrac{1+\cos(\sqrt{2}h)\cosh(\sqrt{2}t)}{\cos(\sqrt{2}h)+\cosh(\sqrt{2}t)}
	\partial_u
	+\tfrac{\sin(\sqrt{2}h)\sinh(\sqrt{2}t)}{\cos(\sqrt{2}h)+\cosh(\sqrt{2}t)}\partial_v\\
	&\ \ -
\tfrac{h_v+h_v\cos(\sqrt{2}h)\cosh(\sqrt{2}t)-h_u\sin(\sqrt{2}h)\sinh(\sqrt{2}t)}
{{\cos(\sqrt{2}h)+\cosh(\sqrt{2}t)}}\partial_t+h_vN,\\
	\tilde{P}\partial_v&=\tfrac{\sin(\sqrt{2}h)\sinh(\sqrt{2}t)}{\cos(\sqrt{2}h)+\cosh(\sqrt{2}t)}
	\partial_u-
	\tfrac{1+\cos(\sqrt{2}h)\cosh(\sqrt{2}t)}{\cos(\sqrt{2}h)+\cosh(\sqrt{2}t)}\partial_v\\
	&\ \  -\tfrac{h_u+h_u\cos(\sqrt{2}h)\cosh(\sqrt{2}t)+h_v\sin(\sqrt{2}h)\sinh(\sqrt{2}t)}
{{\cos(\sqrt{2}h)+\cosh(\sqrt{2}t)}}\partial_t-h_uN,\\
	\tilde{P}\partial_t&=N,\\
	\tilde{P}N&=\partial_t.\\	
\end{aligned}
\end{equation}
It can be checked  directly that all the integrability conditions are satisfied.
Therefore, the conclusions can be obtained by applying  the existence and uniqueness theorems in  \cite{K,L-T-V} directly.
\end{proof}


\begin{remark}\label{rem:3.1aaa1}
{\bf Case 1} of Lemma \ref{lemma:2.1aaa1}
corresponds to a trivial solution to the differential equation \eqref{eq:2.ab16}: $h\equiv0$,
and the corresponding example is the hypersurface
$M_{k_1,k_1}^{1/2}$ given in Example \ref{exam:3.4fg1} for some constant $k_1$,
see Theorem  \ref{thm:3.2aa} for more details.
\end{remark}

Before we deal with the {\bf Case 1} of Lemma \ref{lemma:2.1aaa1},
we shall prove the following proposition.

\begin{proposition}\label{prop:3.61}
Let $M$ be a hypersurface of $\mathbb{H}^2\times\mathbb{H}^2$ with constant $C\neq\pm1$.
Then, under the frame $\{E_1,E_2,E_3\}$ defined by \eqref{eqn:2.5}, $b_2$ is constant on $M$ if and only if either
\begin{enumerate}
\item[(1)]
$b_2\neq 0$ and $M$ is an open part of $M_\tau$ for some $\tau<-1$; or

\item[(2)]
$b_2=0$ and $M$ is an open part of $M_{k_1,k_2}^c$ given in Example \ref{exam:3.4fg1}. 

%
\end{enumerate}
\end{proposition}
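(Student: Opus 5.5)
Throughout I use Lemma \ref{lemma:2.3}. Write $\alpha=\sqrt{\tfrac{1-C}{1+C}}$, so $\sqrt{\tfrac{1+C}{1-C}}=1/\alpha$ and $\sqrt{1-C^2}=\alpha(1+C)$. The structure equations are then \eqref{eqn:2.26}--\eqref{eqn:2.30}. The ``if'' direction is a direct verification: by \eqref{eqn:3.4ss1} the shape operator of $M^c_{k_1,k_2}$ is diagonal in the frame $(\tfrac{d\gamma_1}{dr},0),(0,\tfrac{d\gamma_2}{ds}),E_3$, and this frame coincides with the canonical frame \eqref{eqn:2.5} up to signs, so $b_2=0$. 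For $M_\tau$, the computations of Example 3.10 in \cite{GMY} give three distinct constant principal curvatures with $C=0$. In that frame one reads off $b_1=-b_4$ and $b_2^2=\tfrac{1}{2}\cdot\tfrac{\tau^2+1}{\tau^2-1}-\ldots$, in any case a nonzero constant. The main work is the ``only if'' direction, which splits according to whether the constant $b_2$ vanishes.

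\textbf{Case $b_2\neq 0$ constant.} From \eqref{eqn:2.27} we get $b_1\alpha=b_4/\alpha$, i.e. $b_4=\alpha^2 b_1$. Substituting this into \eqref{eqn:2.26} and \eqref{eqn:2.28} gives two expressions for $E_3b_4$:
\[
\alpha^2\bigl(-\tfrac{\sqrt{1-C^2}}{2}+\alpha b_1^2-b_2^2/\alpha\bigr)=\tfrac{\sqrt{1-C^2}}{2}+\alpha b_2^2-\alpha^3 b_1^2 .
\]
The $b_1^2$ terms should cancel, leaving an algebraic relation among $C$ and $b_2$: $\tfrac{\sqrt{1-C^2}}{2}(1+\alpha^2)=-2\alpha b_2^2$, i.e. $b_2^2=-\tfrac{1}{2}$, which is impossible. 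I expect the sign conventions here to be the main obstacle: if this computation really forces a contradiction, then $M_\tau$ must correspond to a frame in which $C=0$ but $b_2$ is not the off-diagonal entry I think it is. So I will first recompute $M_\tau$ carefully. The expected resolution is that for $M_\tau$ we have $b_1=b_4=0$ and $b_2\neq0$, with \eqref{eqn:2.26} holding because $C=0$ gives $0=-\tfrac12-b_2^2$... Since that is again impossible, I will recheck the signs of \eqref{eqn:2.26}--\eqref{eqn:2.28}. Most likely the correct relation is $b_2^2=\tfrac{1}{2}$-type together with $C=0$, $b_1=b_4=0$.

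Assuming the corrected relation forces $C=0$ and $b_1=b_4=0$ (or $b_1$, $b_4$ constant) with $b_2^2$ a fixed constant, \eqref{eqn:2.29}--\eqref{eqn:2.30} hold trivially. Then $A$ has constant eigenvalues $\pm b_2,0$ (or their analogues), so $M$ is isoparametric with three distinct constant principal curvatures and $C=0$. The classification of isoparametric hypersurfaces in \cite{GMY} then identifies $M$ as an open part of $M_\tau$; alternatively, uniqueness via \cite{K,L-T-V} applies, since the data $(g,A,\tilde P)$ of $M_\tau$ coincide with those of $M$.

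\textbf{Case $b_2=0$.} The frame then diagonalizes $A$, and by \eqref{eqn:2.25} the distributions $\mathrm{span}\{E_1\}$, $\mathrm{span}\{E_2\}$ and $\mathrm{span}\{E_3\}$ have geodesic $E_3$-lines. Moreover $\nabla_{E_i}E_j\in\mathrm{span}\{E_3\}$ for $i\neq j\le2$, and \eqref{eqn:2.29}--\eqref{eqn:2.30} give $E_2b_1=0$ and $E_1b_4=0$. Integrating \eqref{eqn:2.26} and \eqref{eqn:2.28} (Riccati equations) along $E_3$, I will write $b_1=-\sqrt{1-c}\,\tfrac{\sinh(\sqrt c t)-k_1\cosh(\sqrt c t)}{\cosh(\sqrt c t)-k_1\sinh(\sqrt c t)}$ with $c=\tfrac{1-C}{2}$, and similarly $b_4$ with $k_2$, where $E_3k_1=E_3k_2=0$, $E_2k_1=0$ and $E_1k_2=0$. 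Thus $k_1$ depends only on the $E_1$-leaf parameter $r$ and $k_2$ only on $s$. This matches the principal curvatures of $M^c_{k_1,k_2}$. I will then conclude, either by building coordinates $(t,r,s)$ and comparing metric and shape operator, or by invoking uniqueness in \cite{K,L-T-V}, that $M$ is an open part of $M^c_{k_1,k_2}$ with the curves $\gamma_1,\gamma_2$ chosen to have curvatures $k_1(r),k_2(s)$.
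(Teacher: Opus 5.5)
Your ``if'' direction and your plan for the case $b_2\neq0$ follow the paper: use \eqref{eqn:2.27} to get $b_4=\alpha^2b_1$, substitute into \eqref{eqn:2.26} and \eqref{eqn:2.28}, deduce constant principal curvatures, and quote the classification in \cite{GMY}. The decisive computation, however, is wrong. This is an algebra slip, not a sign-convention problem in \eqref{eqn:2.26}--\eqref{eqn:2.28}. Your identity itself is correct, but its left side contains $+\alpha^3b_1^2$ and its right side contains $-\alpha^3b_1^2$, so these terms add rather than cancel. Using $(1+\alpha^2)\tfrac{\sqrt{1-C^2}}{2}=\alpha$, the identity reads
\[
2\alpha^3 b_1^2=\alpha\,(1+2b_2^2),\qquad\text{i.e.}\qquad b_1^2=\frac{(1+C)(1+2b_2^2)}{2(1-C)},
\]
which is a positive constant. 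So $b_1$, being continuous and nonvanishing, is constant, and so is $b_4=\alpha^2b_1$; this is exactly the step the paper makes. Multiply \eqref{eqn:2.26} by $\alpha$ and use $E_3b_1=0$: this gives $C/2=0$. Hence $C=0$, $b_1=b_4$ and $b_1^2-b_2^2=\tfrac12$. The principal curvatures $b_1\pm b_2$ and $0$ are then constant and distinct, with nonzero sum. Theorem 1.1 of \cite{GMY} then leaves only $M_\tau$, since the other entries there with $C\neq\pm1$ have $b_2=0$.

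It follows that your fallback ``$b_1=b_4=0$ with eigenvalues $\pm b_2,0$'' is false. It would make $M$ minimal, it contradicts \eqref{eqn:2.26} (as you noticed), and $M_\tau$ is not minimal. Your claim $b_1=-b_4$ in the ``if'' part is wrong for the same reason. For $M_\tau$ one has $b_1=b_4$, $b_1^2=\tfrac{\tau^2}{2(\tau^2-1)}$ and $b_2^2=\tfrac{1}{2(\tau^2-1)}$. A non-circular way to see that $b_2$ is constant on $M_\tau$ is as follows. With $C=0$, adding \eqref{eqn:2.26} and \eqref{eqn:2.28} gives $E_3(b_1+b_4)=(b_1-b_4)(b_1+b_4)$. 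Since $b_1+b_4=3H$ is a nonzero constant, $b_1=b_4$, and then $b_2^2=b_1^2-\lambda_1\lambda_2$ is constant.

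In the case $b_2=0$ you re-derive by hand what the paper simply quotes as Theorem 3.11 of \cite{GMY}. That route is fine in principle, but two things are still missing. First, you must choose $t$ with $E_1t=E_2t=0$. This is possible because $[E_1,E_2]=0$ and $[E_i,E_3]\in\mathrm{span}\{E_i\}$ when $b_2=0$; without it, $E_2b_1=0$ does not yield $E_2k_1=0$. Second, you still have to carry out the coordinate construction and the comparison with $M^c_{k_1,k_2}$.
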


\begin{proof}
To verify the ``only if" part, we assume that $b_2$ is constant on $M$.

If $b_2$ is a non-zero constant on $M$, then by \eqref{eqn:2.27}, it holds
$$
b_1\sqrt{\tfrac{1-C}{1+C}}-b_4\sqrt{\tfrac{1+C}{1-C}}=0,
$$
which implies that $b_4=\frac{1-C}{1+C}b_1$.
Substituting $b_4=\frac{1-C}{1+C}b_1$ into \eqref{eqn:2.26} and \eqref{eqn:2.28}, we have
$$
\begin{aligned}
E_3b_1&=\tfrac{1+C}{2}\sqrt{\tfrac{1+C}{1-C}}+b_2^2\sqrt{\tfrac{1+C}{1-C}}
-b_1^2\sqrt{\tfrac{1-C}{1+C}},\\
E_3b_1&=-\tfrac{\sqrt{1-C^2}}{2}+b_1^2\sqrt{\tfrac{1-C}{1+C}}-b_2^2\sqrt{\tfrac{1+C}{1-C}}.
\end{aligned}
$$
From above two equations, it follows that $b_1$ and $b_4$ are also constant on $M$.
Now, we know that $M$ has constant principal curvatures and constant $C$.
According to Theorem 1.1 of \cite{GMY}, it can be directly checked that
$M$ is an open part of $M_\tau$ for some $\tau<-1$.

If $b_2=0$ on $M$, then by Theorem 3.11 of \cite{GMY}, we know that $M$ is an open part of $M_{k_1,k_2}^c$ given in Example \ref{exam:3.4fg1}.


For the ``if" part, it is a directly consequence from verifying that hypersurfaces $M_\tau$ and
$M_{k_1,k_2}^c$ have constant $C$ and constant $b_2$.
\end{proof}


Now, combining Lemma \ref{lemma:2.1aaa1}, Theorem \ref{thm:3.1} and Proposition \ref{prop:3.61},
we can obtain the following classification result.
\begin{theorem}\label{thm:3.2aa}
Let $M$ be a minimal hypersurface of $\mathbb{H}^2\times \mathbb{H}^2$ with constant $C=0$.
Then, either
\begin{enumerate}
\item[(1)]
$M$ is an open part of $M_{k_1,k_1}^{1/2}$ given in Example \ref{exam:3.4fg1} for some constant $k_1$; or

\item[(2)]
$M$ is an open part of a minimal hypersurface described by Theorem \ref{thm:3.1}.
\end{enumerate}
\end{theorem}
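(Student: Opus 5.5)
The plan is to split according to whether the function $b_2$ of \eqref{eqn:2.7} is constant on $M$, which is exactly the dichotomy of Lemma \ref{lemma:2.1aaa1}. Since $C=0\neq\pm1$, the frame $\{E_1,E_2,E_3\}$ of \eqref{eqn:2.5} is available, and Lemma \ref{lemma:2.3} gives $b_3=b_5=b_6=0$. Minimality then forces $b_4=-b_1$.

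\textbf{Case $b_2$ not constant.} Here nothing remains to prove: the hypotheses of Theorem \ref{thm:3.1} are exactly satisfied. Hence $M$ is an open part of a minimal hypersurface described there, which gives alternative (2).

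\textbf{Case $b_2$ constant.} I would invoke Proposition \ref{prop:3.61}. Its first alternative, $b_2\neq0$ with $M\subset M_\tau$, is excluded. One way to see this is the remark after Example \ref{exam:3.8f1f2} that $M_\tau$ is never minimal. Directly, $b_2\neq0$ together with \eqref{eqn:2.27} at $C=0$ gives $b_4=b_1$, and combined with $b_4=-b_1$ this yields $b_1=0$. Equation \eqref{eqn:2.26} then reads $0=-\tfrac12-b_2^2$, which is impossible. Hence $b_2=0$, and Proposition \ref{prop:3.61} says $M$ is an open part of some $M^c_{k_1,k_2}$.

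Since $C=1-2c$ for these hypersurfaces, $C=0$ gives $c=1/2$. Lemma \ref{lemma:3.5fg12}(1) then says that minimality forces $k_1$ and $k_2$ to be the same constant. This is alternative (1), $M^{1/2}_{k_1,k_1}$. It is consistent with Case 1 of Lemma \ref{lemma:2.1aaa1}, where $b_1=-b_4=-\tfrac1{\sqrt2}\tanh(\tfrac{t-k}{\sqrt2})$.

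\textbf{Obstacles.} There is essentially no serious obstacle, since all the heavy work sits in Theorem \ref{thm:3.1}, Proposition \ref{prop:3.61} and Lemma \ref{lemma:3.5fg12}. The only points needing care are two. First, ruling out $M_\tau$, which is done by the short computation above. Second, checking that "$b_2$ constant" and "$b_2$ not constant" are meant on the given open piece. If $b_2$ is constant on some open subsets and not on others, I would restrict to the open dense set where one alternative holds locally and argue by analyticity or continuity, or else state the result locally as the paper does ("an open part of").
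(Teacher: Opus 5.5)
Your proposal is correct and follows the paper's proof essentially step for step. The paper also splits via the dichotomy of Lemma~\ref{lemma:2.1aaa1}, whose proof already excludes a nonzero constant $b_2$ by your computation: $E_3b_2=2b_1b_2$ forces $b_1=0$, which contradicts $E_3b_1=-\tfrac12+b_1^2-b_2^2$. It then sends the non-constant case to Theorem~\ref{thm:3.1} and handles the $b_2=0$ case through Proposition~\ref{prop:3.61} and Lemma~\ref{lemma:3.5fg12}(1). Your remark about $b_2$ being constant on only part of $M$ is a point the paper passes over silently. The local ``open part'' formulation, or the real-analyticity of minimal hypersurfaces in the analytic ambient space, takes care of it.
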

\begin{proof}
By Lemma \ref{lemma:2.1aaa1}, under the frame $\{E_1,E_2,E_3\}$ defined by \eqref{eqn:2.5}, there are two cases
for $b_1,b_2,b_4$ on $M$, and the {\bf Case 2} has been classified by Theorem \ref{thm:3.1}.

For {\bf Case 1}, by Proposition \ref{prop:3.61}, $M$ is an open part of $M_{k_1,k_2}^c$ given in Example \ref{exam:3.4fg1}.
Then, by Lemma \ref{lemma:3.5fg12}, we know that $c=\frac{1}{2}$, $k_1(r)$ and $k_2(s)$
are equal to the same constant functions.
\end{proof}

\section{Proof of Theorem \ref{thm:1.1}}\label{sect:5}
In this section, we prove Theorem \ref{thm:1.1}.
Firstly, by using the so-called Tsinghua principle, we establish a very useful identity
on any hypersurface $M$ of $\mathbb{H}^2\times\mathbb{H}^2$ with constant sectional curvature.

\begin{lemma}\label{lemma:3.2aa}
Let $M$ be a hypersurface of $\mathbb{H}^2\times \mathbb{H}^2$ with constant sectional curvature $\kappa$.
Then, for any tangent vector fields $W,U,Y,Z\in TM$, the following equation holds:
\begin{equation}\label{eqn:3.1}
\mathop\mathfrak{S}\limits_{W,U,Y}\mathbf{I}(W,U,Y,Z)=0,
\end{equation}
where the symbol $\mathfrak{S}$ stands for the cyclic summation,
and $\mathbf{I}(W,U,Y,Z)$ is defined by
\begin{equation*}
\mathbf{I}(W,U,Y,Z):=-\tfrac{1}{2}\{-g(TY,Z)g(AW,TU)+g(TU,Z)g(AW,TY)\}.
\end{equation*}
\end{lemma}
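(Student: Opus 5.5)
We run the Tsinghua principle: differentiate the Codazzi equation \eqref{eqn:2.3} once more, antisymmetrize, and compare with the Ricci identity \eqref{eqn:ric}.

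\textbf{Step 1: the covariant derivative of the Codazzi equation.} Since $R(U,Y)Z=\kappa\{g(Y,Z)U-g(U,Z)Y\}$, the tensor $R$ is parallel. Differentiating \eqref{eqn:2.3} in the direction $W$ gives
\begin{equation*}
(\nabla^2A)(W,Y,Z)-(\nabla^2A)(W,Z,Y)=-\tfrac12\big[g(Y,\nabla_WX)TZ+g(Y,X)(\nabla_WT)Z-g(Z,\nabla_WX)TY-g(Z,X)(\nabla_WT)Y\big].
\end{equation*}
Lemma \ref{lemma:2.1} lets us substitute $\nabla_WX=CAW-TAW$ and $(\nabla_WT)Z=g(AW,Z)X+\mu(Z)AW$. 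This expresses $(\nabla^2A)(W,Y,Z)-(\nabla^2A)(W,Z,Y)$ as an explicit algebraic tensor built from $A,T,X,\mu,C$.

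\textbf{Step 2: cyclic sum of the Ricci identity.} Write the Ricci identity \eqref{eqn:ric} for the pairs $(W,U)$, $(U,Y)$, $(Y,W)$, pair each with $Z$, and take the cyclic sum over $W,U,Y$. The standard Tsinghua manipulation uses the symmetry $g((\nabla^2A)(a,b,c),d)=g((\nabla^2A)(a,c,b),d)+(\text{Step 1 terms})$ to trade the slots of $\nabla^2A$. After this, the left-hand side $\mathfrak{S}\,[g((\nabla^2A)(W,U,Y),Z)-g((\nabla^2A)(U,W,Y),Z)]$ becomes a cyclic sum of the Step 1 expressions. On the right-hand side, the constant curvature form of $R$ gives
\begin{equation*}
-\kappa\,\mathfrak{S}\{g(U,Y)g(W,AZ)-g(W,Y)g(U,AZ)+g(U,Z)g(W,AY)-g(W,Z)g(U,AY)\}.
\end{equation*}
This vanishes identically by the cyclic symmetry and the symmetry of $A$.

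\textbf{Step 3: cancellation.} In the cyclic sum, the Step 1 terms involving $g(\cdot,X)$ multiplied by $A$ and $\mu$ cancel in pairs, because of the symmetry of $A$ and $T$ and the identity $\mu(Y)=g(Y,X)$. The latter holds since $g(PY,N)=g(Y,PN)=g(Y,X)$. The terms with $C$ cancel as well, using $g(TY,Z)$ symmetric and $TX=-CX$ from \eqref{eqn:2.6}. The only survivors are the terms coming from $-TAW$ in $\nabla_WX$, namely $-\tfrac12\{-g(TY,Z)g(AW,TU)+g(TU,Z)g(AW,TY)\}$ up to sign and relabeling. Their cyclic sum must therefore vanish, which is \eqref{eqn:3.1}.

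\textbf{Main obstacle.} The hard part is the bookkeeping in Steps 2--3: organizing the twelve-odd terms so that the $\mu$- and $C$-terms visibly cancel. I would first carry out the computation pointwise in the frame \eqref{eqn:2.5}, using $TE_3=-CE_3$ and $\mu(E_3)=\sqrt{1-C^2}$, to check the signs. Then I would rewrite the argument invariantly.
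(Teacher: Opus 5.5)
Your proposal is correct and follows the paper's proof: the cyclic sum $\mathfrak{A}$ is computed once from the differentiated Codazzi equation together with Lemma~\ref{lemma:2.1}, where the $\mu\mu$-terms and the $C$-terms drop out by the symmetry of $A$ and leave $\mathfrak{S}\,\mathbf{I}$. It is computed a second time from the Ricci identity, whose curvature side vanishes when the curvature is constant. A few remarks are inessential to the argument: the parallelism of $R$ and the identity $TX=-CX$ are never used, and the passage from $\mathfrak{S}[(\nabla^2A)(W,U,Y)-(\nabla^2A)(U,W,Y)]$ to the differentiated Codazzi expressions is just the reindexing $\mathfrak{S}\,(\nabla^2A)(U,W,Y)=\mathfrak{S}\,(\nabla^2A)(W,Y,U)$ of the cyclic sum.
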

\begin{proof}

In order to prove Lemma \ref{lemma:3.2aa}, we calculate the expression of the cyclic
summation
\begin{equation}\label{eqn:3.2}
\mathfrak{A}:=\mathop\mathfrak{S}\limits_{W,U,Y} \big\{g((\nabla^2 A)(W,U,Y),Z)-
g((\nabla^2 A)(W,Y,U),Z)\big\}
\end{equation}
in two different ways. On the one hand, we take the covariant derivative
of the Codazzi equation \eqref{eqn:2.3}, we have
\begin{equation*}
\begin{split}
&g((\nabla^2 A)(W,U,Y),Z)-g((\nabla^2 A)(W,Y,U),Z)\\
&=-\tfrac{1}{2}\{g((\nabla_W T)Y,Z)\mu(U)+g(TY,Z)(\nabla_W \mu)U\\
&\ \ -g((\nabla_W T)U,Z)\mu(Y)-g(TU,Z)(\nabla_W \mu)Y\}.
\end{split}
\end{equation*}
By direct calculations, with the use of Lemma \ref{lemma:2.1},
we can have
\begin{equation*}
\begin{split}
g&((\nabla^2 A)(W,U,Y),Z)-g((\nabla^2 A)(W,Y,U),Z)\\
&=-\tfrac{1}{2}\{g(AW,Y)\mu(Z)\mu(U)-g(TY,Z)g(AW,TU)+Cg(TY,Z)g(AW,U)\\
&\ \ -g(AW,U)\mu(Z)\mu(Y)+g(TU,Z)g(AW,TY)-Cg(TU,Z)g(AW,Y)\}.
\end{split}
\end{equation*}
Thus, it holds that
\begin{equation}\label{eqn:3.3aa}
\mathfrak{A}=\mathop\mathfrak{S}\limits_{W,U,Y}\mathbf{I}(W,U,Y,Z).
\end{equation}

On the other hand, the cyclic sum $\mathfrak{A}$ can be rewritten as
\begin{equation}\label{eqn:3.3}
\mathfrak{A}=\mathop\mathfrak{S}\limits_{W,U,Y}\big\{g((\nabla^{2}A)(W,U,Y),Z)
-g((\nabla^{2}A)(U,W,Y),Z)\big\}.
\end{equation}
Then by applying the Ricci identity \eqref{eqn:ric}, it follows from \eqref{eqn:3.3} that
\begin{equation}\label{eqn:3.4}
\mathfrak{A}=-\mathop\mathfrak{S}\limits_{W,U,Y} \big\{g(R(W,U)Y,AZ)
+g(R(W,U)Z,AY)\big\}.
\end{equation}

Since $M$ has constant sectional curvature $\kappa$, we have
\begin{equation}\label{eqn:3.6aa}
g(R(W,U)Y,Z)=\kappa\{g(U,Y)g(W,Z)-g(W,Y)g(U,Z)\},
\end{equation}
it follows directly that
$$
\mathfrak{A}=-\mathop\mathfrak{S}\limits_{W,U,Y} \big\{g(R(W,U)Y,AZ)
+g(R(W,U)Z,AY)\big\}=0.
$$
Thus, from \eqref{eqn:3.3aa}, we have completed the proof of Lemma \ref{lemma:3.2aa}.
\end{proof}



Next, we prove that the sectional curvature of $M$ must be
$\kappa=-\tfrac{1}{2}$.
\begin{proposition}\label{prop:4.5}
Let $M$ be a hypersurface of $\mathbb{H}^2\times\mathbb{H}^2$ with constant sectional curvature $\kappa$.
Then, $\kappa=-\tfrac{1}{2}$.
\end{proposition}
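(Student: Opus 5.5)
The plan is to first exclude the case $C^2\equiv 1$, then work in the canonical frame \eqref{eqn:2.5}. There I would use the Tsinghua identity of Lemma \ref{lemma:3.2aa} to obtain linear constraints on the $b_i$, and combine them with the Gauss equation for constant curvature.

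\emph{Step 0: the case $C^2=1$.} On an open set where $C^2\equiv1$, Lemma \ref{lem:2.2} says $M$ is locally $\Gamma\times\mathbb{H}^2$. The plane tangent to the $\mathbb{H}^2$ factor has sectional curvature $-1$, while a plane spanned by $\Gamma'$ and a vector tangent to the $\mathbb{H}^2$ factor has curvature $0$. So the sectional curvature is not constant. Hence we may assume $|C|<1$ on an open dense set and use $\{E_1,E_2,E_3\}$ there; the conclusion $\kappa=-\tfrac12$ then extends by continuity.

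\emph{Step 1: linear constraints from \eqref{eqn:3.1}.} By \eqref{eqn:2.6}, $T=\mathrm{diag}(1,-1,-C)$ in this frame. I would evaluate $\mathfrak{S}_{W,U,Y}\mathbf{I}(W,U,Y,Z)=0$ with $(W,U,Y)=(E_1,E_2,E_3)$ and $Z=E_1,E_2,E_3$ in turn.
\begin{itemize}
\item For $Z=E_1$, only two terms survive: $\mathbf{I}(E_2,E_3,E_1,E_1)=-\tfrac12 Cb_5$ and $\mathbf{I}(E_3,E_1,E_2,E_1)=\tfrac12 b_5$. This gives $(1-C)b_5=0$, so $b_5=0$.
\item Symmetrically, $Z=E_2$ gives $(1+C)b_3=0$, so $b_3=0$.
\item $Z=E_3$ gives $Cb_2=0$.
\end{itemize}
Thus $AE_3=b_6E_3$. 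By Lemma \ref{lemma:2.1}, $\nabla C=-2AX=-2b_6\sqrt{1-C^2}\,E_3$.

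\emph{Step 2: Gauss equation.} With $b_3=b_5=0$, \eqref{eqn:2.2} gives the sectional curvatures
\[
K(E_1,E_2)=b_1b_4-b_2^2,\quad K(E_1,E_3)=-\tfrac{1-C}{2}+b_1b_6,\quad K(E_2,E_3)=-\tfrac{1+C}{2}+b_4b_6 .
\]
The mixed component $g(R(E_1,E_3)E3,E_2)$ equals $b_2b_6$; it must vanish because the curvature is constant. Setting all three sectional curvatures equal to $\kappa$ splits the argument into two cases.
\begin{itemize}
\item If $b_6=0$ on an open set, then $C$ is constant there. The last two equations give $-\tfrac{1-C}{2}=-\tfrac{1+C}{2}=\kappa$, hence $C=0$ and $\kappa=-\tfrac12$.
\item If $b_6\neq0$, then $b_2=0$ and $b_1=(\kappa+\tfrac{1-C}{2})/b_6$, $b_4=(\kappa+\tfrac{1+C}{2})/b_6$. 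Substituting into the first equation yields the algebraic relation $(\kappa+\tfrac12)^2-\tfrac{C^2}{4}=\kappa b_6^2$.
\end{itemize}

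\emph{Step 3 (main obstacle): the case $b_6\neq0$.} Here I would use the Codazzi equation \eqref{eqn:2.3} together with the connection forms. The latter are obtained from Lemma \ref{lemma:2.1}, through $\nabla_YX=CAY-TAY$ and $(\nabla_YT)Z$, exactly as in Lemma \ref{lemma:2.3} but now with $C$ non-constant. This should give ODEs along $E_3$ for $b_1,b_4,b_6$, alongside $E_3C=-2b_6\sqrt{1-C^2}$, and it should show that $E_1,E_2$ derivatives of these functions vanish.

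I would then differentiate the algebraic relation of Step 2 along $E_3$ and substitute those ODEs. Because $b_1,b_4$ are expressed through $b_6$, $C$ and $\kappa$, this produces polynomial identities in $b_6$ and $C$ with coefficients depending on $\kappa$. Since $C$ is non-constant in this case, the coefficients must vanish identically, and I expect this to force $\kappa=-\tfrac12$; Example \ref{exam:4.1ss1ss11ab1} shows this branch is indeed realized with $\kappa=-\tfrac12$. The delicate part is computing the connection correctly and organizing the elimination, so as to avoid spurious solutions with $\kappa=0$ or with $\kappa b_6^2$ constant.
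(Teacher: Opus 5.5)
Your proposal is correct and is essentially the paper's proof. The shared steps are: the Tsinghua identity gives $b_3=b_5=0$ and $Cb_2=0$; the Gauss equation handles the constant-$C$ case (forcing $C=0$ and $\kappa=-\tfrac12$); and in the remaining case the Codazzi equation $E_3b_1=-\tfrac12\sqrt{1-C^2}+\sqrt{\tfrac{1-C}{1+C}}\,b_1(b_1-b_6)$ is compared with the $E_3$-derivative of the Gauss-determined $b_1$. The paper differs only cosmetically: it splits on $C\neq0$ versus $C\equiv0$ rather than on $b_6$, and it writes $b_1,b_4,b_6$ explicitly with separate sign cases for $\kappa>0$ and $\kappa<0$, which your implicit differentiation avoids.

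To finish your Step 3, note that Codazzi gives no equation for $E_3b_6$; it is fixed by differentiating $\kappa b_6^2=(\kappa+\tfrac12)^2-\tfrac{C^2}{4}$, which gives $E_3b_6=\tfrac{C\sqrt{1-C^2}}{2\kappa}$, while $E_3(b_1b_4)=0$ is automatically satisfied. The information therefore comes from differentiating $b_1b_6=\kappa+\tfrac{1-C}{2}$. Inserting $E_3C=-2b_6\sqrt{1-C^2}$, this value of $E_3b_6$, and the Codazzi value of $E_3b_1$ yields $(2\kappa+1)(1-C^2)\,b_1b_6=0$. Hence $\kappa=-\tfrac12$, because $b_1\neq0$ (as $b_1b_4=\kappa\neq0$, and $\kappa=0$ would force $C^2=1$ in your relation).
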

\begin{proof}
Assume that the hypersurface $M$ has constant sectional curvature $\kappa$.
We will discuss two cases depending on the value of
the product angle function $C$ on $M$.

{\bf Case I}. $C\neq\pm1$.

In this case, we can take the local orthonormal frame fields $\{E_1,E_2,E_3\}$
defined by \eqref{eqn:2.5}.
Then, the shape operator $A$, $(1,1)$-tensor $T$ and $1$-form $\mu$
are given by \eqref{eqn:2.6}--\eqref{eqn:2.7}.
By using $\bar{\nabla}P=0$, \eqref{eqn:2.6} and Lemma \ref{lemma:2.1},
we can obtain the connections with respect to $\{E_1,E_2,E_3\}$ as follows:
\begin{equation}\label{eqn:LL}
\left\{
\begin{aligned}
&E_1C=-2b_3\sqrt{1-C^2},\ E_2C=-2b_5\sqrt{1-C^2},\ E_3C=-2b_6\sqrt{1-C^2},\\ 		
&\nabla_{E_1}E_1=b_1\sqrt{\tfrac{1-C}{1+C}}E_3,\
\nabla_{E_1}E_2=-b_2\sqrt{\tfrac{1+C}{1-C}}E_3,\ \nabla_{E_1}E_3=-b_1\sqrt{\tfrac{1-C}{1+C}}E_1
+b_2\sqrt{\tfrac{1+C}{1-C}}E_2,\\
&\nabla_{E_2} E_1=b_2\sqrt{\tfrac{1-C}{1+C}}E_3,\
\nabla_{E_2} E_2=-b_4\sqrt{\tfrac{1+C}{1-C}}E_3,\ \nabla_{E_2}E_3=-b_2\sqrt{\tfrac{1-C}{1+C}}E_1
+b_4\sqrt{\tfrac{1+C}{1-C}}E_2,\\
&\nabla_{E_3}E_1=b_3\sqrt{\tfrac{1-C}{1+C}}E_3,\
\nabla_{E_3}E_2=-b_5\sqrt{\tfrac{1+C}{1-C}}E_3,\ \nabla_{E_3}E_3=-b_3\sqrt{\tfrac{1-C}{1+C}}E_1
+b_5\sqrt{\tfrac{1+C}{1-C}}E_2.
\end{aligned}\right.
\end{equation}

Taking in \eqref{eqn:3.1}, respectively, 
$$
(W,U,Y,Z)=(E_1,E_2,E_3,E_1), \ (E_1,E_2,E_3,E_2), \ (E_1,E_2,E_3,E_3),
$$
we obtain $b_3(1+C)=b_5(1-C)=b_2C=0$.
Since $C\neq\pm1$, we have $b_3=b_5=0$. In the following, we further divide {\bf Case I} into two subcases depending on whether $C$ is $0$ or not.

{\bf Case I-i}: $C\neq0$.

In this subcase, we have $b_3=b_5=b_2=0$. Then, by direct calculations, we get
$$
g(R(E_1,E_2)E_2,E_1)=b_1b_4,\
g(R(E_1,E_3)E_3,E_1)=\tfrac{C}{2}-\tfrac{1}{2}+b_1b_6,\
g(R(E_2,E_3)E_3,E_2)=b_4b_6-\tfrac{1}{2}-\tfrac{C}{2}.
$$
By $C\neq\pm1$, it follows $\kappa\neq0$, $b_6=\tfrac{2\kappa+1-C}{2b_1}=\tfrac{2\kappa+1+C}{2b_4}$ 
and $b_6^2=\tfrac{(2\kappa+1)^2-C^2}{4\kappa}$. If $\kappa>0$, without loss of generality, we have
\begin{equation}\label{eqn:6.2}
b_6=-\tfrac{\sqrt{(2\kappa+1)^2-C^2}}{2\sqrt{\kappa}},\
b_1=-\tfrac{\sqrt{\kappa}(2\kappa+1-C)}{\sqrt{(2\kappa+1)^2-C^2}},
\ b_4=-\tfrac{\sqrt{\kappa}(2\kappa+1+C)}{\sqrt{(2\kappa+1)^2-C^2}}.
\end{equation}

Next, taking $(Y,Z)=(E_3,E_1)$ and $(Y,Z)=(E_3,E_2)$ into Codazzi equation \eqref{eqn:2.3},
with the use of $b_2=b_3=b_5=0$, \eqref{eqn:2.6} and \eqref{eqn:LL},
we compare the components of $E_1$ and $E_2$, respectively, then we can have
\begin{equation}\label{eqn:6.3}
E_3b_1=-\tfrac{\sqrt{1-C}(1+C-2b_1(b_1-b_6))}{2\sqrt{1+C}},\
E_3b_4=\tfrac{\sqrt{1+C}(1-C-2b_4(b_4-b_6))}{2\sqrt{1-C}}.
\end{equation}
In the following, we take the derivatives of $b_1$ and $b_4$ with respect to $E_3$,
with the use of \eqref{eqn:6.2} and $E_3C=-2b_6\sqrt{1-C^2}$, we obtain
\begin{equation}\label{eqn:6.4}
E_3b_1=\tfrac{\sqrt{1-C^2}(2\kappa+1)}{1+C+2\kappa},\
E_3b_4=\tfrac{\sqrt{1-C^2}(2\kappa+1)}{-1+C-2\kappa}.
\end{equation}
By \eqref{eqn:6.2}, \eqref{eqn:6.3} and \eqref{eqn:6.4}, we have $\sqrt{1-C^2}(2\kappa+1)=0$,
which implies that $\kappa=-\tfrac{1}{2}$.
But, this contradicts with $\kappa>0$. Thus, it holds $\kappa<0$.

If $\kappa<0$, without loss of generality, we have
\begin{equation}\label{eqn:6.5}
b_6=-\tfrac{\sqrt{-(2\kappa+1)^2+C^2}}{2\sqrt{-\kappa}},\
b_1=-\tfrac{\sqrt{-\kappa}(2\kappa+1-C)}{\sqrt{-(2\kappa+1)^2+C^2}},
\ b_4=-\tfrac{\sqrt{-\kappa}(2\kappa+1+C)}{\sqrt{-(2\kappa+1)^2+C^2}}.
\end{equation}

By taking $(Y,Z)=(E_3,E_1)$ and $(Y,Z)=(E_3,E_2)$ into Codazzi equation \eqref{eqn:2.3},
we compare the components of $E_1$ and $E_2$ respectively, then we still have \eqref{eqn:6.3}.
Taking the derivatives of $b_1$ and $b_4$ in \eqref{eqn:6.5} with respect to $E_3$,
with the use of \eqref{eqn:6.5} and $E_3C=-2b_6\sqrt{1-C^2}$, then comparing with \eqref{eqn:6.3}, we can obtain
$$
\kappa=-\frac{1}{2},\ b_1=\frac{C}{\sqrt{2C^2}},\ b_4=-\frac{C}{\sqrt{2C^2}},\ b_6=-\frac{\sqrt{C^2}}{\sqrt{2}}.
$$

Notice that, the isometry map
$$
\mathbb{F}:\mathbb{H}^2\times \mathbb{H}^2\rightarrow\mathbb{H}^2\times \mathbb{H}^2,\ \
\mathbb{F}(p,q)=(q,p),
$$
satisfies $d \mathbb{F}\circ P=-P \circ d \mathbb{F}$. If $C<0$ on an open subset $\Omega$ of $M$, then using the isometry $\mathbb{F}$, we can obtain another hypersurface $\mathbb{F}(M)$ with constant sectional curvature and $C>0$ on $\mathbb{F}(\Omega)$. 
Thus, we can assume that $C>0$ on an open subset $\Omega$ of $M$, and have
$$
b_1=\frac{1}{\sqrt{2}},\ b_4=-\frac{1}{\sqrt{2}},\ b_6=-\frac{C}{\sqrt{2}},\ b_2=b_3=b_5=0.
$$
By directly integrating $E_3C=C\sqrt{2(1-C^2)}$ along the integral curves of $E_3$ and $C>0$ on $\Omega$, we have
$$
C=\frac{1}{\cosh(\sqrt{2}\tilde{t}+\tilde{f})},
$$
where $\tilde{t},\tilde{f}$ are functions on $\Omega$ satisfying $E_3\tilde{t}=1$ and $E_3\tilde{f}=0$.
According to the fact that $C$ is a continuous function on $M$, we know that $C>0$ on $M$.

{\bf Case I-ii}: $C=0$.

In this subcase, by Lemma \ref{lemma:2.1}, we have $b_3=b_5=b_6=0$.
By Gauss equation \eqref{eqn:2.2}, we have
$$
g(R(E_1,E_2)E_2,E_1)=b_1b_4-b_2^2, \ \ g(R(E_1,E_3)E_3,E_1)=g(R(E_2,E_3)E_3,E_2)=-\tfrac{1}{2}.
$$
It implies that the sectional curvature of $M$ is $\kappa=-\tfrac{1}{2}$
and it holds $b_1b_4-b_2^2=-\tfrac{1}{2}$.

{\bf Case II}. $C^2=1$.

In this case, by Lemma \ref{lem:2.2}, we know that $M$ is an open part of
$\Gamma\times \mathbb{H}^2$, where $\Gamma$
is a curve of $\mathbb{H}^2$. It follows that the tangent bundle of $M$ has
a decomposition $TM=T\Gamma\oplus T\mathbb{H}^2$,
and we have $A|_{T\mathbb{H}^2}=0$.

Now, we take an orthonormal basic $\{v_1,v_2,v_3\}$ with $v_1\in T\Gamma$ and $v_2,v_3\in T\mathbb{H}^2$.
By direct calculations, we have $g(R(v_1,v_2)v_2,v_1)=0$ and $g(R(v_2,v_3)v_3,v_2)=-1$.
Therefore, in this case, there exists no hypersurface with constant sectional curvature.
\end{proof}


In the following, we discuss the {\bf Case I-i} of Proposition \ref{prop:4.5}.

\begin{theorem}\label{thm:4.1abc}
Let $M$ be a hypersurface of $\mathbb{H}^2\times \mathbb{H}^2$ with constant sectional curvature $-\frac{1}{2}$.
If under the frame field $\{E_1,E_2,E_3\}$ defined by \eqref{eqn:2.5}, it holds
$$
C>0,\ b_1=\frac{1}{\sqrt{2}},\ b_4=-\frac{1}{\sqrt{2}},\ b_6=-\frac{C}{\sqrt{2}},\ b_2=b_3=b_5=0.
$$
%
Then, $M$ is an open part of Example \ref{exam:4.1ss1ss11ab1}.
\end{theorem}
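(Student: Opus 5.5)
We aim to show that $M$ is congruent to the hypersurface of Example \ref{exam:4.1ss1ss11ab1}. Since $A$ is diagonal in $\{E_1,E_2,E_3\}$ and $E_1,E_2$ are $\pm1$ eigenvectors of $P$, $M$ should be a curvature-adapted hypersurface of the form \eqref{eqn:3.1ss1}. We therefore first pin down the intrinsic data, and then realise them by that construction.

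\textbf{Step 1 (connection and $C$).} Insert $b_1=\tfrac1{\sqrt2}$, $b_4=-\tfrac1{\sqrt2}$, $b_6=-\tfrac{C}{\sqrt2}$, $b_2=b_3=b_5=0$ into \eqref{eqn:LL}. This gives $E_1C=E_2C=0$ and $E_3C=C\sqrt{2(1-C^2)}$. All components $\nabla_{E_3}E_j$ vanish, while
\[
\nabla_{E_1}E_1=\tfrac1{\sqrt2}\sqrt{\tfrac{1-C}{1+C}}\,E_3,\qquad \nabla_{E_2}E_2=\tfrac1{\sqrt2}\sqrt{\tfrac{1+C}{1-C}}\,E_3,
\]
and the mixed terms vanish. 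Consequently:
\begin{enumerate}
\item[(i)] $E_3$ is a unit geodesic field;
\item[(ii)] the distribution $\mathrm{span}\{E_1,E_2\}$ is integrable, with $[E_1,E_2]=0$ up to lower order terms;
\item[(iii)] the level sets of $C$ are flat and totally umbilic in each direction separately.
\end{enumerate}
Choose a function $t$ with $E_3t=1$ and $E_1t=E_2t=0$; this is possible because $\mathrm{span}\{E_1,E_2\}$ is integrable. Then $C=C(t)$, and after a translation in $t$ we get $C=\mathrm{sech}(\sqrt2 t)$, matching the example. Note that $C=\mathrm{sech}(\sqrt2 t)$ with $t<0$ gives $E_3C>0$, consistent with the sign of $E_3C$ above.

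\textbf{Step 2 (local coordinates).} Set $E_1=\alpha(t)^{-1}\partial_r$ and $E_2=\beta(t)^{-1}\partial_s$. The bracket relations $[E_1,E_3]=\tfrac1{\sqrt2}\sqrt{\tfrac{1-C}{1+C}}E_1$ and its analogue for $E_2$ give the linear ODEs
\[
\alpha'/\alpha=\tfrac1{\sqrt2}\sqrt{\tfrac{1-C}{1+C}},\qquad \beta'/\beta=\tfrac1{\sqrt2}\sqrt{\tfrac{1+C}{1-C}}.
\]
Integrating with $C=\mathrm{sech}(\sqrt2 t)$ yields explicit $\alpha,\beta$. Up to constants they should be $\sqrt{\cosh(\sqrt2t)+1}$ and $\sqrt{\cosh(\sqrt2t)-1}$, which are exactly the speeds of $\partial_r p$ and $\partial_s q$ in \eqref{eqn:jr}. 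Thus the metric becomes
\[
g=dt^2+\alpha^2dr^2+\beta^2ds^2 .
\]

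\textbf{Step 3 (the immersion).} Write the immersion as $F=(p,q)\in\mathbb R^3_1\times\mathbb R^3_1$ and split every vector into its two factors. Since $PE_1=E_1$, $E_1$ has only a first-factor component; since $PE_2=-E_2$, $E_2$ has only a second-factor component. Hence $\partial_r q=0$ and $\partial_s p=0$, so $p=p(t,r)$ and $q=q(t,s)$. Next express $E_3$ and $N$ in terms of their factor components. Because $PN=CN+\sqrt{1-C^2}E_3$, the first-factor parts of $E_3$ and $N$ are parallel, and likewise for the second-factor parts, with lengths given by $\sqrt{(1\pm C)/2}$. The Gauss formula in $\mathbb R^6_2$, together with the known $A$ and $\nabla$, now yields a closed linear ODE system in $t$ for the frame $(p,\partial_r p,\text{first-factor part of }E_3)$, and an analogous one for $q$.

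Solving these systems will show two things. For fixed $t$, the curve $r\mapsto p$ has constant geodesic curvature in $\mathbb H^2$, and it is equidistant from a geodesic. The curve $s\mapsto q$ is a circle. This reproduces the form \eqref{eqn:3.1ss1} with $f_1'=\sqrt{(1+C)/2}$ and $f_2'=-\sqrt{(1-C)/2}$, up to sign, from which one recovers the $f_1,f_2$ of Example \ref{exam:4.1ss1ss11ab1}. Uniqueness, up to an isometry of $\mathbb H^2\times\mathbb H^2$, then follows from the existence and uniqueness theorem of \cite{K,L-T-V}, since the data $g$, $A$, $\tilde P$ coincide with those of the example.

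\textbf{Main obstacle.} The hardest step is Step 3: correctly integrating the ODE system and showing that the constants of integration can be normalised to the specific curves $\gamma_1$ (a geodesic, $k_1=0$ data) and $\gamma_2$ (a circle of radius $a$). I would handle this by a second route as well: compute $g$, $A$ and $T$ for the example directly via \eqref{eqn:3.4ss1}, check that they coincide with the data from Steps 1–2, and conclude congruence by uniqueness rather than by explicit integration.
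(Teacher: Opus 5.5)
Your outline follows the paper's route: rescale $E_1,E_2$ to a commuting frame, normalize $C=\mathrm{sech}(\sqrt2 t)$, split $(p,q)$ via $P$, then integrate in $t$. However, two computations are wrong, and as written they would make the final matching fail.

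\textbf{Sign of the brackets.} From \eqref{eqn:LL} with $b_1=\tfrac1{\sqrt2}$ and $b_3=0$ you get $\nabla_{E_1}E_3=-\tfrac1{\sqrt2}\sqrt{\tfrac{1-C}{1+C}}\,E_1$ and $\nabla_{E_3}E_1=0$. Hence $[E_1,E_3]=-\tfrac1{\sqrt2}\sqrt{\tfrac{1-C}{1+C}}\,E_1$, and likewise $[E_2,E_3]=-\tfrac1{\sqrt2}\sqrt{\tfrac{1+C}{1-C}}\,E_2$; your signs are reversed. Since $\sqrt{(1-C)/(1+C)}=-\tanh(t/\sqrt2)$ for $t<0$, your ODEs give $\alpha\propto\mathrm{sech}(t/\sqrt2)$ and $\beta\propto|\mathrm{csch}(t/\sqrt2)|$. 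These are the reciprocals of what you state. Your stated $\alpha\propto\sqrt{\cosh(\sqrt2t)+1}$ and $\beta\propto\sqrt{\cosh(\sqrt2t)-1}$ (the paper's $\rho_1=\sqrt{(1+C)/(2C)}$, $\rho_2=\sqrt{(1-C)/(2C)}$) are correct only with the corrected sign.

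\textbf{Swapped profile derivatives.} In \eqref{eqn:3.1ss1} with $f_1'^2+f_2'^2=1$, the first-factor part of $\partial_t$ has length $|f_1'|$. From $PN=CN+\sqrt{1-C^2}E_3$, the first-factor part of $E_3$ has length $\sqrt{(1-C)/2}$; it is $N$'s first-factor part that has length $\sqrt{(1+C)/2}$. So $f_1'^2=(1-C)/2$ and $f_2'^2=(1+C)/2$; indeed $f_1=\mathrm{arccosh}(\sqrt2\cosh(t/\sqrt2))$ in Example \ref{exam:4.1ss1ss11ab1}. Your assignment is swapped and would produce product angle $-C$.

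\textbf{Choice of $t$ (minor).} Integrability of $\mathrm{span}\{E_1,E_2\}$ alone does not give a $t$ with $E_3t=1$. The simplest fix is to take $t$ as a function of $C$. This works because $\nabla C$ is parallel to $E_3$ and $E_3C=C\sqrt{2(1-C^2)}$ depends only on $C$.

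\textbf{Step 3 is asserted, not proved.} As your main route, Step 3 leaves out exactly the part that uses the hypotheses. The paper solves \eqref{eqn:4.mm1} explicitly as $p=\sqrt{(1+C)/C}\,V_1(r)+C^{-1/2}V_2(r)$ and $q=C^{-1/2}W_1(s)+\sqrt{(1-C)/C}\,W_2(s)$, and normalizes the $V_i,W_i$. Only then does it use the data on $A$. The condition $b_1=\tfrac1{\sqrt2}$ excludes $V_1$ constant (that would force $C=0$) and forces $V_1$ to have zero curvature. The condition $b_4=-\tfrac1{\sqrt2}$ forces $W_1$ to be constant. ``Solving these systems will show'' does not supply this.

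\textbf{Your fallback is a valid, different proof.} Once the sign is fixed, Steps 1--2 give coordinates in which $g$, $A$, $T$, $\mu$ are universal explicit functions of $t$. For example, after rescaling $r$ and $s$, $g=dt^2+\cosh^2(t/\sqrt2)\,dr^2+\sinh^2(t/\sqrt2)\,ds^2$. Hence any two hypersurfaces satisfying the hypotheses are congruent by \cite{K,L-T-V}, and it suffices that the example satisfies them. It does. With $k_1=0$ and $k_2=-\coth a$ (for the given $N_2$), \eqref{eqn:3.4ss1} gives $b_1=-f_2'\tanh f_1=\tfrac1{\sqrt2}$, $b_4=f_1'\coth(f_2+a)=-\tfrac1{\sqrt2}$ and $b_6=-C/\sqrt2$. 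Also $X=2f_1'f_2'\,\partial_t=-\tanh(\sqrt2t)\,\partial_t$, so $E_3=\partial_t$ for $t<0$.

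This rigidity argument replaces all integration by a check on a known example. The paper's integration is self-contained and actually derives the example, showing why the profile curves must be a geodesic and a point.
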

\begin{proof}
By $b_3=b_5=0$, $b_6=-\frac{C}{\sqrt{2}}$ and $\nabla C=-2AX$, we have that product angle function satisfies
$$
E_1C=E_2C=0,\ E_3C=C\sqrt{2(1-C^2)}.
$$
Now, we choose two appropriate non-zero functions $\rho_1$ and $\rho_2$, such that
the new frame
$$
\{X_1=\rho_1E_1,\ X_2=\rho_2E_2,\ X_3=E_3\}
$$
satisfies that $[X_1,X_2]=[X_1,X_3]=[X_2,X_3]=0$. Specifically, by using
$[E_i,E_j]=\nabla_{E_i} {E_j}-\nabla_{E_j} {E_i}$, \eqref{eqn:LL} and $b_2=b_3=b_5=0$, we get
$$
[E_1,E_2]=0,\ [E_1,E_3]=-\sqrt{\tfrac{1-C}{2(1+C)}}E_1,\ [E_2,E_3]=-\sqrt{\tfrac{1+C}{2(1-C)}}E_2.
$$
Therefore
$$
\begin{aligned}
{\text [X_1,X_3]}&=-(\sqrt{\tfrac{1-C}{2(1+C)}}\rho_1+E_3\rho_1)E_1.
\end{aligned}
$$
Then $[X_1,X_3]=0$ is equivalent to
\begin{equation}\label{eqn:m1}
E_3\rho_1=-\sqrt{\tfrac{1-C}{2(1+C)}}\rho_1.
\end{equation}
Similarly, we get
$$
{\text [X_2,X_3]}=(-\sqrt{\tfrac{1+C}{2(1-C)}}\rho_2-E_3\rho_2)E_2,\ \
{\text [X_1,X_2]}=\rho_1(E_1\rho_2)E_2-\rho_2(E_2\rho_1)E_1.
$$
Then $[X_2,X_3]=[X_1,X_2]=0$ are equivalent to
\begin{equation}\label{eqn:m2}
E_3\rho_2=-\sqrt{\tfrac{1+C}{2(1-C)}}\rho_2,
\end{equation}
\begin{equation}\label{eqn:dm1dm4}
\rho_1 E_1\rho_2=\rho_2E_2\rho_1=0.
\end{equation}

Let $\rho_1=\sqrt{\frac{1+C}{2C}}$, $\rho_2=\sqrt{\frac{1-C}{2C}}$, then by $E_1C=E_2C=0$ and $E_3C=C\sqrt{2(1-C^2)}$,
we can check that \eqref{eqn:m1}--\eqref{eqn:dm1dm4} hold, which implies that $[X_1,X_2]=[X_1,X_3]=[X_2,X_3]=0$.

By the definition of the frame $\{X_1,X_2,X_3\}$ and using $[X_1,X_2]=[X_1,X_3]=[X_2,X_3]=0$,
we can identify $M$ with an open subset $\Omega_1$ of $\mathbb{R}^3$ locally and express the hypersurface $M$ by an immersion
$$
\begin{aligned}
\Phi_1:\Omega_1
\longrightarrow\mathbb{H}^2\times\mathbb{H}^2,\ \ \
(t_1,r,s)\mapsto (p(t_1,r,s),q(t_1,r,s)),
\end{aligned}
$$
such that $(\frac{\partial p}{\partial t_1},\frac{\partial q}{\partial t_1})=E_3$,
$(\frac{\partial p}{\partial r},\frac{\partial q}{\partial r})=\rho_1E_1$ and
$(\frac{\partial p}{\partial s},\frac{\partial q}{\partial s})=\rho_2E_2$.

By $C\neq0$, we integrate $E_3C=C\sqrt{2(1-C^2)}$ along the integral curves of $E_3$, and obtain
$$
C=\frac{1}{\cosh(\sqrt{2}t_1+\tilde{h})},
$$
where $\tilde{h}$ is a function on $M$ satisfying $E_3\tilde{h}=0$ and
$\sqrt{2}t_1+\tilde{h}<0$.

By $E_1C=E_2C=0$, $(\frac{\partial p}{\partial r},\frac{\partial q}{\partial r})=\rho_1E_1$ and
$(\frac{\partial p}{\partial s},\frac{\partial q}{\partial s})=\rho_2E_2$, we have that
$$
\frac{\partial C}{\partial r}=\frac{\partial C}{\partial s}=0\ \  {\rm and}\ \
\frac{\partial \tilde{h}}{\partial r}=\frac{\partial \tilde{h}}{\partial s}=0.
$$
It follows from $E_3\tilde{h}=0$ that $\tilde{h}$ is constant function. Thus, without loss of generality,
we can identify $M$ with an open subset $\Omega$ of $\mathbb{R}^3$ locally and express the hypersurface $M$ by an immersion
$$
\begin{aligned}
\Phi:\Omega
\longrightarrow\mathbb{H}^2\times\mathbb{H}^2,\ \ \
(t,r,s)\mapsto (p(t,r,s),q(t,r,s)),
\end{aligned}
$$
such that $(\frac{\partial p}{\partial t},\frac{\partial q}{\partial t})=E_3$,
$(\frac{\partial p}{\partial r},\frac{\partial q}{\partial r})=\rho_1E_1$,
$(\frac{\partial p}{\partial s},\frac{\partial q}{\partial s})=\rho_2E_2$ and
$C=\frac{1}{\cosh(\sqrt{2}t)}$, where $t=t_1+\frac{\tilde{h}}{\sqrt{2}}<0$. By $E_3=(\frac{\partial p}{\partial t},\frac{\partial q}{\partial t})$ and
$PN=CN+\sqrt{1-C^2}E_3$, we have $N=(\sqrt{\frac{1+C}{1-C}}\frac{\partial p}{\partial t},-\sqrt{\frac{1-C}{1+C}}\frac{\partial q}{\partial t})$.

From the definition of $P$  and using
$$
PE_1=E_1,\ \ PE_2=-E_2,
$$
we obtain that $dp,dq:T(\Omega)\rightarrow
T\mathbb{H}^2$ satisfy
\begin{equation}\label{eqn:5.49}
\left\{
\begin{aligned}
(dp(\tfrac{\partial}{\partial r}),0)&=\tfrac{1}{2}(d\Phi(\tfrac{\partial}{\partial r})+Pd\Phi(\tfrac{\partial}{\partial r}))=d\Phi(\tfrac{\partial}{\partial r}),\\
(0,dq(\tfrac{\partial}{\partial r}))&=\tfrac{1}{2}(d\Phi(\tfrac{\partial}{\partial r})-Pd\Phi(\tfrac{\partial}{\partial r}))=0,
\end{aligned}
\right.
\end{equation}
%
%
and
\begin{equation}\label{eqn:5.50}
\left\{
\begin{aligned}
(dp(\tfrac{\partial}{\partial s}),0)&=\tfrac{1}{2}(d\Phi(\tfrac{\partial}{\partial s})+Pd\Phi(\tfrac{\partial}{\partial s}))=0,\\
(0,dq(\tfrac{\partial}{\partial s}))&=\tfrac{1}{2}(d\Phi(\tfrac{\partial}{\partial s})-Pd\Phi(\tfrac{\partial}{\partial s}))=d\Phi(\tfrac{\partial}{\partial s}).
\end{aligned}
\right.
\end{equation}

The first equation of \eqref{eqn:5.50} shows that $p$ depends only on the $(t,r)$.
Similarly, from the second equation in \eqref{eqn:5.49} we derive that $q$ depends only on
the $(t,s)$. By $g(PE_3,E_3)=-C$, $AE_3=-\frac{C}{\sqrt{2}}E_3$ and $\nabla_{E_3}{E_3}=0$,
we have that
\begin{equation}\label{eqn:4.mm1}
\begin{aligned}
(\tfrac{\partial^2 p}{\partial t^2},\tfrac{\partial^2 q}{\partial t^2})&
=\bar{\nabla}_{E_3}{E_3}-g( D_{E_3}{E_3},\tfrac{(p,q)}{\sqrt{2}})\tfrac{(p,q)}{\sqrt{2}}
-g( D_{E_3}{E_3},\tfrac{(p,-q)}{\sqrt{2}})\tfrac{(p,-q)}{\sqrt{2}}\\
&=-\tfrac{C}{\sqrt{2}}N+\tfrac{1}{2}(p,q)-\tfrac{C}{2}(p,-q)\\
&=(-C\sqrt{\tfrac{1+C}{2(1-C)}}\tfrac{\partial p}{\partial t},C\sqrt{\tfrac{1-C}{2(1+C)}}\tfrac{\partial q}{\partial t})+(\tfrac{1-C}{2}p,\tfrac{1+C}{2}q),
\end{aligned}
\end{equation}
where $D$ is the canonical Euclidean connection on $\mathbb{R}_2^6$.
From $\frac{dC}{dt}=E_3C=C\sqrt{2(1-C^2)}$, we know that the solution of
\eqref{eqn:4.mm1} is given by
\begin{equation}\label{eqn:pq}
\begin{aligned}
p(t,r)=\sqrt{\tfrac{1+C}{C}}V_1(r)+\tfrac{1}{\sqrt{C}}V_2(r),\ \
q(t,s)=\tfrac{1}{\sqrt{C}}W_1(s)+\sqrt{\tfrac{1-C}{C}}W_2(s),
\end{aligned}
\end{equation}
where $V_1(r),V_2(r),W_1(s),W_2(s)\in \mathbb{R}_1^3$ are vector-value functions depending on variables $r$ and $s$. Thus, we have
$$
\begin{aligned}
E_1&=\tfrac{1}{\rho_1}(\tfrac{\partial p}{\partial r},\tfrac{\partial q}{\partial r})
=(\sqrt{2}\tfrac{d V_1(r)}{dr}+\sqrt{\tfrac{2}{1+C}}\tfrac{d V_2(r)}{dr},0),\\
E_2&=\tfrac{1}{\rho_2}(\tfrac{\partial p}{\partial s},\tfrac{\partial q}{\partial s})
=(0,\sqrt{\tfrac{2}{1-C}}\tfrac{d W_1(s)}{ds}+\sqrt{2}\tfrac{d W_2(s)}{ds}),\\
E_3&=(\tfrac{\partial p}{\partial t},\tfrac{\partial q}{\partial t})
=(-\sqrt{\tfrac{1-C}{2C}}V_1(r)-\sqrt{\tfrac{1-C^2}{2C}}V_2(r),
-\sqrt{\tfrac{1-C^2}{2C}}W_1(s)-\sqrt{\tfrac{1+C}{2C}}W_2(s)),
\end{aligned}
$$
and
$$
N=(\sqrt{\tfrac{1+C}{1-C}}\tfrac{\partial p}{\partial t},-\sqrt{\tfrac{1-C}{1+C}}\tfrac{\partial q}{\partial t})
=(-\sqrt{\tfrac{1+C}{2C}}V_1(r)-\tfrac{1+C}{\sqrt{2C}}V_2(r),
\tfrac{1-C}{\sqrt{2C}}W_1(s)+\sqrt{\tfrac{1-C}{2C}}W_2(s)).
$$

From the fact that $\langle p(t,r),p(t,r)\rangle=\langle q(t,s),q(t,s)\rangle=-1$, we have
$$
\begin{aligned}
&1+\langle V_1(r),V_1(r)\rangle+\tfrac{1}{C}(\langle V_1(r),V_1(r)\rangle+\langle V_2(r),V_2(r)\rangle)
+\tfrac{2\sqrt{1+C}}{C}\langle V_1(r),V_2(r)\rangle=0,\\
&1-\langle W_2(s),W_2(s)\rangle+\tfrac{1}{C}(\langle W_1(s),W_1(s)\rangle+\langle W_2(s),W_2(s)\rangle)
+\tfrac{2\sqrt{1-C}}{C}\langle W_1(s),W_2(s)\rangle=0.
\end{aligned}
$$
According to the independence of parameters $t,r$ and $s$, it follows from above two equations that
$$
\begin{aligned}
&\langle V_1(r),V_1(r)\rangle=-1,\ \langle V_1(r),V_2(r)\rangle=0,\ \langle V_2(r),V_2(r)\rangle=1.\\
&\langle W_1(s),W_1(s)\rangle=-1,\ \langle W_1(s),W_2(s)\rangle=0,\ \langle W_2(s),W_2(s)\rangle=1.
\end{aligned}
$$
Then, by $g(E_1,E_3)=g(E_2,E_3)=0$,
with the use of $\langle \frac{dV_1(r)}{dr},V_2(r)\rangle+\langle V_1(r),\frac{dV_2(r)}{dr}\rangle=0$
and $\langle \frac{dW_1(s)}{ds},W_2(s)\rangle+\langle W_1(s),\frac{dW_2(s)}{ds}\rangle=0$,
we can have
$$
\begin{aligned}
&\langle \frac{d V_1(r)}{dr},V_2(r)\rangle=\langle \frac{d W_1(s)}{ds},W_2(s)\rangle=0.
\end{aligned}
$$

Now, on one hand, for $V_1(r)\in \mathbb{H}^2$, if $V_1(r)$ is a constant vector, we have
$\frac{d V_1(r)}{dr}=0$ and $\frac{d V_2(r)}{dr}\neq0$. Then, it holds that
$$
\begin{aligned}
E_1&=\sqrt{\tfrac{2}{1+C}}(\tfrac{d V_2(r)}{d r},0),\\
D_{E_1}N&=\sqrt{\tfrac{2C}{1+C}}\tfrac{\partial N}{\partial r}=
\sqrt{\tfrac{2C}{1+C}}(-\tfrac{1+C}{\sqrt{2C}}\tfrac{d V_2(r)}{d r},0)=-\tfrac{1+C}{\sqrt{2}}E_1,
\end{aligned}
$$
which implies that $b_1=-\langle D_{E_1}N,E_1\rangle=\frac{1+C}{\sqrt{2}}$.
By the assumption that $b_1=\frac{1}{\sqrt{2}}$, we have $C=0$. It is a contradiction.
Thus, $V_1(r)$ is a curve of $\mathbb{H}^2$,
and $V_2(r)$ is a unit normal vector field of $V_1(r)\hookrightarrow\mathbb{H}^2$.
By taking re-parameterization to make $r$ being arc length parameter of $V_1(r)$, and assume that
$k(r)$ is the curvature of $V_1(r)$, we have
$$
\frac{d^2V_1(r)}{dr^2}=V_1(r)+k(r)V_2(r), \ \
\frac{dV_2(r)}{dr}=-k(r)\frac{dV_1(r)}{dr}.
$$
Now, we can have
$$
\begin{aligned}
E_1&=\Big((\sqrt{2}-k(r)\sqrt{\tfrac{2}{1+C}})\tfrac{d V_1(r)}{dr},0\Big),\\
D_{E_1}N&=\sqrt{\tfrac{2C}{1+C}}\tfrac{\partial N}{\partial r}=
((-1+k(r)\sqrt{1+C})\tfrac{d V_1(r)}{d r},0)=\tfrac{-1+k(r)\sqrt{1+C}}{\sqrt{2}-k(r)\sqrt{\tfrac{2}{1+C}}}E_1,
\end{aligned}
$$
which implies that $b_1=-\langle D_{E_1}N,E_1\rangle=\frac{1-k(r)\sqrt{1+C}}{\sqrt{2}-k(r)\sqrt{\frac{2}{1+C}}}$.
By the assumption that $b_1=\frac{1}{\sqrt{2}}$ and $C\neq0$, we have $k(r)=0$,
i.e., $V_1(r)$ is a geodesic of $\mathbb{H}^2$.

On the other hand, for $W_1(s)\in \mathbb{H}^2$, if $W_1(s)$ is not constant vector,
then $W_1(s)$ is a curve of $\mathbb{H}^2$,
and $W_2(s)$ is a unit normal vector field of $W_1(s)\hookrightarrow\mathbb{H}^2$.
By taking re-parameterization to make $s$ being arc length parameter of $W_1(s)$, and assume that
$\tilde{k}(s)$ is the curvature of $W_1(s)$, we have
$$
\frac{d^2W_1(s)}{ds^2}=W_1(s)+\tilde{k}(s)W_2(s), \ \
\frac{dW_2(s)}{ds}=-\tilde{k}(s)\frac{dW_1(s)}{ds}.
$$
So, it follows that
$$
\begin{aligned}
E_2&=\Big(0,(\sqrt{\tfrac{2}{1-C}}-\sqrt{2}\tilde{k}(s))\tfrac{d W_1(s)}{ds}\Big),\\
D_{E_2}N&=\sqrt{\tfrac{2C}{1-C}}\tfrac{\partial N}{\partial s}=
(0,(\sqrt{1-C}-\tilde{k}(s))\tfrac{d W_1(s)}{d s})=\tfrac{\sqrt{1-C}-\tilde{k}(s)}{\sqrt{\tfrac{2}{1-C}}-\sqrt{2}\tilde{k}(s)}E_2,
\end{aligned}
$$
which implies that $b_4=-\langle D_{E_2}N,E_2\rangle=\frac{-\sqrt{1-C}+\tilde{k}(s)}{\sqrt{\frac{2}{1-C}}-\sqrt{2}\tilde{k}(s)}$.
By the assumption that $b_4=-\frac{1}{\sqrt{2}}$, we have $\sqrt{1-C}-\sqrt{\frac{1}{1-C}}=0$.
It is a contradiction. Thus, $W_1(s)$ is a constant vector, we have
$\frac{d W_1(s)}{ds}=0$ and $\frac{d W_2(s)}{ds}\neq0$.
Then, we get
$$
\begin{aligned}
E_2&=(0,\sqrt{2}\tfrac{d W_2(s)}{d s}),\\
D_{E_2}N&=\sqrt{\tfrac{2C}{1-C}}\tfrac{\partial N}{\partial s}=
(0,\tfrac{d W_2(s)}{d s})=\tfrac{1}{\sqrt{2}}E_2,
\end{aligned}
$$
which implies that $b_4=-\langle D_{E_2}N,E_2\rangle=-\frac{1}{\sqrt{2}}$.

In conclusion, we know that $V_1(r)$ is a geodesic of $\mathbb{H}^2$ and $V_2(r)$ is a unit normal
vector field of $V_1(r)\hookrightarrow\mathbb{H}^2$;
$W_1(s)$ is a constant vector in $\mathbb{H}^2$ and $W_2(s)\in T^1_{W_1(s)}\mathbb{H}^2$.
By taking re-parameterizations of $r$ and $s$, and up to an isometry of $\mathbb{H}^2\times \mathbb{H}^2$, we can assume that
$$
\begin{aligned}
V_1(r)&=(\cosh(r),\sinh(r),0),\ \ V_2(r)=(0,0,1),\\
W_1(s)&=(1,0,0), \ \ W_2(s)=(0,\cos(s),\sin(s)).
\end{aligned}
$$
Thus, combining \eqref{eqn:pq}, $C=\frac{1}{\cosh(\sqrt{2}t)}$ and $t<0$, we exactly get the immersion \eqref{eqn:jr}.
\end{proof}




In the following, we discuss the hypersurface of $\mathbb{H}^2\times \mathbb{H}^2$ with constant sectional curvature $-\frac{1}{2}$ and $C=0$.
By Lemma \ref{lemma:2.3}, {\bf Case I-ii} of Proposition \ref{prop:4.5} and $C=0$, we have the following lemma.

\begin{lemma}\label{lemma:4.7}
Let $M$ be a hypersurface of $\mathbb{H}^2\times \mathbb{H}^2$ with constant sectional curvature $\kappa=-\frac{1}{2}$ and $C=0$.
Then, under the frame $\{E_1,E_2,E_3\}$ defined by \eqref{eqn:2.5},
the connections satisfy
\begin{equation}\label{eqn:2.25a11}
\begin{aligned}
&A E_1=b_1 E_1+b_2 E_2,\ A E_2=b_2 E_1+b_4 E_2,\ A E_3=0,\\		
&\nabla_{E_1}E_1=b_1E_3,\
\nabla_{E_1}E_2=-b_2E_3,\ \nabla_{E_1}E_3=-b_1E_1
+b_2E_2,\\
&\nabla_{E_2} E_1=b_2E_3,\
\nabla_{E_2} E_2=-b_4E_3,\ \nabla_{E_2}E_3=-b_2E_1
+b_4E_2,\\
&\nabla_{E_3}E_1=\nabla_{E_3}E_2=\nabla_{E_3}E_3=0.
\end{aligned}
\end{equation}
Moreover, the following six equations hold:
\begin{equation}\label{eqn:4.26}
E_3b_1=-\tfrac{1}{2}+b_1^{2}-b_2^2,
\end{equation}
\begin{equation}\label{eqn:4.27}
E_3b_2=b_2\big(b_1-b_4\big),
\end{equation}
\begin{equation}\label{eqn:4.28}
E_3b_4=\tfrac{1}{2}+b_2^{2}-b_4^2,
\end{equation}
\begin{equation}\label{eqn:4.29}
E_1b_2-E_2b_1=0,	
\end{equation}
\begin{equation}\label{eqn:4.30}
E_1b_4-E_2b_2=0,
\end{equation}
\begin{equation}\label{eqn:4.31asas}
b_1b_4-b_2^2=-\tfrac{1}{2}.
\end{equation}
\end{lemma}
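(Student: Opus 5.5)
The lemma is essentially a specialization of Lemma \ref{lemma:2.3} to $C=0$, supplemented by the Gauss equation computed in {\bf Case I-ii} of Proposition \ref{prop:4.5}. So the plan is to substitute and read off, with no genuinely new computation.

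\textbf{Step 1: the shape operator and connection.} Since $C=0$ is constant and $C\neq\pm1$, Lemma \ref{lemma:2.3} applies. It gives $b_3=b_5=b_6=0$, hence the stated form of $AE_1$, $AE_2$ and $AE_3=0$. At $C=0$ both factors $\sqrt{\tfrac{1-C}{1+C}}$ and $\sqrt{\tfrac{1+C}{1-C}}$ equal $1$. Setting them to $1$ in \eqref{eqn:2.25} yields exactly the connection formulas \eqref{eqn:2.25a11}.

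\textbf{Step 2: the Codazzi equations.} Substituting $C=0$ into \eqref{eqn:2.26}--\eqref{eqn:2.30} gives the following, each for the stated reason.
\begin{itemize}
\item \eqref{eqn:2.26}: $\sqrt{1-C^2}/2=1/2$ and both square-root factors are $1$, so it becomes \eqref{eqn:4.26}.
\item \eqref{eqn:2.27}: it reduces directly to \eqref{eqn:4.27}.
\item \eqref{eqn:2.28}: it becomes \eqref{eqn:4.28}.
\item \eqref{eqn:2.29} and \eqref{eqn:2.30}: they do not involve $C$ and are literally \eqref{eqn:4.29} and \eqref{eqn:4.30}.
\end{itemize}

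\textbf{Step 3: the Gauss equation.} This is the only input that uses $\kappa=-\tfrac12$. The Gauss equation \eqref{eqn:2.2} with $TE_1=E_1$, $TE_2=-E_2$, $TE_3=0$ (at $C=0$) gives
\[
g(R(E_1,E_2)E_2,E_1)=-\tfrac12(1-1)+b_1b_4-b_2^2=b_1b_4-b_2^2,
\]
as already recorded in {\bf Case I-ii} of Proposition \ref{prop:4.5}. Setting this sectional curvature equal to $\kappa=-\tfrac12$ yields \eqref{eqn:4.31asas}.

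The remaining sectional curvatures $K(E_1,E_3)$ and $K(E_2,E_3)$ equal $-\tfrac12$ automatically. The mixed components $g(R(E_i,E_j)E_k,E_l)$ with distinct indices vanish because $b_3=b_5=b_6=0$. Hence no further Gauss condition arises.

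\textbf{Main obstacle.} The only delicate point is bookkeeping. One must check that the Gauss equation imposes nothing beyond \eqref{eqn:4.31asas}; the mixed-component check in Step 3 settles this.
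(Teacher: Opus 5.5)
Your proposal is correct and follows the paper's own argument exactly. The paper obtains this lemma by specializing Lemma~\ref{lemma:2.3} to $C=0$, which makes both square-root factors equal to $1$, and then adding the Gauss-equation identity $b_1b_4-b_2^2=-\tfrac12$ from {\bf Case I-ii} of Proposition~\ref{prop:4.5}. Your extra check that the remaining Gauss components impose nothing further is not needed here, since the lemma only asserts necessary conditions, but it is accurate and harmless.
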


Then, by using \eqref{eqn:4.26}--\eqref{eqn:4.28} and \eqref{eqn:4.31asas},
we have the following lemma.

\begin{lemma}\label{lemma:4.1aaa1}
Let $M$ be a hypersurface of $\mathbb{H}^2\times \mathbb{H}^2$ with constant sectional curvature $\kappa=-\frac{1}{2}$ and $C=0$.
Then, under the frame $\{E_1,E_2,E_3\}$ defined by \eqref{eqn:2.5}, we have the following six cases
of $b_1,b_2$ and $b_4$ on $M$:

\noindent{\bf Case 1}:
$$
b_1=-\tfrac{1}{\sqrt{2}},\ \ b_2=0,\ \
b_4=\tfrac{1}{\sqrt{2}}.
$$

\noindent{\bf Case 2}:
$$
b_1=-\tfrac{1}{\sqrt{2}}\tanh(\tfrac{t+k}{\sqrt{2}}),\ \ b_2=0,\ \
b_4=\tfrac{1}{\sqrt{2}}\coth(\tfrac{t+k}{\sqrt{2}}).
$$

\noindent{\bf Case 3}:
\begin{equation}\label{eq:jj1}
\begin{aligned}
&b_1=-\tfrac{1}{\sqrt{2}}+b_2,\ b_4=\tfrac{1}{\sqrt{2}}+b_2,\ b_2=e^{-\sqrt{2}(t+h_1)}.\\
\end{aligned}	
\end{equation}

\noindent{\bf Case 4}:
\begin{equation}\label{eq:jj2}
\begin{aligned}
&b_1=-\tfrac{1}{\sqrt{2}}-b_2,\ b_4=\tfrac{1}{\sqrt{2}}-b_2,\ b_2=e^{-\sqrt{2}(t+h_2)}.\\
\end{aligned}	
\end{equation}

\noindent{\bf Case 5}:
\begin{equation}\label{eq:jj3}
\begin{aligned}
b_1&=\tfrac{1}{\sqrt{2}}{\rm sech}(\sqrt{2}(t+l_1))\Big(\sinh (h_3)-\sinh(\sqrt{2}(t+l_1))\Big),\\
b_2&=\tfrac{1}{\sqrt{2}}{\rm sech}(\sqrt{2}(t+l_1))\cosh (h_3) ,\\
b_4&=\tfrac{1}{\sqrt{2}}{\rm sech}(\sqrt{2}(t+l_1))\Big(\sinh (h_3)+\sinh(\sqrt{2}(t+l_1))\Big).
\end{aligned}	
\end{equation}



\noindent{\bf Case 6}: 
\begin{equation}\label{eq:jj4}
\begin{aligned}
b_1&=\tfrac{1}{\sqrt{2}}{\rm csch}(\sqrt{2}(t+l_2))\Big(\cosh(h_4)-\cosh(\sqrt{2}(t+l_2))\Big),\\
b_2&=\tfrac{1}{\sqrt{2}}{\rm csch}(\sqrt{2}(t+l_2))\sinh(h_4),\\
b_4&=\tfrac{1}{\sqrt{2}}{\rm csch}(\sqrt{2}(t+l_2))\Big(\cosh(h_4)+\cosh(\sqrt{2}(t+l_2))\Big).
\end{aligned}	
\end{equation}
Here, function $t$ satisfies $E_3t=1$ and functions $h_1,h_2,h_3,h_4, k, l_1, l_2:M\rightarrow \mathbb{R}$
satisfy $E_3h_1=E_3h_2=E_3h_3=E_3h_4=E_3k=E_3l_1=E_3l_2=0$.
\end{lemma}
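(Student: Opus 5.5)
The plan is to integrate the Riccati-type system \eqref{eqn:4.26}--\eqref{eqn:4.28} along the integral curves of $E_3$, using the algebraic constraint \eqref{eqn:4.31asas} to cut it down to two unknowns. Introduce $s:=\tfrac12(b_1+b_4)$ and $d:=\tfrac12(b_4-b_1)$. Subtracting and adding \eqref{eqn:4.26} and \eqref{eqn:4.28} gives
$$E_3 d=\tfrac12+b_2^2-sd-\dots,$$
which after simplification reads $E_3(b_4-b_1)=1+2b_2^2-(b_4^2-b_1^2)$ and $E_3(b_1+b_4)=b_1^2-b_4^2$. Similarly, \eqref{eqn:4.27} gives $E_3b_2=-2db_2$. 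The constraint \eqref{eqn:4.31asas} becomes $s^2-d^2-b_2^2=-\tfrac12$, that is, $d^2=s^2-b_2^2+\tfrac12$. With it, the equation for $d$ reduces to $E_3 d=\tfrac12+b_2^2+\dots$; concretely, one checks $E_3d=d^2-2sd+\dots$. Rather than track all of this, the cleaner route is to note $E_3 s=-2sd$ and $E_3b_2=-2db_2$, so $s$ and $b_2$ satisfy the same linear equation. Hence $(s,b_2)$ is proportional along each $E_3$-curve to a common factor $e^{-2\int d}$, and $d$ itself satisfies a Riccati equation $E_3d=\tfrac12+\dots$ which, using the constraint, should turn out to be $E_3 d=d^2-\tfrac12\cdot(\text{sign})$ after eliminating $s^2-b_2^2$.

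Next I would split according to the sign of $s^2-b_2^2$, which is preserved along $E_3$ because both components scale by the same positive factor. First take $s=b_2=0$. Then $d^2=\tfrac12$, which yields \textbf{Case 1} (up to the orientation choice of $N$, which flips the signs). Second take $b_2=0$, $s\ne0$. Then $d$ solves a Riccati equation whose solutions give $b_1=-\tfrac1{\sqrt2}\tanh$ and $b_4=\tfrac1{\sqrt2}\coth$, consistent with $b_1b_4=-\tfrac12$; this is \textbf{Case 2}. If $s=\pm b_2\ne0$, then $d^2=\tfrac12$ and $E_3b_2=\mp\sqrt2\,b_2$, so $b_2=e^{-\sqrt2(t+h)}$, which gives \textbf{Cases 3, 4}. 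If $s^2-b_2^2>0$ or $<0$ with $b_2\ne0$, write $(b_2,s)=\rho(\cosh h_3,\sinh h_3)$ or $\rho(\sinh h_4,\cosh h_4)$ with $E_3h_i=0$ (the hyperbolic angle being constant along $E_3$ because the ratio $s/b_2$ is constant). Then $d^2=\pm\rho^2+\tfrac12$, and solving $E_3\rho=-2d\rho$ together with the Riccati equation for $d$ gives $d=\tfrac1{\sqrt2}\tanh(\sqrt2(t+l_1))$ with $\rho=\tfrac1{\sqrt2}{\rm sech}$, respectively $d=\tfrac1{\sqrt2}\coth$ with $\rho=\tfrac1{\sqrt2}{\rm csch}$. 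These are \textbf{Cases 5, 6}.

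The main obstacle is deriving the closed Riccati equation for $d$ correctly and handling degenerate branches. The step I expect to require most care is showing that $d$ satisfies $E_3 d=-2d^2+1+\dots$, which reduces via the constraint to an autonomous equation in $d$ alone, together with the constant solutions $d=\pm\tfrac1{\sqrt2}$. I would first verify the derivative identity by direct substitution, and then make sure each constant or sign choice is absorbed by the orientation of $N$ (via $\mathbb{F}$ if needed) or by shifting $t$. A final check is that each listed triple actually satisfies \eqref{eqn:4.26}--\eqref{eqn:4.28} and \eqref{eqn:4.31asas}.
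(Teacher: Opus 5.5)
Your plan is the paper's argument in different variables, and it goes through once the central identity is stated correctly. The paper integrates the Riccati equation $E_3(b_1-b_4)=(b_1-b_4)^2-2$. It then integrates the common linear equation $E_3y=(b_1-b_4)y$ for $y=b_2$ and $y=b_1+b_4$, and fixes the constants with \eqref{eqn:4.31asas}. In your variables these equations are $E_3d=1-2d^2$, $E_3s=-2ds$ and $E_3b_2=-2db_2$.

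The difference is bookkeeping. The paper first splits on whether $b_2$ is constant, excluding a nonzero constant by a separate contradiction. Only after integrating does it obtain $c_3^2=4e^{-2\sqrt2h}$ or $4h^2-c_3^2=\pm2$. You instead rewrite \eqref{eqn:4.31asas} as $d^2-\tfrac12=s^2-b_2^2$. Then the branch of $d$ (constant, $\tanh$ or $\coth$) is fixed in advance by the sign of $s^2-b_2^2$, which is constant along $E_3$-curves. So all six cases come from one uniform split, and the nonzero-constant $b_2$ case is excluded automatically. This is a slightly cleaner organization at no cost.

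What needs fixing is the step you defer. Subtracting \eqref{eqn:4.26} from \eqref{eqn:4.28} gives $E_3(b_4-b_1)=1+2b_2^2-b_1^2-b_4^2$, not $1+2b_2^2-(b_4^2-b_1^2)$. Equivalently $E_3d=\tfrac12+b_2^2-s^2-d^2$, and the constraint turns this into exactly $E_3d=1-2d^2$. There are no remaining terms and no sign ambiguity, so discard the guesses $d^2-2sd+\cdots$ and $d^2-\tfrac12(\mathrm{sign})$. The solutions $\pm\tfrac1{\sqrt2}$, $\tfrac1{\sqrt2}\tanh(\sqrt2(t+l))$ and $\tfrac1{\sqrt2}\coth(\sqrt2(t+l))$ are the ones you use.

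There are two smaller slips. First, $(b_2,s)=\rho(\cosh h_3,\sinh h_3)$ gives $d^2=\tfrac12-\rho^2$, and $\rho(\sinh h_4,\cosh h_4)$ gives $d^2=\tfrac12+\rho^2$; your $\pm$ is reversed, though your $\tanh$/$\coth$ assignment is right. Second, when $s=\pm b_2$, the coefficient in $E_3b_2=-2db_2$ is fixed by the sign of $d$, not by the choice $s=\pm b_2$.

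Finally, be precise about the normalizations. Reversing $N$ sends $(s,d,b_2)\mapsto(-s,-d,-b_2)$, and $\mathbb{F}$ sends $(s,d,b_2)\mapsto(s,-d,b_2)$; both replace $t$ by $-t$. So the signs of $sd$ and $db_2$ are orientation-invariant, and you need $\mathbb{F}$, as the paper uses, in three places:
\begin{enumerate}
\item the branch $s=-\tfrac1{\sqrt2}\mathrm{csch}(\sqrt2(t+k))$ when $b_2=0$, which yields $b_1=-\tfrac1{\sqrt2}\coth$ and $b_4=\tfrac1{\sqrt2}\tanh$;
\item the branch $\rho=-\tfrac1{\sqrt2}\mathrm{csch}$ in Case 6;
\item the situation $db_2<0$ in Cases 3--4.
\end{enumerate}
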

\begin{proof}
Assume that the hypersurface $M$ has constant sectional curvature $-\frac{1}{2}$ and $C=0$.
We will discuss two cases depending on the function $b_2$ on $M$.

{\bf Case I}: $b_2$ is constant.

When $b_2$ is constant on $M$, by \eqref{eqn:4.27}, we have $b_2(b_1-b_4)=0$.
If $b_2$ is non-zero constant, then it holds $b_1=b_4$ and $b_1^2-b_2^2=-\frac{1}{2}$. Now,
$$
E_3b_1=-\tfrac{1}{2}+b_1^{2}-b_2^2=-1,\ \ E_3b_4=\tfrac{1}{2}+b_2^{2}-b_4^2=1,
$$
which contradicts with $b_1=b_4$.

If $b_2=0$, then $b_1$ and $b_4$ satisfy $E_3b_1=-\tfrac{1}{2}+b_1^2$ and $E_3b_4=\tfrac{1}{2}-b_4^2$.
By directly integrating these equations along the integral curves of $E_3$,
and combining $b_1b_4=-\tfrac{1}{2}$, up to a sign of normal vector field,
we have
$$
\begin{aligned}
(b_1,b_4)=(-\tfrac{1}{\sqrt{2}},\tfrac{1}{\sqrt{2}}),\ &{\rm or}\
(b_1,b_4)=(-\tfrac{1}{\sqrt{2}}\tanh(\tfrac{t+k}{\sqrt{2}}),
\tfrac{1}{\sqrt{2}}\coth(\tfrac{t+k}{\sqrt{2}})),\\
&  {\rm or}\ (b_1,b_4)=(-\tfrac{1}{\sqrt{2}}\coth(\tfrac{t+k}{\sqrt{2}}),
\tfrac{1}{\sqrt{2}}\tanh(\tfrac{t+k}{\sqrt{2}})),
\end{aligned}
$$
where $k$ is a function on $M$ satisfying $E_3k=0$.
In fact, if $(b_1,b_4)=(-\tfrac{1}{\sqrt{2}}\coth(\tfrac{t+k}{\sqrt{2}}),
\tfrac{1}{\sqrt{2}}\tanh(\tfrac{t+k}{\sqrt{2}}))$, then by using the isometry
$$
\mathbb{F}:\mathbb{H}^2\times \mathbb{H}^2\rightarrow\mathbb{H}^2\times \mathbb{H}^2,\ \
\mathbb{F}(p,q)=(q,p),
$$
we can have another hypersurface $\mathbb{F}(M)$ of $\mathbb{H}^2\times \mathbb{H}^2$
with constant section curvature $-\frac{1}{2}$ and $C=0$. The unit normal vector field
of $\mathbb{F}(M)$ is $\tilde{N}=d\mathbb{F}(N)$.
Now, define the frame $\{\tilde{E}_1,\tilde{E}_2,\tilde{E}_3\}$ as \eqref{eqn:2.5} on $\mathbb{F}(M)$,
then the corresponding components of the shape operator on $\mathbb{F}(M)$ are given by:
$$
\tilde{b}_1=b_4,\ \tilde{b}_2=b_2=0,\ \tilde{b}_4=b_1.
$$
Thus, if $b_2=0$, up to an isometry of $\mathbb{H}^2\times \mathbb{H}^2$
or up to a sign of normal vector field, we obtain {\bf Case 1} and {\bf Case 2}.

\vskip 2mm

{\bf Case II}: $b_2$ is not constant.

When $b_2$ is not constant on $M$, let $F=b_1-b_4$, we get
$$
E_3F=E_3(b_1-b_4)=-2+(b_1-b_4)^2=-2+F^2.
$$
This equation can be directly integrated along the integral curves of $E_3$
to obtain
\begin{equation}\label{eqn:5.1a}
b_1-b_4=\pm\sqrt{2},\ {\rm or}\ b_1-b_4=-\sqrt{2}\tanh(\sqrt{2}(t+l)),
\ {\rm or}\ b_1-b_4=-\sqrt{2}\coth(\sqrt{2}(t+l)),
\end{equation}
where $l$ is a function on $M$ satisfying $E_3l=0$.
Note that, for the case of $b_1-b_4=\pm\sqrt{2}$, up to the isometry $\mathbb{F}$ of $\mathbb{H}^2\times \mathbb{H}^2$, and up to a sign of $N$, 
we can always assume that $b_1-b_4=-\sqrt{2}$ and $b_2>0$.

Then by $E_3b_2=b_2\big(b_1-b_4\big)=b_2F$, with the use of \eqref{eqn:5.1a}, we integrate along the integral curves of $E_3$
to obtain
\begin{equation}\label{eqn:5.2a}
b_2=e^{-\sqrt{2}(t+h)}, \ {\rm or}\ b_2=h\, {\rm sech}(\sqrt{2}(t+l)),
\ {\rm or}\ b_2=h\, {\rm csch}(\sqrt{2}(t+l)),
\end{equation}
where $h$ is a function on $M$ satisfying $E_3h=0$.

Now, let $\tilde{F}=b_1+b_4$, by \eqref{eqn:4.26} and \eqref{eqn:4.28}, we get
$$
E_3\tilde{F}=E_3(b_1+b_4)=(b_1+b_4)(b_1-b_4)=F\tilde{F}.
$$
Using \eqref{eqn:5.1a}, we integrate along the integral curves of $E_3$
to obtain
$$
b_1+b_4=c_3e^{-\sqrt{2}t}, \ {\rm or}\ \ b_1+b_4=c_3{\rm sech}(\sqrt{2}(t+l)),
\ {\rm or}\ \ b_1+b_4=c_3{\rm csch}(\sqrt{2}(t+l)),
$$
where $c_3$ is a function on $M$ satisfying $E_3c_3=0$. Thus, we have the following three situations:
$$
\begin{aligned}
&b_1=-\tfrac{1}{\sqrt{2}}+\tfrac{c_3}{2}e^{-\sqrt{2}t},\ b_4=\tfrac{1}{\sqrt{2}}+\tfrac{c_3}{2}e^{-\sqrt{2}t};\\
&b_1=\tfrac{1}{2}{\rm sech}(\sqrt{2}(t+l))\Big(c_3-\sqrt{2}\sinh(\sqrt{2}(t+l))\Big),\
b_4=\tfrac{1}{2}{\rm sech}(\sqrt{2}(t+l))\Big(c_3+\sqrt{2}\sinh(\sqrt{2}(t+l))\Big);\\
&b_1=\tfrac{1}{2}{\rm csch}(\sqrt{2}(t+l))\Big(c_3-\sqrt{2}\cosh(\sqrt{2}(t+l))\Big),\
b_4=\tfrac{1}{2}{\rm csch}(\sqrt{2}(t+l))\Big(c_3+\sqrt{2}\cosh(\sqrt{2}(t+l))\Big).
\end{aligned}
$$
From the fact that $b_1b_4-b_2^2=-\frac{1}{2}$ and using \eqref{eqn:5.2a},
corresponding to the above three situations, we have
$$
\begin{aligned}
&c_3^2=4e^{-2\sqrt{2}h};\ \ \ {\rm or}\ \ 4h^2-c_3^2=2;\ \ \ {\rm or}\ \ 4h^2-c_3^2=-2.
\end{aligned}
$$

For the first situation that $c_3^2=4e^{-2\sqrt{2}h}$, we have $c_3=\pm2e^{-\sqrt{2}h}$.
Now, we have two subcases as follows:
\begin{equation*}
\left\{
\begin{aligned}
&(1)\ b_1=-\tfrac{1}{\sqrt{2}}+b_2,\ b_2=e^{-\sqrt{2}(t+h)},\ b_4=\tfrac{1}{\sqrt{2}}+b_2.\\
&(2)\ b_1=-\tfrac{1}{\sqrt{2}}-b_2,\ b_2=e^{-\sqrt{2}(t+h)},\ b_4=\tfrac{1}{\sqrt{2}}-b_2.
\end{aligned}\right.
\end{equation*}
Thus, we have {\bf Case 3} and {\bf Case 4}.

For the second situation that $4h^2-c_3^2=2$, up to an isometry of $\mathbb{H}^2\times \mathbb{H}^2$,
we assume $h=\frac{1}{\sqrt{2}}\cosh h_3$ and $c_3=\sqrt{2}\sinh h_3$, where $h_3$ is a function on $M$ satisfying $E_3h_3=0$. Thus we obtain {\bf Case 5}.

For the third situation that $4h^2-c_3^2=-2$, up to an isometry of $\mathbb{H}^2\times \mathbb{H}^2$,
we assume $h=\frac{1}{\sqrt{2}}\sinh h_4$ and $c_3=\sqrt{2}\cosh h_4$, where $h_4$ is a function on $M$ satisfying $E_3h_4=0$. Thus we obtain {\bf Case 6}.
\end{proof}

For {\bf Case 1} and {\bf Case 2} of Lemma \ref{lemma:4.1aaa1}, we have the following proposition.

\begin{proposition}\label{prop:4.1a1}
Let $M$ be a hypersurface of $\mathbb{H}^2\times \mathbb{H}^2$ with constant sectional curvature $-\frac{1}{2}$ and $C=0$.
Under the frame $\{E_1,E_2,E_3\}$ defined by \eqref{eqn:2.5}, if $b_1,b_2$ and $b_4$ are
given by {\bf Case 1} or {\bf Case 2} of Lemma \ref{lemma:4.1aaa1}, then
$M$ is an open part of $M_{k_1,\frac{1}{k_1}}^{1/2}$ given in Example \ref{exam:3.4fg1} for some constant $0<k_1\leq1$.
More specifically, {\bf Case 1} corresponds to $M_{1,1}^{1/2}$, and
{\bf Case 2} corresponds to $M_{k_1,\frac{1}{k_1}}^{1/2}$ for some constant $0<k_1<1$.
\end{proposition}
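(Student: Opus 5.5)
In both Case 1 and Case 2 we have $b_2=0$, and $C=0$ is constant. The plan is therefore to invoke Proposition \ref{prop:3.61}(2) first. It tells us that $M$ is an open part of some $M_{k_1,k_2}^c$ from Example \ref{exam:3.4fg1}. Since $C=1-2c=0$, we get $c=\tfrac12$. Next, because $M$ has constant sectional curvature, Lemma \ref{lemma:3.5fg12}(2) forces $k_1,k_2$ to be constants with $k_1k_2=1$. So $M$ is an open part of $M_{k_1,1/k_1}^{1/2}$ for some constant $k_1\neq 0$. What remains is to reduce to $0<k_1\le 1$ and to match each case with the correct value of $k_1$.

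For the normalization, I would use the symmetries of $\mathbb{H}^2\times\mathbb{H}^2$. Changing the orientation of $N_1$ (respectively $N_2$) sends $k_1\mapsto -k_1$ (respectively $k_2\mapsto -k_2$) and reparametrizes $t$. Doing both at once preserves $k_1k_2=1$, so we may take $k_1>0$. The swap isometry $\mathbb{F}(p,q)=(q,p)$, which was already used in Lemma \ref{lemma:4.1aaa1}, exchanges the roles of $k_1$ and $k_2=1/k_1$. Hence we may also assume $k_1\le 1$.

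To identify which case gives which $k_1$, I would compute the principal curvatures directly from Example \ref{exam:3.4fg1} with $c=\tfrac12$, $k_2=1/k_1$ and $0<k_1\le1$. Writing $\tau=t/\sqrt2$, they are
\[
-\tfrac{1}{\sqrt2}\,\frac{\sinh\tau-k_1\cosh\tau}{\cosh\tau-k_1\sinh\tau},\qquad
\tfrac{1}{\sqrt2}\,\frac{k_1\sinh\tau-\cosh\tau}{k_1\cosh\tau-\sinh\tau}.
\]
When $k_1=1$ these are the constants $\tfrac{1}{\sqrt2}$ and $-\tfrac{1}{\sqrt2}$, which is Case 1 up to the sign of $N$. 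When $0<k_1<1$, put $k_1=\tanh(\alpha)$ with $\alpha>0$. The two expressions become $-\tfrac1{\sqrt2}\tanh(\tau-\alpha)$ and $\tfrac1{\sqrt2}\coth(\tau-\alpha)$. This is exactly Case 2 once the shift $k$ of $t$ is chosen appropriately; note that the parameter $t$ here satisfies $E_3t=1$, in agreement with Lemma \ref{lemma:4.1aaa1}. Conversely, if $b_1,b_4$ are constant we must have $k_1=1$, and if they are non-constant we must have $k_1<1$. This gives the stated correspondence.

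The main obstacle is bookkeeping rather than anything conceptual. We have to check that the sign and orientation choices, and the use of $\mathbb{F}$ made in Lemma \ref{lemma:4.1aaa1} to put $(b_1,b_4)$ into the normalized forms of Cases 1 and 2, are compatible with the normalization $0<k_1\le1$. In particular, Case 2 could a priori correspond to $k_1>1$, that is, to the $\coth/\tanh$-swapped form. I would resolve this by verifying that $\mathbb{F}$ interchanges $b_1\leftrightarrow b_4$ and $k_1\leftrightarrow k_2$ at the same time, so that the normalization in Lemma \ref{lemma:4.1aaa1} singles out exactly the range $k_1<1$.
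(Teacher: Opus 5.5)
Your proposal is correct and follows the paper's route: Proposition \ref{prop:3.61}(2) applies because $C$ is constant and $b_2=0$, which puts $M$ inside some $M^c_{k_1,k_2}$, and Lemma \ref{lemma:3.5fg12}(2) then forces $c=\tfrac12$ and constant $k_1,k_2$ with $k_1k_2=1$. Your additional bookkeeping checks out and fills in what the paper leaves implicit: you normalize to $0<k_1\le 1$ by flipping $N_1,N_2$ together with $t$ and by using $\mathbb{F}$, and you match $k_1=1$ and $k_1=\tanh\alpha$ to Cases 1 and 2 through the explicit principal curvatures.
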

\begin{proof}
Recall that the functions $b_2=0$ in {\bf Case 1} and {\bf Case 2} of Lemma \ref{lemma:4.1aaa1}, and we have classified the hypersurfaces of $\mathbb{H}^2\times \mathbb{H}^2$ with constant $C$
and constant $b_2$ in Proposition \ref{prop:3.61}. Then, Proposition \ref{prop:4.1a1} can be obtained directly by using (2) of Lemma \ref{lemma:3.5fg12}.
\end{proof}

Now, we begin to deal with {\bf Case 3} and {\bf Case 4} of Lemma \ref{lemma:4.1aaa1}.

\begin{theorem}\label{thm:5.10a1}
Let $M$ be a hypersurface of $\mathbb{H}^2\times \mathbb{H}^2$ with constant sectional curvature $-\frac{1}{2}$ and $C=0$.
Under the frame $\{E_1,E_2,E_3\}$ defined by \eqref{eqn:2.5}, if $b_1,b_2$ and $b_4$ are
given by {\bf Case 3} of Lemma \ref{lemma:4.1aaa1}, then $M$ admits local coordinates $(u,v,t)$
such that the functions $b_1,b_2,b_4$ are given by \eqref{eqn:3.1ff1}.

Conversely, let $\mathcal{D}$ be an open subset in $\mathbb{R}^2$, $I$ be an open subset in $\mathbb{R}$ and $\beta_1:I\to\mathbb{R},\  v\mapsto \beta_1(v)\in\mathbb{R}$ be any function. We define a metric $g$ on $\Omega=\{(u,v,t)\in \mathcal{D}\times I\}\subseteq \mathbb{R}^3$ by
\eqref{eqn:3.1ff3}, and a $(1,1)$-tensor field $A$ on $T\Omega$ by
\eqref{eqn:3.1ff4}.
Let $\nu$ be a vector bundle over  $\Omega$ of rank $1$ with metric $\tilde{g}$, $\nabla ^{\perp} $ a connection on $\nu$ compatible with the metric $\tilde{g}$, and $N$ a unit vector field in $\nu$.   We define a $(1,1)$-tensor field $\tilde{P}$ on $T\Omega\otimes \nu$ by
\eqref{eqn:3.1ff5}.
Then the integrability conditions are satisfied on $\Omega$ and hence on any simply
connected subset $\Omega_1$ of $\Omega$, up to an isometry of  $\mathbb{H}^2\times \mathbb{H}^2$, there is a unique isometric immersion $\Psi$ from $(\Omega_1,~g)$ into $\mathbb{H}^2\times \mathbb{H}^2$ such that $A$ is the shape operator of  $\Omega_1$ into $\mathbb{H}^2\times \mathbb{H}^2$, $\nu$ is isometric to the normal bundle of $\Psi(\Omega_1)$ in
$\mathbb{H}^2\times \mathbb{H}^2$ by an isomorphism $\tilde{\Psi}:\nu\to T^{\perp}\Psi(\Omega_1)$,
and for any $Y\in T\Omega_1$, we have
\begin{equation}
	P(\Psi_{*}Y)=\Psi_{*}((\tilde{P}Y)^{T})+\tilde{\Psi}((\tilde{P}Y)^{\perp}),~
	P(\tilde{\Psi}N)=\Psi_*(\partial_t),
\end{equation}
where $P$ is the product structure of $\mathbb{H}^2\times \mathbb{H}^2$, $(\tilde{P}Y)^{T}$ and $(\tilde{P}Y)^{\perp}$ denote the projections of  $\tilde{P}Y$ onto $T\Omega_1$ and $\nu$, respectively.
Moreover, $\Psi(\Omega_1)$ has constant sectional curvature $-\frac{1}{2}$ and product angle function $C=0$.
\end{theorem}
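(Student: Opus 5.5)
The plan is to follow the same scheme used for Theorem \ref{thm:3.1}: first build adapted coordinates $(u,v,t)$ from the frame $\{E_1,E_2,E_3\}$, then rewrite the remaining Codazzi equations \eqref{eqn:4.29}--\eqref{eqn:4.30} in those coordinates, and finally check the converse through the existence and uniqueness theorems of \cite{K,L-T-V}.

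\emph{Step 1: simplify the structure equations.} By Lemma \ref{lemma:4.7} with $C=0$ we have
\[
[E_1,E_2]=-2b_2E_3,\quad [E_1,E_3]=-b_1E_1+b_2E_2,\quad [E_2,E_3]=-b_2E_1+b_4E_2 .
\]
In Case 3 we have $b_1=-\tfrac{1}{\sqrt2}+b_2$ and $b_4=\tfrac{1}{\sqrt2}+b_2$. Hence $\mathrm{ad}_{E_3}$ acts on $\mathrm{span}\{E_1,E_2\}$ as $\tfrac{1}{\sqrt2}\,\mathrm{Id}$ plus $b_2$ times a rank-one nilpotent matrix. The natural null directions are $E_1\pm E_2$.

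The Codazzi equations also simplify. Equation \eqref{eqn:4.29} reads $E_1b_2-E_2b_1=(E_1-E_2)b_2=0$. Equation \eqref{eqn:4.30} reads $E_1b_4-E_2b_2=(E_1-E_2)b_2=0$. So both reduce to the single condition
\[
(E_1-E_2)\,e^{-\sqrt2(t+h_1)}=0 .
\]

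\emph{Step 2: build coordinates.} I look for a function $f$ with $E_3f=-1$ and set $d_i=E_if$. The integrability conditions for $f$ are exactly \eqref{eq:2.ab3}--\eqref{eq:2.ab4}, with $b_1,b_4$ now taken from Case 3.

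I then look for coordinate fields of the form
\[
U=\alpha(E_1-E_2)+\cdots\,E_3,\qquad V=\beta(E_1+E_2)+\gamma(E_1-E_2)+\cdots\,E_3,\qquad T=E_3 .
\]
The coefficients are chosen to solve the linear ODE system along $E_3$ that comes from $[T,U]=[T,V]=0$. This system should be explicitly solvable: $\alpha$ and $\beta$ behave like $e^{t/\sqrt2}$, and the nilpotent part contributes a term integrating $b_2=e^{-\sqrt2(t+h_1)}$. The commutator $[U,V]=0$ then imposes a first-order condition on the transverse derivatives.

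As in Lemma \ref{lemma:2.1a3}, I expect that choosing $t=-f$ is possible. With that choice, $h_1$ satisfies $E_3h_1=0$, and by the reduced Codazzi equation it is killed by $E_1-E_2$. It is therefore a function of $v$ alone, which I rename $\beta_1(v)$ (up to normalisation).

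\emph{Step 3: write out the data and prove the converse.} Substituting back gives $b_1,b_2,b_4$ in the form \eqref{eqn:3.1ff1}. From the frame one then reads off the metric \eqref{eqn:3.1ff3}, the shape operator \eqref{eqn:3.1ff4} and the tensor $\tilde P$ \eqref{eqn:3.1ff5}; for $\tilde P$ we use $\tilde PE_1=E_1$, $\tilde PE_2=-E_2$, $\tilde PE_3=N$.

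For the converse, for arbitrary $\beta_1$ I verify directly the following conditions:
\begin{itemize}
\item the Gauss equation with constant curvature $-\tfrac12$, i.e.\ \eqref{eqn:4.31asas} together with $C=0$;
\item the Codazzi equations \eqref{eqn:4.26}--\eqref{eqn:4.30};
\item the compatibility conditions of $\tilde P$ required in \cite{K,L-T-V}, namely $\tilde P^2=\mathrm{Id}$, symmetry, and parallelism.
\end{itemize}
Since $\beta_1$ depends only on $v$ and the $(E_1-E_2)$-derivative of $h_1$ vanishes identically, no PDE survives. This explains why an arbitrary one-variable function is allowed.

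The main obstacle is Step 2: finding explicit solutions for the coordinate coefficients so that $[U,V]=0$. The nilpotent, $b_2$-dependent coupling makes the ODE system non-diagonal. I would first try the $E_1\pm E_2$ splitting, in which the system becomes triangular, and solve it by variation of constants.
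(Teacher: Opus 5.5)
There is a genuine gap: the proposal misidentifies where the arbitrary function $\beta_1$ comes from, and that is the heart of the theorem. Your reduction of \eqref{eqn:4.29}--\eqref{eqn:4.30} to $(E_1-E_2)b_2=0$ is correct, and so is singling out the directions $E_1\pm E_2$. But $h_1$ carries no geometric information. By \eqref{eqn:4.27}, $E_3b_2=-\sqrt2\,b_2$, so $f:=-\frac{1}{\sqrt2}\ln b_2$ satisfies $E_3f=1$, and choosing $t=f$ gives $h_1\equiv0$. This choice is in fact forced by \eqref{eqn:3.1ff1}. If $h_1$ were a non-constant $\beta_1(v)$, you would get $b_2=e^{-\sqrt2(t+\beta_1(v))}$, which contradicts your own Step 3. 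Note also that the Codazzi equation annihilates $b_2$, i.e.\ $t+h_1$, not $h_1$ alone. In fact $b_1,b_2,b_4$ contain no free function at all. The function $\beta_1$ lives in how the frame sits in the coordinates, i.e.\ in the metric, and your plan never produces it.

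The missing step is to go one order further in the transverse derivatives of $f$.
\begin{itemize}
\item Since $(E_1-E_2)f=0$, the function $d_1:=E_1f=E_2f$ is a new invariant.
\item Applying the bracket relations from Lemma \ref{lemma:4.7} to $f$ and then to $d_1$ gives the closed system \eqref{eqn:Eid1}--\eqref{eqn:Eid2} for $d_1$ and $d_2:=(E_1+E_2)d_1$. For example, $E_3d_1=-\frac{d_1}{\sqrt2}$ and $E_3d_2=-4b_2^2-\sqrt2\,d_2$.
\item The paper then uses $t=f$ and $u=d_1$ themselves as coordinates, via the commuting frame $\partial_u=-\frac{1}{2b_2}(E_1-E_2)$, $\partial_t=E_3+\frac{u}{\sqrt2}\partial_u$, and an explicit $\partial_v$.
\item In these coordinates the system becomes $\partial_ud_2=-3\sqrt2\,u$ and $\partial_td_2-\frac{u}{\sqrt2}\partial_ud_2=-4e^{-2\sqrt2t}-\sqrt2\,d_2$. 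Its general solution $d_2=-\frac{3}{\sqrt2}u^2+2\sqrt2\,e^{-2\sqrt2t}+\beta_1(v)e^{-\sqrt2t}$ is where $\beta_1$ actually comes from.
\item This $d_2$ fixes the frame in coordinates, and hence \eqref{eqn:3.1ff3}--\eqref{eqn:3.1ff5}. The remaining compatibility conditions put no constraint on $\partial_vd_2$, which is the real reason no PDE survives.
\end{itemize}

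This also explains why your Step 2 stalls. Since $E_3d_1\neq0$, the invariant $d_1$ cannot serve as a coordinate once you insist on $\partial_t=E_3$. Indeed, \eqref{eqn:3.1ff3} has $g(\partial_t,\partial_t)=1+\frac14e^{2\sqrt2t}u^2$, and \eqref{eqn:3.1ff5} gives $E_3=\tilde PN=\partial_t-\frac{u}{\sqrt2}\partial_u$. So those formulas cannot be read off in flow-box coordinates of $E_3$ without a further change of variables; for instance, $ue^{t/\sqrt2}$ is an $E_3$-invariant. The clause $P(\tilde\Psi N)=\Psi_*(\partial_t)$ in the statement is inconsistent with \eqref{eqn:3.1ff5} and should not be taken as a normalization. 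Finally, \eqref{eq:2.ab3} was derived under $b_4=-b_1$ and does not carry over verbatim to Case 3.
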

\begin{proof}
For {\bf Case 3} of Lemma \ref{lemma:4.1aaa1}, we have $b_1=-\frac{1}{\sqrt{2}}+b_2,\ b_4=\frac{1}{\sqrt{2}}+b_2$. By \eqref{eqn:4.29}, we have $E_1b_2=E_2b_2$. Let $f=-\frac{1}{\sqrt{2}}\ln b_2$ and $d_1=E_1f$,
then we have $E_2b_2=-\sqrt{2}b_2d_1$.

Now, we calculate $0=([E_i,E_j]-(\nabla_{E_i}{E_j}-\nabla_{E_j}{E_i}))b_2$, $1\leq i,j\leq3$, and assume that $E_1d_1+E_2d_1=d_2$, then we have
\begin{equation}\label{eqn:Eid1}
E_1d_1=-b_2+\frac{d_2}{2},\ \ E_2d_1=b_2+\frac{d_2}{2},\ \ E_3d_1=-\frac{d_1}{\sqrt{2}}.
\end{equation}

Next, we calculate $0=([E_i,E_j]-(\nabla_{E_i}{E_j}-\nabla_{E_j}{E_i}))d_1$, $1\leq i,j\leq3$, and have
\begin{equation}\label{eqn:Eid2}
E_2d_2=-6\sqrt{2}b_2d_1+E_1d_2,\ \ E_3d_2=-4b_2^2-\sqrt{2}d_2.
\end{equation}

Then, we define the new frame $\{X_1,X_2,X_3\}$ on $M$ as follows:
$$
\begin{aligned}
X_1&=\frac{-1}{2b_2}(E_1-E_2),\ X_3=\frac{-d_1}{2\sqrt{2}b_2}(E_1-E_2)+E_3,\\
X_2&=\frac{-2b_2-\sqrt{2}d_1^2-d_2}{2b_2^{\frac{3}{2}}}E_1+\frac{-2b_2+\sqrt{2}d_1^2+d_2}
{2b_2^{\frac{3}{2}}}E_2+\frac{2d_1}{\sqrt{b_2}}E_3.
\end{aligned}
$$
One can check that frame $\{X_1,X_2,X_3\}$ satisfies $[X_i,X_j]=0$. By using $E_1f=E_2f=d_1,E_3f=1$ and
\eqref{eqn:Eid1}, we can have
$$
X_1d_1=1,\ X_2d_1=X_3d_1=0,\ X_1f=X_2f=0,\ X_3f=1.
$$
Due that $[X_i,X_j]=0$, then there exists a local coordinates $\{u,v,t\}$, such that
$$
\partial_u=X_1,\ \partial_v=X_2,\ \partial_t=X_3,\ f=t,\ b_2=e^{-\sqrt{2}t},\ d_1=u.
$$
So the functions $b_1,b_2,b_4$ are given by
\begin{equation}\label{eqn:3.1ff1}
b_1=-\frac{1}{\sqrt{2}}+e^{-\sqrt{2}t},\ b_2=e^{-\sqrt{2}t},\ b_4=\frac{1}{\sqrt{2}}+e^{-\sqrt{2}t}.
\end{equation}
It follows that
\begin{equation}\label{eqn:d1d2}
\left(
  \begin{array}{c}
    E_1 \\
    E_2 \\
    E_3 \\
  \end{array}
\right)
=\left(\begin{array}{cccc}
-b_2+\frac{d_2}{2} & \frac{-\sqrt{b_2}}{2} & d_1 \\
b_2+\frac{d_2}{2} & \frac{-\sqrt{b_2}}{2} & d_1 \\
\frac{-d_1}{\sqrt{2}} & 0 & 1
\end{array}\right)
\left(
  \begin{array}{c}
    \partial_u \\
    \partial_v \\
    \partial_t \\
  \end{array}
\right).
\end{equation}

Now, by using \eqref{eqn:Eid2} and \eqref{eqn:d1d2}, we have
\begin{equation}\label{eqn:dud2}
\frac{\partial d_2}{\partial u}=-3\sqrt{2}u,
\end{equation}
\begin{equation}\label{eqn:dtd2}
-\frac{u}{\sqrt{2}}\frac{\partial d_2}{\partial u}+\frac{\partial d_2}{\partial t}=
-4e^{-2\sqrt{2}t}-\sqrt{2}d_2.
\end{equation}
By \eqref{eqn:dud2}, we have $d_2=\frac{-3}{\sqrt{2}}u^2+\alpha(v,t)$,
where $\alpha(v,t)$ is a function depending on variables $v$ and $t$.
By \eqref{eqn:dtd2}, we have $\frac{\partial \alpha(v,t)}{\partial t}=-\sqrt{2}\alpha(v,t)-4e^{-2\sqrt{2}t}$, which deduces that
$\alpha(v,t)=e^{-2\sqrt{2}t}(2\sqrt{2}+e^{\sqrt{2}t}\beta_1(v))$.
Here, $\beta_1(v)$ is a function depending on variable $v$.
Thus, we have $d_2=\frac{-3}{\sqrt{2}}u^2+e^{-2\sqrt{2}t}(2\sqrt{2}+e^{\sqrt{2}t}\beta_1(v))$.
Finally, we can check that the compatibility conditions of  $b_1,b_2,b_4,d_1,d_2,\beta_1(v)$  with respect to the Lie brackets $[E_i,E_j]$ are automatically satisfied.

Conversely, we assume that the metric $g$ on $\Omega$ is defined by
\begin{equation}\label{eqn:3.1ff3}
\begin{aligned}
g=&\tfrac{1}{4}(16 e^{-\sqrt{2}t}+e^{3 \sqrt{2}t}u^4+8\sqrt{2}\beta_1(v)
-2\sqrt{2}e^{2\sqrt{2}t}u^2\beta_1(v)+2e^{\sqrt{2}t}(4+4u^2+\beta_1^2(v)))dv^2\\
&+\tfrac{1}{2}e^{2\sqrt{2}t}du^2+(1+\tfrac{1}{4}e^{2\sqrt{2}t}u^2)dt^2+\tfrac{1}{2}e^{\tfrac{t}{\sqrt{2}}}
(\sqrt{2}(4-e^{2\sqrt{2}t}u^2)+2e^{\sqrt{2}t}\beta_1(v))dudv\\
&+\tfrac{e^{2\sqrt{2}t}u}{\sqrt{2}}dudt
+\tfrac{1}{2}e^{\tfrac{t}{\sqrt{2}}}u(12-e^{2\sqrt{2}t}u^2+\sqrt{2}e^{\sqrt{2}t}\beta_1(v))dvdt,
\end{aligned}
\end{equation}
the $(1,1)$-tensor $A$ on $T\Omega$ is defined by
\begin{equation}\label{eqn:3.1ff4}
\begin{aligned}
A\partial_u=&(e^{-\sqrt{2}t}-\tfrac{3}{4}e^{\sqrt{2}t}u^2+\tfrac{\beta_1(v)}{2\sqrt{2}})\partial_u
-\tfrac{e^{\tfrac{t}{\sqrt{2}}}}{2\sqrt{2}}\partial_v+\tfrac{e^{\sqrt{2}t}u}{\sqrt{2}}\partial_t,\\
A\partial_v=&\tfrac{1}{8}e^{-\tfrac{5t}{\sqrt{2}}}(-16\sqrt{2}+3\sqrt{2}e^{4\sqrt{2}t}u^4-8e^{3\sqrt{2}t} u^2\beta_1(v)+2\sqrt{2}e^{2\sqrt{2}t}(-4+4u^2+\beta_1^2(v)))\partial_u\\
&+(e^{-\sqrt{2}t}+\tfrac{1}{4}e^{\sqrt{2}t}u^2-\tfrac{\beta_1(v)}{2\sqrt{2}})\partial_v
-\tfrac{1}{2}e^{-\tfrac{t}{\sqrt{2}}}u(4+e^{2\sqrt{2}t}u^2-\sqrt{2}e^{\sqrt{2}t}\beta_1(v))\partial_t,\\
A\partial_t=&\tfrac{1}{8}u(\sqrt{2}e^{-\sqrt{2}t}(4-3e^{2\sqrt{2}t}u^2)+2\beta_1(v))\partial_u
-\tfrac{e^{\tfrac{t}{\sqrt{2}}}u}{4}\partial_v+\tfrac{1}{2}e^{\sqrt{2}t}u^2\partial_t,
\end{aligned}
\end{equation}
and the $(1,1)$-tensor $\tilde{P}$ on $T\Omega\otimes \nu$ is defined by
\begin{equation}\label{eqn:3.1ff5}
\begin{aligned}
\tilde{P}\partial_u=&\tfrac{1}{4}(\sqrt{2}e^{-\sqrt{2}t}(-4+3e^{2\sqrt{2}t}u^2)-2 \beta_1(v))\partial_u+\tfrac{e^{\tfrac{t}{\sqrt{2}}}}{2}\partial_v
-e^{\sqrt{2}t}u\partial_t,\\
\tilde{P}\partial_v=&\tfrac{1}{4}e^{-\tfrac{5t}{\sqrt{2}}}(-16-3e^{4\sqrt{2}t}u^4-8\sqrt{2}e^{\sqrt{2}t} \beta(v)+4\sqrt{2}e^{3\sqrt{2}t}u^2\beta_1(v)+2e^{2\sqrt{2}t}(4+8u^2-\beta_1^2(v)))\partial_u\\
&+\tfrac{1}{4}(\sqrt{2}e^{-\sqrt{2}t}(4-e^{2\sqrt{2}t}u^2)+2\beta_1(v))\partial_v
+\tfrac{1}{2}e^{-\tfrac{t}{\sqrt{2}}}u(\sqrt{2}(-4+e^{2\sqrt{2}t}u^2)-2e^{\sqrt{2}t} \beta_1(v))\partial_t\\
&+2ue^{\tfrac{t}{\sqrt{2}}}N,\\
\tilde{P}\partial_t=&\tfrac{1}{4}u(-4e^{-\sqrt{2}t}+3e^{\sqrt{2}t}u^2-\sqrt{2} \beta_1(v))\partial_u+\tfrac{ue^{\frac{t}{\sqrt{2}}}}{2\sqrt{2}}\partial_v
-\tfrac{e^{\sqrt{2}t}u^2}{\sqrt{2}}\partial_t+N,\\
\tilde{P}N=&-\tfrac{u}{\sqrt{2}}\partial_u+\partial_t.\\	
\end{aligned}
\end{equation}
Then it can be  checked  that all the integrability conditions are satisfied.
Therefore, the conclusions can be obtained by applying  the existence and uniqueness theorems in  \cite{K,L-T-V} directly.
\end{proof}

\begin{theorem}\label{thm:5.11a1}
Let $M$ be a hypersurface of $\mathbb{H}^2\times \mathbb{H}^2$ with constant sectional curvature $-\frac{1}{2}$ and $C=0$.
Under the frame $\{E_1,E_2,E_3\}$ defined by \eqref{eqn:2.5}, if $b_1,b_2$ and $b_4$ are
given by {\bf Case 4} of Lemma \ref{lemma:4.1aaa1}, then $M$ admits local coordinates $(u,v,t)$
such that the functions $b_1,b_2,b_4$ are given by \eqref{eqn:3.1ffff1}.

Conversely, let $\mathcal{D}$ be an open subset in $\mathbb{R}^2$, $I$ be an open subset in $\mathbb{R}$ and $\beta_2:I\to\mathbb{R},\ v\mapsto \beta_2(v)\in\mathbb{R}$ be any function. We define a metric $g$ on  $\Omega=\{(u,v,t)\in \mathcal{D}\times I\}\subseteq  \mathbb{R}^3$ by
\eqref{eqn:3.1ffff3}
and a $(1,1)$-tensor field $A$ on $T\Omega$ by
\eqref{eqn:3.1ffff4}.
Let $\nu$ be a vector bundle over  $\Omega$ of rank $1$ with metric $\tilde{g}$, $\nabla ^{\perp} $ a connection on $\nu$ compatible with the metric $\tilde{g}$, and $N$ a unit vector field in $\nu$.   We define a $(1,1)$-tensor field $\tilde{P}$ on $T\Omega\otimes \nu$ by
\eqref{eqn:3.1ffff5}.

Then the integrability conditions are satisfied on $\Omega$ and hence on any simply
connected subset $\Omega_1$ of $\Omega$, up to an isometry of  $\mathbb{H}^2\times \mathbb{H}^2$, there is a unique isometric immersion $\Psi$ from $(\Omega_1,~g)$ into $\mathbb{H}^2\times \mathbb{H}^2$ such that $A$ is the shape operator of  $\Omega_1$ into $\mathbb{H}^2\times \mathbb{H}^2$, $\nu$ is isometric to the normal bundle of $\Psi(\Omega_1)$ in
$\mathbb{H}^2\times \mathbb{H}^2$ by an isomorphism $\tilde{\Psi}:\nu\to T^{\perp}\Psi(\Omega_1)$,
and for any $Y\in T\Omega_1$, we have
\begin{equation}
	P(\Psi_{*}Y)=\Psi_{*}((\tilde{P}Y)^{T})+\tilde{\Psi}((\tilde{P}Y)^{\perp}),~
	P(\tilde{\Psi}N)=\Psi_*(\partial_t),
\end{equation}
where $P$ is the product structure of $\mathbb{H}^2\times \mathbb{H}^2$, $(\tilde{P}Y)^{T}$ and $(\tilde{P}Y)^{\perp}$ denote the projections of  $\tilde{P}Y$ onto $T\Omega_1$ and $\nu$, respectively.
Moreover, $\Psi(\Omega_1)$ has constant sectional curvature $-\frac{1}{2}$ and product angle function $C=0$.
\end{theorem}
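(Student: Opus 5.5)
The plan is to mirror the proof of Theorem \ref{thm:5.10a1}, with the sign changes coming from $b_1=-\tfrac{1}{\sqrt{2}}-b_2$ and $b_4=\tfrac{1}{\sqrt{2}}-b_2$. A quicker alternative would be to realize Case 4 as the image of Case 3 under a symmetry. Replacing $N$ by $-N$ keeps $C=0$, but since $E_1,E_2$ in \eqref{eqn:2.5} depend on $N$ through $J_1N\pm J_2N$, it also changes the signs of the $b_i$ and reorients $E_3$. Combining this with the isometry $\mathbb{F}$, which swaps $b_1$ and $b_4$, does not obviously map Case 4 to Case 3: a check suggests it maps Case 4 to itself up to reparametrization. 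So I would carry out the direct computation.

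\textbf{Direct argument.} From \eqref{eqn:4.29} with $b_1=-\tfrac{1}{\sqrt{2}}-b_2$ we get $E_1b_2=-E_2b_2$. Put $f=-\tfrac{1}{\sqrt{2}}\ln b_2$, so that $E_3f=1$ by \eqref{eqn:4.27} (indeed $b_1-b_4=-\sqrt{2}$). Set $d_1=E_1f$; then $E_2f=-d_1$ and $E_2b_2=\sqrt{2}b_2d_1$.

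Next, apply the commutator identities $([E_i,E_j]-\nabla_{E_i}E_j+\nabla_{E_j}E_i)$ from \eqref{eqn:2.25a11} to $b_2$. Introducing $d_2=E_1d_1-E_2d_1$ (the analogue of the Case 3 choice, with the sign flipped), this should give formulas of the type
\[
E_1d_1=\pm b_2+\tfrac{d_2}{2},\qquad E_2d_1=\pm b_2-\tfrac{d_2}{2},\qquad E_3d_1=-\tfrac{d_1}{\sqrt{2}}.
\]
Applying the same identities to $d_1$ then yields $E_3d_2$ and a relation between $E_1d_2$ and $E_2d_2$.

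With this in hand, I would build a commuting frame by analogy with Case 3:
\[
X_1=\tfrac{-1}{2b_2}(E_1+E_2),\qquad X_3=E_3+(\text{multiple of }E_1+E_2),
\]
with $X_2$ a suitable combination weighted by $b_2^{-3/2}$. The coefficients are fixed by requiring $X_1d_1=1$, $X_2d_1=X_3d_1=0$, $X_1f=X_2f=0$ and $X_3f=1$. This yields coordinates $(u,v,t)$ with $f=t$, $b_2=e^{-\sqrt{2}t}$, $d_1=u$, and gives \eqref{eqn:3.1ffff1}.

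Inverting the frame expresses $E_i$ in terms of $\partial_u,\partial_v,\partial_t$. The equations for $d_2$ become linear PDEs, and integrating them gives $d_2$ as a quadratic in $u$ plus $e^{-2\sqrt{2}t}(\text{const}+e^{\sqrt{2}t}\beta_2(v))$. The remaining compatibility conditions should then hold identically.

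For the converse, I would write $g$, $A$ and $\tilde P$ from the frame expressions as \eqref{eqn:3.1ffff3}--\eqref{eqn:3.1ffff5}. The steps are:
\begin{itemize}
\item verify the Gauss equation \eqref{eqn:2.2} with $\kappa=-\tfrac{1}{2}$ and the Codazzi equation \eqref{eqn:2.3};
\item verify the compatibility conditions on $\tilde P$: $\tilde P^2=\mathrm{Id}$, symmetry, and parallelism, equivalently Lemma \ref{lemma:2.1};
\item invoke the existence and uniqueness theorems of \cite{K,L-T-V}.
\end{itemize}
Finally, $C=\tilde g(\tilde PN,N)=0$ since $\tilde PN$ is tangent.

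The main obstacle is finding the commuting frame $X_2$ and fixing the signs correctly. The rest is mechanical but long symbolic verification.
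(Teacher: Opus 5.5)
Your plan is correct and is essentially the paper's proof. It uses the same $f=-\tfrac{1}{\sqrt2}\ln b_2$, $d_1=E_1f$ and $d_2=E_1d_1-E_2d_1$, the same commuting frame and integration of $d_2$, and the same direct verification plus existence and uniqueness theorems for the converse. One sign to fix: the commutators give $E_1d_1=b_2+\tfrac{d_2}{2}$ and $E_2d_1=b_2-\tfrac{d_2}{2}$, so $X_1d_1=1$ forces $X_1=\tfrac{1}{2b_2}(E_1+E_2)$ with a plus sign, not $-\tfrac{1}{2b_2}$.

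As an aside, the symmetry shortcut you set aside does work if you add an orientation-reversing isometry $\sigma\in\mathrm{O}^+(1,2)$ on one factor, which flips only the sign of $b_2$. Composing $(\mathrm{id},\sigma)$ with $\mathbb{F}$ and $N\mapsto -N$ gives $(b_1,b_2,b_4)\mapsto(-b_4,b_2,-b_1)$ with $E_3$ preserved, and this sends Case 4 exactly onto Case 3.
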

\begin{proof}
For {\bf Case 4}, we have $b_1=-\frac{1}{\sqrt{2}}-b_2,\ b_4=\frac{1}{\sqrt{2}}-b_2$. By \eqref{eqn:4.29}, we get $E_1b_2=-E_2b_2$. Let $f=-\frac{1}{\sqrt{2}}\ln b_2$ and $d_1=E_1f$,
then it holds $E_2b_2=\sqrt{2}b_2d_1$.

Now, we calculate $0=([E_i,E_j]-(\nabla_{E_i}{E_j}-\nabla_{E_j}{E_i}))b_2$, $1\leq i,j\leq3$, and assume that
$E_1d_1-E_2d_1=d_2$, it follows that
\begin{equation}\label{eqn:Eid1ff}
E_1d_1=b_2+\frac{d_2}{2},\ \ E_2d_1=b_2-\frac{d_2}{2},\ \ E_3d_1=-\frac{d_1}{\sqrt{2}}.
\end{equation}

Next, we calculate $0=([E_i,E_j]-(\nabla_{E_i}{E_j}-\nabla_{E_j}{E_i}))d_1$, $1\leq i,j\leq3$, and have
\begin{equation}\label{eqn:Eid2ff}
E_2d_2=-6\sqrt{2}b_2d_1-E_1d_2,\ \ E_3d_2=-4b_2^2-\sqrt{2}d_2.
\end{equation}

Then, we define the new frame $\{X_1,X_2,X_3\}$ on $M$ as follows:
$$
\begin{aligned}
X_1&=\frac{1}{2b_2}(E_1+E_2),\ X_3=\frac{d_1}{2\sqrt{2}b_2}(E_1+E_2)+E_3,\\
X_2&=\frac{2b_2-\sqrt{2}d_1^2-d_2}{2b_2^{\frac{3}{2}}}E_1+\frac{-2b_2-\sqrt{2}d_1^2-d_2}{2b_2^{\frac{3}{2}}}E_2
-\frac{2d_1}{\sqrt{b_2}}E_3.\\
\end{aligned}
$$
One can check that frame $\{X_1,X_2,X_3\}$ satisfies $[X_i,X_j]=0$. By using $E_1f=-E_2f=d_1,E_3f=1$ and
\eqref{eqn:Eid1ff}, we can get
$$
X_1d_1=1,\ X_2d_1=X_3d_1=0,\ X_1f=X_2f=0,\ X_3f=1.
$$
Due that $[X_i,X_j]=0$, there exists a local coordinates $\{u,v,t\}$, such that
$$
\partial_u=X_1,\ \partial_v=X_2,\ \partial_t=X_3,\ f=t,\ b_2=e^{-\sqrt{2}t},\ d_1=u.
$$
So the functions $b_1,b_2,b_4$ are given by
\begin{equation}\label{eqn:3.1ffff1}
b_1=-\frac{1}{\sqrt{2}}-e^{-\sqrt{2}t},\ b_2=e^{-\sqrt{2}t},\ b_4=\frac{1}{\sqrt{2}}-e^{-\sqrt{2}t}.
\end{equation}
It follows that
\begin{equation}\label{eqn:d1d2ff}
\left(
  \begin{array}{c}
    E_1 \\
    E_2 \\
    E_3 \\
  \end{array}
\right)
=\left(\begin{array}{cccc}
b_2+\frac{d_2}{2} & \frac{\sqrt{b_2}}{2} & d_1 \\
b_2-\frac{d_2}{2} & \frac{-\sqrt{b_2}}{2} & -d_1 \\
\frac{-d_1}{\sqrt{2}} & 0 & 1
\end{array}\right)
\left(
  \begin{array}{c}
    \partial_u \\
    \partial_v \\
    \partial_t \\
  \end{array}
\right).
\end{equation}

Next, by using \eqref{eqn:Eid2ff} and \eqref{eqn:d1d2ff}, we obtain
\begin{equation}\label{eqn:dud2ff}
\frac{\partial d_2}{\partial u}=-3\sqrt{2}u,
\end{equation}
\begin{equation}\label{eqn:dtd2ff}
-\frac{u}{\sqrt{2}}\frac{\partial d_2}{\partial u}+\frac{\partial d_2}{\partial t}=
-4e^{-2\sqrt{2}t}-\sqrt{2}d_2.
\end{equation}
Now, by \eqref{eqn:dud2ff}, we have $d_2=\frac{-3}{\sqrt{2}}u^2+\alpha(v,t)$,
where $\alpha(v,t)$ is a function depending on variables $v$ and $t$.
By \eqref{eqn:dtd2ff}, we have $\frac{\partial \alpha(v,t)}{\partial t}=-\sqrt{2}\alpha(v,t)-4e^{-2\sqrt{2}t}$, which deduces that
$\alpha(v,t)=e^{-2\sqrt{2}t}(2\sqrt{2}+e^{\sqrt{2}t}\beta_2(v))$.
Here, $\beta_2(v)$ is a function depending on variable $v$.
Thus, we have
$$
d_2=\frac{-3}{\sqrt{2}}u^2+e^{-2\sqrt{2}t}(2\sqrt{2}+e^{\sqrt{2}t}\beta_2(v)).
$$
Finally, we can check that the compatibility conditions of  $b_1,b_2,b_4,d_1,d_2,\beta_2(v)$  with respect to the Lie brackets $[E_i,E_j]$ are automatically satisfied.

Conversely, we assume that the metric $g$ on $\Omega$ is defined by
\begin{equation}\label{eqn:3.1ffff3}
\begin{aligned}
g=&\frac{1}{4}(16 e^{-\sqrt{2}t}+e^{3 \sqrt{2}t}u^4+8\sqrt{2}\beta_2(v)
-2\sqrt{2}e^{2\sqrt{2}t}u^2\beta_2(v)+2e^{\sqrt{2}t}(4+4u^2+\beta_2^2(v)))dv^2\\
&+\frac{1}{2}e^{2\sqrt{2}t}du^2+(1+\frac{1}{4}e^{2\sqrt{2}t}u^2)dt^2+\frac{1}{2}e^{\frac{t}{\sqrt{2}}}(\sqrt{2}(-4+e^{2\sqrt{2}t}u^2)-2e^{\sqrt{2}t}\beta_2(v))dudv\\
&+\frac{e^{2\sqrt{2}t}u}{\sqrt{2}}dudt
+\frac{1}{2}e^{\frac{t}{\sqrt{2}}}u(-12+e^{2\sqrt{2}t}u^2-\sqrt{2}e^{\sqrt{2}t}\beta_2(v))dvdt,
\end{aligned}
\end{equation}
the $(1,1)$-tensor $A$ on $T\Omega$ is defined by
\begin{equation}\label{eqn:3.1ffff4}
\begin{aligned}
A\partial_u=&(-e^{-\sqrt{2}t}+\frac{3}{4}e^{\sqrt{2}t}u^2-\frac{\beta_2(v)}{2\sqrt{2}})\partial_u
-\frac{e^{\frac{t}{\sqrt{2}}}}{2\sqrt{2}}\partial_v-\frac{e^{\sqrt{2}t}u}{\sqrt{2}}\partial_t,\\
A\partial_v=&\frac{1}{8}e^{-\frac{5t}{\sqrt{2}}}(-16\sqrt{2}+3\sqrt{2}e^{4\sqrt{2}t}u^4-8e^{3\sqrt{2}t} u^2\beta_2(v)+2\sqrt{2}e^{2\sqrt{2}t}(-4+4u^2+\beta_2^2(v)))\partial_u\\
&+(-e^{-\sqrt{2}t}-\frac{1}{4}e^{\sqrt{2}t}u^2+\frac{\beta_2(v)}{2\sqrt{2}})\partial_v
-\frac{1}{2}e^{-\frac{t}{\sqrt{2}}}u(4+e^{2\sqrt{2}t}u^2-\sqrt{2}e^{\sqrt{2}t}\beta_2(v))\partial_t,\\
A\partial_t=&\frac{1}{8}u(\sqrt{2}e^{-\sqrt{2}t}(-4+3e^{2\sqrt{2}t}u^2)-2\beta_2(v))\partial_u
-\frac{e^{\frac{t}{\sqrt{2}}}u}{4}\partial_v-\frac{1}{2}e^{\sqrt{2}t}u^2\partial_t,
\end{aligned}
\end{equation}
and the $(1,1)$-tensor $\tilde{P}$ on $T\Omega\otimes \nu$ is defined by
\begin{equation}\label{eqn:3.1ffff5}
\begin{aligned}
\tilde{P}\partial_u=&\frac{1}{4}(\sqrt{2}e^{-\sqrt{2}t}(4-3e^{2\sqrt{2}t}u^2)+2 \beta_2(v))\partial_u+\frac{e^{\frac{t}{\sqrt{2}}}}{2}\partial_v
+e^{\sqrt{2}t}u\partial_t,\\
\tilde{P}\partial_v=&\frac{1}{4}e^{-\frac{5t}{\sqrt{2}}}(-16-3e^{4\sqrt{2}t}u^4-8\sqrt{2}e^{\sqrt{2}t} \beta_2(v)+4\sqrt{2}e^{3\sqrt{2}t}u^2\beta_2(v)+2e^{2\sqrt{2}t}(4+8u^2-\beta_2^2(v)))\partial_u\\
&+\frac{1}{4}(\sqrt{2}e^{-\sqrt{2}t}(-4+e^{2\sqrt{2}t}u^2)-2\beta_2(v))\partial_v
+\frac{1}{2}e^{-\frac{t}{\sqrt{2}}}u(\sqrt{2}(-4+e^{2\sqrt{2}t}u^2)-2e^{\sqrt{2}t} \beta_2(v))\partial_t\\
&-2ue^{\frac{t}{\sqrt{2}}}N,\\
\tilde{P}\partial_t=&\frac{1}{4}u(4e^{-\sqrt{2}t}-3e^{\sqrt{2}t}u^2+\sqrt{2} \beta_2(v))\partial_u+\frac{ue^{\frac{t}{\sqrt{2}}}}{2\sqrt{2}}\partial_v+\frac{e^{\sqrt{2}t}u^2}{\sqrt{2}}\partial_t+N,\\
\tilde{P}N=&-\frac{u}{\sqrt{2}}\partial_u+\partial_t.\\	
\end{aligned}
\end{equation}
Then it can be  checked  that all the integrability conditions are satisfied.
Therefore, the conclusions can be obtained by applying  the existence and uniqueness theorems in  \cite{K,L-T-V} directly.
\end{proof}

In the following, we deal with {\bf Case 5} and {\bf Case 6} of Lemma \ref{lemma:4.1aaa1}. By $b_1-b_4\neq\sqrt{2}$, we define
\begin{equation}\label{eqn:bibar}
b_1^*=\frac{b_1+b_4}{2-\sqrt{2}(b_1-b_4)},\ \
b_2^*=\frac{2b_2}{2-\sqrt{2}(b_1-b_4)}.
\end{equation}
Then, combining with $b_1b_4-b_2^2=-\tfrac{1}{2}$, we have
\begin{equation}\label{eqn:bi}
b_1=\frac{1}{\sqrt{2}}-\frac{\sqrt{2}-2b_1^*}{1-2b_1^{*2}+2b_2^{*2}},\
b_2=\frac{2b_2^*}{1-2b_1^{*2}+2b_2^{*2}},\
b_4=-\frac{1}{\sqrt{2}}+\frac{\sqrt{2}+2b_1^*}{1-2b_1^{*2}+2b_2^{*2}}.
\end{equation}

Now, by \eqref{eqn:4.26}--\eqref{eqn:4.30}, we have
\begin{equation}\label{eqn:Eibibar}
\begin{aligned}
&E_3b_1^*=-\sqrt{2}b_1^*,\ \
E_1b_1^*=-E_2b_2^*+\frac{2(\sqrt{2}b_2^*E_1b_2^*+(1-\sqrt{2}b_1^*)E_2b_2^*)}{1-2b_1^{*2}+2b_2^{*2}},\\
&E_3b_2^*=-\sqrt{2}b_2^*,\ \ E_2b_1^*=-E_1b_2^*+\frac{2((1+\sqrt{2}b_1^*)E_1b_2^*-\sqrt{2}b_2^*E_2b_2^*)}{1-2b_1^{*2}+2b_2^{*2}}.
\end{aligned}
\end{equation}
Next, we define
\begin{equation}\label{eqn:Eibar}
\begin{aligned}
&E_1^*=(1+\sqrt{2}b_1^{*}-\sqrt{2}b_2^{*})E_1+(-1+\sqrt{2}b_1^{*}-\sqrt{2}b_2^{*})E_2,\\
&E_2^*=(1+\sqrt{2}b_1^{*}+\sqrt{2}b_2^{*})E_1+(1-\sqrt{2}b_1^{*}-\sqrt{2}b_2^{*})E_2,\ \
E_3^*=E_3.
\end{aligned}
\end{equation}
Then, by \eqref{eqn:2.25a11}, \eqref{eqn:Eibibar} and \eqref{eqn:Eibar}, one can check that
\begin{equation}\label{eqn:LieEibar}
[E_1^*,E_3^*]=\frac{1}{\sqrt{2}}E_1^*,\ \ [E_2^*,E_3^*]=\frac{1}{\sqrt{2}}E_2^*,\ \
[E_1^*,E_2^*]=-8b_2^*E_3^*.
\end{equation}

\begin{theorem}\label{thm:5.12a1}
Let $M$ be a hypersurface of $\mathbb{H}^2\times \mathbb{H}^2$ with constant sectional curvature $-\frac{1}{2}$ and $C=0$.
Under the frame $\{E_1,E_2,E_3\}$ defined by \eqref{eqn:2.5}, if functions $b_1,b_2$ and $b_4$ defined in \eqref{eqn:2.7} are given by {\bf Case 5} of Lemma \ref{lemma:4.1aaa1},
then $M$ admits local coordinates $(u,v,t)$
and there is a function $h_3:(u,v)\mapsto \mathbb{R}$
such that the functions $b_1,b_2,b_4$ are given by
\eqref{eqn:bijth5},
where $h_3$ satisfies the following ``cosh-Gordon equation'':
\begin{equation}\label{eqn:3.1fffff2}
\frac{\partial^2 h_3}{\partial u \partial v}
=4\sqrt{2}\cosh(h_3).
\end{equation}

Conversely, let $\mathcal{D}$ be an open subset in $\mathbb{R}^2$ and  $h_3:\mathcal{D}\to\mathbb{R}, (u,v)\mapsto h_3(u,v)\in\mathbb{R}$
be a function which satisfies the differential equation \eqref{eqn:3.1fffff2}.
We define a metric $g$ on $\Omega=\{(u,v,t)\in \mathcal{D}\times \mathbb{R}\}$ by
\eqref{eqn:3.1fffff3},
and a $(1,1)$-tensor field $A$ on $T\Omega$ by
\eqref{eqn:3.1fffff4}.
Let $\nu$ be a vector bundle over  $\Omega$ of rank $1$ with metric $\tilde{g}$, $\nabla ^{\perp} $ a connection on $\nu$ compatible with the metric $\tilde{g}$, and $N$ a unit vector field in $\nu$.   We define a $(1,1)$-tensor field $\tilde{P}$ on $T\Omega\otimes \nu$ by
\eqref{eqn:3.1fffff5}.
Then the integrability conditions are satisfied on $\Omega$ and hence on any simply
connected subset $\Omega_1$ of $\Omega$, up to an isometry of  $\mathbb{H}^2\times \mathbb{H}^2$, there is a unique isometric immersion $\Psi$ from $(\Omega_1,~g)$ into $\mathbb{H}^2\times \mathbb{H}^2$ such that $A$ is the shape operator of  $\Omega_1$ into $\mathbb{H}^2\times \mathbb{H}^2$, $\nu$ is isometric to the normal bundle of $\Psi(\Omega_1)$ in
$\mathbb{H}^2\times \mathbb{H}^2$ by an isomorphism $\tilde{\Psi}:\nu\to T^{\perp}\Psi(\Omega_1)$,
and for any $Y\in T\Omega_1$, we have
\begin{equation}
	P(\Psi_{*}Y)=\Psi_{*}((\tilde{P}Y)^{T})+\tilde{\Psi}((\tilde{P}Y)^{\perp}),~
	P(\tilde{\Psi}N)=\Psi_*(\partial_t),
\end{equation}
where $P$ is the product structure of $\mathbb{H}^2\times \mathbb{H}^2$, $(\tilde{P}Y)^{T}$ and $(\tilde{P}Y)^{\perp}$ denote the projections of  $\tilde{P}Y$ onto $T\Omega_1$ and $\nu$, respectively.
Moreover, $\Psi(\Omega_1)$ has constant sectional curvature $-\frac{1}{2}$ and product angle function $C=0$.
\end{theorem}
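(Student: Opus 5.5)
The plan follows the pattern of Theorem \ref{thm:3.1}, now in the starred frame \eqref{eqn:Eibar}. In {\bf Case 5} of Lemma \ref{lemma:4.1aaa1} we have $b_1-b_4=-\sqrt{2}\tanh(\sqrt{2}(t+l_1))\neq\sqrt{2}$, so $b_1^*,b_2^*$ in \eqref{eqn:bibar} are defined. A direct substitution of \eqref{eq:jj3} gives
\[
b_1^*=\tfrac{1}{2}\sinh(h_3)\,e^{-\sqrt{2}(t+l_1)},\qquad b_2^*=\tfrac{1}{\sqrt{2}}\cosh(h_3)\,e^{-\sqrt{2}(t+l_1)}\cdot\tfrac{1}{\sqrt2},
\]
up to the normalizing constants. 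This is consistent with $E_3b_i^*=-\sqrt{2}b_i^*$ from \eqref{eqn:Eibibar}, and $b_2^*\neq0$.

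\textbf{Step 1: choose coordinates.} By \eqref{eqn:LieEibar}, the rescaled fields $X_1=\lambda E_1^*$, $X_2=\mu E_2^*$, $X_3=E_3^*+\alpha E_1^*+\beta E_2^*$ can be made to commute. Take $\lambda,\mu$ proportional to $e^{-t/\sqrt2}$-type factors, with $E_3\lambda=\lambda/\sqrt2$ and similarly for $\mu$, so that $[X_i,X_3]=0$. The bracket $[E_1^*,E_2^*]=-8b_2^*E_3$ forces a correction along $E_3$. I would absorb it by taking $t$ itself as the third coordinate with $E_3t=1$, exactly as in Lemmas \ref{lemma:2.1a2}--\ref{lemma:2.1a3}. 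Concretely, I would look for a function $f$ with $E_3f=1$ and set $d_i=E_i^*f$. The integrability conditions of $f$ are linear equations for $d_1,d_2$ along $E_3$, analogous to \eqref{eq:2.ab3}--\eqref{eq:2.ab4}.

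A good choice is to pick $f$ so that $E_1^*f$ and $E_2^*f$ are expressed through $b_1^*,b_2^*$, e.g.\ $f=-\tfrac{1}{2\sqrt2}\ln(2b_2^{*2}-2b_1^{*2})$ up to constants. Here $2b_2^{*2}-2b_1^{*2}$ is proportional to $e^{-2\sqrt2(t+l_1)}$, so $E_3f=1$ and $l_1$ becomes constant. With this choice, arguing as in Lemma \ref{lemma:2.1a3}, $l_1$ can be normalized to $0$, and one obtains coordinates $(u,v,t)$ with $\partial_t=E_3$ up to a shear. In these coordinates $h_3$ depends only on $(u,v)$, since $E_3h_3=0$, and $b_1,b_2,b_4$ take the form \eqref{eqn:bijth5}, namely \eqref{eq:jj3} with $l_1=0$ and $h_3=h_3(u,v)$.

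\textbf{Step 2: derive the PDE.} I would express $E_1,E_2,E_3$ in terms of $\partial_u,\partial_v,\partial_t$. The Codazzi equations \eqref{eqn:4.29}--\eqref{eqn:4.30}, equivalently the $E_1,E_2$ relations in \eqref{eqn:Eibibar}, should become first-order conditions identifying the shear coefficients with $h_{3,u},h_{3,v}$. The remaining compatibility condition, from the bracket $[E_1^*,E_2^*]=-8b_2^*E_3$ applied to $b_1^*$, then gives a second-order equation. Because $[E_1^*,E_2^*]$ is proportional to $\cosh(h_3)$, I expect this to be $h_{3,uv}=4\sqrt2\cosh h_3$, with the constant fixed by the choice of scaling of $u,v$. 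The $[E_i,E_3]$ conditions should hold automatically, as in Theorem \ref{thm:3.1}.

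\textbf{Step 3: the converse.} Inverting the frame change gives explicit $g$, $A$ and $\tilde P$, namely \eqref{eqn:3.1fffff3}--\eqref{eqn:3.1fffff5}. One then verifies the structure equations of \cite{K,L-T-V}: Gauss and Codazzi, the conditions $\tilde P^2=\mathrm{Id}$ and $\tilde P$ symmetric, and $\tilde P$ parallel. These should reduce to \eqref{eqn:3.1fffff2} together with identities. Existence and uniqueness then give the immersion. Finally, $C=\tilde g(\tilde PN,N)=0$ because $\tilde PN=\partial_t$ is tangent, and constant curvature $-\tfrac12$ follows from $b_1b_4-b_2^2=-\tfrac12$ and the Gauss equation, as in Case I-ii of Proposition \ref{prop:4.5}.

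The main obstacle is Step 1: choosing the function $f$ and the scalings so that the frame becomes a coordinate frame and $h_3$ loses its $t$-dependence cleanly. The symbolic verification of the integrability conditions in Step 3 is routine but heavy.
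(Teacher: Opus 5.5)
Your outline has the same architecture as the paper: the starred frame, rescaled $E_1^*,E_2^*$, the choice $t=f=-\tfrac{1}{2\sqrt2}\ln\bigl(2(b_2^{*2}-b_1^{*2})\bigr)$, the shear from Codazzi, the PDE from the $[E_1,E_2]$ compatibility, and the converse via \cite{K,L-T-V}. But Step 1 has a genuine gap, and it is exactly the point that makes the theorem come out in the stated form.

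\textbf{The rescaled fields cannot commute.} As written, the claim fails: for $X_1=\lambda E_1^*$, $X_2=\mu E_2^*$,
\[
[X_1,X_2]=-8b_2^*\lambda\mu\,E_3+\lambda(E_1^*\mu)E_2^*-\mu(E_2^*\lambda)E_1^*,
\]
whose $E_3$-component never vanishes, whatever $X_3$ is. What you actually need is functions $u,v$ with $X_1u=1$, $X_2u=E_3u=0$, $X_2v=1$, $X_1v=E_3v=0$. These exist iff every bracket of $\{X_1,X_2,E_3\}$ lies in $\mathrm{span}(E_3)$. That requires $E_3\lambda=\lambda/\sqrt2$ and $E_3\mu=\mu/\sqrt2$, so $\lambda\sim e^{+t/\sqrt2}$, not $e^{-t/\sqrt2}$. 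It also requires $E_2^*\lambda=0$ and $E_1^*\mu=0$, which you neither impose nor justify.

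\textbf{The missing idea.} The Codazzi relations \eqref{eqn:Eibibar}, inserted into \eqref{eqn:Eibar}, give
\[
E_2^*(b_2^*-b_1^*)=0,\qquad E_1^*(b_2^*+b_1^*)=0.
\]
In both cases the numerators of the fractions become $(1-2b_1^{*2}+2b_2^{*2})$ times $\pm(E_1b_2^*\pm E_2b_2^*)$, and everything cancels. Hence $\rho_1=(b_2^*-b_1^*)^{-1/2}$ and $\rho_2=(b_2^*+b_1^*)^{-1/2}$ satisfy all four conditions. This is the content of \eqref{eqn:Lietilde5}, with $[\tilde E_1^*,\tilde E_2^*]=-8\cosh(h_3)E_3$. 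Note that $\rho_1=2^{1/4}e^{t/\sqrt2+h_3/2}$ and $\rho_2=2^{1/4}e^{t/\sqrt2-h_3/2}$ involve $h_3$, so no factor depending on $t$ alone will work.

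\textbf{The scaling is not a constant to be fixed later.} Any other admissible pair is $(\rho_1F(u),\rho_2G(v))$, i.e.\ a reparametrization of $u$ and $v$. In those coordinates the equation becomes $h_{uv}=4\sqrt2\,FG\cosh h$. So the stated cosh-Gordon form determines $\rho_1,\rho_2$ up to $(\rho_1,\rho_2)\mapsto(c\rho_1,c^{-1}\rho_2)$.

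\textbf{These identities also give Step 2 directly.} Since
\[
\sqrt2t+h_3=-\ln\bigl(\sqrt2(b_2^*-b_1^*)\bigr),\qquad \sqrt2t-h_3=-\ln\bigl(\sqrt2(b_2^*+b_1^*)\bigr),
\]
they yield $\tilde d_2=-(h_3)_v/\sqrt2$ and $\tilde d_1=(h_3)_u/\sqrt2$. Then $[\partial_u+\tilde d_1\partial_t,\partial_v+\tilde d_2\partial_t]=-\sqrt2(h_3)_{uv}\partial_t=-8\cosh(h_3)\partial_t$ is exactly \eqref{eqn:3.1fffff2}.

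\textbf{Smaller slips.} The correct values are $b_1^*=\tfrac{1}{\sqrt2}e^{-\sqrt2(t+l_1)}\sinh h_3$ and $b_2^*=\tfrac{1}{\sqrt2}e^{-\sqrt2(t+l_1)}\cosh h_3$. Also, $l_1$ does not ``become constant'' here, unlike in Lemma \ref{lemma:2.1a3}. It is simply absorbed by taking $f=t+l_1$ itself as the new coordinate $t$.
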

\begin{proof}
For {\bf Case 5} of Lemma \ref{lemma:4.1aaa1}, we have
$$
\begin{aligned}
&b_1^*=\frac{1}{\sqrt{2}}e^{-\sqrt{2}(t+l_1)}\sinh(h_3),\ \
b_2^*=\frac{1}{\sqrt{2}}e^{-\sqrt{2}(t+l_1)}\cosh(h_3),\\
&b_2^*-b_1^*>0,\ \ b_2^*+b_1^*>0,\ \ 1-2b_1^{*2}+2b_2^{*2}\neq0,
\end{aligned}
$$
where function $t$ satisfies $E_3t=1$ and functions $l_1, h_3:M\rightarrow \mathbb{R}$
satisfy $E_3l_1=E_3h_3=0$.


Now, we define $\rho_1=\frac{1}{\sqrt{b_2^*-b_1^*}}$ and $\rho_2=\frac{1}{\sqrt{b_2^*+b_1^*}}$.
Let $\tilde{E}_1^*=\rho_1E_1^*$, $\tilde{E}_2^*=\rho_2E_2^*$ and $\tilde{E}_3^*=E_3^*$.
Then, it can be checked that
\begin{equation}\label{eqn:Lietilde5}
[\tilde{E}_1^*,\tilde{E}_3^*]=0,\ \ [\tilde{E}_2^*,\tilde{E}_3^*]=0,\ \
[\tilde{E}_1^*,\tilde{E}_2^*]=-8\frac{b_2^*}{\sqrt{b_2^{*2}-b_1^{*2}}}\tilde{E}_3^*.
\end{equation}

Next, we define a differential system:
\begin{equation}\label{eqn:diff}
\left\{
\begin{aligned}
&\tilde{E}_1^*(u)=1,\ \ \tilde{E}_2^*(u)=0,\ \ \tilde{E}_3^*(u)=0,\\
&\tilde{E}_1^*(v)=0,\ \ \tilde{E}_2^*(v)=1,\ \ \tilde{E}_3^*(v)=0.
\end{aligned}\right.
\end{equation}
Then by \eqref{eqn:Lietilde5}, we know that there are functions $u$ and $v$ satisfying \eqref{eqn:diff} on $M$. Now, we choose 
$f=-\frac{1}{2\sqrt{2}}\ln\Big(2(b_2^{*2}-b_1^{*2})\Big)=t+l_1$.
In the following, we use $t$ to denote $f$.
We define the map
$$
\Phi: M \longrightarrow \mathbb{R}^{3},
\quad p \mapsto(u(p),v(p),t(p)).
$$
Then $\Phi_*$ is given by
\begin{equation}\label{eqn:dPhi5}
\Phi_*\left(
        \begin{array}{c}
          \tilde{E}_1^* \\
          \tilde{E}_2^* \\
          \tilde{E}_3^* \\
        \end{array}
      \right)
=\left(\begin{array}{cccc}
1 & 0 & \tilde{d}_1:=\tilde{E}_1^*(t) \\
0 & 1 & \tilde{d}_2:=\tilde{E}_2^*(t) \\
0 & 0 & 1
\end{array}\right),
\end{equation}
which implies that $\{u,v,t\}$ are
local coordinates on $M$. It follows from $E_3h_3=0$ that function $h_3$ depends only on variables $\{u,v\}$. Then for {\bf Case 5} of Lemma \ref{lemma:4.1aaa1},
we have
\begin{equation}\label{eqn:bijth5}
\begin{aligned}
b_1&=\tfrac{1}{\sqrt{2}}{\rm sech}(\sqrt{2}(t))\Big(\sinh (h_3)-\sinh(\sqrt{2}(t))\Big),\\
b_2&=\tfrac{1}{\sqrt{2}}{\rm sech}(\sqrt{2}(t))\cosh (h_3) ,\\
b_4&=\tfrac{1}{\sqrt{2}}{\rm sech}(\sqrt{2}(t))\Big(\sinh (h_3)+\sinh(\sqrt{2}(t))\Big),\\
b_1^*&=\frac{1}{\sqrt{2}}e^{-\sqrt{2}t}\sinh(h_3),\ \
b_2^*=\frac{1}{\sqrt{2}}e^{-\sqrt{2}t}\cosh(h_3).
\end{aligned}
\end{equation}

By using \eqref{eqn:Eibar}, \eqref{eqn:dPhi5}, $\tilde{E}_1^*=\rho_1E_1^*$
and $\tilde{E}_2^*=\rho_2E_2^*$, we have
\begin{equation}\label{eqn:transframe5}
\left(
  \begin{array}{c}
    E_1 \\
    E_2 \\
    E_3 \\
  \end{array}
\right)
=\left(\begin{array}{cccc}
\frac{-1+\sqrt{2}b_1^*+\sqrt{2}b_2^*}{\rho_1(-2+4b_1^{*2}-4b_2^{*2})} & \frac{-1+\sqrt{2}b_1^*-\sqrt{2}b_2^*}{\rho_2(-2+4b_1^{*2}-4b_2^{*2})} & 
\frac{\tilde{d}_1\rho_2(-1+\sqrt{2}b_1^*+\sqrt{2}b_2^*)+\tilde{d}_2\rho_1(-1+\sqrt{2}b_1^*-\sqrt{2}b_2^*)}
{\rho_1\rho_2(-2+4b_1^{*2}-4b_2^{*2})}\\
\frac{1+\sqrt{2}b_1^*+\sqrt{2}b_2^*}{\rho_1(-2+4b_1^{*2}-4b_2^{*2})} & \frac{-1-\sqrt{2}b_1^*+\sqrt{2}b_2^*}{\rho_2(-2+4b_1^{*2}-4b_2^{*2})} & 
\frac{\tilde{d}_1\rho_2(1+\sqrt{2}b_1^*+\sqrt{2}b_2^*)+\tilde{d}_2\rho_1(-1-\sqrt{2}b_1^*+\sqrt{2}b_2^*)}
{\rho_1\rho_2(-2+4b_1^{*2}-4b_2^{*2})}
\\
0 & 0 & 1
\end{array}\right)
\left(
  \begin{array}{c}
    \partial_u \\
    \partial_v \\
    \partial_t \\
  \end{array}
\right).
\end{equation}

Now, by using \eqref{eqn:4.29}, \eqref{eqn:4.30} and \eqref{eqn:transframe5}, we can get
$$
\tilde{d}_1=\frac{(h_3)_u}{\sqrt{2}},\ \ \tilde{d}_2=\frac{-(h_3)_v}{\sqrt{2}}.
$$
Then we can check the compatibility conditions of $b_1,b_2,b_4$ with respect to the Lie brackets $[E_i,E_j]$, $1\leq i<j\leq3$,
and obtain that
$$
\frac{\partial^2 h_3}{\partial u \partial v}
=4\sqrt{2}\cosh(h_3).
$$
In fact, the  compatibility conditions of  $b_1,b_2,b_4$  with respect to the Lie brackets $[E_1,E_3]$ and $[E_2,E_3]$ are automatically satisfied and the  compatibility conditions of  $b_1,b_2,b_4$  with respect to the Lie bracket $[E_1,E_2]$  imply the above differential equation.

Conversely, we assume that the function $h_3$ satisfies \eqref{eqn:3.1fffff2}, the metric $g$ on $\Omega$ is defined by
\begin{equation}\label{eqn:3.1fffff3}
\begin{aligned}
g=&\frac{1}{2}(8\sqrt{2}\cosh(\sqrt{2}t+h_3)+(h_3)_u^2)du^2
+\frac{1}{2}(8\sqrt{2}\cosh(\sqrt{2}t-h_3)+(h_3)_v^2)dv^2+dt^2\\
&+(8\sqrt{2}\sinh(h_3)-(h_3)_u(h_3)_v)dudv-(\sqrt{2}(h_3)_u)dudt+(\sqrt{2}(h_3)_v)dvdt,
\end{aligned}
\end{equation}
the $(1,1)$-tensor $A$ on $T\Omega$ is defined by
\begin{equation}\label{eqn:3.1fffff4}
\begin{aligned}
A\partial_u=&\frac{{\rm sech}(\sqrt{2}t)\sinh(h_3)}{\sqrt{2}}\partial_u
-\frac{\cosh(h_3)+\sinh(h_3)\tanh(\sqrt{2}t)}{\sqrt{2}}\partial_v\\
&+\frac{1}{2}{\rm sech}(\sqrt{2}t)(\cosh(\sqrt{2}t+h_3)(h_3)_v+\sinh(h_3)(h_3)_u)\partial_t,\\
A\partial_v=&\frac{-\cosh(h_3)+\sinh(h_3)\tanh(\sqrt{2}t)}{\sqrt{2}}\partial_u+
\frac{{\rm sech}(\sqrt{2}t)\sinh(h_3)}{\sqrt{2}}\partial_v\\
&-\frac{1}{2}{\rm sech}(\sqrt{2}t)(\cosh(\sqrt{2}t-h_3)(h_3)_u+\sinh(h_3)(h_3)_v)\partial_t,\\
A\partial_t=&0,
\end{aligned}
\end{equation}
and the $(1,1)$-tensor $\tilde{P}$ on $T\Omega\otimes \nu$ is defined by
\begin{equation}\label{eqn:3.1fffff5}
\begin{aligned}
\tilde{P}\partial_u=&-\cosh(h_3){\rm sech}(\sqrt{2}t)\partial_u+(\sinh(h_3)+\cosh(h_3)\tanh(\sqrt{2}t))\partial_v\\
&-\frac{{\rm sech}(\sqrt{2}t)(\sinh(\sqrt{2}t+h_3)(h_3)_v+\cosh(h_3)(h_3)_u)}{\sqrt{2}}\partial_t-\frac{(h_3)_u}{\sqrt{2}}N,\\
\tilde{P}\partial_v=&(-\sinh(h_3)+\cosh(h_3)\tanh(\sqrt{2}t))\partial_u+\cosh(h_3){\rm sech}(\sqrt{2}t)\partial_v\\
&+\frac{{\rm sech}(\sqrt{2}t)(\sinh(\sqrt{2}t-h_3)(h_3)_u-\cosh(h_3)(h_3)_v)}{\sqrt{2}}\partial_t+\frac{(h_3)_v}{\sqrt{2}}N,\\
\tilde{P}\partial_t=&N,\\
\tilde{P}N=&\partial_t.\\	
\end{aligned}
\end{equation}
Then it can be  checked  that all the integrability conditions are satisfied.
Therefore, the conclusions can be obtained by applying  the existence and uniqueness theorems in  \cite{K,L-T-V} directly.
\end{proof}

\begin{theorem}\label{thm:5.13a1}
Let $M$ be a hypersurface of $\mathbb{H}^2\times \mathbb{H}^2$ with constant sectional curvature $-\frac{1}{2}$ and $C=0$.
Under the frame $\{E_1,E_2,E_3\}$ defined by \eqref{eqn:2.5}, if $b_1,b_2$ and $b_4$ are
given by {\bf Case 6} of Lemma \ref{lemma:4.1aaa1}, then $M$ admits local coordinates $(u,v,t)$
and there is a non-zero function $h_4:(u,v)\mapsto \mathbb{R}$
such that the functions $b_1,b_2,b_4$ are given by \eqref{eqn:bijth6},
where $h_4$ satisfies the following ``sinh-Gordon equation'':
\begin{equation}\label{eqn:3.1ffffff2}
\frac{\partial^2 h_4}{\partial u \partial v}
=4\sqrt{2}\sinh(h_4).
\end{equation}

Conversely, let $\mathcal{D}$ be an open subset in $\mathbb{R}^2$ and $h_4:\mathcal{D}\to\mathbb{R}, (u,v)\mapsto h_4(u,v)\in\mathbb{R}$
be a non-zero function which satisfies the differential equation \eqref{eqn:3.1ffffff2}.
Let $\Omega_0=\{(u,v,t)\in \mathcal{D}\times \mathbb{R}|t=0\}$, $\Omega=\{(u,v,t)\in \mathcal{D}\times \mathbb{R}\}-\Omega_0\subseteq \mathbb{R}^3$, we define a metric $g$ on $\Omega$ by
\eqref{eqn:3.1ffffff3},
and a $(1,1)$-tensor field $A$ on $T\Omega$ by
\eqref{eqn:3.1ffffff4}.
Let $\nu$ be a vector bundle over  $\Omega$ of rank $1$ with metric $\tilde{g}$, $\nabla ^{\perp} $ a connection on $\nu$ compatible with the metric $\tilde{g}$, and $N$ a unit vector field in $\nu$.   We define a $(1,1)$-tensor field $\tilde{P}$ on $T\Omega\otimes \nu$ by
\eqref{eqn:3.1ffffff5}.
Then the integrability conditions are satisfied on $\Omega$ and hence on any simply
connected subset $\Omega_1$ of $\Omega$, up to an isometry of  $\mathbb{H}^2\times \mathbb{H}^2$, there is a unique isometric immersion $\Psi$ from $(\Omega_1,~g)$ into $\mathbb{H}^2\times \mathbb{H}^2$ such that $A$ is the shape operator of  $\Omega_1$ into $\mathbb{H}^2\times \mathbb{H}^2$, $\nu$ is isometric to the normal bundle of $\Psi(\Omega_1)$ in
$\mathbb{H}^2\times \mathbb{H}^2$ by an isomorphism $\tilde{\Psi}:\nu\to T^{\perp}\Psi(\Omega_1)$,
and for any $Y\in T\Omega_1$, we have
\begin{equation}
	P(\Psi_{*}Y)=\Psi_{*}((\tilde{P}Y)^{T})+\tilde{\Psi}((\tilde{P}Y)^{\perp}),~
	P(\tilde{\Psi}N)=\Psi_*(\partial_t),
\end{equation}
where $P$ is the product structure of $\mathbb{H}^2\times \mathbb{H}^2$, $(\tilde{P}Y)^{T}$ and $(\tilde{P}Y)^{\perp}$ denote the projections of  $\tilde{P}Y$ onto $T\Omega_1$ and $\nu$, respectively.
Moreover, $\Psi(\Omega_1)$ has constant sectional curvature $-\frac{1}{2}$ and product angle function $C=0$.
\end{theorem}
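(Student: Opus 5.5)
The plan is to follow the proof of Theorem \ref{thm:5.12a1}, using the starred quantities \eqref{eqn:bibar} and the frame \eqref{eqn:Eibar}. In \textbf{Case 6} of Lemma \ref{lemma:4.1aaa1} we have $b_1-b_4=-\sqrt{2}\coth(\sqrt{2}(t+l_2))$, so $2-\sqrt{2}(b_1-b_4)\neq 0$ wherever $t+l_2\neq0$. Substituting \eqref{eq:jj4} into \eqref{eqn:bibar} gives
$$
b_1^*=\tfrac{1}{\sqrt{2}}e^{-\sqrt{2}(t+l_2)}\cosh(h_4),\qquad b_2^*=\tfrac{1}{\sqrt{2}}e^{-\sqrt{2}(t+l_2)}\sinh(h_4).
\qquad (\ast)
$$
Compared with Case 5, the roles of $\sinh$ and $\cosh$ are interchanged. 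Consequently $b_1^{*2}-b_2^{*2}=\tfrac12 e^{-2\sqrt{2}(t+l_2)}>0$, and $b_1^*\pm b_2^*>0$ after a choice of sign (up to the normal and the isometry $\mathbb{F}$). We require $h_4\neq0$, since $h_4\equiv0$ would force $b_2=0$, which is Case I of Lemma \ref{lemma:4.1aaa1}.

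We use the brackets \eqref{eqn:LieEibar}. Set $\tilde E_1^*=\rho_1E_1^*$, $\tilde E_2^*=\rho_2E_2^*$, $\tilde E_3^*=E_3$, with $\rho_1=(b_1^*-b_2^*)^{-1/2}$ and $\rho_2=(b_1^*+b_2^*)^{-1/2}$ (the signs are adapted to Case 6). Since $E_3b_i^*=-\sqrt2 b_i^*$, the $\rho_i$ satisfy $E_3\rho_i=\tfrac{1}{\sqrt2}\rho_i$. This gives $[\tilde E_i^*,\tilde E_3^*]=0$ for $i=1,2$, while $[\tilde E_1^*,\tilde E_2^*]$ is a multiple of $E_3$.

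Next we solve for functions $u,v$ with $\tilde E_1^*u=1$, $\tilde E_2^*u=\tilde E_3^*u=0$ and $\tilde E_2^*v=1$, $\tilde E_1^*v=\tilde E_3^*v=0$. This system is solvable because the brackets only produce $E_3$-components and $E_3$ annihilates the right-hand sides. As third coordinate we take $t:=-\tfrac{1}{2\sqrt2}\ln\!\big(2(b_1^{*2}-b_2^{*2})\big)=t+l_2$. Then $E_3h_4=0$ gives $h_4=h_4(u,v)$, and $b_1,b_2,b_4$ take the form \eqref{eq:jj4} with $l_2=0$; this is \eqref{eqn:bijth6}. The Jacobian is upper triangular with entries $\tilde d_i=\tilde E_i^*(t)$, so $(u,v,t)$ are coordinates away from $t=0$, which explains the removal of $\Omega_0$. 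Inverting, we express $E_1,E_2,E_3$ in terms of $\partial_u,\partial_v,\partial_t$.

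The main step is to impose the Codazzi equations \eqref{eqn:4.29}--\eqref{eqn:4.30}. I expect these to force $\tilde d_1=\pm(h_4)_u/\sqrt2$ and $\tilde d_2=\mp(h_4)_v/\sqrt2$. After that we check the remaining compatibility conditions, namely that $b_1,b_2,b_4$ and $t$ are consistent with all Lie brackets $[E_i,E_j]$. The $[E_i,E_3]$ conditions should be automatic, since they only use the $t$-dependence already fixed by \eqref{eqn:4.26}--\eqref{eqn:4.28}. The $[E_1,E_2]$ condition, with $[\tilde E_1^*,\tilde E_2^*]=-8b_2^*(b_1^{*2}-b_2^{*2})^{-1/2}E_3$, should reduce to $(h_4)_{uv}=4\sqrt2\sinh(h_4)$, since $b_2^*\propto\sinh h_4$. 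This computation is the obstacle: the signs and constants must be tracked carefully, and any mismatch signals that $\rho_i$ or the orientation has to be adjusted.

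For the converse, we write down $g$, $A$, $\tilde P$ (these are \eqref{eqn:3.1ffffff3}--\eqref{eqn:3.1ffffff5}) by transcribing the frame, with $AE_3=0$, $\tilde PE_1=E_1$, $\tilde PE_2=-E_2$, $\tilde PE_3=N$, $\tilde PN=E_3$, into coordinates. We then verify the Gauss and Codazzi equations and the compatibility of $\tilde P$ using the sinh-Gordon equation, and apply the existence and uniqueness theorems of \cite{K,L-T-V}. Finally, $C=g(\tilde PN,N)=0$, and the Gauss equation with $b_1b_4-b_2^2=-\tfrac12$ gives constant sectional curvature $-\tfrac12$.
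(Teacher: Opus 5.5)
Your proposal is correct and follows the paper's proof essentially step for step. It uses the same $b_1^*,b_2^*$, the same rescaled frame $\tilde E_i^*=\rho_iE_i^*$ and the same choice of $t$. It obtains $\tilde d_1=(h_4)_u/\sqrt2$ and $\tilde d_2=-(h_4)_v/\sqrt2$ from the Codazzi equations, and derives the sinh-Gordon equation from the $[E_1,E_2]$-compatibility, namely $[\tilde E_1^*,\tilde E_2^*]=-\sqrt2(h_4)_{uv}\partial_t=-8\sinh(h_4)\partial_t$. The converse goes through \cite{K,L-T-V}, as in the paper.

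Two small corrections. First, that $[\tilde E_1^*,\tilde E_2^*]$ is a multiple of $E_3$ does not follow from $E_3\rho_i=\rho_i/\sqrt2$. It follows from the identities $E_1^*(b_1^*+b_2^*)=E_2^*(b_1^*-b_2^*)=0$, which are consequences of \eqref{eqn:Eibibar} and also give $\tilde d_1,\tilde d_2$ at once. Second, $t=0$ is not removed because of the Jacobian, whose diagonal is $1$ everywhere. It is removed because there $1-2b_1^{*2}+2b_2^{*2}=1-e^{-2\sqrt2 t}$ vanishes, so $E_1^*,E_2^*$ become dependent, the $b_i$ blow up, and the metric \eqref{eqn:3.1ffffff3} degenerates.
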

\begin{proof}
For {\bf Case 6} of Lemma \ref{lemma:4.1aaa1}, we have
$$
\begin{aligned}
&b_1^*=\frac{1}{\sqrt{2}}e^{-\sqrt{2}(t+l_2)}\cosh(h_4),\ \
b_2^*=\frac{1}{\sqrt{2}}e^{-\sqrt{2}(t+l_2)}\sinh(h_4),\\
&b_1^*-b_2^*>0,\ \ b_2^*+b_1^*>0,\ \ 1-2b_1^{*2}+2b_2^{*2}\neq0,
\end{aligned}
$$
where function $t$ satisfies $E_3t=1$ and functions $l_2, h_4:M\rightarrow \mathbb{R}$
satisfy $E_3l_2=E_3h_4=0$.



Now, we define $\rho_1=\frac{1}{\sqrt{b_1^*-b_2^*}}$ and $\rho_2=\frac{1}{\sqrt{b_2^*+b_1^*}}$.
Let $\tilde{E}_1^*=\rho_1E_1^*$, $\tilde{E}_2^*=\rho_2E_2^*$ and $\tilde{E}_3^*=E_3^*$.
Then, it can be checked that
\begin{equation}\label{eqn:Lietilde6}
[\tilde{E}_1^*,\tilde{E}_3^*]=0,\ \ [\tilde{E}_2^*,\tilde{E}_3^*]=0,\ \
[\tilde{E}_1^*,\tilde{E}_2^*]=-8\frac{b_2^*}{\sqrt{b_1^{*2}-b_2^{*2}}}\tilde{E}_3^*.
\end{equation}

We still define the differential system \eqref{eqn:diff}. By  \eqref{eqn:Lietilde6},
we know that there are functions $\{u,v\}$ satisfying \eqref{eqn:diff} on $M$.
Now, we choose 
$f=-\frac{1}{2\sqrt{2}}\ln\Big(2(b_1^{*2}-b_2^{*2})\Big)=t+l_2$. In the following,
we still use $t$ to denote $f$.

We define the map
$$
\Phi: M \longrightarrow \mathbb{R}^{3},
\quad p \mapsto(u(p),v(p),t(p)).
$$
Then $\Phi_*$ is given by
\begin{equation}\label{eqn:dPhi6}
\Phi_*\left(
        \begin{array}{c}
          \tilde{E}_1^* \\
          \tilde{E}_2^* \\
          \tilde{E}_3^* \\
        \end{array}
      \right)
=\left(\begin{array}{cccc}
1 & 0 & \tilde{d}_1:=\tilde{E}_1^*(t) \\
0 & 1 & \tilde{d}_2:=\tilde{E}_2^*(t) \\
0 & 0 & 1
\end{array}\right),
\end{equation}
which implies that $\{u,v,t\}$ are
local coordinates on $M$. It follows from $E_3h_4=0$ that function $h_4$ depends only on variables $\{u,v\}$. Then for {\bf Case 6} of Lemma \ref{lemma:4.1aaa1},
we have
\begin{equation}\label{eqn:bijth6}
\begin{aligned}
b_1&=\tfrac{1}{\sqrt{2}}{\rm csch}(\sqrt{2}t)\Big(\cosh(h_4)-\cosh(\sqrt{2}t)\Big),\\
b_2&=\tfrac{1}{\sqrt{2}}{\rm csch}(\sqrt{2}t)\sinh(h_4),\\
b_4&=\tfrac{1}{\sqrt{2}}{\rm csch}(\sqrt{2}t)\Big(\cosh(h_4)+\cosh(\sqrt{2}t)\Big),\\
b_1^*&=\frac{1}{\sqrt{2}}e^{-\sqrt{2}t}\cosh(h_4),\ \
b_2^*=\frac{1}{\sqrt{2}}e^{-\sqrt{2}t}\sinh(h_4).
\end{aligned}
\end{equation}

By using \eqref{eqn:Eibar}, \eqref{eqn:dPhi6}, $\tilde{E}_1^*=\rho_1E_1^*$
and $\tilde{E}_2^*=\rho_2E_2^*$, we have
\begin{equation}\label{eqn:transframe6}
\left(
  \begin{array}{c}
    E_1 \\
    E_2 \\
    E_3 \\
  \end{array}
\right)
=\left(\begin{array}{cccc}
\frac{-1+\sqrt{2}b_1^*+\sqrt{2}b_2^*}{\rho_1(-2+4b_1^{*2}-4b_2^{*2})} & \frac{-1+\sqrt{2}b_1^*-\sqrt{2}b_2^*}{\rho_2(-2+4b_1^{*2}-4b_2^{*2})} & 
\frac{\tilde{d}_1\rho_2(-1+\sqrt{2}b_1^*+\sqrt{2}b_2^*)+\tilde{d}_2\rho_1(-1+\sqrt{2}b_1^*-\sqrt{2}b_2^*)}
{\rho_1\rho_2(-2+4b_1^{*2}-4b_2^{*2})}\\
\frac{1+\sqrt{2}b_1^*+\sqrt{2}b_2^*}{\rho_1(-2+4b_1^{*2}-4b_2^{*2})} & \frac{-1-\sqrt{2}b_1^*+\sqrt{2}b_2^*}{\rho_2(-2+4b_1^{*2}-4b_2^{*2})} & 
\frac{\tilde{d}_1\rho_2(1+\sqrt{2}b_1^*+\sqrt{2}b_2^*)+\tilde{d}_2\rho_1(-1-\sqrt{2}b_1^*+\sqrt{2}b_2^*)}
{\rho_1\rho_2(-2+4b_1^{*2}-4b_2^{*2})}
\\
0 & 0 & 1
\end{array}\right)
\left(
  \begin{array}{c}
    \partial_u \\
    \partial_v \\
    \partial_t \\
  \end{array}
\right).
\end{equation}

Now, by using \eqref{eqn:4.29}, \eqref{eqn:4.30} and \eqref{eqn:transframe6}, we can have
$$
\tilde{d}_1=\frac{(h_4)_u}{\sqrt{2}},\ \ \tilde{d}_2=\frac{-(h_4)_v}{\sqrt{2}}.
$$
Then we can check the compatibility conditions of $b_1,b_2,b_4$ with respect to the Lie brackets $[E_i,E_j]$, $1\leq i<j\leq3$,
and obtain that
$$
\frac{\partial^2 h_4}{\partial u \partial v}
=4\sqrt{2}\sinh(h_4).
$$
In fact, the  compatibility conditions of  $b_1,b_2,b_4$  with respect to the Lie brackets $[E_1,E_3]$ and $[E_2,E_3]$ are automatically satisfied and the  compatibility conditions of  $b_1,b_2,b_4$  with respect to the Lie bracket $[E_1,E_2]$  imply the above differential equation.

Conversely, we assume that the function $h_4$ satisfies \eqref{eqn:3.1ffffff2}, the metric $g$ on $\Omega$ is defined by %
\begin{equation}\label{eqn:3.1ffffff3}
\begin{aligned}
g=&\frac{1}{2}(8\sqrt{2}\cosh(\sqrt{2}t+h_4)+(h_4)_u^2)du^2
+\frac{1}{2}(8\sqrt{2}\cosh(\sqrt{2}t-h_4)+(h_4)_v^2)dv^2+dt^2\\
&+(8\sqrt{2}\cosh(h_4)-(h_4)_u(h_4)_v)dudv-(\sqrt{2}(h_4)_u)dudt+(\sqrt{2}(h_4)_v)dvdt,
\end{aligned}
\end{equation}
the $(1,1)$-tensor $A$ on $T\Omega$ is defined by
\begin{equation}\label{eqn:3.1ffffff4}
\begin{aligned}
A\partial_u=&\frac{{\rm csch}(\sqrt{2}t)\cosh(h_4)}{\sqrt{2}}\partial_u
-\frac{\sinh(h_4)+\cosh(h_4)\coth(\sqrt{2}t)}{\sqrt{2}}\partial_v\\
&+\frac{1}{2}{\rm csch}(\sqrt{2}t)(\cosh(\sqrt{2}t+h_4)(h_4)_v+\cosh(h_4)(h_4)_u)\partial_t,\\
A\partial_v=&\frac{\sinh(h_4)-\cosh(h_4)\coth(\sqrt{2}t)}{\sqrt{2}}\partial_u+
\frac{{\rm csch}(\sqrt{2}t)\cosh(h_4)}{\sqrt{2}}\partial_v\\
&-\frac{1}{2}{\rm csch}(\sqrt{2}t)(\cosh(\sqrt{2}t-h_4)(h_4)_u+\cosh(h_4)(h_4)_v)\partial_t,\\
A\partial_t=&0,
\end{aligned}
\end{equation}
and the $(1,1)$-tensor $\tilde{P}$ on $T\Omega\otimes \nu$ is defined by
\begin{equation}\label{eqn:3.1ffffff5}
\begin{aligned}
\tilde{P}\partial_u=&-\sinh(h_4){\rm csch}(\sqrt{2}t)\partial_u+(\cosh(h_4)+\sinh(h_4)\coth(\sqrt{2}t))\partial_v\\
&-\frac{{\rm csch}(\sqrt{2}t)(\sinh(\sqrt{2}t+h_4)(h_4)_v+\sinh(h_4)(h_4)_u)}{\sqrt{2}}\partial_t-\frac{(h_4)_u}{\sqrt{2}}N,\\
\tilde{P}\partial_v=&(\cosh(h_4)-\sinh(h_4)\coth(\sqrt{2}t))\partial_u+\sinh(h_4){\rm csch}(\sqrt{2}t)\partial_v\\
&+\frac{{\rm csch}(\sqrt{2}t)(\sinh(\sqrt{2}t-h_4)(h_4)_u-\sinh(h_4)(h_4)_v)}{\sqrt{2}}\partial_t+\frac{(h_4)_v}{\sqrt{2}}N,\\
\tilde{P}\partial_t=&N,\\
\tilde{P}N=&\partial_t.\\	
\end{aligned}
\end{equation}
Then it can be  checked  that all the integrability conditions are satisfied.
Therefore, the conclusions can be obtained by applying  the existence and uniqueness theorems in  \cite{K,L-T-V} directly.
\end{proof}

\noindent{\bf Completion of the proof of Theorem \ref{thm:1.1}}.

Since $M$  has  constant sectional curvature $\kappa$, from Proposition \ref{prop:4.5}, we know that $M$ has constant sectional curvature $\kappa=-\tfrac{1}{2}$. According to the proof of Proposition \ref{prop:4.5}, we consider two situations depending on whether $C$ is $0$ or not on $M$.
When $C>0$ on $M$, then by Theorem \ref{thm:4.1abc}, 
$M$ is an open part of Example \ref{exam:4.1ss1ss11ab1}.
When $C\equiv0$ on $M$, then by Lemma \ref{lemma:4.1aaa1}, there are six cases.
For {\bf Case 1} and {\bf Case 2} of Lemma \ref{lemma:4.1aaa1}, by Proposition \ref{prop:4.1a1},
$M$ is an open part of $M_{k_1,\frac{1}{k_1}}^{1/2}$ given in Example \ref{exam:3.4fg1} for some constant $0<k_1\leq1$. For {\bf Case 3}, {\bf Case 4}, {\bf Case 5} and {\bf Case 6} of Lemma \ref{lemma:4.1aaa1}, we know that $M$ is an open part of one of the hypersurfaces described by Theorems \ref{thm:5.10a1}--\ref{thm:5.13a1}, respectively. We complete the proof
of Theorem \ref{thm:1.1}.

\section{Proofs of Theorem \ref{thm:1.2} and Theorem \ref{thm:1.3}}\label{sect:6}
\subsection{Proof of Theorem \ref{thm:1.2}}\label{sect:6.1}~

In this subsection, we assume that $M$ has constant product angle function and
constant mean curvature.
When $C=\pm1$ on $M$, by Lemma \ref{lem:2.2}, we know that $M$ is locally the
product of a curve $\Gamma$ in $\mathbb{H}^2$ and an open
subset of $\mathbb{H}^2$. Furthermore, if $M$ has constant mean curvature,
then $M$ is an open part of $M_\Gamma$, where $\Gamma$ is a curve of $\mathbb{H}^2$ with constant curvature.

When $C\neq\pm1$, we choose the frame $\{E_1,E_2,E_3\}$ defined by \eqref{eqn:2.5}.
Then, by $\nabla C=-2 AX$, we have $b_3=b_5=b_6=0$. It follows from \eqref{eqn:2.8a1212} that
$\rho=-2+2(b_1b_4-b_2^2)$. By \eqref{eqn:2.6a11} and \eqref{eqn:2.5aaa}, we have
\begin{equation}\label{eqn:3.2a1}
C\|A\|^2={\rm tr}(TA^2)=3H(b1-b4), 
\end{equation}
\begin{equation}\label{eqn:3.2a2}
9H^2-\|A\|^2=2(b_1b_4-b_2^2).
\end{equation}

By using \eqref{eqn:2.26}--\eqref{eqn:2.28}, \eqref{eqn:3.2a1}, \eqref{eqn:3.2a2},
$\|A\|^2=b_1^2+2b_2^2+b_4^2$ and $X=\sqrt{1-C^2}E_3$, we can check that
\begin{equation}\label{eqn:3.2aas1}
3H(X\|A\|^2)=C[-(1-C^2)\|A\|^2+(9H^2-\|A\|^2)^2].
\end{equation}

If $H\neq0$, by \eqref{eqn:3.2aas1}, we have
$$
X(\|A\|^2)=\frac{C[-(1-C^2)\|A\|^2+(9H^2-\|A\|^2)^2]}{3H}.
$$

Taking the derivative of $(b_1-b_4)$ with respect to $X$, with the use of \eqref{eqn:2.26}--\eqref{eqn:2.28}, \eqref{eqn:3.2a1} and \eqref{eqn:3.2a2}, we can check that
$$
X(b_1-b_4)=-1+C^2-C^2\|A\|^2+(9H^2-\|A\|^2)+\frac{C^2\|A\|^4}{9H^2}.
$$

Now, taking the derivative of \eqref{eqn:3.2a1} with respect to $X$, with the use of the expressions of $X(\|A\|^2)$ and $X(b_1-b_4)$, it follows that
$$
(C^2-9H^2)\|A\|^2=9H^2(1-9H^2).
$$
If $9H^2=C^2$, then $0=9H^2(1-9H^2)=C^2(1-C^2)$, so $C=0$ and $H=0$, which is a contradiction.
Then we have $\|A\|^2=\tfrac{9H^2(1-9H^2)}{C^2-9H^2}$,
which means that $\|A\|$ is constant. It implies that $M$ has constant principal curvatures.
Then, according to Theorem 1.1 of \cite{GMY},
$M$ is either an open part of $M_\tau$ for some $\tau<-1$, or
$M$ is an open part of $M_{1,1}^c$ for some $c\in(0,\frac{1}{2})\cup(\frac{1}{2},1)$, or
$M$ is an open part of $M_{1,-1}^c$ for some $c\in(0,1)$.

If $H=0$, then by \eqref{eqn:3.2a1}, we have $C=0$ or $\|A\|=0$. Note that,
according to Corollary 1.3 of \cite{Me-T}, we know that the hypersurfaces with $\|A\|=0$ corresponds to $C=\pm1$.
Thus, $M$ is a minimal hypersurface with $C=0$, which has been classified by Theorem \ref{thm:3.2aa}.
We complete the proof of Theorem \ref{thm:1.2}. 


\subsection{Proof of Theorem \ref{thm:1.3}}\label{sect:6.2}~

In this subsection, we assume that $M$ has constant product angle function and
constant scalar curvature.
When $C=\pm1$ on $M$, by Lemma \ref{lem:2.2}, we know that $M$ is locally the
product of a curve $\Gamma$ in $\mathbb{H}^2$ and an open
subset of $\mathbb{H}^2$. Furthermore, if $M$ has constant scalar curvature,
then $M$ is an open part of $M_\Gamma$, where $\Gamma$ is a curve of $\mathbb{H}^2$ with constant curvature.

When $C\neq\pm1$, we choose the frame $\{E_1,E_2,E_3\}$ defined by \eqref{eqn:2.5}. Since the scalar curvature
$\rho=-2+2(b_1b_4-b_2^2)$ is constant, by using \eqref{eqn:2.26}--\eqref{eqn:2.28} and $X=\sqrt{1-C^2}E_3$, it follows from $X\rho=0$ that
\begin{equation}\label{eqn:4.1a1}
(1-C)b_1(\rho+C+3)=(1+C)b_4(\rho-C+3).
\end{equation}
Taking the derivative of equation \eqref{eqn:4.1a1} with respect to $X$, using \eqref{eqn:2.26}--\eqref{eqn:2.28}, \eqref{eqn:4.1a1} and $\rho=-2+2(b_1b_4-b_2^2)$, we can obtain
$$
(\rho+1)(\rho+3)+C^2=0.
$$
Hence there are two possible values of the scalar curvature: $\rho=-2\pm\sqrt{1-C^2}$.

{\bf Case i}: $\rho=-2+\sqrt{1-C^2}$.

Putting this value in \eqref{eqn:4.1a1}, it follows that
\begin{equation}\label{eqn:4.1a121212}
b_1\sqrt{1-C}=b_4\sqrt{1+C}.
\end{equation}
Then, by $b_1b_4-b_2^2=\frac{1}{2}(\rho+2)=\frac{\sqrt{1-C^2}}{2}$, we have $b_1\neq0$ and
$\sqrt{\frac{1-C}{1+C}}b_1^2-b_2^2=\frac{\sqrt{1-C^2}}{2}$.

Taking the derivative of $\sqrt{\frac{1-C}{1+C}}b_1^2-b_2^2=\frac{\sqrt{1-C^2}}{2}$ with respect to $E_1$ and $E_2$, with the use of \eqref{eqn:2.29}, \eqref{eqn:2.30} and \eqref{eqn:4.1a121212}, we get
$$
b_2E_1b_2-b_1E_2b_2=0,\ \ \sqrt{\frac{1-C}{1+C}}b_1E_1b_2-b_2E_2b_2=0.
$$
Now, by $\sqrt{\frac{1-C}{1+C}}b_1^2-b_2^2=\frac{\sqrt{1-C^2}}{2}\neq0$, the solution to above two equations is $E_1b_2=E_2b_2=0$.

From the fact that $[E_1,E_2]=\frac{-2b_2}{\sqrt{1-C^2}}E_3$, with the use of \eqref{eqn:2.27} and
\eqref{eqn:4.1a121212}, we have
$$
0=[E_1,E_2]b_2=\frac{-2b_2}{\sqrt{1-C^2}}E_3b_2
=\frac{-2b_2^2b_1}{\sqrt{1-C^2}}\frac{\sqrt{1-C}-\sqrt{1+C}}{\sqrt{1+C}}.
$$
Then we have $b_2=0$ or $C=0$.

{\bf Case i-1}: If $b_2=0$, then $b_1^2=\frac{1+C}{2}$ and $b_4^2=\frac{1-C}{2}$ and
$M$ has constant principal curvature. Thus, from Theorem 1.1 of \cite{GMY}, we know that $M$ is an open part of $M_{1,-1}^c$ for $c\in(0,1)$.

{\bf Case i-2}: If $C=0$, then $b_1=b_4$. By \eqref{eqn:2.27} and $E_1b_2=E_2b_2=0$, we have that
$b_2$ is constant. It follows that $b_1$ and $b_4$ are also constant.
So, it implies that $M$ has constant principal curvature.
Thus, from Theorem 1.1 of \cite{GMY}, we know that $M$ is either an open part of $M_{1,-1}^{1/2}$,
or an open part of $M_\tau$ for some $\tau<-1$.

{\bf Case ii}: $\rho=-2-\sqrt{1-C^2}$.

Putting this value in \eqref{eqn:4.1a1}, it follows that
$$
b_1\sqrt{1-C}(\sqrt{1+C}-\sqrt{1-C})=b_4\sqrt{1+C}(\sqrt{1-C}-\sqrt{1+C}).
$$
We divide the discussions into two subcases depending on the value of $C$.

{\bf Case ii-1}: If $C\neq0$, then we have
\begin{equation}\label{eqn:4.2a121212}
b_1\sqrt{1-C}=-b_4\sqrt{1+C}.
\end{equation}
Then by $b_1b_4-b_2^2=\frac{1}{2}(\rho+2)=-\frac{\sqrt{1-C^2}}{2}$, it holds  $\sqrt{\frac{1-C}{1+C}}b_1^2+b_2^2=\frac{\sqrt{1-C^2}}{2}$.

Taking the derivative of $\sqrt{\frac{1-C}{1+C}}b_1^2+b_2^2=\frac{\sqrt{1-C^2}}{2}$ with respect to $E_1$ and $E_2$, with the use of \eqref{eqn:2.29}, \eqref{eqn:2.30} and \eqref{eqn:4.2a121212}, we get
$$
-b_2E_1b_2+b_1E_2b_2=0,\ \ \sqrt{\frac{1-C}{1+C}}b_1E_1b_2+b_2E_2b_2=0.
$$
Now, by $\sqrt{\frac{1-C}{1+C}}b_1^2+b_2^2=\frac{\sqrt{1-C^2}}{2}\neq0$,
the solution to above two equations is $E_1b_2=E_2b_2=0$.

From the fact that $[E_1,E_2]=\frac{-2b_2}{\sqrt{1-C^2}}E_3$, we have
$0=[E_1,E_2]b_2=\frac{-2b_2}{\sqrt{1-C^2}}E_3b_2$. Then by \eqref{eqn:2.27} and
\eqref{eqn:4.2a121212}, we get
$$0=b_2^2b_1(\sqrt{1-C}+\sqrt{1+C}).$$

Then we have $b_2=0$ or $b_1=0$. If $b_1=0$, then $M$ has constant principal curvature,
by Theorem 1.1 of \cite{GMY}, we know that this subcase does not occur.
Thus, we have $b_2=0$, which also means that $M$ has constant principal curvature.
By Theorem 1.1 of \cite{GMY}, we know that $M$ is an open part of $M_{1,1}^c$ for $c\in(0,\frac{1}{2})\cup(\frac{1}{2},1)$.

{\bf Case ii-2}: If $C=0$, then $\rho=-3$ and $b_1b_4-b_2^2=-\frac{1}{2}$.

Now, by \eqref{eqn:2.8a1212}, the Ricci tensor is expressed by
$$
{\rm Ric}(E_1)=-E_1,\ \ {\rm Ric}(E_2)=-E_2,\ \ {\rm Ric}(E_3)=-E_3.
$$
Since ${\rm dim}~M=3$, $M$ has constant sectional curvature $-\tfrac{1}{2}$,
which has been classified by Theorem \ref{thm:1.1}.
We complete the proof of Theorem \ref{thm:1.3}.

\section{Appendix}\label{sect:7}

In this section, we give a complete classification of the hypersurfaces in $\mathbb{S}^m\times \mathbb{S}^n$ ($m\geq3$, $n\geq2$), 
$\mathbb{H}^m\times \mathbb{H}^n$ ($m\geq3$, $n\geq2$), and $\mathbb{S}^m\times \mathbb{H}^n$ ($m,n\geq2$) with constant sectional curvature. 
For $\epsilon\in \{-1,1\}$, 
we denote by $\bar{M}^m(\epsilon)$, the unit $m$-sphere
$\mathbb S^{m}$ 
endowed with the canonical metric of constant sectional curvature $1$ when $\epsilon= 1$,
and the hyperbolic space 
$\mathbb{H}^{m}$ endowed with the canonical metric of constant sectional curvature $-1$ when $\epsilon= -1$. 
We endow $\bar{M}^m(\epsilon_1)\times \bar{M}^n(\epsilon_2)$ 
with the product metric $g$, where $(\epsilon_1,\epsilon_2)\in\{(1,1), (1,-1), (-1,-1)\}$.  
The product structure $P$ on $\bar{M}^m(\epsilon_1)\times \bar{M}^n(\epsilon_2)$
is defined by
$$
P(v_1,v_2)=(v_1,-v_2),~\hbox{for any}~(v_1,v_2)\in T(\bar{M}^m(\epsilon_1)\times \bar{M}^n(\epsilon_2)).
$$
Let $\bar{\nabla}$ be the Levi-Civita connection of the metric 
$g$ on $\bar{M}^m(\epsilon_1)\times \bar{M}^n(\epsilon_2)$. 
The curvature tensor $\bar{R}$ of $\bar{M}^m(\epsilon_1)\times \bar{M}^n(\epsilon_2)$ 
with the Riemannian product metric $g$ is given by
\begin{equation}\label{eqn:apxql}
\begin{aligned}
g(\bar{R}(U,Y)Z,W)=&\varepsilon_1\Big(g(U_1,W_1)g(Y_1,Z_1)-g(U_1,Z_1)g(Y_1,W_1)\Big)\\
&+\varepsilon_2\Big(g(U_2,W_2)g(Y_2,Z_2)-g(U_2,Z_2)g(Y_2,W_2)\Big), 
\end{aligned}
\end{equation}
where $U_1=\frac{PU+U}{2}$ and $U_2=\frac{U-PU}{2}$ for any $U\in T(\bar{M}^m(\epsilon_1)\times \bar{M}^n(\epsilon_2))$. 

Let $M$ be an oriented hypersurface of $\bar{M}^m(\epsilon_1)\times \bar{M}^n(\epsilon_2)$ with a unit normal vector field $N$ and the induced 
metric, still denoted by $g$. Then, with respect
to the product structure $P$, the smooth product angle function $C:M\rightarrow\mathbb R$
and the vector field $X$ tangent to $M$ are defined by
\begin{align*}
	C:=g(PN,N ),\ \ 	X:=PN-CN.
\end{align*}
It is obvious that $-1\le C\le1$ and $\|X\|^2:=g(X,X)=1-C^2$.
For any tangential vector field $Y$ of M, acting by the product structure $P$, we have the following decomposition
$$
PY=TY+\mu(Y)N, 
$$
where $TY$ and $\mu(Y)N$ are the tangential and normal parts of $PY$. Thus $T$ is a tensorial field of type $(1,1)$, and $\mu$ is a $1$-form over $M$. Moreover, $\mu(Y)=g(PY,N)$.

Let $\nabla$ be the Levi-Civita connection of the induced metric
$g$ on $M$. Then the Gauss and Weingarten
formulae are given respectively as below:
\begin{equation}\label{eqn:APX2.5}
	\bar{\nabla}_YZ=\nabla_YZ+g(AY,Z)N, ~~
	\bar{\nabla}_YN=-AY,~~\hbox{for any}~ Y,Z\in TM,
\end{equation}
where $A$ is the shape operator of $M$.

Let $R$ be the Riemannian curvature tensor of $M$. Then, the Gauss
and Codazzi equations of $M$ are given respectively as follows:
\begin{equation}\label{eqn:APX2.6}
	\begin{aligned}
		g(R(U,Y)Z,W)=g(\bar{R}(U,Y)Z,W)+g(AY,Z)g(AU,W)-g(AU,Z)g(AY,W),
	\end{aligned}
\end{equation}
\begin{equation}\label{eqn:APX2.7}
	g((\nabla_YA)Z-(\nabla_ZA)Y,W)=g(\bar{R}(Y,Z)W,N),
\end{equation}
where $U, Y, Z, W\in TM$.

Similar to the proof of Lemma 2.1 in \cite{GMY}, we still have that 
the gradient of $C$ and the covariant derivative of $X$ are given by 
\begin{equation}\label{eqn:APX2.9}
	\nabla C=-2AX,~~ \nabla_YX=CAY-TAY, ~~\hbox{for any}~ Y\in TM.
\end{equation}

In the following, by using the so-called Tsinghua principle (see the description in Sect. \ref{sect:1}), one can establish a very useful identity on a hypersurface in any Riemannian manifold with constant sectional curvature. 

\begin{lemma}[\cite{AD}]\label{thm:7.1}
	Let $M$ be a hypersurface of Riemannian manifold $(\bar{M},g)$ with constant sectional curvature. 
	Then, it holds
	\begin{equation}\label{eqn:4.1q1}
		\begin{aligned}
	\mathop\mathfrak{S}\limits_{W,U,Y}g(\bar{R}(U,Y)Z,AW)=0, \ \ \forall \ U,Y,Z,W\in TM, 
		\end{aligned}
	\end{equation}
	where the symbol $\mathfrak{S}$ stands for the cyclic summation, $\bar{R}$ denotes the curvature tensor of $\bar{M}$, 
	and $A$ denotes the shape operator of $M$.
\end{lemma}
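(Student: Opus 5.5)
The plan is to follow the Tsinghua principle as in the proof of Lemma \ref{lemma:3.2aa}. We evaluate the cyclic sum
\[
\mathfrak{A}:=\mathop\mathfrak{S}\limits_{W,U,Y}\big\{g((\nabla^2A)(W,U,Y),Z)-g((\nabla^2A)(W,Y,U),Z)\big\}
\]
in two ways. The point is to avoid any local symmetry assumption on $\bar M$: the only place where $\bar\nabla\bar R$ appears will be a cyclic sum over exactly the three arguments that the second Bianchi identity of $\bar M$ kills.

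\emph{First way (Ricci identity).} As in \eqref{eqn:3.3}--\eqref{eqn:3.4}, rewrite $\mathfrak{A}$ as the cyclic sum of $g((\nabla^2A)(W,U,Y),Z)-g((\nabla^2A)(U,W,Y),Z)$. The Ricci identity \eqref{eqn:ric} then gives
\[
\mathfrak{A}=-\mathop\mathfrak{S}\limits_{W,U,Y}\{g(R(W,U)Y,AZ)+g(R(W,U)Z,AY)\}.
\]
Since $M$ has constant sectional curvature $\kappa$, $R$ has the form \eqref{eqn:3.6aa}.
\begin{itemize}
\item The first sum vanishes by the first Bianchi identity.
\item The second sum is $\kappa\,\mathfrak{S}\{g(U,Z)g(W,AY)-g(W,Z)g(U,AY)\}$. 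This cancels termwise after a cyclic relabeling, using the symmetry of $A$.
\end{itemize}
Hence $\mathfrak{A}=0$.

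\emph{Second way (differentiate Codazzi).} Write the general Codazzi equation \eqref{eqn:APX2.7} as
\[
g((\nabla_UA)Y-(\nabla_YA)U,Z)=\bar R(U,Y,Z,N),
\]
and differentiate covariantly along $W$ on $M$. The tangential derivative of the ambient tensor restricted to $M$ is expanded via the Gauss--Weingarten formulae \eqref{eqn:APX2.5}. It produces:
\begin{itemize}
\item the term $(\bar\nabla_W\bar R)(U,Y,Z,N)$;
\item terms $g(AW,U)\bar R(N,Y,Z,N)$, $g(AW,Y)\bar R(U,N,Z,N)$ and $g(AW,Z)\bar R(U,Y,N,N)$, coming from $\bar\nabla_WU=\nabla_WU+g(AW,U)N$ and its analogues;
\item the term $-\bar R(U,Y,Z,AW)$, coming from $\bar\nabla_WN=-AW$.
\end{itemize}
So the identity reads
\begin{align*}
g((\nabla^2A)(W,U,Y)-(\nabla^2A)(W,Y,U),Z)={}&(\bar\nabla_W\bar R)(U,Y,Z,N)+g(AW,U)\bar R(N,Y,Z,N)\\
&+g(AW,Y)\bar R(U,N,Z,N)-\bar R(U,Y,Z,AW),
\end{align*}
with the $g(AW,Z)$ term already dropped since $\bar R(U,Y,N,N)=0$.

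Now take the cyclic sum over $(W,U,Y)$ and treat each group separately.
\begin{itemize}
\item The $\bar\nabla\bar R$ terms give $\mathfrak{S}_{W,U,Y}(\bar\nabla_W\bar R)(U,Y,\cdot,\cdot)=0$ by the second Bianchi identity of $\bar M$. This is exactly where local symmetry would otherwise be needed, and it is not.
\item For the $N$-terms, relabel the second one cyclically ($W\to U\to Y\to W$). It becomes $g(AU,W)\bar R(Y,N,Z,N)=-g(AW,U)\bar R(N,Y,Z,N)$, using the skew-symmetry of $\bar R$ and the symmetry of $A$. So these cancel pairwise with the first family.
\end{itemize}
Thus $\mathfrak{A}=-\mathfrak{S}_{W,U,Y}\bar R(U,Y,Z,AW)$.

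Comparing with $\mathfrak{A}=0$ gives \eqref{eqn:4.1q1}.

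The main obstacle is the bookkeeping in the second way. One must verify that the derivative of the restricted ambient tensor is exactly $\bar\nabla_W\bar R$ plus the listed normal-component corrections, and that the mixed terms with one or two $N$'s really cancel rather than double. I would check this first by writing everything in a local frame and tracking signs using only the antisymmetries of $\bar R$ and the symmetry of $A$. As a consistency check, specializing $\bar R$ to the curvature tensor of $\mathbb{H}^2\times\mathbb{H}^2$ should reproduce Lemma \ref{lemma:3.2aa}.
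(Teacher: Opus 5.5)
Your argument is correct and is the Tsinghua-principle computation the paper has in mind. The paper only cites this lemma and gives no proof, but its proof of Lemma~\ref{lemma:3.2aa} uses the same method: it computes the cyclic sum $\mathfrak{A}$ once via the Ricci identity, getting $0$ from constant curvature, and once by differentiating the Codazzi equation. Your one genuine addition is needed for a general ambient space: the term $\mathfrak{S}_{W,U,Y}(\bar\nabla_W\bar R)(U,Y,Z,N)$ is killed by the second Bianchi identity, whereas Lemma~\ref{lemma:3.2aa} relied on $\bar\nabla P=0$. Your bookkeeping of the normal-component terms and their pairwise cancellation checks out.
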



%
Now, we can state our result as follows:

\begin{theorem}\label{thm:1.4}
	Let $M$ be a hypersurface of $M_1^m(\epsilon_1)\times M_2^n(\epsilon_2)$ with constant sectional curvature $\kappa$,  where $m,n\geq 2$ and $(\epsilon_1,\epsilon_2)\in\{(1,1),(1,-1),(-1,-1)\}$. For $\mathbb{S}^m\times \mathbb{S}^n$ 
	and $\mathbb{H}^m\times \mathbb{H}^n$, we further assume $m\geq 3$ and $n\geq 2$. 
	Then $\kappa=-\frac{1}{2}$, $\epsilon_1=\epsilon_2=-1$ and $M$ is either
		
		
		\begin{enumerate}
			\item[(1)]
		locally given by the immersion $\Phi_1$, see \eqref{eqn:ap3.1ss};  or 
		\item[(2)]
		locally given by the immersion $\Phi_2$, see \eqref{eqn:ap3.1sss}; or
		\item[(3)]
		locally given by the immersion $\Phi_3$, see \eqref{eqn:ap3.1ssss}. 
			\end{enumerate}
\end{theorem}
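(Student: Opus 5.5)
We will apply Lemma \ref{thm:7.1} with the curvature tensor \eqref{eqn:apxql}, exactly as the Tsinghua principle was used in Lemma \ref{lemma:3.2aa} and Proposition \ref{prop:4.5}. Write every tangent vector $U\in TM$ as $U=U_1+U_2$ with $U_1=\frac{U+PU}{2}$ and $U_2=\frac{U-PU}{2}$. Note that $U_1,U_2$ are tangent to $\bar M$, not necessarily to $M$.

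The relevant subspaces are
$$\mathcal{D}_1=\{U\in TM: PU=U\},\qquad \mathcal{D}_2=\{U\in TM: PU=-U\},$$
which are the tangent vectors of $M$ lying in the first or the second factor. Since $M$ is a hypersurface, $\dim\mathcal{D}_1\ge m-1$ and $\dim\mathcal{D}_2\ge n-1$. If $|C|<1$ on an open set, then $TM=\mathcal{D}_1\oplus\mathcal{D}_2\oplus\mathrm{span}\{X\}$, with $\dim\mathcal{D}_1=m-1$ and $\dim\mathcal{D}_2=n-1$.

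\emph{Step 1: extract linear conditions on $A$.} Insert test vectors into \eqref{eqn:4.1q1}.

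First take $U,Y,Z\in\mathcal{D}_1$ mutually orthogonal. This needs $m-1\ge 3$ in general. When $m=3$, take $Z=U$ and one vector $Y\perp U$ instead. The cyclic identity then gives $\epsilon_1\,g(AW,Y)=0$ for suitable $W$. In particular $A$ maps $\mathcal{D}_1$ into $\mathcal{D}_1^{\perp}$ off the diagonal, and mixed components vanish.

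Next take $U\in\mathcal{D}_1$, $Y\in\mathcal{D}_2$ and $W=X$. This gives relations between $g(AU,U)$, $g(AY,Y)$ and $g(AX,\cdot)$, analogous to the relations $b_3(1+C)=b_5(1-C)=b_2C=0$ obtained in Proposition \ref{prop:4.5}.

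The aim is to show that $A$ is diagonal in an adapted frame, $A|_{\mathcal{D}_1}=\lambda\,\mathrm{Id}$, $A|_{\mathcal{D}_2}=\nu\,\mathrm{Id}$, $AX=\sigma X$. The hypothesis $m\ge3$ for $\mathbb{S}\times\mathbb{S}$ and $\mathbb{H}\times\mathbb{H}$ should be exactly what provides enough room in $\mathcal{D}_1$.

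\emph{Step 2: Gauss equation.} Compute sectional curvatures of $M$ from \eqref{eqn:APX2.6} with the diagonal $A$:
$$K(\mathcal{D}_1,\mathcal{D}_1)=\epsilon_1+\lambda^2,\qquad K(\mathcal{D}_1,\mathcal{D}_2)=\lambda\nu,\qquad K(\mathcal{D}_1,X)=\epsilon_1\tfrac{(1+C)}{2}\cdots+\lambda\sigma,$$
and similarly for the remaining planes; the precise $C$-weights come from \eqref{eqn:apxql}. Setting all of these equal to $\kappa$ yields algebraic equations.

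If $\dim\mathcal{D}_1\ge2$, then $\lambda^2=\kappa-\epsilon_1$. Combined with the Codazzi equation \eqref{eqn:APX2.7} and $\nabla C=-2AX$ (that is, $X(C)=-2\sigma(1-C^2)$), differentiating these algebraic relations along $X$ should force $\kappa=-\tfrac12$ and $\epsilon_1=\epsilon_2=-1$, just as in Case I-i of Proposition \ref{prop:4.5}. For $\mathbb{S}^m\times\mathbb{H}^n$ the mixed signs should produce a contradiction, because $\epsilon_1+\lambda^2=\kappa=\epsilon_2+\nu^2$ together with $\lambda\nu=\kappa$ is incompatible with the $X$-derivative identities. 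The cases $C=\pm1$ are handled separately, as in Case II: $M$ is locally $\Gamma\times\bar M^n$ or $\bar M^m\times\Gamma$, whose curvature is not constant.

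\emph{Step 3: integrate.} Once the possible values of $(\lambda,\nu,\sigma,C)$ are known, Codazzi shows that $\mathcal{D}_1$, $\mathcal{D}_2$ and $\mathrm{span}\{X\}$ are suitably integrable. One then builds coordinates $(t,r_1,\dots,s_1,\dots)$ as in Theorem \ref{thm:4.1abc} and solves the ODE \eqref{eqn:4.mm1} for the two components $p,q$. This should give immersions of the form $p=\alpha(t)V_1(r)+\beta(t)V_2(r)$, $q=\gamma(t)W_1(s)+\delta(t)W_2(s)$, where the $V_i,W_i$ parametrize totally geodesic or horospherical pieces. The three distinct solutions (the analogues of $C>0$ and of the two $C=0$ subcases with $b_2=0$) are $\Phi_1,\Phi_2,\Phi_3$.

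The main obstacle is Step 1 combined with Step 2 when $C$ is nonconstant. There one must rule out nonzero off-diagonal components $g(AU,X)$ for $U\in\mathcal{D}_i$, and pin down $\sigma$. I would first try to obtain $g(AU,X)=0$ directly from \eqref{eqn:4.1q1} with $(W,U,Y,Z)=(U,U',X,U')$, where $U'\in\mathcal{D}_1$ is orthogonal to $U$; this uses $\dim\mathcal{D}_1\ge2$, which is where $m\ge3$ enters.
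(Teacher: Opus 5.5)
Your outline follows the paper's overall strategy: use Lemma \ref{thm:7.1} to make $\mathcal{D}_1$, $\mathcal{D}_2$ and $\mathrm{span}\{X\}$ invariant under $A$, then apply the Gauss equation, then integrate. It does not, however, cover the whole statement. For $\mathbb{S}^m\times\mathbb{H}^n$ the theorem allows $m=2$. Your Steps 1--2 explicitly rely on $\dim\mathcal{D}_1\ge2$, both in the test with $U'\perp U$ in $\mathcal{D}_1$ and in the relation $\lambda^2=\kappa-\epsilon_1$. Your mixed-sign contradiction also uses $\epsilon_2+\nu^2=\kappa$, so it needs $n\ge3$ as well. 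For $m=2$, $n\ge3$ you can argue symmetrically with $\mathcal{D}_2$, but for $m=n=2$ neither block contains two vectors.

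In that case the cross term $a_{12}=g(AE_1,E_2)$ must be killed with $(U,Y,Z,W)=(E_2,E_1,E_3,E_3)$ in \eqref{eqn:4.1q1}. This gives $a_{12}\big((C-1)\epsilon_1+(1+C)\epsilon_2\big)=-2a_{12}=0$, and it works only because the signs are mixed. For equal signs the coefficient is $\pm2C$, which is why $\mathbb{H}^2\times\mathbb{H}^2$ with $C=0$ carries the families of Theorems \ref{thm:5.10a1}--\ref{thm:5.13a1}. After that, the Gauss equations reduce to $\lambda_1\lambda_2=\kappa$, $\tfrac{1-C}{2}+\lambda_1\lambda_3=\kappa$ and $-\tfrac{1+C}{2}+\lambda_2\lambda_3=\kappa$. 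These have pointwise solutions, for example $C=0$, $\kappa=1$, $\lambda_3=\tfrac{\sqrt3}{2}$, $\lambda_1=\tfrac{1}{\sqrt3}$, $\lambda_2=\sqrt3$, so no algebraic contradiction is available. The paper proceeds as follows:
\begin{enumerate}
\item[(a)] solve $\lambda_1=\frac{2\kappa-1+C}{2\lambda_3}$ and $\lambda_2=\frac{2\kappa+1+C}{2\lambda_3}$;
\item[(b)] take $\kappa$ to be either root of $(2\kappa-1+C)(2\kappa+1+C)=4\kappa\lambda_3^2$;
\item[(c)] differentiate along $E_3$ using $E_3\kappa=0$ and $E_3C=-2\lambda_3\sqrt{1-C^2}$;
\item[(d)] compare with the Codazzi formulas for $E_3\lambda_1$ and $E_3\lambda_2$; both roots lead to a contradiction.
\end{enumerate}
This is the one genuinely differential step in the proof, and your proposal has no substitute for it.

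Your Step 3 also predicts the wrong list of examples. Two smaller points first. Whenever $\mathcal{D}_1$ or $\mathcal{D}_2$ has dimension at least $2$, the Gauss equations decide everything algebraically, with no $X$-derivative needed. For $\mathbb{H}^m\times\mathbb{H}^n$ they give $(1+C)(1-2\lambda^2)=0$, hence $\lambda_i=\tfrac{1}{\sqrt2}$, $\lambda_p=-\tfrac{1}{\sqrt2}$, $\lambda_{m+n-1}=-\tfrac{C}{\sqrt2}$. The equality of eigenvalues within each block comes from $\lambda_i\lambda_p=\kappa\neq0$, not from the Tsinghua identity. Your first test, with three mutually orthogonal vectors of $\mathcal{D}_1$, is vacuous.

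The main point is the case analysis. From $E_{m+n-1}C=\sqrt2\,C\sqrt{1-C^2}$ you get $C={\rm sech}(\sqrt2t)$, $C\equiv0$, or $C=-{\rm sech}(\sqrt2t)$, corresponding to $\Phi_1$, $\Phi_2$ and $\Phi_3$. There is only one $C=0$ case: the $\tanh/\coth$ analogue of Case 2 of Lemma \ref{lemma:4.1aaa1} cannot occur, because $\lambda_i^2=\kappa+1$ is constant. Moreover, $C<0$ cannot be normalized to $C>0$ by $(p,q)\mapsto(q,p)$ when $m\neq n$, so copying the three-dimensional normalization would lose $\Phi_3$.

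Finally, instead of building coordinates by hand, the paper notes that $AT=TA$ and invokes the structure theorem for such hypersurfaces. This gives $p=\cosh(u_1)f_1+\sinh(u_1)N^{f_1}$ and $q=\cosh(u_2)f_2+\sinh(u_2)N^{f_2}$. The problem then reduces to an ODE for $(u_1,u_2)$ and to $f_1,f_2$ being totally umbilical with prescribed principal curvatures.
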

\begin{proof}
	
Assume that the hypersurface $M$ has constant sectional curvature $\kappa$. We will discuss two cases depending on the value of the product angle function $C$ on $M$.

{\bf Case I}. $C\neq\pm1$.

In this case,  it holds that $PN=CN+X$. 
We can take a local orthonormal frame fields $\{E_\alpha\}_{\alpha=1}^{m+n-1}$ 
such that $E_i\in P(1)$ for $1\leq i\leq m-1$, $E_p\in P(-1)$ for $m\leq p\leq m+n-2$ and  $E_{m+n-1}=\tfrac{X}{\sqrt{1-C^2}}$, where $P(1)$ (resp. $P(-1)$) is the eigenspace of $P$ corresponding to $1$ (resp. $-1$). Then it holds that 
\begin{equation}\label{eqn:4.1}
	PN=CN+\sqrt{1-C^2}E_{m+n-1},\ \ PE_{m+n-1}=\sqrt{1-C^2}N-CE_{m+n-1}.
\end{equation}
Let us write $AE_{\alpha}=\sum_{\beta=1}^{m+n-1}a_{\alpha\beta}E_\beta$, $1\leq \alpha\leq m+n-1$, where $a_{\alpha\beta}=a_{\beta\alpha}$ for
$1\leq \alpha,\beta\leq m+n-1$. 


Now, taking in \eqref{eqn:4.1q1}, respectively, $1\leq i, j\leq m-1$ and $m\leq p,q\leq m+n-2$,
$$
(U,Y,Z,W)=(E_p,E_i,E_j,E_j), (E_p,E_i,E_q,E_q), (E_p,E_i,E_{m+n-1},E_{m+n-1}),
$$
with the use of \eqref{eqn:apxql}, we obtain 
\begin{equation}\label{eqn:ap1}
\epsilon_1(1-\delta_{ij})a_{ip}=0,
\end{equation}
\begin{equation}\label{eqn:ap2}
	\epsilon_2(1-\delta_{pq})a_{ip}=0,
\end{equation}
\begin{equation}\label{eqn:ap3}
a_{ip}\big(( C-1)\epsilon_1 +(1+C)\epsilon_2\big)=0.
\end{equation}

Taking in \eqref{eqn:4.1q1}, respectively, $1\leq i, j\leq m-1$ and $m\leq p,q\leq m+n-2$,
$$
(U,Y,Z,W)=(E_{m+n-1},E_i,E_p,E_p),(E_{m+n-1},E_p,E_i,E_i),
$$
with the use of \eqref{eqn:apxql}, we obtain  
\begin{equation}\label{eqn:ap4}
	a_{i,m+n-1}(1+C)\epsilon_2=0,
\end{equation}
\begin{equation}\label{eqn:ap5}
	a_{p,m+n-1}(C-1)\epsilon_1=0.
\end{equation}

For any $(\epsilon_1,\epsilon_2)\in \{(1,1),(1,-1),(-1,-1)\}$, $m,n\geq2$ and $m+n\geq5$, by 
\eqref{eqn:ap1} and \eqref{eqn:ap2}, it follows that $a_{ip}=0$ for $1\leq i\leq m-1$ and $m\leq p\leq m+n-2$. 
For $(\epsilon_1,\epsilon_2)=(1,-1)$ and $m=n=2$, by \eqref{eqn:ap3}, we still have $a_{12}=0$. 
For any $(\epsilon_1,\epsilon_2)\in \{(1,1),(1,-1),(-1,-1)\}$ and $m,n\geq2$, by using $C\neq\pm1$,  \eqref{eqn:ap4} and \eqref{eqn:ap5}, we have $a_{i,m+n-1}=a_{p,m+n-1}=0$ for $1\leq i\leq m-1$ and $m\leq p\leq m+n-2$. Thus, on hypersurface $M$, it always holds that the subdistributions 
${\rm Span}\,\{E_i\}_{i=1}^{m-1}$, ${\rm Span}\,\{E_p\}_{p=m}^{m+n-2}$ and ${\rm Span}\,\{E_{m+n-1}\}$ 
are $A$-invariant, respectively. This allows us to further require the frame $\{E_i\}_{i=1}^{m+n-1}$ satisfying 
$$
AE_i=\lambda_iE_i,  AE_p=\lambda_pE_p, AE_{m+n-1}=\lambda_{m+n-1}E_{m+n-1}, 
\ \ \forall \ 1\leq i\leq m-1,\ m\leq p\leq m+n-2.
$$

Now, we calculate the sectional curvatures $K(E_i,E_j)$, $K(E_i,E_p)$, $K(E_p,E_q)$, $K(E_i,E_{m+n-1})$ and 
$K(E_p,E_{m+n-1})$. By using the Gauss equation \eqref{eqn:APX2.6} and the fact that $M$ has constant sectional curvature $\kappa$, 
we obtain 
\begin{gather}
	K(E_i,E_j)=\epsilon_1+\lambda_i\lambda_j=\kappa,\label{eqn:4.7s}\\
	K(E_i,E_p)=\lambda_i\lambda_{p}=\kappa,\label{eqn:4.8s}\\
	K(E_p,E_q)=\epsilon_2+\lambda_p\lambda_q=\kappa,\label{eqn:4.7adss}\\
	K(E_i,E_{m+n-1})=\tfrac{1}{2}(1-C)\epsilon_1+\lambda_i\lambda_{m+n-1}=\kappa,\label{eqn:4.9s}\\
	K(E_p,E_{m+n-1})=\tfrac{1}{2}(1+C)\epsilon_2+\lambda_p\lambda_{m+n-1}=\kappa,\label{eqn:4.9as}
\end{gather}
where \eqref{eqn:4.7s} only appears for $m\geq3$, and 
\eqref{eqn:4.7adss} only appears for $n\geq3$.

In the following, according to the ambient space, we discuss three subcases. 

{\bf Case I-i}. $\mathbb{S}^m\times\mathbb{S}^n$ for $m\geq3$ and $n\geq2$. 

In this case, $\epsilon_1=\epsilon_2=1$. If $\kappa=0$, then by \eqref{eqn:4.8s}, it holds 
$\lambda_i=0$ or $\lambda_p=0$ which contradicts with \eqref{eqn:4.7s}, \eqref{eqn:4.9as} 
and $C\neq\pm1$. 
So $\kappa\neq0$ and $\lambda_i\lambda_p\neq0$. By \eqref{eqn:4.8s}, we have 
$\lambda_i=\lambda_j$ for $1\leq i\neq j\leq m-1$.
Now, by \eqref{eqn:4.7s} and
\eqref{eqn:4.8s}, we have $\lambda_p=\tfrac{1}{\lambda_i}+\lambda_i$. By \eqref{eqn:4.7s} 
and \eqref{eqn:4.9s},
we get $\lambda_{m+n-1}=\tfrac{1+2\lambda_i^2+C}{2\lambda_i}$. By substituting $\lambda_p=\tfrac{1}{\lambda_i}+\lambda_i$ and  $\lambda_{m+n-1}=\tfrac{1+2\lambda_i^2+C}{2\lambda_i}$ into \eqref{eqn:4.9as}, with the use of  $\kappa=1+\lambda_i^2$, 
it deduces that $(1+C)(1+2\lambda_i^2)=0$, which contradicts with $C\neq\pm1$. 
Thus, {\bf Case I-i} does not occur. 

\vskip 2mm

{\bf Case I-ii}. $\mathbb{H}^m\times\mathbb{H}^n$ for $m\geq3$ and $n\geq2$. 

In this case, $\epsilon_1=\epsilon_2=-1$. 
If $\kappa=0$, then by \eqref{eqn:4.8s}, it holds 
$\lambda_i=0$ or $\lambda_p=0$ which contradicts with \eqref{eqn:4.7s}, \eqref{eqn:4.9as} 
and $C\neq\pm1$. 
So $\kappa\neq0$ and $\lambda_i\lambda_p\neq0$. 
Then by  \eqref{eqn:4.8s}, we have $\lambda_i=\lambda_j$ and $\lambda_p=\lambda_q$ for  $1\leq i,j\leq m-1$ and $m\leq p,q\leq m+n-2$. By \eqref{eqn:4.7s} and \eqref{eqn:4.8s}, we have $\lambda_{p}=\lambda_{i}-\frac{1}{\lambda_{i}}$. 
By \eqref{eqn:4.7s} and \eqref{eqn:4.9s}, we get  $\lambda_{m+n-1}=-\frac{1+C-2\lambda_i^2}{2\lambda_i}$. By substituting 
$\lambda_{p}=\lambda_{i}-\frac{1}{\lambda_{i}}$ and $\lambda_{m+n-1}=-\frac{1+C-2\lambda_i^2}{2\lambda_i}$ into 
\eqref{eqn:4.9as}, it deduces that  
$(1+C)(1-2\lambda_i^2)=0$. Since $C\neq\pm1$, we get $\lambda_i=\pm\frac{1}{\sqrt{2}}$ 
and $\kappa=-\frac{1}{2}$. 
Up to a sign of the normal vector field $N$, we can assume that 
$\lambda_i=\frac{1}{\sqrt{2}}$, $\lambda_p=\frac{-1}{\sqrt{2}}$ and 
$\lambda_{m+n-1}=\frac{-C}{\sqrt{2}}$.  By \eqref{eqn:APX2.9}, 
it holds $E_{m+n-1}C=C\sqrt{2(1-C^2)}$. 
Since the distributions ${\rm Span}\,\{E_i\}_{i=1}^{m-1}$, ${\rm Span}\,\{E_p\}_{p=m}^{m+n-2}$ and ${\rm Span}\,\{E_{m+n-1}\}$ are $A$-invariant, one can check that $M$ satisfies the condition of $AT=TA$. Note that, 
hypersurfaces satisfying $AT=TA$ are called class $\mathcal{A}$ hypersurfaces 
in \cite{CT}. 

In the following, we begin to find the special examples of class $\mathcal{A}$ in $\mathbb{H}^m\times\mathbb{H}^n$ with 
constant sectional curvature. 
Let $f_1:M_1\rightarrow \mathbb{H}^m$ and $f_2:M_2\rightarrow \mathbb{H}^n$ 
be two hypersurfaces with unit normal vector fields $N^{f_1}$ and $N^{f_2}$, respectively. 
By Theorem 3.1 of \cite{CT}, we know that $M$ is locally given by the immersion: 
\begin{equation}\label{eqn:APXap3.1}
	\begin{aligned}
		\Phi:I\times M_1\times M_2&\longrightarrow \mathbb{H}^m\times \mathbb{H}^n,\\
		(\tilde{t},x_1,x_2)&\longmapsto (p(\tilde{t},x_1),q(\tilde{t},x_2)),
	\end{aligned}
\end{equation}
where
$$
\left\{\begin{aligned}
	p(\tilde{t},x_1)=\cosh(u_1(\tilde{t}))f_1(x_1)+\sinh(u_1(\tilde{t}))N^{f_1}(x_1),\\
	q(\tilde{t},x_2)=\cosh(u_2(\tilde{t}))f_2(x_2)+\sinh(u_2(\tilde{t}))N^{f_2}(x_2),  
\end{aligned}\right.
$$
where $u_1(\tilde{t})$, $u_2(\tilde{t})$ are two smooth functions satisfying
$(u'_{1}(\tilde{t}))^2+(u'_{2}(\tilde{t}))^2=1$ and $u'_{1}(\tilde{t})<0$, $u'_{2}(\tilde{t})<0$.
Moreover, it holds that $(\frac{\partial p}{\partial \tilde{t}},\frac{\partial q}{\partial \tilde{t}})=E_{m+n-1}$. By 
$PN=CN+\sqrt{1-C^2}E_{m+n-1}$, we have $N=(\sqrt{\frac{1+C}{1-C}}\frac{\partial p}{\partial \tilde{t}},-\sqrt{\frac{1-C}{1+C}}\frac{\partial q}{\partial \tilde{t}})$.

We integrate $E_{m+n-1}C=C\sqrt{2(1-C^2)}$ along the integral curves of $E_{m+n-1}$, and obtain
$$
C=\frac{1}{\cosh(\sqrt{2}\tilde{t}+d_1)},\ {\rm or}\ \  C=0,
\ {\rm or}\ \  C=\frac{-1}{\cosh(\sqrt{2}\tilde{t}+d_1)}, 
$$
where $d_1$ is constant and $\sqrt{2}\tilde{t}+d_1<0$. By taking the reparametrization $t=\tilde{t}+\frac{d_1}{\sqrt{2}}$, it follows that 
$$
C=\frac{1}{\cosh(\sqrt{2}t)},\ {\rm or}\ \  C=0,
\ {\rm or}\ \  C=\frac{-1}{\cosh(\sqrt{2}t)},\ \ t<0.  
$$

Let $\{X_i\}_{i=1}^{m-1}$ and $\{X_p\}_{p=m}^{m+n-2}$ be bases of $T_{x_1}M_1$ and $T_{x_2}M_2$ such that $\{df_1(X_i)\}_{i=1}^{m-1}$ and $\{df_2(X_p)\}_{p=m}^{m+n-2}$ are 
principal curvature vectors of $f_1(M_1)$ and $f_2(M_2)$, respectively. Assume that 
$$
A^{f_1}df_1(X_i)=\lambda^{f_1}_i(x_1)df_1(X_i),\ A^{f_2}df_2(X_p)=\mu^{f_2}_p(x_2)df_2(X_p),\ \forall \ 
1\leq i \leq m-1,\ m\leq p \leq m+n-2, 
$$
where $A^{f_1}$ and $A^{f_2}$ are the shape operators of $f_1(M_1)\hookrightarrow\mathbb{H}^m$ and $f_2(M_2)\hookrightarrow\mathbb{H}^n$,  respectively.
Now, at point $\Phi(t,x_1,x_2)$, the basis $\{E_\alpha\}_{\alpha=1}^{m+n-1}$ of $\Phi$ can be given as follows:
\begin{equation*}
	\begin{aligned}
		&E_i=\left(df_1(X_i),0\right),\ 
		E_p=\left(0,df_2(X_p)\right), \ 
		1\leq i \leq m-1,\ m\leq p \leq m+n-2, \\ 
		&E_{m+n-1}=\tfrac{\partial}{\partial t}(p(t,x_1),q(t,x_2))
		=\big(u'_{1}(t)\sinh(u_1(t))f_1(x_1)
		+u'_{1}(t)\cosh(u_1(t))N^{f_1}(x_1),\\
		&~~~~~~~~~~~~~~~~~~~~~~~~   u'_{2}(t)\sinh(u_2(t))f_2(x_2)
		+u'_{2}(t)\cosh(u_2(t))N^{f_2}(x_2)\big).
	\end{aligned}
\end{equation*}
The unit normal vector field $N$ of $M$ in $\mathbb{H}^m\times \mathbb{H}^n$ is given by
\begin{equation}\label{eqn:apx3.3}
	\begin{aligned}
		N=&\big(u'_{2}(t)\sinh(u_1(t))f_1(x_1)
		+u'_{2}(t)\cosh(u_1(t))N^{f_1}(x_1),\\
		&\ \ \ -u'_{1}(t)\sinh(u_2(t))f_2(x_2)
		-u'_{1}(t)\cosh(u_2(t))N^{f_2}(x_2)\big).
	\end{aligned}
\end{equation}
It follows that
\begin{equation}\label{eqn:apxCC}
	C=(u'_{2}(t))^2-(u'_{1}(t))^2=1-2(u'_{1}(t))^2.
\end{equation}
%

Now, by direct calculation, at point $\Phi(t,x_1,x_2)$, we can get 
\begin{equation}\label{eqn:apx3.5}
	\left\{\begin{aligned}
		&AE_i=-\tfrac{\sinh(u_1(t))
			-\lambda^{f_1}_i(x_1)\cosh(u_1(t))}
		{\cosh(u_1(t))-\lambda^{f_1}_i(x_1)\sinh(u_1(t))}u'_{2}(t)E_i,\ \ 1\leq i\leq m-1,\\
		&AE_p=\tfrac{\sinh(u_2(t))
			-\mu^{f_2}_p(x_2)\cosh(u_2(t))}
		{\cosh(u_2(t))-\mu^{f_2}_p(x_2)\sinh(u_2(t))}u'_{1}(t)E_p,\ \ m\leq p\leq m+n-2,\\
		&AE_{m+n-1}=\left(u'_{2}(t)u''_{1}(t)-u'_{1}(t)u''_{2}(t)\right)E_{m+n-1}.
	\end{aligned}\right.
\end{equation}
Thus, it holds that 
\begin{equation}\label{eqn:apx3.5ss}
\begin{aligned}
&\lambda_i=-\tfrac{\sinh(u_1(t))
	-\lambda^{f_1}_i(x_1)\cosh(u_1(t))}
{\cosh(u_1(t))-\lambda^{f_1}_i(x_1)\sinh(u_1(t))}u'_{2}(t), \ \ \lambda_p=\tfrac{\sinh(u_2(t))
	-\mu^{f_2}_p(x_2)\cosh(u_2(t))}
{\cosh(u_2(t))-\mu^{f_2}_p(x_2)\sinh(u_2(t))}u'_{1}(t),\\
&\lambda_{m+n-1}=u'_{2}(t)u''_{1}(t)-u'_{1}(t)u''_{2}(t).
	\end{aligned}
\end{equation}

By \eqref{eqn:APX2.9} and $E_{m+n-1}=\tfrac{X}{\sqrt{1-C^2}}$, we can have 
 $\nabla_{E_{m+n-1}}{E_{m+n-1}}=0$. 
Then by using \eqref{eqn:4.1}, $AE_{m+n-1}=-\frac{C}{\sqrt{2}}E_{m+n-1}$ and $\nabla_{E_{m+n-1}}{E_{m+n-1}}=0$, 
we can get 
\begin{equation}\label{eqn:APX4.mm1}
	\begin{aligned}
		(\tfrac{\partial^2 p}{\partial t^2},\tfrac{\partial^2 q}{\partial t^2})&
		=\bar{\nabla}_{E_{m+n-1}}{E_{m+n-1}}-g( D_{E_{m+n-1}}{E_{m+n-1}},\tfrac{(p,q)}{\sqrt{2}})\tfrac{(p,q)}{\sqrt{2}}\\
		&~~~~~~~~~~~~~~~~    -g( D_{E_{m+n-1}}{E_{m+n-1}},\tfrac{(p,-q)}{\sqrt{2}})\tfrac{(p,-q)}{\sqrt{2}}\\
		&=-\tfrac{C}{\sqrt{2}}N+\tfrac{1}{2}(p,q)-\tfrac{C}{2}(p,-q)\\
		&=(-C\sqrt{\tfrac{1+C}{2(1-C)}}\tfrac{\partial p}{\partial t},C\sqrt{\tfrac{1-C}{2(1+C)}}\tfrac{\partial q}{\partial t})+(\tfrac{1-C}{2}p,\tfrac{1+C}{2}q),
	\end{aligned}
\end{equation}
where $D$ is the canonical Euclidean connection on $\mathbb{R}_2^{m+n+2}$. 
It can be checked that equation \eqref{eqn:APX4.mm1} is equivalent to the following 
ordinary differential equation system:
\begin{equation}\label{eqn:APX4.mm1sx}
	\left\{\begin{aligned}
	&u_1''(t)=-\frac{1}{\sqrt{2}}\Big((u_2'(t))^2-(u_1'(t))^2\Big)u_2'(t),\\
	&u_2''(t)=\frac{1}{\sqrt{2}}\Big((u_2'(t))^2-(u_1'(t))^2\Big)u_1'(t). 
\end{aligned}\right.
\end{equation}

In the following, we divide the discussion into three subcases, according to the expression of the function $C$ on $M$. 

{\bf Case I-ii-1}. $C=\frac{1}{\cosh(\sqrt{2}t)}$. 

In this subcase, from $(u'_{1}(t))^2+(u'_{2}(t))^2=1$, $(u'_{2}(t))^2-(u'_{1}(t))^2=C$ and $C=\frac{1}{\cosh(\sqrt{2}t)}$, $t<0$, we solve the equation \eqref{eqn:APX4.mm1sx} and obtain 
\begin{equation}\label{eqn:APXpq}
	\begin{aligned}
		u_1(t)=\ln\Big(\sqrt{2}\cosh(\tfrac{t}{\sqrt{2}})+\sqrt{\cosh(\sqrt{2}t)}\Big)-a,\ \
		u_2(t)={\rm arcsinh}\Big(-\sqrt{2}\sinh(\tfrac{t}{\sqrt{2}})\Big)-b, 
	\end{aligned}
\end{equation}
where $a,b$ are constants. 
From $\lambda_i=\frac{1}{\sqrt{2}}$ and $\lambda_p=\frac{-1}{\sqrt{2}}$ for $1\leq i\leq m-1$ and $m\leq p\leq m+n-2$, by using \eqref{eqn:apx3.5ss} and \eqref{eqn:APXpq}, we have 
$\lambda^{f_1}_i(x_1)=-\tanh(a)$ and $\mu^{f_2}_p(x_2)=-\coth(b)$. It means that 
$f_1(M_1)$ is a totally umbilical hypersurface of 
$\mathbb{H}^m$ with constant principal curvatures $-\tanh(a)$, and $f_2(M_2)$ is a totally umbilical hypersurface of 
$\mathbb{H}^n$ with constant principal curvatures $-\coth(b)$. 
Now, we have 
$$
\left\{
\begin{aligned}
	&p(t,x_1)=\sqrt{\cosh(\sqrt{2}t)+1}\big(\cosh(a)f_1(x_1)-\sinh(a)N^{f_1}(x_1)\big)\\
&\ \ \ \ \ \ \ \ \ \ \ \ \ \ \ \ \ \ \ \ \ \ \ +\sqrt{\cosh(\sqrt{2}t)}\big(-\sinh(a)f_1(x_1)+\cosh(a)N^{f_1}(x_1)\big),\\
	&q(t,x_2)=\sqrt{\cosh(\sqrt{2}t)}\big(\cosh(b)f_2(x_2)-\sinh(b)N^{f_2}(x_2)\big)\\
	&\ \ \ \ \ \ \ \ \ \ \ \ \ \ \ \ \ \ \ \ \ \ \
	+\sqrt{\cosh(\sqrt{2}t)-1}\big(-\sinh(b)f_2(x_2)+\cosh(b)N^{f_2}(x_2)\big).
\end{aligned}\right.
$$
One can check that $\cosh(a)f_1(x_1)-\sinh(a)N^{f_1}(x_1)$ is a totally geodesic hypersurface in $\mathbb{H}^{m}$, and $\cosh(b)f_2(x_2)-\sinh(b)N^{f_2}(x_2)$ is a point in $\mathbb{H}^{n}$. 

Since the immersion $\Phi$ is constructed by using the parallel hypersurfaces of $f_1(M_1)$ and $f_2(M_2)$,  
then up to an isometric of $\mathbb{H}^m\times \mathbb{H}^n$, we can rewrite $\Phi$ as follows:  
\begin{equation}\label{eqn:ap3.1ss}
	\begin{aligned}
		\Phi_1:I\times \mathbb{H}^{m-1}\times \mathbb{S}^{n-1}&\longrightarrow \mathbb{H}^m\times \mathbb{H}^n,\\
		(t,x,y)&\longmapsto (p(t,x),q(t,y)),
	\end{aligned}
\end{equation}
where
$$
\left\{\begin{aligned}
	&p(t,x)=\Big(\sqrt{\cosh(\sqrt{2}t)+1}\ x,\sqrt{\cosh(\sqrt{2}t)}\Big),\\
	&q(t,y)=\Big(\sqrt{\cosh(\sqrt{2}t)},\sqrt{\cosh(\sqrt{2}t)-1}\ y\Big).
\end{aligned}\right.
$$

{\bf Case I-ii-2}. $C=0$. 

In this subcase, we have $u'_{2}(t)^2-u'_{1}(t)^2=0$ and $u'_{1}(t)^2+u'_{2}(t)^2=1$. Thus, 
without loss of generality, we can assume that $u_{1}(t)=-\frac{t}{\sqrt{2}}-a$ and 
$u_{2}(t)=-\frac{t}{\sqrt{2}}-b$, where $a$ and $b$ are constant.
From $\lambda_i=\frac{1}{\sqrt{2}}$, $\lambda_p=\frac{-1}{\sqrt{2}}$ for $1\leq i\leq m-1$ and $m\leq p\leq m+n-2$, and \eqref{eqn:apx3.5ss}, it follows that  $\lambda^{f_1}_i(x_1)=\mu^{f_2}_p(x_2)=-1$, which means that $f_1(M_1)$ (resp. $f_2(M_2)$) is a horosphere of 
$\mathbb{H}^m$ (resp. $\mathbb{H}^n$) with constant principal curvatures $-1$. 

Since the immersion $\Phi$ is constructed by using the parallel hypersurfaces of $f_1(M_1)$ and $f_2(M_2)$, 
up to an isometric of $\mathbb{H}^m\times \mathbb{H}^n$, we can rewrite $\Phi$ as follows:  
\begin{equation}\label{eqn:ap3.1sss}
	\begin{aligned}
		\Phi_2:I\times \mathbb{R}^{m-1}\times \mathbb{R}^{n-1}&\longrightarrow \mathbb{H}^m\times \mathbb{H}^n,\\
		\big(t,(y_1,\ldots,y_{m-1}),(z_1,\ldots,z_{n-1})\big)&\longmapsto \big(p(t,y_1,\ldots,y_{m-1}),q(t,z_1,\ldots,z_{n-1})\big),
	\end{aligned}
\end{equation}
where
$$
\left\{\begin{aligned}
	&p(t,y_1,\ldots,y_{m-1})=\cosh(\frac{t}{\sqrt{2}})
	(1+\frac{1}{2}\sum_{i=1}^{m-1}y_i^2,y_1,\ldots,y_{m-1},\frac{1}{2}\sum_{i=1}^{m-1}y_i^2)\\
	&~~~~~~~~~~~~~~~~~~~~~~~~~~~~~~~~~~~                      +\sinh(\frac{t}{\sqrt{2}})(-\frac{1}{2}
	\sum_{i=1}^{m-1}y_i^2,-y_1,\ldots,-y_{m-1},1-\frac{1}{2}\sum_{i=1}^{m-1}y_i^2),\\
	&q(t,z_1,\ldots,z_{n-1})=\cosh(\frac{t}{\sqrt{2}})
	(1+\frac{1}{2}\sum_{i=1}^{n-1}z_i^2,z_1,\ldots,z_{n-1},\frac{1}{2}\sum_{i=1}^{n-1}z_i^2)\\
	&~~~~~~~~~~~~~~~~~~~~~~~~~~~~~~~~~~~           +\sinh(\frac{t}{\sqrt{2}})(-\frac{1}{2}
	\sum_{i=1}^{n-1}z_i^2,-z_1,\ldots,-z_{n-1},1-\frac{1}{2}\sum_{i=1}^{n-1}z_i^2).
\end{aligned}\right.
$$

{\bf Case I-ii-3}. $C=-\frac{1}{\cosh(\sqrt{2}t)}$. 

In this subcase, from $(u'_{1}(t))^2+(u'_{2}(t))^2=1$, $(u'_{2}(t))^2-(u'_{1}(t))^2=C$ and $C=\frac{-1}{\cosh(\sqrt{2}t)}$, $t<0$, we solve the equation \eqref{eqn:APX4.mm1sx} and obtain 
\begin{equation}\label{eqn:APXpqs}
		u_1(t)={\rm arcsinh}\Big(-\sqrt{2}\sinh(\tfrac{t}{\sqrt{2}})\Big)-a, \ \ 
		u_2(t)=\ln\Big(\sqrt{2}\cosh(\tfrac{t}{\sqrt{2}})+\sqrt{\cosh(\sqrt{2}t)}\Big)-b,
\end{equation}
where $a,b$ are constants.  
From $\lambda_i=\frac{1}{\sqrt{2}}$, $\lambda_p=\frac{-1}{\sqrt{2}}$ for $1\leq i\leq m-1$ and $m\leq p\leq m+n-2$, by using \eqref{eqn:apx3.5ss} and \eqref{eqn:APXpqs}, it follows that 
$\lambda^{f_1}_i(x_1)=-\coth(a)$ and $\mu^{f_2}_p(x_2)=-\tanh(b)$. It means that 
$f_1(M_1)$ is a totally umbilical hypersurface of 
$\mathbb{H}^m$ with constant principal curvatures $-\coth(a)$, and $f_2(M_2)$ is a totally umbilical hypersurface of 
$\mathbb{H}^n$ with constant principal curvatures $-\tanh(b)$. 
Now, we have 
$$
\left\{
\begin{aligned}
	&p(t,x_1)=\sqrt{\cosh(\sqrt{2}t)}\big(\cosh(a)f_1(x_1)-\sinh(a)N^{f_1}(x_1)\big)\\
	&\ \ \ \ \ \ \ \ \ \ \ \ \ \ \ \ \ \ \ \ \ \ \
	+\sqrt{\cosh(\sqrt{2}t)-1}\big(-\sinh(a)f_1(x_1)+\cosh(a)N^{f_1}(x_1)\big),\\
	&q(t,x_2)=\sqrt{\cosh(\sqrt{2}t)+1}\big(\cosh(b)f_2(x_2)-\sinh(b)N^{f_2}(x_2)\big)\\
	&\ \ \ \ \ \ \ \ \ \ \ \ \ \ \ \ \ \ \ \ \ \ \
	+\sqrt{\cosh(\sqrt{2}t)}\big(-\sinh(b)f_2(x_2)+\cosh(b)N^{f_2}(x_2)\big).
\end{aligned}\right.
$$
One can check that $\cosh(a)f_1(x_1)-\sinh(a)N^{f_1}(x_1)$ is a point in $\mathbb{H}^{m}$, and $\cosh(b)f_2(x_2)-\sinh(b)N^{f_2}(x_2)$ is a totally geodesic hypersurface in $\mathbb{H}^{n}$.

Since the immersion $\Phi$ is constructed by using the parallel hypersurfaces of $f_1(M_1)$ and $f_2(M_2)$, 
up to an isometric of $\mathbb{H}^m\times \mathbb{H}^n$, we can rewrite $\Phi$ as follows: 
\begin{equation}\label{eqn:ap3.1ssss}
	\begin{aligned}
		\Phi_3:I\times \mathbb{S}^{m-1}\times \mathbb{H}^{n-1}&\longrightarrow \mathbb{H}^m\times \mathbb{H}^n,\\
		(t,x,y)&\longmapsto (p(t,x),q(t,y)),
	\end{aligned}
\end{equation}
where
$$
\left\{\begin{aligned}
	&p(t,x)=\Big(\sqrt{\cosh(\sqrt{2}t)},\sqrt{\cosh(\sqrt{2}t)-1}\ x\Big),\\
	&q(t,y)=\Big(\sqrt{\cosh(\sqrt{2}t)+1}\ y,\sqrt{\cosh(\sqrt{2}t)}\Big).
\end{aligned}\right.
$$

\vskip 2mm

{\bf Case I-iii}. $\mathbb{S}^m\times\mathbb{H}^n$ for $m\geq2$ and $n\geq2$. 

In this case, $\epsilon_1=1$, $\epsilon_2=-1$.  
If $\kappa=0$, then by \eqref{eqn:4.8s}, it holds 
$\lambda_i=0$ or $\lambda_p=0$ which contradicts with \eqref{eqn:4.7s}, \eqref{eqn:4.9as} 
and $C\neq\pm1$. 
So $\kappa\neq0$ and $\lambda_i\lambda_p\neq0$. 
Then by  \eqref{eqn:4.8s}, we have $\lambda_i=\lambda_j$ and $\lambda_p=\lambda_q$ for  $1\leq i,j\leq m-1$ and $m\leq p,q\leq m+n-2$. 

In the following, according to the values of $m$ and $n$, we divide the discussion into three subcases. 

{\bf Case I-iii-1}. $m\geq 3$. 

In this subcase, by \eqref{eqn:4.7s} and
\eqref{eqn:4.8s}, we have $\lambda_p=\tfrac{1}{\lambda_i}+\lambda_i$ for any $m\leq p\leq m+n-2$. 
By \eqref{eqn:4.7s} and \eqref{eqn:4.9s}, we have 
$\lambda_{m+n-1}=\frac{1+2\lambda_i^2+C}{2\lambda_i}$.  By substituting 
$\lambda_p=\tfrac{1}{\lambda_i}+\lambda_i$ and  $\lambda_{m+n-1}=\frac{1+2\lambda_i^2+C}{2\lambda_i}$ into \eqref{eqn:4.9as}, 
we get 
$\frac{1+2(\lambda_i^2+\lambda_i^4)+C}{2\lambda_i^2}=\kappa$, which combines with 
\eqref{eqn:4.7s}, it deduces that $C=-1$. It is a contradiction. 

{\bf Case I-iii-2}. $m=2$ and $n\geq 3$. 

In this subcase, by \eqref{eqn:4.8s} and \eqref{eqn:4.7adss}, we have $\lambda_i=\tfrac{-1}{\lambda_p}+\lambda_p$. 
By \eqref{eqn:4.9as} and \eqref{eqn:4.7adss}, we have 
$\lambda_{m+n-1}=\frac{C-1+2\lambda_p^2}{2\lambda_p}$.  We substitute $\lambda_i=\tfrac{-1}{\lambda_p}+\lambda_p$ and $\lambda_{m+n-1}=\frac{C-1+2\lambda_p^2}{2\lambda_p}$ into \eqref{eqn:4.9s}, and using $\kappa=-1+\lambda_p^2$, 
we can have $1-C=0$, which is a contradiction. 

{\bf Case I-iii-3}. $m=n=2$. 

In this subcase, if $\lambda_3=0$, then \eqref{eqn:4.9s} and \eqref{eqn:4.9as} become to 
$\frac{1-C}{2}=\frac{-1-C}{2}=\kappa$, which is a contradiction. So $\lambda_3\neq0$. 
By \eqref{eqn:4.9s} and \eqref{eqn:4.9as}, we have $\lambda_1=\frac{-1+C+2\kappa}{2\lambda_3}$ and $\lambda_2=\frac{1+C+2\kappa}{2\lambda_3}$. These combine 
with \eqref{eqn:4.8s}, we have $\frac{(-1+C+2\kappa)(1+C+2\kappa)}{4\lambda_3^2}=\kappa$. 
It follows from $C\neq\pm1$ that $\kappa\neq0$ and $\kappa=\frac{1}{2}(\lambda_3^2-C-\sqrt{1+\lambda_3^4-2\lambda_3^2C})$ or $\kappa=\frac{1}{2}(\lambda_3^2-C+\sqrt{1+\lambda_3^4-2\lambda_3^2C})$. 

If $\kappa=\frac{1}{2}(\lambda_3^2-C-\sqrt{1+\lambda_3^4-2\lambda_3^2C})$, then by the fact that $E_3\kappa=0$ and $E_3C=-2\lambda_3\sqrt{1-C^2}$, we have 
$$
E_3\lambda_3=\frac{\sqrt{1-C^2}(\lambda_3^2-\sqrt{1+\lambda_3^4-2\lambda_3^2C})}{-\lambda_3^2+C+\sqrt{1+\lambda_3^4-2\lambda_3^2C}}. 
$$
By $\lambda_1=\frac{-1+C+2\kappa}{2\lambda_3}$,  $\lambda_2=\frac{1+C+2\kappa}{2\lambda_3}$ and $E_3C=-2\lambda_3\sqrt{1-C^2}$, 
we have 
\begin{equation}\label{eqn:E312}
E_3\lambda_1=\frac{\sqrt{1-C^2}(1-\lambda_3^2+\sqrt{1+\lambda_3^4-2\lambda_3^2C})}
{2\lambda_3^2(\lambda_3^2-C-\sqrt{1+\lambda_3^4-2\lambda_3^2C})},\ \ 
E_3\lambda_2=\frac{\sqrt{1-C^2}(1+\lambda_3^2-\sqrt{1+\lambda_3^4-2\lambda_3^2C})}
{2\lambda_3^2(\lambda_3^2-C-\sqrt{1+\lambda_3^4-2\lambda_3^2C})}.
\end{equation}

On the other hand, we calculate 
$g\big((\nabla_{E_3}A)E_{1}-(\nabla_{E_1}A)E_{3},E_1\big)$ and $g\big((\nabla_{E_3}A)E_{2}-(\nabla_{E_2}A)E_{3},E_2\big)$. 
By direct calculation, with the use of \eqref{eqn:APX2.9} and Codazzi equation \eqref{eqn:APX2.7}, it holds that  
\begin{equation}\label{eqn:apxe11}
E_3\lambda_1=\frac{(1-C)(\lambda_1^2-\lambda_1\lambda_3)}{\sqrt{1-C^2}}+\frac{\sqrt{1-C^2}}{2},\ 
E_3\lambda_2=\frac{(1+C)(-\lambda_2^2+\lambda_2\lambda_3)}{\sqrt{1-C^2}}+\frac{\sqrt{1-C^2}}{2}.
\end{equation}
By using $\lambda_1=\frac{-1+C+2\kappa}{2\lambda_3}$ and $\lambda_2=\frac{1+C+2\kappa}{2\lambda_3}$, we  compare \eqref{eqn:apxe11} with \eqref{eqn:E312}, it follows that 
$$
\frac{-\sqrt{1-C^2}(1-\lambda_3^2+\sqrt{1+\lambda_3^4-2\lambda_3^2C})}
{\lambda_3^2(-\lambda_3^2+C+\sqrt{1+\lambda_3^4-2\lambda_3^2C})}=0,\ \ 
\frac{\sqrt{1-C^2}(-1-\lambda_3^2+\sqrt{1+\lambda_3^4-2\lambda_3^2C})}
{\lambda_3^2(-\lambda_3^2+C+\sqrt{1+\lambda_3^4-2\lambda_3^2C})}=0. 
$$
Since $C\neq\pm1$, we get a contradiction.

If $\kappa=\frac{1}{2}(\lambda_3^2-C+\sqrt{1+\lambda_3^4-2\lambda_3^2C})$, then by the fact that $E_3\kappa=0$ and $E_3C=-2\lambda_3\sqrt{1-C^2}$, we have 
$$
E_3\lambda_3=-\frac{\sqrt{1-C^2}(\lambda_3^2+\sqrt{1+\lambda_3^4-2\lambda_3^2C})}{\lambda_3^2-C+\sqrt{1+\lambda_3^4-2\lambda_3^2C}}. 
$$
By $\lambda_1=\frac{-1+C+2\kappa}{2\lambda_3}$ and $\lambda_2=\frac{1+C+2\kappa}{2\lambda_3}$, we have 
\begin{equation}\label{eqn:E312s}
	E_3\lambda_1=-\frac{\sqrt{1-C^2}(-1+\lambda_3^2+\sqrt{1+\lambda_3^4-2\lambda_3^2C})}
	{2\lambda_3^2(\lambda_3^2-C+\sqrt{1+\lambda_3^4-2\lambda_3^2C})},\ \ 
	E_3\lambda_2=\frac{\sqrt{1-C^2}(1+\lambda_3^2+\sqrt{1+\lambda_3^4-2\lambda_3^2C})}
	{2\lambda_3^2(\lambda_3^2-C+\sqrt{1+\lambda_3^4-2\lambda_3^2C})}.
\end{equation}

On the other hand, we calculate 
$g\big((\nabla_{E_3}A)E_{1}-(\nabla_{E_1}A)E_{3},E_1\big)$ and $g\big((\nabla_{E_3}A)E_{2}-(\nabla_{E_2}A)E_{3},E_2\big)$. 
By direct calculation, with the ue of \eqref{eqn:APX2.9} and Codazzi equation \eqref{eqn:APX2.7}, we get 
\begin{equation}\label{eqn:apxE312s}
E_3\lambda_1=\frac{(1-C)(\lambda_1^2-\lambda_1\lambda_3)}{\sqrt{1-C^2}}+\frac{\sqrt{1-C^2}}{2},\  
E_3\lambda_2=\frac{(1+C)(-\lambda_2^2+\lambda_2\lambda_3)}{\sqrt{1-C^2}}+\frac{\sqrt{1-C^2}}{2}.
\end{equation}

By substituting $\lambda_1=\frac{-1+C+2\kappa}{2\lambda_3}$ and $\lambda_2=\frac{1+C+2\kappa}{2\lambda_3}$ into the right hands of \eqref{eqn:apxE312s}, 
we compare \eqref{eqn:apxE312s} with \eqref{eqn:E312s}, it follows that  
$$
\frac{-\sqrt{1-C^2}(-1+\lambda_3^2+\sqrt{1+\lambda_3^4-2\lambda_3^2C})}
{\lambda_3^2(\lambda_3^2-C+\sqrt{1+\lambda_3^4-2\lambda_3^2C})}=0,\ \ 
\frac{\sqrt{1-C^2}(1+\lambda_3^2+\sqrt{1+\lambda_3^4-2\lambda_3^2C})}
{\lambda_3^2(\lambda_3^2-C+\sqrt{1+\lambda_3^4-2\lambda_3^2C})}=0. 
$$
Since $C\neq\pm1$, we get a contradiction. 
Thus {\bf Case I-iii} does not occur.

\vskip 2mm

{\bf Case II}. $C^2=1$.

In this case, we know that $M$ is an open part of the Riemannian product manifold 
$\Gamma_1\times M_2^n(\epsilon_2)$  or $M_1^m(\epsilon_1)\times\Gamma_2$, where $\Gamma_1$ 
(resp. $\Gamma_2$) is a hypersurface of $M_1^m(\epsilon_1)$ (resp. $M_2^n(\epsilon_2)$). 
Since $\epsilon_1\epsilon_2\neq0$, it follows that the Riemannian product manifolds 
$\Gamma_1\times M_2^n(\epsilon_2)$ and $M_1^m(\epsilon_1)\times\Gamma_2$ 
can not have constant sectional curvature. 

We have completed the proof of Theorem \ref{thm:1.4}. 
\end{proof}

\begin{remark}
The hypersurface $\Phi_1(I\times \mathbb{H}^{m-1}\times \mathbb{S}^{n-1})$ is the high-dimensional version of  Example \ref{exam:4.1ss1ss11ab1}. 
The hypersurface $\Phi_2(I\times \mathbb{R}^{m-1}\times \mathbb{R}^{n-1})$ is the high-dimensional version of  hypersurface $M_{1,1}^{1/2}$, which appears in Example \ref{exam:3.4fg1}. 
When $m=n\geq3$, up to the isometry map
$$
\mathbb{F}:\mathbb{H}^m\times \mathbb{H}^m\rightarrow\mathbb{H}^m\times \mathbb{H}^m,\ \
\mathbb{F}(p,q)=(q,p),
$$
the hypersurface $\Phi_1(I\times \mathbb{H}^{m-1}\times \mathbb{S}^{m-1})$ is congruent to 
the hypersurface $\Phi_3(I\times \mathbb{S}^{m-1}\times \mathbb{H}^{m-1})$. 
\end{remark}

\begin{remark}
Combining Theorem 1.1 of \cite{LVWY}, Theorem \ref{thm:1.1}  and Theorem \ref{thm:1.4} of the present paper, we 
obtain a complete classification of the hypersurfaces in $\mathbb{S}^m\times \mathbb{S}^n$, 
$\mathbb{H}^m\times \mathbb{H}^n$ and $\mathbb{S}^m\times \mathbb{H}^n$
with constant sectional curvature, for $m,n\geq2$. 
Together with the results of \cite{AEG,CT2025,M-T}, one can get a complete classification of the hypersurfaces with constant sectional curvature in a product of two space forms. Note that the authors in \cite{CT2025} also claimed the related results (cf. \cite{KNT}). 
\end{remark}


\vskip 10mm

\begin{flushleft}
Haizhong Li\\
{\sc Department of Mathematical Sciences, Tsinghua University,\\
Beijing, 100084, P.R. China}\\
E-mail: lihz@tsinghua.edu.cn

\vskip 1mm

Luc Vrancken\\
{\sc
Univ. Polytechnique Hauts-de-France,
CERAMATHS-Laboratoire de Mat\'eriaux C\'{e}ramiques et de Math\'{e}matiques,
F-59313 Valenciennes, France;\\
INSA Hauts-de-France, CERAMATHS, F-59313 Valenciennes, France;\\
Department of Mathematics, KU Leuven, Celestijnenlaan 200B, Box 2400, BE-3001 Leuven, Belgium}\\
E-mail: luc.vrancken@uphf.fr

\vskip 1mm

Xianfeng Wang\\
{\sc School of Mathematical Sciences and LPMC, Nankai University,\\
Tianjin 300071, P.R. China}\\
E-mail: wangxianfeng@nankai.edu.cn

\vskip 1mm

Zeke Yao\\
{\sc School of Mathematical Sciences, South China Normal University,\\
Guangzhou 510631, P.R. China}\\
E-mail: yaozkleon@163.com

\end{flushleft}

\end{document}